\documentclass[11pt]{amsart}
\usepackage{amsfonts}
\usepackage{color}
\usepackage{amssymb,amsthm,amsmath,hyperref}
\usepackage{epsfig}
\usepackage{graphicx,float}
\usepackage{CJK}
\usepackage{setspace}
\usepackage{cancel}
\usepackage{stmaryrd}
\usepackage{verbatim}
\usepackage{ulem}
\usepackage{makecell}

 \newtheorem{thm}{Theorem}[section]
 \newtheorem{coro}[thm]{Corollary}
 \newtheorem{lem}[thm]{Lemma}
 \newtheorem{prop}[thm]{Proposition}
 \theoremstyle{definition}
 
 \newtheorem{rem}[thm]{Remark}
 \numberwithin{equation}{section}

\def\dl{\delta}
\def\tl{\tilde}

\def\D{\displaystyle}
\def\Dl{\Delta}

\def\sig{\sigma}

\def\N{\mathbb{N}}

\def\nn{\nonumber}

\def\eps{\epsilon}

\def\fr{\frac}
\def\al{\alpha}

\def\la{\langle}
\def\ra{\rangle}
\def\R{\mathbb{R}}
\def\Z{\mathbb{Z}}
\def\T{\mathbb{T}}

\def\pr{\partial}
\def\nb{\nabla}

\def\les{\lesssim}
\def\lm{\lambda}
\def\Lm{\Lambda}

\def\om{\omega}

\def\l|{\left\|}
\def\r|{\right\|}

\newcommand{\beq}{\begin{eqnarray}}
\newcommand{\eeq}{\end{eqnarray}}
\newcommand{\beqno}{\begin{eqnarray*}}
\newcommand{\eeqno}{\end{eqnarray*}}
\newcommand{\be}{\begin{equation}}
\newcommand{\ee}{\end{equation}}
\newcommand{\beno}{\begin{equation*}}
\newcommand{\eeno}{\end{equation*}}
\newtheorem{theorem}{Theorem}[section]

\newtheorem{Lemma A.1}{Lemma A.1}
\theoremstyle{definition}

\theoremstyle{remark}
\newtheorem{remark}[theorem]{Remark}
\allowdisplaybreaks[4]

\begin{document}
\title[Landau damping for VPFP]{Optimal stability threshold in lower regularity spaces for the Vlasov-Poisson-Fokker-Planck equations}

\author{ Weiren Zhao}
\thanks{Corresponding author: Weiren Zhao, email: zjzjzwr@126.com}
\address[W. Zhao]{Department of Mathematics, New York University Abu Dhabi, Saadiyat Island, P.O. Box 129188, Abu Dhabi, United Arab Emirates.}
\email{zjzjzwr@126.com, wz19@nyu.edu}

\author{Ruizhao Zi}
\address[R. Zi]{School of Mathematics and Statistics, and Key Laboratory of Nonlinear Analysis \& Applications (Ministry of Education), Central China Normal University, Wuhan,  430079,  P. R. China.}
\email{rzz@ccnu.edu.cn}

\date{\today}

\begin{abstract}
    In this paper, we study the optimal stability threshold for the Vlasov-Poisson equation with weak Fokker-Planck collision. We prove that if the initial perturbation is of size $\nu^{\frac{1}{2}}$ in the critical weighted space $H_x^{\log}L^2_{v}(\langle v\rangle^m)$, then the solution remains the same size in the same space. Moreover, a space-time type Landau damping holds, namely, $\|E\|_{L^2_tL^2_x}\lesssim \nu^{\fr{1}{2}}$; and a point-wise type Landau damping holds, namely, $\|E(t)\|_{L^2}\lesssim \nu^{1/2}\langle t\rangle^{-N}$ for any $N>0$ for $t\geq \nu^{-1}$. We also prove that there exists initial perturbation in $H^{1}_xL^2_v(\langle v\rangle^m)$ with size $\nu^{\fr12-\fr32\epsilon_0}$ with any ${\epsilon_0>0}$, such that the enhanced dissipation fails to hold in the following sense: there is $0<T\ll \nu^{-\fr13}$ such that
    \begin{align*}
        \|\la v\ra^m f_{\neq}(T)\|_{L^2_xL^2_v}\gtrsim \frac{1}{\nu^{\delta_1}}\|\la v\ra^m f_{\neq}(0)\|_{ H^1_xL^2_v}
    \end{align*}
    with some $\delta_1>0$. 
The paper solves the open problem raised in [Bedrossian; arXiv: 2211.13707] about the sharp stability threshold in lower regularity spaces. 
    %The main idea is to construct a wave operator $\mathbf{D}$ with a very precise expression to absorb the nonlocal term, namely, 
%\begin{align*}
%    \mathbf{D}[\partial_tg+v\cdot \nabla_x g+E\cdot\nb_v \mu]=(\partial_t +v\cdot \nabla_x)\mathbf{D}[g]. 
%\end{align*}

\end{abstract}

\maketitle
\section{Introduction}
In this paper, we consider the following single-species Vlasov-Poisson- Fokker-Planck (VPFP) equations with a neutralizing background on $\mathbb{T}^n_x\times\mathbb{R}^n_v$ (with $\mathbb{T}_x$ normalized to length $2\pi$ and $n=1,2,3$), which models the evolution of the distribution function of electrons which are subject to the electrostatic force coming from their Coulomb interaction and to a Brownian force which models their collisions:
\be\label{VFP}
\begin{cases}
\partial_tF+v\cdot\nabla_xF+E(t,x)\cdot\nabla_vF=\nu \big(\Delta_vF+\nabla_v\cdot(vF)\big),\\[1mm]
E(t,x)=-\nabla_x(-\Delta_x)^{-1}\Big(\varrho-\frac{1}{|\T^n|}\displaystyle\int_{\T^n}\varrho(t,x)dx\Big),\\[1mm]
\displaystyle\varrho(t,x)=\int_{\mathbb{R}^n}F(t,x,v)dv,\\[1mm]
F(t=0, x, v)=F_{\mathrm{in}}(x,v).
\end{cases}
\ee
The unknown is the distribution function $F (t, x, v)$, which gives the number density of (say) electrons at location $x$ moving with velocity $v$. The electrostatic force is responsible for the self-consistent force term $E(t,x)\cdot\nb_vF$. The non-local interaction between $E(t,x)$ and the density function $\varrho(t, x)$ of electrons would be through Coulomb electrostatic interactions. The Brownian force is modeled by the Fokker-Planck term $\nu \big(\Dl_vF+\nb_v\cdot(vF)\big)$ with the small coefficient $0< \nu\ll 1$ which represents the collision frequency. 

A direct calculation gives the  conservation of mass
\begin{align*}
    \frac{d}{dt}\int_{\mathbb{T}^n}\varrho(t,x)dx=\frac{d}{dt}\int_{\mathbb{T}^n\times\mathbb{R}^n}F(t, x,v)dvdx=0.
\end{align*}
Therefore, the average density can be normalized to 1 by assuming that the initial distribution $F_{\mathrm{in}}$ satisfies
\be\label{initial-F}
\displaystyle
\frac{1}{(2\pi)^n}\int_{\mathbb{T}^n\times\mathbb{R}^n}F_{\mathrm{in}}(x,v)dxdv=1.
\ee
Clearly, the VPFP equation \eqref{VFP} admits a steady solution, namely the global Maxwellian, 
\be\label{gM}
\mu(v)=\frac{1}{(2\pi )^{\frac{n}{2}}}e^{-\frac{|v|^2}{2}}.
\ee

In this paper, we study the asymptotic stability of the global Maxwellian $\mu(v)$. It is natural to introduce the perturbations
\begin{gather}
\label{pertur-1}F(t,x,v)=\mu(v)+g(t,x,v), \quad \varrho(t,x)=1+\rho(t,x).
\end{gather}
Then $\rho(t,x)=\D\int_{\R^3}g(t,x,v)dv$, and by using the conservation of mass and the initial assumption \eqref{initial-F}, one deduces that
\begin{gather}\label{0M}
\int_{\mathbb{T}^n}\rho(t,x)dx=0.
\end{gather}

Now we rewrite the system \eqref{VFP} in terms of  $g$ and $\rho$:
\begin{subequations}\label{pVPFP}
\be\label{VPFP-perturbation}
\begin{cases}
\pr_tg+v\cdot\nb_xg-\nu { L}[g]+E\cdot\nb_v\mu=-E\cdot\nb_vg,\\
E(t,x)=-\nb_x(-\Dl_x)^{-1}\rho,\quad \rho(t,x)=\D\int_{\R^3}g(t,x,v)dv,\\
g(t=0, x, v)=g_{\mathrm{in}}(x,v),
\end{cases}
\ee
with $\rho$ satisfying \eqref{0M}, and  
\begin{align}
    &{ L}[g]:=\Dl_v g+\nb_v\cdot(vg).
\end{align}
\end{subequations}

In many plasma physics settings, the collision frequency is very small ($\nu\ll1$), and it is common to neglect collisions and consider the Vlasov-Poisson equation ($\nu=0$). However, the interaction between weak collisions, collisionless effects, and nonlinear dynamics has been a central topic in plasma physics for decades \cite{malmberg1964collisionless,o1968effect}. A fundamental collisionless phenomenon is {\bf Landau damping (L.D.)}, discovered by Landau \cite{landau196561}, which describes the rapid decay of the electric field in the absence of dissipation. This effect is closely related to phase-space mixing generated by the transport operator. In the nonlinear regime, the dynamics are strongly influenced by the plasma echo mechanism \cite{malmberg1968plasma}, where nonlinear interactions excite modes that unmix in phase space and generate transient growth, leading to a cascade and loss of regularity. The first mathematical proof of nonlinear Landau damping for the Vlasov-Poisson equation was given by Mouhot and Villani \cite{MouhotVillani2011}, requiring analytic regularity. This was later simplified by Bedrossian, Masmoudi, and Mouhot \cite{BMM2016}, where the threshold was lowered to Gevrey regularity $s>\frac13$; see also \cite{grenier2021landau}. The sharp Gevrey result was obtained in \cite{ionescu2024nonlinearsharp}. In lower regularity settings, BGK waves and plasma echoes may appear \cite{bedrossian2020nonlinear, lin2011small}. A related phenomenon in fluid mechanics is {\bf inviscid damping (I.D.)} for the 2D Euler equation near Couette flow, first observed by Orr \cite{Orr1907} and proved nonlinearly in \cite{BM2015, IonescuJia2020cmp, ChenWeiZhangZhang2023}, with further developments for monotone shear flows in \cite{IJ2020, MasmoudiZhao2020, zhao2023inviscid}. Both Landau damping and inviscid damping originate from transport-induced mixing mechanisms.
When collisions are weak but nonzero, the interaction between transport and dissipation leads to the {\bf enhanced dissipation (E.D.)} effect: mixing transfers energy to high frequencies where dissipation becomes stronger, yielding a dissipation time scale $\nu^{-1/3}$, much shorter than the heat time scale $\nu^{-1}$. This mechanism can suppress plasma echoes generated by nonlinear interactions. Bedrossian \cite{bedrossian2017suppression} proved the asymptotic stability of the global Maxwellian for the Vlasov-Fokker-Planck equation when the perturbation is of size $\nu^{1/3}$ in Sobolev spaces. 

In this paper, we study the asymptotic stability in even lower Sobolev spaces, and solve the open problem raised by Bedrossian in  \cite{bedrossian2022brief}:\par

{\it Can one obtain stability thresholds in much lower regularity, for example $L^1\cap L^{\infty}$ or $L^1\cap L^p$ initial data (along with velocity moments), at least for linear Fokker-Planck collisions? Could one prove these were sharp?}

Our first main result states as follows:
\begin{theorem}\label{thm:main1}
    Let $n=1,2,3$. For any $N>0$, there exist $\epsilon_0,\nu_0>0, m_0>3$ such that if the initial data $g_{\mathrm{in}}(x,v)$ satisfies 
    \begin{align}\label{eq: normalize}
        \int_{\mathbb{T}^n\times \mathbb{R}^n}g_{\mathrm{in}}(x,v)dxdv=0,
    \end{align}
    and for any $0< \epsilon\leq \epsilon_0$ and $m\geq m_0$, it holds that
\begin{subequations}\label{initial-sta}
    \begin{align}
        &\|\ln(e+|D_x|)\langle v\rangle^mg_{\mathrm{in}}\|_{L^2_{x,v}(\mathbb{T}^3\times \mathbb{R}^3)}\leq \epsilon \nu^{\fr{1}{2}},\quad \text{for}\quad n=3,\\
        &\|\langle v\rangle^mg_{\mathrm{in}}\|_{L^2_{x,v}(\mathbb{T}^n\times \mathbb{R}^n)}\leq \epsilon \nu^{\fr{1}{2}},\quad \text{for}\quad n=1,2,
    \end{align}
\end{subequations}
    then the equation \eqref{pVPFP} with $0< \nu\leq \nu_0$ has a unique global solution $(g, E)$. Moreover, the following estimates hold for some $C>1$ independent of $\nu, t$: 
    \begin{itemize}
        \item Long-time estimates: For $0\leq t\leq \nu^{-1}$, we have
        \begin{itemize}
            \item a bounded estimate of the distribution
            \begin{subequations}\label{bd:distribution}
                \begin{align}
                &\sup_{t\in[0,\nu^{-1}]}\|\ln(e+|\nb_x|)\langle v\rangle^mg(t)\|_{L^2_{x,v}(\mathbb{T}^3\times \mathbb{R}^3)}\leq C\epsilon \nu^{\fr{1}{2}},\quad \text{for}\quad n=3,\\
        &\sup_{t\in[0,\nu^{-1}]}\|\langle v\rangle^mg(t)\|_{L^2_{x,v}(\mathbb{T}^n\times \mathbb{R}^n)}\leq C\epsilon \nu^{\fr{1}{2}},\quad \text{for}\quad n=1,2.
            \end{align}
            \end{subequations}
            \item a space-time type of Landau damping 
            \begin{align}\label{bd:E}
        \||\nb_x|^{\fr32}E(t)\|_{L^2_{t,x}}\leq C\epsilon\nu^{\fr12}.
            \end{align}
            \item an improved regularity estimate for any $s>0$
            \begin{align}\label{bd:improve}
                \sup_{t\in [\nu^{-1}/2,\nu^{-1}]}\|\langle v\rangle^mg(t)\|_{H^{s}_{x,v}}\leq C\epsilon\nu^{\fr12}. 
            \end{align}
        \end{itemize}
        \item Global-in-time estimates: For $t\geq \nu^{-1}$, a point-wise Landau damping estimate holds, namely, 
        \begin{align}\label{Lan-dam}
            \|E(t)\|_{L^2}\leq \frac{C\epsilon\nu^{\fr12}}{\langle t\rangle^N}. 
        \end{align}
        \item Enhanced dissipation estimate: for all $t\geq 0$, it holds that
        \begin{subequations}\label{eq: enhanced-dissipation}
            \begin{align}
            \|\la v\ra^m\mathbb{P}_{\neq}g(t)\|_{L^2}\leq& C \|\la v\ra^m\mathbb{P}_{\neq}g_{\rm in}\|_{L^2}\langle \nu^{1/3}t\rangle^{-s},\quad{\rm for}\quad n=1,2,\\
            \|\la v\ra^m\ln(e+|\nb_x|)\mathbb{P}_{\neq}g(t)\|_{L^2}\leq& C \|\la v\ra^m\ln(e+|\nb_x|)\mathbb{P}_{\neq}g_{\rm in}\|_{L^2}\langle \nu^{1/3}t\rangle^{-s},\quad{\rm for}\quad n=3,
        \end{align}
        \end{subequations}
        where $\mathbb{P}_{\neq}$ is the projection to the non-zero modes. 
    \end{itemize}
\end{theorem}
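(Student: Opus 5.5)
The plan is to run a bootstrap on $[0,\nu^{-1}]$ and then switch to a high-regularity argument for $t\ge \nu^{-1}$. The starting point is the linearized problem $\pr_t g+v\cdot\nb_x g+E\cdot\nb_v\mu-\nu L[g]=0$. Its nonlocal term $E\cdot\nb_v\mu$ I would absorb by a wave-operator-type transformation $\mathbf{D}$, built mode by mode in $x$ from the Penrose-stable resolvent of the Maxwellian. This operator intertwines the linearized collisionless operator with free transport:
\begin{align*}
\mathbf{D}[\pr_t g+v\cdot\nb_x g+E\cdot\nb_v\mu]=(\pr_t+v\cdot\nb_x)\mathbf{D}[g].
\end{align*}
I would also need $\mathbf{D}$ and its inverse to be bounded on $\la v\ra^m$-weighted $L^2_v$, uniformly in the frequency $k$. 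The commutator $[\mathbf{D},\nu L]$ should be of size $\nu$ times lower order, so it can be handled perturbatively.

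For $h=\mathbf{D}[g]$ the equation becomes free transport plus Fokker--Planck plus the nonlinearity $-\mathbf{D}[E\cdot\nb_v g]$. I would then carry out a hypocoercive energy estimate, in the style of the enhanced-dissipation argument of \cite{bedrossian2017suppression}, in the weighted norm $\|\la v\ra^m h\|_{L^2}$ (with the $\ln(e+|D_x|)$ multiplier when $n=3$). The energy would be augmented by cross terms $\nu^{1/3}\la \nb_v h,\nb_x h\ra$-type and ghost-type multipliers. This yields three things: uniform boundedness, the dissipation integral $\nu\int_0^t\|\nb_v h\|^2$, and the decay $\la\nu^{1/3}t\ra^{-s}$ of nonzero modes, which is \eqref{eq: enhanced-dissipation}.

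The space-time Landau damping \eqref{bd:E} comes from the Volterra equation for $\rho$. Writing $\hat\rho_k(t)$ via Duhamel along free streaming and using the Penrose condition together with a Plancherel-in-time argument, $\|\hat\rho_k\|_{L^2_t}$ is controlled by the free-transport contribution of $g_{\rm in}$ plus the nonlinear source. The free-transport part satisfies $\int_0^\infty|\hat g_{\rm in}(k,kt)|^2dt\les |k|^{-1}\|\hat g_{\rm in}(k,\cdot)\|^2_{H^{s}_\eta}$, by a trace/Fourier-restriction bound along the line $\eta=kt$; the velocity weight $\la v\ra^m$ supplies the $\eta$-regularity. This gain of $|k|^{-1}$ is exactly what gives $|\nb_x|^{3/2}E\in L^2_{t,x}$ from $L^2$ data in $x$.

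The main obstacle is closing the nonlinear term $E\cdot\nb_v g$ at the threshold $\nu^{1/2}$ with only $L^2_x$ (or log) regularity. I would bound it by pairing in the energy estimate:
\begin{align*}
\Big|\int_0^t\la E\cdot\nb_v g,\,h\ra\Big|\les \|E\|_{L^2_tL^\infty_x}\,\|\nb_v g\|_{L^2_t L^2}\,\|h\|_{L^\infty_tL^2}.
\end{align*}
The dissipation gives $\|\nb_v g\|_{L^2_tL^2}\les \nu^{-1/2}\epsilon\nu^{1/2}$. The space-time damping gives $\|E\|_{L^2_tL^\infty_x}\les \||\nb_x|^{3/2}E\|_{L^2_{t,x}}\les\epsilon\nu^{1/2}$ by Sobolev embedding. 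For $n=3$ the embedding is borderline, and this is precisely where the $\ln(e+|D_x|)$ weight is needed, via a logarithmically refined Sobolev/Brezis--Gallouet type bound. The resulting product is $\epsilon^3\nu^{1/2}\cdot\nu^{1/2}$, which closes against $(\epsilon\nu^{1/2})^2$. The same structure closes the nonlinear source in the Volterra estimate.

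For the remaining claims:
\begin{itemize}
\item \emph{Improved regularity \eqref{bd:improve}.} This follows from parabolic smoothing in $v$ together with the hypoelliptic transfer to $x$ (hypoelliptic time weights $t^{a}\nb_v$, $t^{b}\nb_x$) over the interval $[\nu^{-1}/4,\nu^{-1}]$. The weights are tuned so that the constants are uniform in $\nu$ at $t\sim\nu^{-1}$.
\item \emph{Global-in-time estimates for $t\ge\nu^{-1}$.} I would restart from $H^s$ data of size $\epsilon\nu^{1/2}$. In high Sobolev regularity the known threshold argument applies, since $\nu^{1/2}\ll\nu^{1/3}$, giving global existence. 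Enhanced dissipation combined with Sobolev-in-time Landau damping (via the standard $\la t\ra^{-N}$ decay of $\hat g(k,kt)$ in high regularity) then yields \eqref{Lan-dam}.
\end{itemize}
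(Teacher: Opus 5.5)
Your outline follows the paper closely. The shared steps are the mode-by-mode wave operator $\mathbf{D}_k$ with $k$-uniform weighted bounds and $O(|k|^{-2})$ commutators, and a bootstrap on $[0,\nu^{-1}]$ with a ghost-type multiplier for enhanced dissipation. They also include the trace bound along $\eta=kt$ giving the gain $|k|^{-1}$, and the energy closure $\epsilon^3\nu$ versus $(\epsilon\nu^{1/2})^2$ via $\nu^{1/2}\|\nabla_v h\|_{L^2_t}$. Finally, both use time weights $(\nu t)^\ell$, $(\nu^{1/3}t)^\ell$ for smoothing and restart at $t=\nu^{-1}/2$ with \cite{bedrossian2017suppression}.

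The genuine difference is the density. You solve the collisional Volterra equation (Penrose plus Plancherel in time). In the stability proof the paper does not solve a Volterra equation. It inverts $\mathbf{D}_k$ and writes $\rho_k(t)=\hat f_k(t,kt)+\int_{\mathbb{R}}\bar{\hat K}_k(\xi_1)\hat f_k(t,\tfrac{k}{|k|}\xi_1+kt)\,d\xi_1$, with $K_k=P_k/W_k-1$ and $\|K_k\|_{H^s}\lesssim|k|^{-2}$ by \eqref{est:Wk}. There, Penrose enters only through the invertibility of $\mathbf{D}_k$. The weights $\mathrm{B}_{k,s}(t)$ are moved onto $\tau$ through the decay of $S_k(t-\tau)$ as in \eqref{absorb}, with no resolvent analysis. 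The price is extra linear terms, from $\nu\bar\eta\cdot\nabla_\eta\hat f_k$ and $[\mathbf{D}_k,L]$, which must be fed by the dissipation, plus the kernel estimate of Lemma \ref{lem-LK}.

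If you integrate along the exact collisional characteristics (as in \cite{bedrossian2017suppression}), those terms disappear. In exchange you need two things:
\begin{itemize}
\item a $\nu$-uniform Penrose bound for the collisional kernel (the Maxwellian part of the proof of Lemma \ref{lem:Penrose});
\item a resolvent bound carrying the weights $\langle C_s\nu^{1/3}|k|^{2/3}t\rangle^{s/2}$, because in high--low interactions the time weight must sit on $\rho$ in $L^2_t$ for the decay and smoothing estimates.
\end{itemize}

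One step fails as you state it: the nonlinear source in the density estimate does not close with ``the same structure.'' Pairing $\rho$ with $\nu^{1/2}\|\nabla_v g\|_{L^2_t}$ leaves you only $\int_0^\infty S_k\lesssim\nu^{-1/3}|k|^{-2/3}$ and the trace bound. That loses a factor $\nu^{-1/6}$ against $\epsilon\nu^{1/2}$. What works (the terms $\mathcal{N}_a$ in the paper) is that on the characteristic $\nabla_v$ becomes the multiplier $ik(t-\tau)$. This factor is absorbed by $\int_0^\infty S_k(t)|kt|^2\,dt\lesssim\nu^{-1}$, while $g$ is taken in $L^\infty_tL^2$ via the trace lemma. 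The result is $\nu^{-1/2}\|\mathbf{B}_s\rho\|_{L^2_{t,x}}\|\langle v\rangle^2\mathbf{A}_0 h\|_{L^\infty_tL^2_{x,v}}$, which closes.

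For $n=3$, Brezis--Gallou\"et is not the right tool, since it needs a norm above $H^{3/2}$ that you do not have. What is used is $\sum_{l\neq0}|\rho_l|/|l|\lesssim\||\nabla_x|^{1/2}\ln(e+|\nabla_x|)\rho\|_{L^2_x}$. So you must propagate the space-time bound with the logarithm on $\rho$, which is stronger than \eqref{bd:E}. You also need a high--low/low--high split so the logarithm from the energy lands on only one factor (Lemma \ref{lem-produ}).

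Two smaller points:
\begin{itemize}
\item Unweighted cross terms $\nu^{1/3}\langle\nabla_v h,\nabla_x h\rangle$ are not controlled at $t=0$ by $L^2$ data; the multiplier alone gives the enhanced dissipation.
\item To get $\|\langle v\rangle^m\mathbb{P}_{\neq}g_{\rm in}\|$ rather than $\epsilon\nu^{1/2}$ on the right of the enhanced dissipation estimate, you need that the nonzero-mode inequalities and the density bound are linear in nonzero-mode quantities with small coefficients.
\end{itemize}
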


The second result is to show that the threshold $\nu^{1/2}$ for the perturbations in the critical spaces is sharp. 

\begin{theorem}\label{Thm: main2}
    Let $n=1$. For any $\epsilon_0\in (0,\frac{1}{100})$, there is $g_{\rm in}(x,v)$ satisfying \eqref{eq: normalize} and 
    \begin{align*}
        \|\langle v\rangle^m g_{\rm in}\|_{L^2}\ge C\nu^{\frac12-\frac32\epsilon_0}
    \end{align*}
    for some $C>1$ independent of $\nu$, such that the enhanced dissipation estimate \eqref{eq: enhanced-dissipation} does not hold. More precisely, there are $C_0>c_0>0$ and $\delta_0>0$ independent of $\nu$, such that for all $t\in [0,T_0]$ with $T_0=\delta_0\nu^{-\fr13+\epsilon_0}\ln \nu^{-1}\ll \nu^{-\fr13}$, it holds that
    \begin{align*}
        c_0e^{c_0\nu^{\fr13-\epsilon_0}t} \|\la v\ra^m\mathbb{P}_{\neq}g_{\rm in}\|_{L^2}\leq \|\la v\ra^m\mathbb{P}_{\neq}g(t)\|_{L^2}\leq C_0e^{C_0\nu^{\fr13-\epsilon_0}t} \|\la v\ra^m\mathbb{P}_{\neq}g_{\rm in}\|_{L^2}.
    \end{align*}
    In particular, a $\nu$-dependent amplification estimate holds at $T_0$, namely, 
    \begin{align*}
        \|\la v\ra^m\mathbb{P}_{\neq}g(T_0)\|_{L^2}\geq c_0\|\la v\ra^m\mathbb{P}_{\neq}g_{\rm in}\|_{L^2}\nu^{-c_0\delta_0}. 
    \end{align*}
\end{theorem}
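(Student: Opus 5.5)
The instability will be driven by a large, essentially stationary spatially homogeneous mode interacting with a small nonzero mode; the nonzero mode then sees a linear operator with a growing eigenvalue on the time scale $\nu^{-1/3+\epsilon_0}$. We work with $n=1$ and split $g=g_0(t,v)+g_{\neq}(t,x,v)$. Since $E$ only involves $\rho_{\neq}$, the zero mode satisfies
\begin{align*}
\pr_t g_0-\nu L[g_0]=-\big(E\,\pr_v g_{\neq}\big)_0 ,
\end{align*}
which is quadratic in the nonzero part. We take $g_{\rm in}=g_{0,\rm in}(v)+\eta\, h_{\rm in}(x,v)$, where the zero mode is large and the nonzero part is tiny. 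The mean-zero condition \eqref{eq: normalize} means $\int g_{0,\rm in}\,dv=0$. We choose $g_{0,\rm in}=A\,\varphi(v/\lambda)$ (minus a Maxwellian correction to restore mass zero), with $\varphi$ a fixed smooth profile chosen so that $\mu+g_{0,\rm in}$ is a Penrose-unstable (two-bump) equilibrium. This requires $A\sim 1$ at scale $\lambda\sim1$. That would be too large, so we instead localize in $v$ at a small scale $\lambda$, using the gradient rather than the amplitude. Penrose instability for the linearized Vlasov--Poisson operator about $\mu+g_0$ with a narrow bump of width $\lambda$ and height $A$ occurs once $A\lambda^{-1}\gtrsim$ const (bump-on-tail), and its growth rate is about $\gamma\sim A^{1/3}$ or $A/\lambda$ in the appropriate regime. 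We tune $A,\lambda$ so that $\|\la v\ra^m g_{0,\rm in}\|_{L^2}\sim A\lambda^{1/2}\sim \nu^{1/2-\frac32\epsilon_0}$ and the growth rate is $\gamma\sim \nu^{1/3-\epsilon_0}$. For the bump to survive until $T_0$ we also need diffusion to be slow: $\nu T_0\lambda^{-2}\ll1$.

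The proof then has three steps.
\begin{enumerate}
\item \emph{Zero-mode stability.} On $[0,T_0]$ we show $g_0(t)=e^{\nu t L}g_{0,\rm in}+O(\text{quadratic})$ stays close to the initial profile, by energy estimates for the heat-type Fokker--Planck semigroup together with a bootstrap that keeps $g_{\neq}$ at size $\eta e^{C\gamma t}$. Choosing $\eta=\nu^{K}$ with $K$ large makes the quadratic forcing negligible up to $T_0=\delta_0\nu^{-1/3+\epsilon_0}\ln\nu^{-1}$, since then $e^{\gamma T_0}=\nu^{-c\delta_0}$.
\item \emph{Linear growth of the nonzero mode.} For the first Fourier mode $k=\pm1$, the linearized operator $\mathcal L_k h=-ikvh-ik\hat E\,\pr_v(\mu+g_0)+\nu L h$ has an unstable eigenvalue $\lambda_\ast$ with $\mathrm{Re}\,\lambda_\ast\sim\gamma$. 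We get it from the Penrose dispersion relation $1-\frac1{k^2}\int\frac{\pr_v(\mu+g_0)}{v-c}\,dv=0$ by a perturbation (Rouch\'e) argument. We then show that the collisional term $\nu L$ and the slow drift of $g_0$ perturb the eigenvalue by $o(\gamma)$, which holds since $\nu\lambda^{-2}\ll\gamma$. Taking $h_{\rm in}$ to be the unstable eigenfunction (cut off) gives the lower bound $c_0e^{c_0\gamma t}$.
\item \emph{Upper bound and closure.} The upper bound $C_0e^{C_0\gamma t}$ comes from an energy estimate in which $|E\,\pr_v(\mu+g_0)|$ is controlled by $\|\pr_v g_0\|_{L^\infty}\lesssim A/\lambda\sim\gamma$-type quantities. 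The nonlinear term $E_{\neq}\pr_v g_{\neq}$ is absorbed in the bootstrap because $\eta$ is tiny. The amplification at $T_0$ then reads $e^{c_0\gamma T_0}=\nu^{-c_0\delta_0}$. This contradicts \eqref{eq: enhanced-dissipation}, which would force a bounded (decaying) ratio.
\end{enumerate}

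The main obstacle is step 2. We need a quantitative Penrose instability for a $\nu$-dependent, narrow profile with a growth rate exactly matching $\nu^{1/3-\epsilon_0}$. Simultaneously the scale has to be chosen so that the norm budget $\nu^{1/2-\frac32\epsilon_0}$ (which is where the exponent $\frac32\epsilon_0$ should come from when we balance $A,\lambda,\gamma$) and the diffusion constraint are compatible. We also have to control the non-normal semigroup, not just the spectrum. Our first attempt here is to build an explicit approximate growing mode $e^{\lambda_\ast t}\psi(v)e^{ix}$ by a WKB-type ansatz and estimate the error through Duhamel with a crude $e^{C\gamma t}$ semigroup bound.
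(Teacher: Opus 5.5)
Your architecture matches the paper's. A narrow homogeneous bump makes the background Penrose-unstable. The nonzero part is a tiny multiple of the collisionless unstable mode. $\nu L[f^*]$ is treated as a Duhamel source against an $e^{C\gamma t}$ bound on the linearized semigroup, and the nonlinearity is absorbed by smallness. Your fallback in Step 2 (the exact collisionless mode with source $\nu L[f^*]=O(\nu\gamma^{-2}e^{\lambda_{\rm r}t})$) is the right one; perturbing the eigenvalue by the singular term $\nu L$ is not.

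Your scaling heuristics need correcting. The growth rate is neither $A^{1/3}$ nor $A/\lambda$; the latter is $\gtrsim1$ by your own instability threshold. The resonant phase velocities lie inside the bump, so the rate is of the order of its width. The paper takes $\tilde\mu=M\gamma\,\sigma(v/\gamma)$. This bump has width $\gamma$, height $M\gamma$, gradient $O(1)$, $L^2$ norm $\sim\gamma^{3/2}=\nu^{\frac12-\frac32\epsilon_0}$, and growth rate $\sim\gamma$. Rescaling $\lambda=\gamma\Lambda$ turns the dispersion relation into $2+M\int_0^\infty e^{-\Lambda t}t\hat\sigma(t)\,dt=O(\gamma)$, where the Maxwellian contributes $\int_0^\infty te^{-t^2/2}dt=1$. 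Its neutral root at $M_0$ is continued into $\mathfrak{Re}\,\lambda>0$ using $B_1,B_2<0$, and your Rouch\'e argument can be run in these variables. With this scaling your norm budget and diffusion constraint are consistent, since $\nu T_0\gamma^{-2}=\delta_0\nu^{3\epsilon_0}\ln\nu^{-1}$.

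The genuine gap is Step 3, and Step 2 inherits it, because its Duhamel error control presupposes the bound $\|\mathbb{S}(t)\|\lesssim e^{C\gamma t}$ that Step 3 is meant to supply. An energy estimate based on $\|\partial_v g_0\|_{L^\infty}$ cannot give a rate $O(\gamma)$. For any bump that is actually unstable, $\|\partial_v g_0\|_{L^\infty}\sim A/\lambda\gtrsim1$. The Maxwellian term $E\,\partial_v\mu$ also yields an $O(1)$ rate in a plain energy estimate, because energy methods do not see Penrose stability. Over $[0,T_0]$ an $e^{Ct}$ bound is $\exp(c\,\nu^{-\frac13+\epsilon_0}\ln\nu^{-1})$. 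That ruins both the stated upper bound and the estimate $\nu\gamma^{-2}t\,e^{(C_0-c_0)\gamma t}\ll1$ behind the lower bound.

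Two ingredients are missing.
\begin{itemize}
\item[(i)] You need a Penrose-type bound for the density's Volterra equation in the half-plane shifted by $C_0\gamma$, uniform in $\nu$ and $\gamma$: $\inf_{k\neq0,\ \mathfrak{Re}\lambda\ge0}|1-\mathcal{L}[\mathbf{K}^\nu_k](\lambda+C_0\gamma)|\ge c>0$. It holds for three reasons. After the shift, the bump's kernel contribution $\frac{M}{k^2}\int_0^\infty e^{-(\lambda+C_0\gamma)t/(|k|\gamma)}t\hat\sigma(t)\,dt$ is $O(M/(C_0|k|))$. The Maxwellian part is Penrose-stable. The collisional corrections are small when $\gamma\gg\nu^{1/3}$. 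The bound gives $\|e^{-C_0\gamma t}|k|^{1/2}\rho_k\|_{L^2_t}\lesssim\|(f_{\rm in})_k\|_{L^2_v}$.
\item[(ii)] You need a loss-free treatment of $E\,\partial_v\mu$, which the paper obtains by conjugating with a wave operator $\tilde{\mathbb{D}}_k$. After that, the bump forcing $ik^{-1}\rho_k\partial_v\mu_2$ is controlled through the $L^2$ size $\|\langle v\rangle^m\partial_v\mu_2\|_{L^2}\sim\gamma^{1/2}$, not through $L^\infty$. Then $|k|^{-1}|\theta_k|\gamma^{1/2}\|\tilde{\mathbb{D}}_kh_k\|\le C|k|^{-2}|\theta_k|^2+\gamma\|\tilde{\mathbb{D}}_kh_k\|^2$ is absorbed by the $C_0\gamma$ damping.
\end{itemize}
This $\gamma^{1/2}$, combined with the shifted Penrose bound, is what produces the rate $\gamma$.
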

\begin{rem}
    To simplify the proof, we state and prove Theorem \ref{Thm: main2} for $n=1$. For $n=2,3$, by modifying our proof (keeping homogeneous in the other dimensions in choosing the zero-mode perturbation, see \eqref{5.1}), the instability result holds. 
\end{rem}

\begin{rem}
    Both results in this paper are related to the stability threshold problem, namely, 

{\it 
 Given a norm $\|\cdot\|_X$, find a $\gamma=\gamma(X)$ so that
\begin{align}\label{eq:STP}
\begin{aligned}
  &\|g_{\mathrm{in}}\|_{X}\leq \nu^{\gamma} \Rightarrow \text{stability, L.D. (or I.D.), and E.D.},\\
&\|g_{\mathrm{in}}\|_{X}\gg \nu^{\gamma} \Rightarrow \text{instability}.
\end{aligned}
\end{align}
}

    This kind of `stability threshold' result is entirely analogous to the line of research on the two-dimensional Navier-Stokes equations near shear flows. Here we briefly compare our results with the Navier-Stokes case in the following: 
    \begin{itemize}
        \item Stability in the Sobolev 
        \begin{itemize}
            \item 2D NS near Couette flow \cite{MasmoudiZhao2019,wei2023nonlinear}: 
            $\|u_{\mathrm{in}}\|_{H^3}\leq \epsilon\nu^{\frac{1}{3}}\ \Rightarrow$ I.D. and E.D.
            \item VPFP near global Maxwellian \cite{bedrossian2017suppression}: 
            $\|\langle v\rangle^m g_{\mathrm{in}}\|_{H^{N}}\leq \epsilon\nu^{\frac{1}{3}}\ \Rightarrow$ L.D. and E.D.
            \item VPL near global Maxwellian \cite{chaturvedi2023vlasov}:
            $\|\langle v\rangle^m g_{\mathrm{in}}\|_{H^{N}}\leq \epsilon\nu^{\frac{1}{3}}\ \Rightarrow$ L.D. and E.D.
        \end{itemize}
        \item Stability in Gevrey class
        \begin{itemize}
            \item 2D NS near Couette flow \cite{liMasmoudiZhao2022asymptotic}: 
            $\|u_{\mathrm{in}}\|_{\mathcal{G}^{s_{\mathrm{f}}}}\leq \epsilon\nu^{\gamma}\  \Rightarrow$ I.D. and E.D.
            \item VPFP near global Maxwellian \cite{bedrossian2025landau, luo2024weak}:
            $\|\langle v\rangle^me^{2\nu|v|^2}g_{\mathrm{in}}\|_{\mathcal{G}^{s}}\leq \epsilon\nu^{\gamma} \ \Rightarrow$ L.D. and E.D.
        \end{itemize}
        \item Justification of the inviscid limit or collisionless limit 
        \begin{itemize}
            \item 2D NS near Couette flow \cite{BMV2016, liMasmoudiZhao2022asymptotic}: $u_{\mathrm{in}}\in \mathcal{G}^2$.
            \item VPFP near global Maxwellian \cite{bedrossian2025landau, luo2024weak}:
            $g_{\mathrm{in}}\in{\mathcal{G}^{s}_{w}}$ with $s<3$.
            \item VPL near global Maxwellian
            \cite{bzz-VPLandau}: $g_{\mathrm{in}}\in{\mathcal{G}^{s}_{w}}$ with $s<2$.
        \end{itemize}
        \item Sharp stability threshold in lower regularity spaces
        \begin{itemize}
            \item 2D NS near Couette flow \cite{MasmoudiZhao2020cpde, liMasmoudiZhao2023dynamical}: $\nu^{1/2}$ is the sharp stability threshold for 
            $\nabla u_{in}\in {H^{\log}_xL^2_y}$ 
            \item VPFP near global Maxwellian [this paper]: 
            $\nu^{1/2}$ is the sharp stability threshold for 
            $g_{\rm in}\in {H^{\log}_xL^2_v(\langle v\rangle^m)}$. 
        \end{itemize}
    \end{itemize}
\end{rem}

\subsection{Notation}\label{sec: notation}
For $f(x,v)\in L^2(\mathbb{T}^n\times\mathbb{R}^n)$, the Fourier transform of $f$ in $x$ variable is defined by
\begin{align*}
    f_k(v)=\fr{1}{(2\pi)^n}\int_{\T^n}e^{-ik\cdot x}f(x,v)dx,
\end{align*}
and the Fourier transform of $f$ in $(x,v)$ variables is defined by
\begin{align*}
    \hat{f}_k(\xi)=\mathcal{F}_v[f_k](\xi)=\fr{1}{(2\pi)^n}\int_{\T^n\times\R^n}e^{-ik\cdot x}e^{-i\xi\cdot v}f(x,v)dxdv.
\end{align*}

For $f: [0,\infty)\rightarrow \mathbb{C}$ satisfying $e^{-\underline{\lm} t}f(t)\in L^1[0,\infty)$ for some $\underline{\lm}\in\R$, let us define the Fourier-Laplace transform
\[
\mathcal{L}[f](\lm)=\int_0^\infty e^{-\lm t}f(t)dt,\quad {\rm for}\quad \mathfrak{Re}\lm\ge \underline{\lm}.
\]
In particular, if $\underline{\lm}\le0$, $\mathcal{L}[f](iz)=\mathcal{F}[{\bf1}_{t\ge0}f](z)$ for all $z\in\R$.

For $k\in\Z^3,s\ge0$, let us denote 
\[
{\rm A}_{k,s}(t):=\ln(e+|k|)\la C_s\nu^{\fr13}|k|^{\fr23}t\ra^{\fr{s}{2}},\quad
{\rm B}_{k,s}(t):=|k|^{\fr12}{\rm A}_{k,s}(t),
\]
where $C_s$ is the same as that appearing in \eqref{def-En-al}.
We also use the notation
\[
{\bf A}_s(t):=\ln(e+|\pr_x|)\la C_s\nu^{\fr13}|\pr_x|^{\fr23}t\ra^{\fr{s}{2}},\quad
{\bf B}_{s}(t):=|\pr_x|^{\fr12}{\bf A}_{s}(t).
\]

\section{Main idea}
In this section, we highlight the main ideas in this paper. 
\subsection{Treatment of nonlocal term and the wave operator method}
We first consider the linearized Vlasov equation, 
\begin{align*}
    \left\{
    \begin{aligned}
        &\partial_tg+v\cdot \nabla g=0,\quad
        \rho=\int_{\mathbb{R}^3}g(t, x, v)dv,\\
        &g(0,x,v)=g_{\mathrm{in}}(x,v).
    \end{aligned}\right.
\end{align*}
It is easy to obtain that the Fourier transform of the density $\rho$ satisfies
\begin{align*}
    \hat{\rho}_k(t)=(\widehat{g_{\mathrm{in}}})_k(kt). 
\end{align*}
Due to the very low regularity assumption of $g_{\rm{in}}$, we can only obtain a space-time estimate before the enhanced dissipation happens, namely, for $t\leq \nu^{-\fr13}$
\begin{align*}
    \big\||k|^{1/2}\rho_k(t)\big\|_{L_t^2l^2_k}\lesssim \|\langle v\rangle^{3}g_{\rm in}\|_{L^2_xL^2_v}.
\end{align*}
Thus, the classical treatment of the linear nonlocal term $E\cdot \nb_v \mu$ will cause a $\min\{\sqrt{t},\nu^{-\fr16}\}$ size loss even for the linear estimate. To obtain a uniform in time control of the distribution, even at the linear level for very low regularity initial data, the wave operator seems to be necessary. We note that if the initial data has a better Sobolev regularity, one may expect $\|E\|_{L^2_k}\in L^1_t$, then there is no such loss of size. 

To treat this term, we apply the wave operator method to absorb this term, namely, we construct a linear operator $\mathbf{D}$, such that 
\begin{align*}
    \mathbf{D}(\partial_tg+v\cdot \nabla_x g+E\cdot\nb_v \mu)=(\partial_t +v\cdot \nabla_x)\mathbf{D}[g]. 
\end{align*}

\begin{rem}
After we completed this paper, it was pointed out to us that the wave operator for the Vlasov-Poisson equation had first been obtained by Morrison and Pfirsch in 1992 \cite{MorrisonPfirsch1992}, who used it to derive formulas for linear Landau damping. This operator, called the $G$-transform there, was later studied in detail in \cite{Morrison}. A similar idea has also been used in more recent kinetic theory results \cite{despres2014symmetrization, despres2019scattering, JeffreyMorrison2018}. We are grateful for the opportunity to include these references and for the constructive feedback that helped improve this work.
\end{rem}

Wave operators (also called distorted Fourier transform) appear in the study of dispersive equations with potential,  see \cite{DelortMasmoudi, GP20,  LLS21, GPR18, Schlag, Yajima} for more details. Let us also mention that recently, the wave operator was successfully used to solve important problems in fluid mechanics.
In \cite{LWZ}, the authors use the wave operator method to solve Gallay's conjecture on pseudospectral and spectral bounds of the Oseen vortices operator. In \cite{WeiZhangZhao2020}, the wave operator method was used to solve Beck and Wayne's conjecture. In \cite{MasmoudiZhao2020, zhao2023inviscid}, the authors use the wave operator method to prove the nonlinear inviscid damping for stable monotone shear flows for both homogeneous and inhomogeneous Euler equations. In \cite{li2024asymptotic}, the time-dependent wave operator is constructed to solve the stability threshold problem of the stable monotonic shear flows. 

%As far as we know, this is the first time the wave operator is applied to the kinetic equation. 
Now, let us highlight the main idea in the construction for $n=3$, which is more direct than that in the fluid equations. Let us focus on a special case, namely, $ k=|k|(\frac{k}{|k|},0,0)$, which can be obtained by an easy rotation, see \eqref{Orthogonal}. The aim is to construct a time-independent operator $\mathbf{D}_k$ such that 
\begin{align}\label{eq:waveop1}
    \mathbf{D}_k\bigg[|k|v_1 g+\frac{|k|v_1 e^{-\fr{|v|^2}{2}}}{(2\pi)^{\fr32}|k|^2}\int_{\mathbb{R}^3}g(t,v)dv\bigg]=|k|v_1  \mathbf{D}_k[g]. 
\end{align}
We assume that the wave operator has a local part and a nonlocal part, namely, 
\begin{align}\label{eq: waveop2}
    \mathbf{D}_k[g]=g+\int_{\mathbb{R}^3}K(v,v')g(v')dv'.
\end{align}
Then the kernel $K(v,v')$ can be obtained directly by substituting \eqref{eq: waveop2} into \eqref{eq:waveop1}. We refer to \eqref{WO} and \eqref{Def-D} for the expression of the wave operator. The main difficulty is to prove the invertibility of the wave operator and the estimates of the commutator with derivatives. We refer to sections \ref{sec:inverseWO} and \ref{sec:prop-WO}, respectively. It worth to pointing out that \underline{the invertibility of the wave operator} is equivalent to \underline{the Penrose stability condition} \cite{Morrison}. 

\begin{rem}
    In the recent paper \cite{ionescu2024nonlinearsharp}, the existence of the wave operator associated with the nonlinear Vlasov-Poisson equation was obtained; see also \cite{flynn2023scattering,huang2026scattering} for the studies of scattering operators for the Vlasov-Poisson equation. 
\end{rem}

\subsection{Improved regularity and time-weighted norm}
To obtain a point-wise Landau damping estimate, we need to show that due to the Fokker-Planck collision, the distribution $g$ gains regularity and has a uniform-in-$\nu$ Sobolev estimate for ${t\geq \nu^{-1}}$. We thus introduce the following energy functional for $s,m\in\N$:
\begin{align}\label{def-En}
\mathcal{E}^{s}_{m}(h(t))
:=\sum_{\al\in \N^n:|\al|\le m}\mathfrak{E}_{\al}^{s}(h(t)),
\end{align}
with the $(s,\al)$ level energy functional
\begin{align}\label{def-En-al}
\mathfrak{E}^{s}_{\al}(h(t))
:=\sum_{0\le\ell\le s}\Big[(\kappa\nu t)^\ell\big\||\nb_v|^\ell\mathcal{M}(v^\al h(t))\big\|_{L^2_{x,v}}^2+(C_{s}\kappa\nu^{\fr13} t)^\ell\big\||\nb_x|^{\fr{\ell}{3}}\mathcal{M}(v^\al h(t))\big\|_{L^2_{x,v}}^2\Big],
\end{align}
where $0<\kappa\le1\le C_s$ are two constants independent of $\nu$, determined in \eqref{Csig} and \eqref{kappa}, respectively.
Moreover,  $\mathcal{M}$ is a  Fourier multiplier defined in  \ref{def-M} which enables us to obtain the enhanced dissipation estimate. 
We also define the corresponding $(s,\al)$ level dissipation 
\begin{align}\label{def-Dis-al}
\frak{D}^{s}_{\al}(h(t))
:=\nn&\sum_{0\le\ell\le s}(\kappa\nu t)^{\ell}\Big(\nu\big\||\nb_v|^{\ell+1}\mathcal{M}(v^\al {h})\big\|_{L^2_{x,v}}^2+\fr{1}{4}\nu^{\fr13}\big\||\nb_x|^{\fr13}|\nb_v|^\ell \mathcal{M}(v^\al {h}_{\ne})\big\|_{L^2_{x,v}}^2\Big)\\
    &+\sum_{0\le\ell\le s}(C_s\kappa\nu^{\fr13} t)^{\ell}\Big(\nu\big\||\nb_v||\nb_x|^{\fr\ell3}\mathcal{M}(v^\al {h})\big\|_{L^2_{x,v}}^2+\fr{1}{4}\nu^{\fr13}\big\||\nb_x|^{\fr{\ell+1}{3}} \mathcal{M}(v^\al {h}_{\ne})\big\|_{L^2_{x,v}}^2\Big),
\end{align}
and the total dissipation is defined by
\begin{align}\label{def-Dis}
    \mathcal{D}^s_m(h(t)):=\sum_{\al\in \N^n:|\al|\le m}\frak{D}^{s}_{\al}(h(t)).
\end{align}

Notice that at $t=0$, it holds that $\mathcal{E}^{s}_{m}(h_{\ne}(0))=\|\langle v\rangle^{m}(h_{\rm in})_{\ne}\|_{L^2_{x,v}}^2$ and at $t=\nu^{-1}$, it holds that 
\begin{align*}
    \mathcal{E}^{s}_{m}(h_{\ne}(\nu^{-1}))\approx \|\langle\nabla_v\rangle^{s}(\langle v\rangle^mh_{\neq}(\nu^{-1}))\|_{L^2_{x,v}}^2+\|\langle\nabla_x\rangle^{s/3}(\langle v\rangle^mh_{\neq}(\nu^{-1}))\|_{L^2_{x,v}}^2.
\end{align*} 

\subsection{Secondary instability}\label{sec:sec-insta}
In this section, we investigate the optimality of the stability threshold, that is, we study possible instabilities triggered by perturbations. Our instability analysis is based on the concept of secondary instability.
Roughly speaking, secondary instability refers to the instability of a nonlinear state generated by a primary (linear) instability. In kinetic plasma models, an initially unstable equilibrium may first produce coherent nonlinear structures, such as traveling waves or BGK-type states. These coherent states then serve as new background profiles, and their stability properties determine whether the system undergoes further growth or disturbances. The emergence of secondary instability therefore provides a natural mechanism for the breakdown of nonlinear coherent structures and the onset of more complex, possibly turbulent, dynamics.
In our stability analysis, we have proved that the linearized Vlasov-Poisson-Fokker-Planck (VPFP) operator around a Maxwellian equilibrium is stable under sufficiently small perturbations. More precisely, we obtained the semigroup estimate
\begin{align*}
    \|S_{\mu}(t)\|_{L^2(\langle v\rangle^m)\to L^2(\langle v\rangle^m)}\lesssim e^{-c\nu^{1/3}t}.
\end{align*}
Here, $S_{\mu}(t)$ denotes the solution operator associated with the linearized equation
\begin{align}\label{eq:linearize}
\partial_t f+\mathbb{VP}_{\mu}[f]-\nu L[f]=0,
\end{align}
where
\begin{align*}
    \mathbb{VP}_{\mu}[f]=v\partial_xf+\partial_x\Delta^{-1}_x\int_{\mathbb{R}}f(x,v)dv\cdot \partial_v\mu
\end{align*}

To investigate the sharpness of this stability result, Theorem \ref{thm:main1}, we first construct a spatially homogeneous solution $\tl{f}^{0}(t,v)$ of the VPFP system in a neighborhood of the Maxwellian, see \eqref{eq:tlf}. We then study the dynamics of perturbations around $\tl{f}^0(t,v)$ by introducing a new linearized operator around this time-dependent background, and analyze whether secondary instabilities may arise. Since $\tl{f}_0(t,v)$ varies slowly throughout $t\le \nu^{-1}$, we consider the linearized operator around the time-independent background $\tl{f}^0(v)$ at $t=0$, see \eqref{linear-f}. 
To create an unstable eigenvalue, we perturb the background Maxwellian $\mu(v)$ by a homogeneous distribution  of the form 
\begin{align}\label{def:tlmu}
    \tilde{\mu}(v)=M\gamma \sigma(\frac{v}{\gamma}),
\end{align}
where $\sigma(\cdot)$ is a smooth even function, $M$ and $\gamma$ are positive constants,   all of which are to be determined in section \ref{sec:Insta}.
Notice that $\|\partial_v^k\tilde{\mu}\|_{L^2(\langle v\rangle^m)}\lesssim \gamma^{3/2-k}\|\sigma\|_{H^k(\langle v\rangle^m)}$ for all $k,m=0,1,2,...$ We will take $\gamma=\nu^{\frac{1}{3}-\epsilon_0}$. 
 
We first show that the linearized operator around the new background $f^0(v)$ (see \eqref{def-f0}) with $\nu=0$ (the collisionless case)
\begin{align*}
 \mathbb{VP}_{f^0}[f]=v\partial_xf+\pr_vf^0(v)\partial_x\Delta^{-1}_x\int_{\mathbb{R}}f(x,v)dv
\end{align*}
has an unstable eigenvalue $\lm=\lm_{\rm r}+i\lm_{\rm i}$ with $\lm_{\rm r}\approx \gamma>0$. Let $e_{\gamma, \lm}$ be the associated eigenfunction. Then it holds that (see Lemma \ref{lem-lm} and Lemma \ref{lem:f*})
\begin{align*}
    &\|e^{-t{\mathbb{VP}_{f^0}}}(e^{ix}e_{\gamma, \lm})\|_{L^2(\langle v\rangle^m)}\gtrsim e^{\lm_{\rm r}t}\|e_{\gamma,\lm}\|_{L^2(\langle v\rangle^m)},\\
    &\|\partial_v^{j}(e^{-t{\mathbb{VP}_{f^0}}}(e^{ix}e_{\gamma, \lm}))\|_{L^2(\langle v\rangle^m)}\lesssim \gamma^{-j}e^{C\gamma t}\|e_{\gamma,\lm}\|_{L^2(\langle v\rangle^m)}, \quad j=0,1,2. 
\end{align*}
Once we turn on the collision in the linearized equation $\nu\neq 0$, the error between the solution to the linearized VPFP and VP is under control. Indeed, we have 
\begin{align*}
    \|\nu L[e^{-t{\mathbb{VP}_{f^0}}}(e^{ix}e_{\gamma, \lm})]\|_{L^2(\langle v\rangle^m)}\lesssim \nu \gamma^{-2}e^{\lm_{\rm r} t}\|e_{\gamma,\lm}\|_{L^2(\langle v\rangle^{(m+1)})},
\end{align*}
which gives the lower bounds of the semi-group to the associated collision equation, namely
\begin{align*}
    \|\mathbb{S}(t)\|_{L^2(\langle v\rangle^m)\to L^2(\langle v\rangle^m)}\gtrsim e^{\lm_{\rm r}t} \quad \text{for}\quad t\leq \delta_0\gamma^{-1} \ln\gamma^{-1}.
\end{align*}
See Proposition \ref{prop:lb-S} for more details. 

\subsection{Upper bounds of the perturbed semigroup}
The upper bounds estimate of the perturbed semigroup associated with $f^0$ is also challenging. In the estimates of the distribution function $f(t,x,v)$, we also apply the wave operator to remove the nonlocal part generated by the Maxwellian. To obtain the estimate of density, we study the Volterra equation \eqref{eq:theta_k} and obtain a $\nu,\gamma$-independent Penrose-type stability estimate (see Lemma \ref{lem:Penrose}).

\section{The wave operator}
In this section, we first construct the wave operator for the case of spatial dimension $n=3$, followed by a presentation of its key properties, including boundedness results and commutator estimates. For the lower-dimensional cases of $n=1$ and $n=2$, the construction of the wave operator and derivation of its corresponding properties can be carried out analogously, and with greater simplicity. Finally, as an application of the wave operator, we rewrite the perturbed Vlasov-Poisson-Fokker-Planck equations \eqref{pVPFP} and obtain a representation of the density without solving the Volterra equation. 
\subsection{Construction of the wave operator}
To construct the wave operator, let us consider the homogeneous linearized Vlasov-Possion equation obtained from \eqref{pVPFP}:
\begin{align}\label{Linear-h}
    \begin{cases}
    \pr_th+v\cdot\nb_xh-E(t,x)\cdot v\mu=0,\\[2mm]
    \displaystyle E(t,x)=-\nb_x(-\Dl_x)^{-1}\rho,\quad \rho=\int_{\R^3}h(t,x,v)dv.
    \end{cases}
\end{align}
Taking the Fourier transform with respect to the $x$ variable, we are led to
\begin{align}\label{Linear-h_k}
    \pr_th_k+ik\cdot vh_k+\fr{ik\cdot v}{|k|^2}\rho_k\mu=0,\quad \rho_k=\int_{\R^3}h_k(t,v)dv.
\end{align}
Let us introduce the change of variable
\begin{align}\label{Orthogonal}
    \tl{v}O=v,\quad{\rm such\  that}\quad k\cdot v=|k|\tl{v}_1,
\end{align}
where $O$ is a orthogonal matrix, such that $\det (O)=1$. More precisely, the orthogonal matrix $O$ can be chose as $O=\begin{pmatrix}\fr{k}{|k|}\\v_*\\ w_*\end{pmatrix}$, where $v_*$ and $w_*$ are unit row vectors in $\R^3$ such that $v_*\bot k$, $w_*\bot k$, and $w_*\bot v_*$.

Under this change of variable, we give the unknown $h_k(t,v)$ a new notation:
\begin{align*}
    g_k(t,\tl{v}):=h_k(t,v)=h_k(t,\tl{v}O).
\end{align*}
Moreover, now $\rho_k$ can be given in terms of $g_k(t,\tl{v})$:
\begin{align*}
    \rho_k(t)=\int_{\R^3}h_k(t,\tl{v}O)d\tl{v}=\int_{\R^3}g_k(t,\tl{v})d\tl{v}.
\end{align*}
Noting that $\mu(\tl{v}O)=\mu(\tl{v})$, now \eqref{Linear-h_k} can be rewritten as 
\begin{align}\label{Li-gk}
\pr_tg_k(t,\tl{v})+i|k|\tl{v}_1g_k(t,\tl{v})+\fr{i|k|\tl{v}_1e^{-\frac{|\tl{v}|^2}{2}}}{(2\pi)^{\frac{3}{2}}|k|^2}\int_{\R^3}g_k(t,\tl{v})d\tl{v}=0.
\end{align}

Let us denote 
\[
d_k(v):=\fr{v_1e^{-\frac{|v|^2}{2}}}{(2\pi)^{\fr32}|k|^2}=\fr{v_1\mu(v)}{|k|^2},\quad \mathrm{and}\quad P_k(u_1):=1-\pi\mathcal{H}\circ\mathcal{A}[d_k](u_1),
\]
where $\mathcal{A}[f]$ is the integral of $f$ over $\R^2$ with respect to the $v'=(v_2,v_3)$ variables, and $\mathcal{H}[f]$ is the Hilbert transform of $f$ with respect to the $v_1$ variable, namely,
\[
\mathcal{A}[d_k](v_1):=\int_{\R^2}d_k(v)dv_2dv_3=\fr{v_1e^{-\fr{v_1^2}{2}}}{(2\pi)^{\fr12}|k|^2},\quad \mathcal{H}[f](v_1):=\frac{1}{\pi}{\rm P.V.}\int_{-\infty}^{\infty}\frac{f(u_1)}{v_1-u_1}du_1.
\]
One can compute $P_k(u_1)$ by virtue of the Fourier inversion formula:
\begin{align}\label{exp-P_k}
   P_k(u_1)=\nn&1 -\fr{1}{2}\int_{\R}\widehat{(\mathcal{H}\circ\mathcal{A}[d_k])}(\xi_1)e^{iu_1\xi_1}d\xi_1\\
=&1+\fr12\int_0^\infty\fr{\xi_1e^{-\fr{\xi_1^2}{2}}}{|k|^2}(e^{iu_1\xi_1}+e^{-iu_1\xi_1})d\xi_1.
\end{align}
It is easy to see that $P_k(u_1)\ge0$ for all $|k|\ge1$ due to the fact
\[
\left|\fr12\int_0^\infty\fr{\xi_1e^{-\fr{\xi_1^2}{2}}}{|k|^2}(e^{iu_1\xi_1}+e^{-iu_1\xi_1})d\xi_1\right|\le \int_0^\infty \xi_1e^{-\fr{\xi_1^2}{2}}d\xi_1=1.
\]
Furthermore, noting that the Penrose stability condition holds for the global Maxwellian $\mu$, we thus have  
\[
\inf_{k\in \Z^3\setminus\{0\}}\inf_{u_1\in\R}\left|1+\int_0^\infty \fr{\xi_1e^{-\fr{\xi_1^2}{2}}}{|k|^2}e^{\pm iu_1\xi_1}d\xi_1\right|>0. 
\]
Consequently,
\begin{align}\label{inf-Pk}
    \inf_{k\in \Z^3\setminus\{0\}}\inf_{u_1\in\R}P_k(u_1)>0.
\end{align}

Now we are in a position to define the wave operator for $k\in\Z^3\setminus\{0\}$:
\begin{align}\label{WO}
    \mathbb{D}_k[\mathfrak{f}](u):=\frak{f}(u)+\pi\fr{d_k(u)}{P_k(u_1)} \mathcal{H}\circ\mathcal{A}[\frak{f}](u_1).
\end{align}
Then it is not difficult to verify that
\begin{align}\label{def-WO}
    \mathbb{D}_k\left[i|k|v_1\frak{f}(v)+i|k|d_k(v)\int_{\R^3}\frak{f}(v)dv\right](u)=i|k|u_1\mathbb{D}_k[\frak{f}](u).
\end{align}
In view of \eqref{Li-gk} and \eqref{def-WO},  we find that the equation for $\mathbb{D}_k[g_k]$ takes the form of
\begin{align*}
    \pr_t\mathbb{D}_k[g_k](\tl{v})+i|k|\tl{v}_1\mathbb{D}_k[g_k](\tl{v})=0.
\end{align*}
Recalling \eqref{Orthogonal}, we rewrite this equation for $\mathbb{D}_k[g_k]$ in the original $v$ variable:
\begin{align}\label{eq:g_k}
    \pr_t\mathbb{D}_k[g_k](vO^{-1})+ik\cdot v\mathbb{D}_k[g_k](vO^{-1})=0.
\end{align}
To obtain the wave operator in the original $v$ variable, let us introduce the notation:
\[
\mathbb{O}[\frak{f}](v):=\frak{f}(vO),\quad \mathbb{O}^{-1}[\frak{f}](v):=\frak{f}(vO^{-1}).
\]
Then we define
\begin{align}\label{Def-D}
    {\bf D}_k[\frak{f}](v):=\mathbb{O}^{-1}\circ\mathbb{D}_k\circ\mathbb{O}[\frak{f}](v).
\end{align}
Recalling that $g_k(t,\tl{v})=h_k(t,\tl{v}O)$, one can see from \eqref{eq:g_k} that the equation for ${\bf D}_k[h_k](v)$ takes the form of
\begin{align}\label{eq-Dh}
    \pr_t{\bf D}_k[h_k](v)+ik\cdot v{\bf D}_k[h_k](v)=0.
\end{align}

 The precise representation of ${\bf D}_k[\frak{f}](v)$ is now given explicitly. Noting that $(vO^{-1})_1=\fr{k\cdot v}{|k|}$, from \eqref{WO} and \eqref{Def-D}, we have
\begin{align}\label{exp-D_k}
    {\bf D}_k[\frak{f}](v)=\frak{f}(v)+\pi\fr{d_k(vO^{-1})}{P_k(\fr{k\cdot v}{|k|})}\mathcal{H}\circ\mathcal{A}\circ\mathbb{O}[\frak{f}]\big(\fr{k\cdot v}{|k|}\big)
    =\frak{f}(v)+{\bf T}_k[\frak{f}](v),
\end{align}
where 
\begin{align}\label{def-T_k}
    {\bf T}_k[\frak{f}](v):=\fr{k\cdot v\mu(v)}{|k|^3P_k(\fr{k\cdot v}{|k|})}{\rm P.V.}\int_{-\infty}^{\infty}\fr{\int_{\R^2}\frak{f}(uO)du_2du_3}{\fr{k\cdot v}{|k|}-{u}_1}d{u}_1.
\end{align}

We finally define 
\begin{align}
    {\bf D}[f](t, x, v)=\frac{1}{(2\pi)^3}\int_{\mathbb{T}^3}f(t,x, v)dx+\frac{1}{(2\pi)^3}\sum_{k\in \mathbb{Z}^3\setminus \{0\}}{\bf D}_k[f_k(t)](v)e^{ikx}.
\end{align}
\subsection{Inverse of the wave operator}\label{sec:inverseWO}
We first solve $\frak{f}$ from \eqref{WO} in terms of $\mathbb{D}_k[\frak{f}]$. To begin with, let us denote 
\begin{align}\label{def-QW}
Q_k(u_1):=\pi \mathcal{A}[d_k](u_1)=\sqrt{\frac{\pi}{2}}\fr{u_1e^{-\fr{u_1^2}{2}}}{|k|^2},\quad W_k(u_1):=P_k^2(u_1)+Q_k^2(u_1). 
\end{align}
Applying $\mathcal{A}$ to \eqref{WO} yields
\begin{align}\label{AD}
    \mathcal{A}\circ\mathbb{D}_k[\frak{f}]=\mathcal{A}[\frak{f}]+ \fr{Q_k}{P_k}\mathcal{H}\circ\mathcal{A}[\frak{f}].
\end{align}
Note that $P_k(u_1)+iQ_k(u_1)=1-\pi\mathcal{H}\circ\mathcal{A}[d_k](u_1)+i\pi \mathcal{A}[d_k](u_1)$ is the trace of an analytic function, and so is $\mathcal{A}[f]+i\mathcal{H}\circ\mathcal{A}[f]$. Then
\begin{align*}
\fr{\mathcal{A}[\frak{f}]+i\mathcal{H}\circ\mathcal{A}[\frak{f}]}{P_k+iQ_k}=\fr{P_k\mathcal{A}\circ\mathbb{D}_k[\frak{f}]}{W_k}+i\fr{P_k\mathcal{H}\circ\mathcal{A}[\frak{f}]-Q_k\mathcal{A}[\frak{f}]}{W_k}
\end{align*}
is the trace of an analytic function as well, provided that the analytic extension of $P_k(u_1)+iQ_k(u_1)$ 
\begin{align*}
    P_k(z)+iQ_k(z)
\end{align*}
has no zeros for $\frak{Im}\, z\geq 0$,  which is equivalent to the Penrose's stability condition
\begin{align*}
\inf_{k\in \Z^3\setminus\{0\}}\inf_{\frak{Im}\, z\geq 0}\left|1+\int_0^\infty \fr{\xi_1e^{-\fr{\xi_1^2}{2}}}{|k|^2}e^{iz\xi_1}d\xi_1\right|>0. 
\end{align*} 
 Then
\begin{align}\label{HPAD}
    \mathcal{H}\left[\fr{P_k\mathcal{A}\circ\mathbb{D}_k[\frak{f}]}{W_k}\right]=\fr{P_k\mathcal{H}\circ\mathcal{A}[\frak{f}]-Q_k\mathcal{A}[\frak{f}]}{W_k}.
\end{align}
Combining \eqref{AD} and \eqref{HPAD}, we have
\begin{align*}
\begin{cases}
    \mathcal{H}\circ\mathcal{A}[\frak{f}]=\fr{P_k}{W_k}\left(W_k\mathcal{H}\left[\fr{P_k\mathcal{A}\circ\mathbb{D}_k[\frak{f}]}{W_k}\right]+Q_k\mathcal{A}\circ\mathbb{D}_k[\frak{f}]\right),\\[3mm]
    \mathcal{A}[\frak{f}]=\fr{-Q_kW_k\mathcal{H}\left[\fr{P_k\mathcal{A}\circ\mathbb{D}_k[\frak{f}]}{W_k}\right]+P_k^2\mathcal{A}\circ\mathbb{D}_k[\frak{f}]}{W_k}.
    \end{cases}
\end{align*}
Substituting this into \eqref{WO}, we are led to
\begin{align}\label{inverseWO}
    \frak{f}=\mathbb{D}_k[\frak{f}]-\pi d_k\left(\mathcal{H}\left[\fr{P_k\mathcal{A}\circ\mathbb{D}_k[\frak{f}]}{W_k}\right]+\fr{Q_k}{W_k}\mathcal{A}\circ\mathbb{D}_k[\frak{f}]\right).
\end{align}

Now we rewrite \eqref{inverseWO}  so that $\frak{f}$ can be solved in terms of ${\bf D}_k[\frak{f}]$. In fact, applying $\mathbb{O}^{-1}$ to \eqref{inverseWO} with $\frak{f}$ replaced by $\mathbb{O}[\frak{f}]$, and using \eqref{Def-D}, we find that
\begin{align}\label{exp-D_kinverse}
\frak{f}(v)=\nn&\mathbb{O}^{-1}\circ\mathbb{D}_k\circ\mathbb{O}[\frak{f}]-\pi \mathbb{O}^{-1}\Bigg[d_k \Bigg(\mathcal{H}\left[\fr{P_k\mathcal{A}\circ\mathbb{O}\big[{\bf D}_k[\frak{f}]\big]}{W_k}\right]+\fr{Q_k}{W_k}\mathcal{A}\circ \mathbb{O}\big[{\bf D}_k[\frak{f}]\big]\Bigg)\Bigg](v)\\
=&{\bf D}_k[\frak{f}](v)+\tl{{\bf T}}_k\big[{\bf D}_k[\frak{f}]\big](v),
\end{align}
where
\begin{align}\label{def-tlT_k}
    \tl{{\bf T}}_k\big[{\bf D}_k[\frak{f}]\big](v):=\nn&-\pi\fr{k\cdot v}{|k|^3}\mu(v) \Bigg[\fr{1}{\pi}{\rm P.V.} \int_{-\infty}^{\infty}\fr{\fr{P_k(u_1)}{W_k(u_1)}\int_{\R^2}{\bf D}_k[\frak{f}](uO)du_2du_3}{\fr{k\cdot v}{|k|}-u_1}du_1\\
    &+\fr{Q_k(\fr{k\cdot v}{|k|})}{W_k(\fr{k\cdot v}{|k|})}\int_{\R^2}{\bf D}_k[\frak{f}]\left(\left(\fr{k\cdot v}{|k|}, u_2, u_3\right)O\right) du_2du_3\Bigg].
\end{align}

\subsection{Properties of the wave operator}\label{sec:prop-WO}
In this section, we first establish the following continuity results for the operators ${\bf D}_k$ and ${\bf D}_k^{-1}$ for all $k\in\Z^3\setminus\{0\}$.
\begin{prop}[boundedness of ${\bf D}_k$ and ${\bf D}_k^{-1}$]\label{prop-continuity}
   For any multi-indices  $\alpha, \beta$, the following hold:
    \begin{align}
   \label{offD} &\|\pr^\beta_v(v^\al {\bf D}_k[f])\|_{L^2_v}\le \|\pr^\beta_v(v^\al f)\|_{L^2_v}+C_{\al,\beta}\sum_{\beta'\le\beta}\|\pr_{v}^{\beta'}(\la v\ra^{1+}f)\|_{L^2_{v}},\\
   \label{onD}&\|\pr_v^\beta(v^\al f)\|_{L^2_v}\le \|\pr_v^{\beta}(v^\al{\bf D}_k[f])\|_{L^2_v}+C_{\al,\beta}\sum_{\beta'\le\beta}\|\pr_v^{\beta'}(\la v\ra^{1+}{\bf D}_k[f])\|_{L^2_v},
    \end{align}
here $C$ is a constant independent of $k$. 
\end{prop}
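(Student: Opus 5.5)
Since ${\bf D}_k=I+{\bf T}_k$ and, by \eqref{exp-D_kinverse}, ${\bf D}_k^{-1}=I+\tl{\bf T}_k$, both estimates reduce to showing that
\begin{align*}
\|\pr_v^\beta(v^\al {\bf T}_k[f])\|_{L^2_v}\le C_{\al,\beta}\sum_{\beta'\le\beta}\|\pr_v^{\beta'}(\la v\ra^{1+}f)\|_{L^2_v},
\end{align*}
uniformly in $k$, together with the same bound for $\tl{\bf T}_k$ in which $f$ is replaced by ${\bf D}_k[f]$ on both sides. The triangle inequality then gives \eqref{offD} and \eqref{onD}. Because $\mathbb{O}$ is an orthogonal change of variables, it preserves $L^2_v$ norms and $\la v\ra$. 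It also maps $\pr_v^\beta$ to a combination of derivatives of the same order whose coefficients are bounded independently of $k$. So I work in the rotated frame, where $k\cdot v/|k|=u_1$.

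In the rotated frame, ${\bf T}_k[f](u)=\pi\, d_k(u)\,P_k(u_1)^{-1}\,\mathcal H[\mathcal A f](u_1)$. This is a product of two factors. The first, $d_k(u)/P_k(u_1)$, is Gaussian-decaying in $u$ and carries the $|k|^{-2}\le1$ factor. The second, $\mathcal H\mathcal A f$, depends only on $u_1$. Every weight $u^\al$ and every derivative falling on $d_k/P_k$ is therefore absorbed by the Gaussian. The required bounds on derivatives of $1/P_k$ follow from \eqref{inf-Pk} and the explicit formula \eqref{exp-P_k}: every $u_1$-derivative of $P_k$ is bounded uniformly in $k$, because the integrand $\xi_1^{j+1}e^{-\xi_1^2/2}$ is integrable. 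The remaining task is to control $\|\pr_{u_1}^{j}\mathcal H\mathcal A f\|_{L^2(e^{-u_1^2/4}du_1)}$. Here $\pr_{u_1}$ commutes with both $\mathcal H$ and $\mathcal A$, and $\mathcal H$ is bounded on $L^2(\R)$. Hence this quantity is at most $\|\mathcal A[\pr_{u_1}^jf]\|_{L^2(\R)}$. By Cauchy--Schwarz in $(u_2,u_3)$,
\begin{align*}
\|\mathcal A[\pr_{u_1}^jf]\|_{L^2(\R)}\le C\|\la u'\ra^{1+}\pr_{u_1}^jf\|_{L^2(\R^3)},
\end{align*}
since $\la u'\ra^{-2-}$ is integrable on $\R^2$. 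Commuting the weight past the derivative produces lower-order derivative terms with bounded coefficients, and these account for the sum over $\beta'\le\beta$. Derivatives in the $u_2,u_3$ directions act only on $d_k$, since $\mathcal H\mathcal A f$ is independent of $u_2,u_3$.

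For $\tl{\bf T}_k$ the argument is the same. The prefactor is again $\frac{k\cdot v}{|k|^3}\mu(v)$, and the functions now acted on are $\mathcal H[(P_k/W_k)\mathcal A g]$ and $(Q_k/W_k)\mathcal A g$, with $g={\bf D}_k[f]$. Since $W_k\ge P_k^2\ge c>0$ and all derivatives of $P_k$ and $Q_k$ are bounded uniformly in $k$, the multipliers $P_k/W_k$ and $Q_k/W_k$ have bounded derivatives of all orders. After distributing $\pr_{u_1}^j$ by the Leibniz rule, $L^2$-boundedness of $\mathcal H$ and the same Cauchy--Schwarz step give the bound. The term $(Q_k/W_k)\mathcal A g$ evaluated at $u_1$ is a pointwise product and is handled identically.

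The main obstacle is checking that every constant is uniform in $k$. In particular this requires the lower bound \eqref{inf-Pk} and uniform bounds on $\pr^j P_k$, for example through $|\pr^jP_k|\le\int_0^\infty\xi^{j+1}e^{-\xi^2/2}d\xi$. The $|k|^{-2}$ factor in $d_k$ makes large $|k|$ harmless. One small point needs care: $\mathcal H$ is unbounded on the Gaussian-weighted $L^2$ space, so the Gaussian from $d_k$ must be used only as an $L^\infty$ factor, and $\mathcal H$ must be applied in unweighted $L^2(\R)$.
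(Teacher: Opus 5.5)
Your argument is correct and is essentially the paper's proof. Both write ${\bf T}_k$ and $\tl{\bf T}_k$ as a Gaussian-decaying coefficient times a function of $\fr{k\cdot v}{|k|}$ alone, with the coefficient's derivatives controlled uniformly in $k$ via \eqref{inf-Pk} and the bounds on $P_k,Q_k,W_k$; both commute derivatives through $\mathcal H\circ\mathcal A$, use the unweighted $L^2(\R)$-boundedness of $\mathcal H$, and apply Cauchy--Schwarz in the transverse variables.

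One point needs tightening. You rotate $\pr_v^\beta$ and bound the resulting terms separately, so your right-hand side involves $\big(\fr{k}{|k|}\cdot\nb_v\big)^{j}f$. That gives a sum over all $|\beta'|\le|\beta|$, not the componentwise $\sum_{\beta'\le\beta}$ in the statement. The paper avoids this by staying in the original variables and using \eqref{com0}, or equivalently $\big(\fr{k}{|k|}\big)^{\beta'}\fr{d^{|\beta'|}}{du_1^{|\beta'|}}\mathcal A\circ\mathbb O[f]=\mathcal A\circ\mathbb O[\pr_v^{\beta'}f]$ from \eqref{pr-AO}. Then each $\pr_v^{\beta'}$ lands on $f$ before Cauchy--Schwarz is applied. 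Your weaker form would still suffice for every later use in the paper.
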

\begin{proof}
To simplify the presentation, we use the notation introduced in Appendix \ref{app:calcu}. 
Recalling \eqref{def-T_k}, we write 
\begin{align}\label{Tk'}
    {\bf T}_k[f](v)=\fr{\pi}{|k|^2}A_k(v)\Phi_{2,k}(\frac{k\cdot v}{|k|})\mathbb{T}_k[f](v),
\end{align}
    where 
    \begin{align*}
        \mathbb{T}_k[f](v):=\mathbb{O}^{-1}\circ \mathcal{H}\circ\mathcal{A}\circ\mathbb{O}[f](v).
    \end{align*}
Since $O$ is orthogonal, we have
\[
(\pr_{v_j}f)(uO)=\sum_{i=1}^3O_{ij}\pr_{u_i}[f(uO)].
\]
Consequently,
\begin{align}\label{pr-AO}
\mathcal{A}\circ\mathbb{O}[\pr_{v_j}f](u_1)=O_{1j}\fr{d}{du_1}\mathcal{A}\circ\mathbb{O}[f](u_1)=\fr{k_j}{|k|}\fr{d}{du_1}\mathcal{A}\circ\mathbb{O}[f](u_1),
\end{align}
and
\begin{align*}
    \mathbb{T}_k[\pr_{v_j}f](v)=\fr{k_j}{|k|}\mathcal{H}\big[(\mathcal{A}\circ\mathbb{O}[f])'\big](\fr{k\cdot v}{|k|})=\pr_{v_j}\mathbb{T}_k[f](v).
\end{align*}
Therefore, for any $\beta\in\N^3$, there holds
\begin{align}\label{com0}
    [\pr_v^\beta,\mathbb{T}]=0.
\end{align}
Combining this with \eqref{bd-w}, we have
\begin{align*}
    |\pr_v^\beta (v^\al {\bf T}_k[f])|\le\nn \fr{C_{\al,\beta}}{|k|^2}\sum_{\beta'\le\beta}e^{-c_{\al,\beta}|v|^2}\bigg|\mathcal{H}\circ\mathcal{A}\circ\mathbb{O}\big[\pr_{v}^{\beta'}f\big](\fr{k\cdot v}{|k|})\bigg|.
\end{align*}
Then applying the change of variable \eqref{Orthogonal}, we are led to
\begin{align}\label{est:Tkalbeta}
\|\pr_v^\beta (v^\al {\bf T}_k[f])\|_{L^2_v}
\le\nn&\fr{C_{\al,\beta}}{|k|^2}\sum_{\beta'\le\beta}\bigg\|e^{-c_{\al,\beta}|\tl{v}|^2}\mathcal{H}\circ\mathcal{A}\circ\mathbb{O}\big[\pr_{v}^{\beta'}f\big](\tl{v}_1)\bigg\|_{L^2_{\tl{v}}}\\
\le\nn&\fr{C_{\al,\beta}}{|k|^2}\sum_{\beta'\le\beta}\big\|\mathcal{A}\circ\mathbb{O}\big[\pr_{v}^{\beta'}f\big]\big\|_{L^2_{\tl v_1}}\\
\le&\fr{C_{\al,\beta}}{|k|^2}\sum_{\beta'\le\beta}\|\la v\ra^{1+}\pr_{v}^{\beta'}f\|_{L^2_{v}}\le \fr{C_{\al,\beta}}{|k|^2}\sum_{\beta'\le\beta}\|\pr_{v}^{\beta'}(\la v\ra^{1+}f)\|_{L^2_{v}},
\end{align}
where we have used
\begin{align*}
    &\|\mathcal{A}\circ\mathbb{O}[\pr_{v}^{\beta'}f]\|_{L^2_{\tl v_1}}^2=\int_{\R}\Big|\int_{\R^2}\pr_{v}^{\beta'}f(uO)du'\Big|^2du_1\\
    \les&\int_{\R}\int_{\R^2}\big|\la uO\ra^{1+}\pr_{v}^{\beta'}f(uO)\big|^2du'du_1=\|\la v\ra^{1+}\pr_{v}^{\beta'}f\|_{L^2_{v}}^2.
\end{align*}
Then \eqref{offD} follows immediately.

Now we turn to prove \eqref{onD}. Denote ${\bf f}_k={\bf D}_k[f]$, i.e., $f={\bf D}_k^{-1}[{\bf f}_k]$. Recalling \eqref{def-tlT_k}, we write
\begin{align}
    \tl{{\bf T}}_k[{\bf f}_k](v)=\nn&-\pi A_k(v)\mathcal{H}\big[\Phi_{4,k}\mathcal{A}\circ\mathbb{O}[{\bf f}_k]\big] (\fr{k\cdot v}{|k|})-\pi w_{3,k}(v)\mathcal{A}\circ\mathbb{O}[{\bf f_k}](\fr{k\cdot v}{|k|}).
\end{align}
Using \eqref{pr-AO}, one deduces  that
\begin{align*}
    \pr_v^\beta(v^\al\tl{\bf T}_k[{\bf f}_k])(v)
=I_1^{\al,\beta}(v)+I_2^{\al,\beta}(v),
\end{align*}
where 
\begin{align*}
    I_1^{\al,\beta}(v):=&-\fr{\pi}{|k|^2}\sum_{ \beta'\le \beta,\beta''\le\beta'}C_\beta^{\beta'}C_{\beta'}^{\beta''} w_{1,k}^{\al,\beta-\beta'}(v)\Big(\frac{k}{|k|}\Big)^{\beta'-\beta''}\mathcal{H}\big[\Phi_{4,k}^{(|\beta'-\beta''|)}\mathcal{A}\circ\mathbb{O}[\pr_v^{\beta''}{\bf f}_k]\big](\fr{k\cdot v}{|k|}),
\end{align*}
and
\begin{align*}
    I_2^{\al,\beta}(v):=&-\fr{\pi}{|k|^2} \sum_{\beta'\le \beta}C_\beta^{\beta'}w^{\al,\beta-\beta'}_{3,k}(v)\mathcal{A}\circ\mathbb{O}\big[\pr_{v}^{\beta'} {\bf f}_k\big]
     (\frac{k \cdot v}{|k|}).
\end{align*}
Using \eqref{bd-w} and \eqref{est:Phi}, 
similar to \eqref{est:Tkalbeta}, we arrive at
\begin{align}\label{est:I1}
    \|I_1^{\al,\beta}\|_{L^2_v}
    \le\nn&\frac{C_{\al,\beta}}{|k|^2}\sum_{\beta'\le\beta}\sum_{\beta''\le\beta'}\left\|\Phi_{4,k}^{|\beta'-\beta''|}\mathcal{A}\circ\mathbb{O}\big[\pr_{v}^{\beta''}{\bf f}_k\big]\right\|_{L^2_{\tl{v}_1}}\\
    \le&\frac{C_{\al,\beta}}{|k|^2}\sum_{\beta''\le\beta}\left\|\la v\ra^{1+}\pr_v^{\beta''}{\bf f}_k\right\|_{L^2_{v}}\le \fr{C_{\al,\beta}}{|k|^2}\sum_{\beta'\le\beta}\|\pr_{v}^{\beta'}(\la v\ra^{1+}{\bf f}_k)\|_{L^2_{v}}.
\end{align}
Moreover, using \eqref{bd-w} again, we find that the upper bound on the right-hand side of \eqref{est:I1} still holds for $I_2^{\al,\beta}$.
We thus get \eqref{onD}, and the proof of Proposition \ref{prop-continuity} is completed.
\end{proof}

The aim of the next lemma is to establish the  commutator estimates of $[{\bf D}_k,L]$ for $k\in\Z^3_*$.

\begin{prop}[commutator estimates]\label{prop-com}
For $k\in\Z^3_*$, the following commutator estimates hold:
    \begin{align}\label{bd:com1}
    \|\pr^\beta_v(v^\al [\nb_{v},{\bf D}_k]f)\|_{L^2_v}\le \fr{C_{\al,\beta}}{|k|^2}\sum_{\beta'\le\beta}\|\pr_{v}^{\beta'}(\la v\ra^{1+}f)\|_{L^2_{v}},
    \end{align}

\begin{align}\label{bd:com2}
    \|\pr_v^\beta(v^{\alpha}[\Delta_v, \mathbf{D}_k][f])\|_{L^2_v}\le\fr{C_{\al,\beta}}{|k|^2}\sum_{|\beta'|\le|\beta|+1}\|\pr_{v}^{\beta'}(\la v\ra^{1+}f)\|_{L^2_{v}},
\end{align}

    \begin{align}\label{bd:com3}
    \|\partial_v^\beta(v^\al[\Lambda,\mathbf{D}_k]f)\|_{L^2_v}\le\fr{C_{\al,\beta}}{|k|^2}\sum_{\beta'\le\beta}\|\pr_{v}^{\beta'}(\la v\ra^{1+}f)\|_{L^2_{v}}.
    \end{align}
    where $\Lambda[f]:=\nb_v\cdot(vf)$.
\end{prop}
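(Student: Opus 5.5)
Since ${\bf D}_k=I+{\bf T}_k$, every commutator with ${\bf D}_k$ reduces to the same commutator with ${\bf T}_k$. By \eqref{Tk'} we have ${\bf T}_k[f]=\fr{\pi}{|k|^2}w_k(v)\,\mathbb{T}_k[f](v)$, where $w_k(v):=A_k(v)\Phi_{2,k}(\fr{k\cdot v}{|k|})$ is Gaussian-decaying with all derivatives bounded uniformly in $k$ (by \eqref{bd-w}, \eqref{est:Phi}). The key structural fact is \eqref{com0}: $\mathbb{T}_k$ commutes with every $\pr_v^\beta$. Consequently each commutator lands only on the smooth, rapidly decaying weight $w_k$, never on the singular operator $\mathcal{H}$. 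The $|k|^{-2}$ factor is inherited directly from the prefactor in \eqref{Tk'}.

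\textbf{Step 1 (\eqref{bd:com1}).} We have $[\pr_{v_j},{\bf D}_k]f=\pr_{v_j}{\bf T}_k[f]-{\bf T}_k[\pr_{v_j}f]$, and by \eqref{com0} this equals $\fr{\pi}{|k|^2}(\pr_{v_j}w_k)\,\mathbb{T}_k[f]$. Applying $\pr_v^\beta(v^\al\,\cdot)$ and Leibniz, each term has the form $\fr{C}{|k|^2}\,\pr^{\gamma}(v^\al\pr_{v_j}w_k)\,\mathbb{T}_k[\pr^{\beta'}f]$ with $\beta'\le\beta$. The coefficient is bounded by $e^{-c|v|^2}$, so the estimate finishes exactly as in \eqref{est:Tkalbeta}: rotate to $\tl v$, integrate the Gaussian in $(\tl v_2,\tl v_3)$, use $L^2$-boundedness of $\mathcal{H}$ in $\tl v_1$, and then apply Cauchy--Schwarz to bound $\|\mathcal{A}\circ\mathbb{O}[g]\|_{L^2}\les\|\la v\ra^{1+}g\|_{L^2}$.

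\textbf{Step 2 (\eqref{bd:com2}).} I will write $[\Dl_v,{\bf T}_k]f=\fr{\pi}{|k|^2}\big((\Dl_v w_k)\mathbb{T}_k f+2\nb_v w_k\cdot\nb_v\mathbb{T}_k f\big)$. Using \eqref{com0} once more, $\nb_v\mathbb{T}_k f=\mathbb{T}_k[\nb_v f]$, so one extra derivative falls on $f$; this accounts for the range $|\beta'|\le|\beta|+1$. The remaining estimates are identical to Step 1.

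\textbf{Step 3 (\eqref{bd:com3}).} Write $\Lm f=nf+v\cdot\nb_v f$. The zeroth-order part commutes with ${\bf T}_k$, so the only work is $[v\cdot\nb_v,{\bf T}_k]$, and this is the delicate step: a naive split produces $v\cdot\nb_v$ acting on $f$, which would cost one derivative, whereas the statement allows only $\beta'\le\beta$. To avoid this I will exploit dilation structure. Write $v\cdot\nb_v=\tl v\cdot\nb_{\tl v}$, which is rotation-invariant. The partial integration $\mathcal{A}$ over $u'$ intertwines it as $\mathcal{A}[u\cdot\nb_u g]=(u_1\pr_{u_1}-2)\mathcal{A}[g]$, and the Hilbert transform commutes with $u_1\pr_{u_1}$ because it is dilation-invariant. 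It follows that $\mathbb{T}_k[v\cdot\nb_v f]=(v\cdot\nb_v-2)\mathbb{T}_k[f]$, with the constant determined by the dimension count ($n-1=2$). Hence
\[
[v\cdot\nb_v,{\bf T}_k]f=\fr{\pi}{|k|^2}\big((v\cdot\nb_v w_k)+2w_k\big)\mathbb{T}_k[f],
\]
which is again a Gaussian weight times $\mathbb{T}_k[f]$. The conclusion then follows as in Step 1, with no derivative loss. The main obstacle is checking this dilation identity carefully, including the P.V. and boundary terms. These should be harmless for Schwartz $f$, and density extends the result to the general case.
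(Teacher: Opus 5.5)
Your proposal is correct and follows essentially the paper's proof: you write ${\bf T}_k=\frac{\pi}{|k|^2}w_k\,\mathbb{T}_k$ and use \eqref{com0} so that the $\nabla_v$ and $\Delta_v$ commutators act only on the Gaussian weight (your $\Delta_v$ formula is what the paper's decomposition $\sum_i\partial_{v_i}[\partial_{v_i},{\bf T}_k]+[\partial_{v_i},{\bf T}_k]\partial_{v_i}$ expands to). For $\Lambda$, your identity $[v\cdot\nabla_v,\mathbb{T}_k]=2\mathbb{T}_k$ is equivalent to the paper's $[\Lambda,\mathbb{T}_k]=2\mathbb{T}_k$, which it obtains from $\mathcal{A}\circ\Lambda=\Lambda_1\circ\mathcal{A}$ and $[\Lambda_1,\mathcal{H}]=0$; both routes give the same formula $[\Lambda,{\bf T}_k]f=\frac{\pi}{|k|^2}\big(v\cdot\nabla_v w_k+2w_k\big)\mathbb{T}_k[f]$, so no derivative is lost.
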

\begin{proof}
    By \eqref{Tk'} and \eqref{com0}, we write
    \begin{align*}
        &[\pr_{v_j},{\bf D}_k]f=[\pr_{v_j},{\bf T}_k]f
        =\fr{\pi}{|k|^2}\pr_{v_j}\left(A_k(v)\Phi_{2,k}(\frac{k\cdot v}{|k|})\right)\mathbb{T}_k[f](v),
    \end{align*}
and hence
\begin{align*}
    v^\al [\pr_{v_j},{\bf D}_k]f=\fr{\pi}{|k|^2}\left[\pr_{v_j}\left(A^\al_k(v)\Phi_{2,k}(\frac{k\cdot v}{|k|})\right)-\al_j \left(A^{\al-e_j}_k(v)\Phi_{2,k}(\frac{k\cdot v}{|k|})\right)\right]\mathbb{T}_k[f](v).
\end{align*}
Then by \eqref{bd-w}, similar to \eqref{est:Tkalbeta}, we have
\begin{align}
    \nn&\|\pr^\beta_v(v^\al [\pr_{v_j},{\bf D}_k]f)\|_{L^2_v}\\
    \le\nn&\frac{C_{\al,\beta}}{|k|^2}\sum_{\beta'\le\beta}\left(\left\| w_{2,k}^{\al,\beta-\beta'+e_j}\mathbb{T}_k[\pr_v^{\beta'}f]\right\|_{L^2_v}+\left\| w_{2,k}^{\al-e_j,\beta-\beta'}\mathbb{T}_k[\pr_v^{\beta'}f]\right\|_{L^2_v}\right)\\
    \le\nn& \fr{C_{\al,\beta}}{|k|^2}\sum_{\beta'\le\beta}\|\pr_{v}^{\beta'}(\la v\ra^{1+}f)\|_{L^2_{v}}.
\end{align}
This proves \eqref{bd:com1}.

A direct calculation gives us that 
    \begin{align*}
        [\Delta_v, \mathbf{D}_k][f]
        =\sum_{i=1}^n\partial_{v_i} \big([\partial_{v_i},{\bf T}_k][f]\big)+\sum_{i=1}^n[\partial_{v_i},{\bf T}_k][\pr_{v_i}f].
    \end{align*}
Thanks to \eqref{bd:com1}, we have
\begin{align}
    \nn&\|\pr_v^\beta(v^{\alpha}[\Delta_v, \mathbf{D}_k][f])\|_{L^2_v}\\
    \leq\nn&\sum_{i=1}^n\Big(\|\pr_v^{\beta+e_i}(v^\al [\pr_{v_i},{\bf T}_k][f])\|_{L^2_v}+\|\pr_v^{\beta}(\pr_{v_i}v^\al [\pr_{v_i},{\bf T}_k][f])\|_{L^2_v}\Big)\\
    \nn&+\sum_{i=1}^n\|\pr_v^\beta(v^{\alpha}[\pr_{v_i}, \mathbf{T}_k][\pr_{v_i}f])\|_{L^2_v}\\
    \le\nn&\frac{C_{\al,\beta}}{|k|^2}\sum_{|\beta'|\le|\beta|+1}\|\pr_{v}^{\beta'}(\la v\ra^{1+}f)\|_{L^2_{v}}.
\end{align}
Thus, \eqref{bd:com2} holds.

Finally, we turn to prove \eqref{bd:com3}.
Noting that $\Lambda \circ\mathbb{O}=\mathbb{O}\circ\Lambda$ and $\Lambda\circ\mathbb{O}^{-1}=\mathbb{O}^{-1}\circ\Lambda$, we have 
\begin{align*}
\mathbb{T}_k\circ \Lambda [f](v)- \Lambda\circ\mathbb{T}_k [f](v)
=\mathbb{O}^{-1}\circ [\mathcal{H}\circ\mathcal{A},\Lambda]\circ\mathbb{O}[f](v).
\end{align*}
So it suffices to compute $[\mathcal{H}\circ\mathcal{A},\Lambda]$.
For $g=\mathbb{O}[f]$,
\begin{align*}
    \mathcal{A}\circ\Lambda [g](u)=&\int_{\R^2}\nb_u\cdot(ug(u))du_2du_3=\fr{d}{du_1}\left(u_1\mathcal{A}[g](u_1)\right)=\Lambda_1\circ\mathcal{A}[g](u_1),
\end{align*}
where $\Lambda_1f=\fr{d}{du_1}(u_1f)$.
Then
\begin{align*}
    \mathcal{H}\circ\mathcal{A}\circ\Lambda[g]=&\mathcal{H}\circ\Lambda_1\circ\mathcal{A}[g].
\end{align*}
On the other hand, 
\begin{align*}
    \Lambda\circ\mathcal{H}\circ\mathcal{A}[g]=&\nb_u\cdot(u\mathcal{H}\circ\mathcal{A}[g])=2\mathcal{H}\circ\mathcal{A}[g]+\Lambda_1\circ\mathcal{H}\circ\mathcal{A}[g].
\end{align*}
It is easy to verify that $[\Lambda_1,\mathcal{H}]=0$. Then,
\begin{align}
    [\Lambda,\mathbb{T}_k]=2\mathbb{T}_k.
\end{align}
Accordingly,
\begin{align*}
    [\Lambda,{\bf T}_k]f=\fr{\pi}{|k|^2}v\cdot \nb_v\left(A_k(v)\Phi_{2,k}(\fr{k\cdot v}{|k|})\right)\mathbb{T}_k[f]+2\fr{\pi}{|k|^2}\left(A_k(v)\Phi_{2,k}(\fr{k\cdot v}{|k|})\right)\mathbb{T}_k[f].
\end{align*}
Then, similar to the proof of \eqref{bd:com1}, we find \eqref{bd:com3} holds.
\end{proof}

\subsection{Reformulation via wave operator}\label{sec-refor}

In this subsection, we first rewrite system \eqref{pVPFP} by applying the wave operator ${\bf D}_k$ mode by mode. Then we give a representation of the density $\rho$ without solving the Volterra equation. 

\subsubsection{Representation of the distribution function}
To begin with, taking the Fourier transform in $x$ of the equation for $g$ in \eqref{VPFP-perturbation}, we are led to
\begin{align}\label{eq-fk}
    \pr_tg_k+ik\cdot vg_k+i\fr{k\cdot v}{|k|^2}\rho_k\mu-\nu L[g_k]=N[g_k],
\end{align}
where 
\begin{align}\label{nonlinearity}
\rho_k(t)=\int_{\R^3}g_{k}(t,v)dv\quad{\rm and}\quad  N[g_k]=\sum_{l\in\Z^n_*}\rho_l\frac{il}{|l|^2}\cdot \nb_v{g}_{k-l}.
\end{align}
Let us denote $h_k:=\mathbf{D}_k[{g}_k]$. Then in view of \eqref{eq-Dh}, for any $k\in\Z^n_*$, applying the wave operator $\mathbf{D}_k$ to \eqref{eq-fk} yields
\begin{align}\label{eq-varfk}
    \pr_th_k+ik\cdot vh_k-\nu L[h_k]=\nu[\mathbf{D}_k,L][g_k]+\mathbf{D}_k\big[N[g_k]\big],
\end{align}
where  
\begin{align}\label{DNg}
    {\bf D}_k\big[N[g_k]\big](t,v)=\sum_{l\in\Z^n_*}\rho_l\fr{il}{|l|^2}\cdot {\bf D}_{ k}[\nb_vg_{ k-l}](t,v).
\end{align}

Next taking the Fourier transform of \eqref{eq-varfk} in $v$ variable, we have
\begin{align}\label{eq-hatf}
    \pr_t\hat{h}_k-k\cdot\nb_{\xi}\hat{h}_k+\nu\xi\cdot\nb_\xi\hat{h}_k +\nu |\xi|^2\hat{h}_k=\mathcal{F}_v\big[\mathbf{D}_k\big[N[g_k]\big]\big](\xi)+\nu\mathcal{F}_v\big[[\mathbf{D}_k,L][g_k]\big](\xi).
\end{align}
To absorb the  transport term $-k\cdot\nb_{\xi}\hat{h}_k$, we denote $\bar{\eta}(t;k,\eta)=\eta-kt$,  and define
\begin{align}\label{def-fw}
\hat{f}_k(t,\eta)=\hat{h}_k(t, \bar{\eta}(t; k,\eta)), \quad \mathrm{or} \quad \hat{h}_k(t, \xi)=\hat{f}_k\left(t, \xi+kt \right).
\end{align}
Then
$\hat{f}_k(t,\eta)$ solves
\begin{align}\label{eq-hk}
    \pr_t\hat{f}_k(t,\eta)+\nu|\bar{\eta}|^2\hat{f}_k(t,\eta)+\nn&\nu \bar{\eta}\cdot\nb_\eta \hat{f}_k(t,\eta)\\
    =&\mathcal{F}_v\big[\mathbf{D}_k\big[N[g_k]\big]\big](\bar{\eta}(t;k,\eta))+\nu\mathcal{F}_v\big[[\mathbf{D}_k,L][g_k]\big](\bar{\eta}(t;k,\eta)).
\end{align}

Let
\begin{align}\label{S1}
S_k(t,\tau; \eta):=\nn&\exp\left(-\nu\int_\tau^t|\bar{\eta}(s;k,\eta)|^2ds\right)\\
=&\exp\left(-\nu \Big[|\bar{\eta}(t;k,\eta)|^2(t-\tau)+\bar{\eta}(t;k,\eta)\cdot k(t-\tau)^2+\fr13|k|^2(t-\tau)^3\Big]\right).
\end{align}
It is not difficult to verify that
\begin{align}\label{semigroup-est}
    S_k(t,\tau;\eta)\le \exp\left(-\Big[\fr{\nu}{8}|\bar{\eta}(t;k,\eta)|^2(t-\tau)+\fr{\nu}{21}|k|^2(t-\tau)^3\Big]\right).
\end{align}
In the following, we denote $S_k(t,0;\eta)$ by $S_k(t;\eta)$ for simplicity. Then $S_k(t,\tau;\eta)=S_k(t-\tau;\eta)$. 
Now take $\eta=kt$ in \eqref{S1}, and denote 
\begin{align}\label{S2}
S_k(t-\tau):=&S_k\left(t-\tau;  kt\right)
=\exp\left(-\fr13\nu|k|^2(t-\tau)^3\right).
\end{align}
In particular,
\be\label{S3}
S_k(t)=\exp\left(-\fr{1}{3}\nu|k|^2t^3\right).
\ee

By Duhamel's principle, the solution $\hat{f}_k(t,\eta)$ to \eqref{eq-hk} can be given as
\begin{align}\label{ex-h}
\hat{f}_k(t,\eta)\nn=&S_k(t;\eta)(\widehat{f_{\mathrm{in}}})_k(\eta)+\nu\int_0^tS_k(t-\tau; \eta)\mathcal{F}_v\big[[\mathbf{D}_k,L][g_k]\big](\tau,\bar{\eta}(\tau;k,\eta))d\tau\\
\nn&-\nu\int_0^t S_k(t-\tau;\eta)\bar{\eta}(\tau;k,\eta)\cdot\nb_\eta\hat{f}_k(\tau,\eta)d\tau\\
&+\int_0^tS_k(t-\tau;\eta)\mathcal{F}_v\big[\mathbf{D}_k\big[N[g_k]\big]\big](\tau,\bar{\eta}(\tau;k,\eta))d\tau.
\end{align}

\subsubsection{Representation of  the density}
Now we derive the representation of the density $\rho$. Recalling that $P_k=1-\mathcal{H}[Q_k]$, from \eqref{def-tlT_k}, we find that
\begin{align*}
\int_{\R^3}\tl{{\bf T}}_k[h_k](v)dv
=&-\int_{\R}Q_k(u_1)\Big(\mathcal{H}\Big[\frac{P_k}{W_k}\mathcal{A}\circ \mathbb{O}[h_k]\Big]+\fr{Q_k}{W_k}\mathcal{A}\mathcal\circ\mathbb{O}[h_k]\Big)(u_1)du_1\\
=&\int_{\R}\Big(\mathcal{H}[Q_k]\frac{P_k}{W_k}-\fr{Q_k^2}{W_k}\Big)(u_1)(\mathcal{A}\mathcal\circ\mathbb{O}[h_k])(u_1)du_1\\
=&\int_{\R}\Big(\frac{P_k}{W_k}-1\Big)(u_1)(\mathcal{A}\mathcal\circ\mathbb{O}[h_k])(u_1)du_1.
\end{align*}
Then from \eqref{exp-D_kinverse},   using Planchrel's theorem and \eqref{def-fw}, we have
\begin{align}\label{def-Tk}
    \nn\rho_k(t)
    =\nn&\int_{\R^3}h_k(t,v)dv+\int_{\R^3}\tl{{\bf T}}_k[h_k](t,v)dv\\
    =\nn&\hat{f}_k(t,kt)+\int_{\R}\bar{\hat{K}}_k(\xi_1)\mathcal{F}\big[\mathcal{A}\circ\mathbb{O} [h_k]\big](t,\xi_1)d\xi_1\\
    =&\hat{f}_k(t,kt)+\int_{\R}\bar{\hat{K}}_k(\xi_1)\hat{f}_k\big(t,\fr{k}{|k|}\xi_1+kt\big)d\xi_1,
\end{align}
where we have used 
\begin{align*}
    \mathcal{F}\big[\mathcal{A}\circ\mathbb{O} [h_k]\big](t,\xi_1)=&\int_{\R}e^{-i\xi_1u_1}\int_{\R^2}h_k(t,uO)du'du_1=\int_{\R^3}e^{-i\xi_1\fr{k\cdot v}{|k|}}h_k(t,v)dv\\
    =&\hat{h}_k(t,\fr{k}{|k|}\xi_1)=\hat{f}_k(t,\fr{k}{|k|}\xi_1+kt),
\end{align*}
and
\begin{align}\label{def-K}
    K_k(u_1):=\Big(\fr{P_k}{W_k}-1\Big)(u_1).
\end{align}

Let us denote $\xi_*(t,k)=\fr{k}{|k|}\xi_1+kt$.
Note that 
\begin{gather}\label{bar-eta-tau}
\bar{\eta}(t;k,kt)=0,\\
\label{bar-eta-ktap}\bar{\eta}(\tau;k,kt)=k(t-\tau),\\
\label{bar-eta-t-xi}\bar{\eta}(t,k,\xi_*(t,k))=\xi_1\fr{k}{|k|},\\
\bar{\eta}(\tau,k,\xi_*(t,k))=\xi_*(t-\tau,k).
\end{gather}
Taking $\eta=kt$ in \eqref{ex-h}, we get
\begin{align}\label{ex-h-var1}
\hat{f}_k(t,kt)\nn=&S_k(t)(\widehat{f_{\mathrm{in}}})_k(kt)+\nu\int_0^tS_k(t-\tau)\mathcal{F}_v\big[[\mathbf{D}_k,L][g_k]\big](\tau,k(t-\tau))d\tau\\
\nn&-\nu\int_0^t S_k(t-\tau)k(t-\tau)\cdot\nb_\eta\hat{f}_k(\tau,kt)d\tau\\
&+\int_0^tS_k(t-\tau)\mathcal{F}_v\big[\mathbf{D}_k\big[N[g_k]\big]\big](\tau,k(t-\tau))d\tau.
\end{align}
In addition, taking $\eta=\xi_*(t,k)$ in \eqref{ex-h} yields
\begin{align}\label{ex-h-var2}
\hat{f}_k(t,\xi_*(t,k))\nn=&S_k(t;\xi_*(t,k))(\widehat{f_{\mathrm{in}}})_k(\xi_*(t,k))\\
\nn&-\nu\int_0^t S_k(t-\tau;\xi_*(t,k))\xi_*(t-\tau,k)\cdot\nb_\eta\hat{f}_k(\tau,\xi_*(t,k))d\tau\\
\nn&+\nu\int_0^tS_k(t-\tau;\xi_*(t,k))\mathcal{F}_v\big[[\mathbf{D}_k,L][g_k]\big](\tau,\xi_*(t-\tau,k))d\tau\\
&+\int_0^tS_k(t-\tau;\xi_*(t,k))\mathcal{F}_v\big[\mathbf{D}_k\big[N[g_k]\big]\big](\tau,\xi_*(t-\tau,k))d\tau.
\end{align}
Substituting \eqref{ex-h-var1} and \eqref{ex-h-var2} into \eqref{def-Tk}, we get the representation for $\rho_k$:
\begin{align}\label{rep-rhok}
\rho_k(t)=\nn&S_k(t)(\widehat{h_{\mathrm{in}}})_k(kt)+\nu\int_0^tS_k(t-\tau)\mathcal{F}_v\big[[\mathbf{D}_k,L][g_k]\big](\tau,k(t-\tau))d\tau\\
\nn&-\nu\int_0^t S_k(t-\tau)k(t-\tau)\cdot\nb_\xi\hat{h}_k(\tau,k(t-\tau))d\tau\\
&\nn+\int_0^tS_k(t-\tau)\mathcal{F}_v\big[\mathbf{D}_k\big[N[g_k]\big]\big](\tau,k(t-\tau))d\tau\\
\nn&+\int_{\R}\bar{\hat{K}}_k(\xi_1)S_k(t;\xi_*(t,k))(\widehat{h_{\mathrm{in}}})_k(\xi_*(t,k))d\xi_1\\
\nn&-\nu\int_0^t\int_{\R}\bar{\hat{K}}_k(\xi_1) S_k(t-\tau;\xi_*(t,k))\xi_*(t-\tau,k)\cdot\nb_\xi\hat{h}_k(\tau,\xi_*(t-\tau,k))d\xi_1d\tau\\
\nn&+\nu\int_0^t\int_{\R}\bar{\hat{K}}_k(\xi_1)S_k(t-\tau;\xi_*(t,k))\mathcal{F}_v\big[[\mathbf{D}_k,L][g_k]\big](\tau,\xi_*(t-\tau,k))d\xi_1d\tau\\
&+\int_0^t\int_{\R}\bar{\hat{K}}_k(\xi_1)S_k(t-\tau;\xi_*(t,k))\mathcal{F}_v\big[\mathbf{D}_k\big[N[g_k]\big]\big](\tau,\xi_*(t-\tau,k))d\xi_1d\tau.
\end{align}

\begin{remark}
    One can also estimate $\rho$ by solving the Volterra equation. Here we introduce a different approach, avoiding dealing with the Volterra equation and using the Laplace transform, which might be useful in other problems. 
\end{remark}

\section{Stability estimates}\label{sec:sta}
A standard argument gives the local well-posedness for the Vlasov-Poisson-Fokker-Planck system. Here, our goal is to prove by a continuity argument that $\mathcal{E}^s_m(h(t))$ (together with the $L^2_tL^2_x$ norm of the density $\rho$) is uniformly bounded for all time $t\le\nu^{-1}$ whenever  $\eps$ is sufficiently small. Let ${\rm C}_1$, ${\rm C}_2$ and ${\rm C}_3$  be positive constants independent of $t$ and $\nu$, determined by the proof below. We define the following controls referred to in the sequel as the {\it bootstrap hypotheses} for all $T\le \nu^{-1}$:
\begin{subequations}\label{hypo}
\begin{align}
\label{hypo1}\|{\bf B}_{s}\rho\|_{L^2_tL^2_x}\le& 4{\rm C}_1\eps \nu^{\fr12},\\
\label{hypo2}\sup_{0\le t\le T}\mathcal{E}^s_m(h_{\ne}(t))+\fr{1}{4}\int_0^{T}\mathcal{D}^s_m(h_{\ne}(\tau))d\tau\le& (4{\rm C}_{2}\eps\nu^{\fr12})^2,\\
\label{hypo3}\sup_{0\le t\le T}\mathcal{E}^s_m(h_0(t))+\fr{1}{4}\int_0^{T}\mathcal{D}^s_m(h_0(\tau))d\tau\le& (4{\rm C}_{3}\eps\nu^{\fr12})^2.
\end{align}
\end{subequations}

We shall establish the following proposition.
\begin{prop}\label{prop:stablity}
    Let $T^*$ be the maximal time such that the bootstrap hypotheses \eqref{hypo} hold on $[0,T^*]$. Then there exist $\eps_0$ and $\nu_0>0$, such that for any $0<\nu\le\nu_0$ and $0<\eps<\eps_0$, the same estimates in \eqref{hypo} hold with the occurrences of 4 on the right-hand side replaced by 2.
\end{prop}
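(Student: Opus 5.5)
We close the three bootstrap bounds in \eqref{hypo} separately, always working with $h_k={\bf D}_k[g_k]$ for $k\neq0$ and with $g_0$ itself for $k=0$. Proposition \ref{prop-continuity} lets us pass between weighted norms of $g_k$ and $h_k$ at the cost of a $k$-independent constant, so the energy $\mathcal{E}^s_m$ of $h$ and the weighted norm of $g$ are comparable. The order is: first the density bound \eqref{hypo1}, then the nonzero-mode energy \eqref{hypo2}, and finally the zero mode \eqref{hypo3}. The constants are chosen so that ${\rm C}_1$ is fixed by the linear data and ${\rm C}_2,{\rm C}_3\gg {\rm C}_1$. All nonlinear contributions should come out as $C\eps\nu^{1/2}\cdot\nu^{-1/2}\cdot(\eps\nu^{1/2})$ or better, and are absorbed once $\eps_0$ is small.

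\textbf{Density estimate.} We use the representation \eqref{rep-rhok} and bound each term in $L^2_t$ after multiplying by ${\rm B}_{k,s}(t)$. For the free terms we have $S_k(t)(\widehat{h_{\rm in}})_k(kt)$. Integrating in $t$, the map $t\mapsto kt$ sweeps a line in $\xi$-space. The trace lemma $\int|\hat h(kt)|^2dt\le |k|^{-1}\sup_{\xi_1}\int|\hat h(\xi)|^2d\xi'$ gives $\||k|^{1/2}\hat h_k(kt)\|_{L^2_t}\lesssim\|\la v\ra^{m}h_k\|_{L^2_v}$ for $m>3/2$. The weight $\la\nu^{1/3}|k|^{2/3}t\ra^{s/2}$ is absorbed by $S_k(t)=e^{-\nu|k|^2t^3/3}$. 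The $\bar{\hat K}_k$ terms are handled identically, since $\hat K_k\in L^1$ uniformly in $k$ by \eqref{inf-Pk}. The $\nu$-linear terms involving $[{\bf D}_k,L]$ and $\nu\bar\eta\cdot\nb_\eta\hat f$ are controlled by Proposition \ref{prop-com} together with the dissipation in \eqref{hypo2}: a Schur/Young bound in time with kernel $\nu (t-\tau)|k|S_k(t-\tau)$ has $L^1_t$ norm $\lesssim \nu^{1/3}|k|^{1/3}$, which the enhanced-dissipation part of $\mathcal{D}$ pays for. The nonlinear terms $N[g_k]$ are estimated by a paraproduct split in $l$ versus $k-l$:
\begin{itemize}
\item when $\rho_l$ is at low frequency, we use \eqref{hypo1} in $L^2_t$ times $\sup_t$ of the energy;
\item when $g_{k-l}$ is at low frequency, we place $\rho_l$ in $L^2_t$ and take the $\nabla_v g$ trace along $k(t-\tau)$ in $L^2$.
\end{itemize}
The $\ln(e+|k|)$ weight in ${\bf A}_s$ is what makes the $k$-sum converge in $n=3$: $\sum_l |l|^{-1}\ln^{-1}$ type losses are exactly what critical $H^{\log}$ handles. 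This yields $\|{\bf B}_s\rho\|\le {\rm C}_1\eps\nu^{1/2}+C\eps^2$ (up to powers of $\nu$ that match), giving the factor 2.

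\textbf{Energy for $h_{\neq}$.} We differentiate $\mathfrak{E}^s_\al(h_k)$ using \eqref{eq-varfk}. The multiplier $\mathcal{M}$ is assumed designed, as in the hypocoercive/ghost-multiplier literature, so that $\frac{d}{dt}\|\mathcal{M}h\|^2+\frac14\nu^{1/3}\||\nb_x|^{1/3}\mathcal{M}h_{\ne}\|^2+\nu\|\nb_v\mathcal{M}h\|^2\le$ source terms. The time weights $(\kappa\nu t)^\ell$ and $(C_s\kappa\nu^{1/3}t)^\ell$ generate terms $\ell\kappa\nu(\kappa\nu t)^{\ell-1}\||\nb_v|^\ell\cdot\|^2$, which are absorbed by the level-$(\ell-1)$ dissipation when $\kappa$ is small and $C_s$ is large. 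The moment weights $v^\al$ produce commutators with $v\cdot\nb_x$ and $L$ of lower order in $\al$, which are handled by induction on $|\al|$. The term $\nu[{\bf D}_k,L]g_k$ is $O(\nu|k|^{-2})$ by Proposition \ref{prop-com}, with no derivative loss beyond one $\nb_v$, so it is absorbed by dissipation. The main nonlinear term is $\la {\bf D}_k[\rho_l\frac{l}{|l|^2}\cdot\nb_vg_{k-l}],h_k\ra$. Moving $\nb_v$ through ${\bf D}_k$ via \eqref{bd:com1} and integrating by parts leaves $\int_0^T\|\rho\|\,\|\nb_v h\|\,\|h\|$. We bound this by $\|{\bf B}_s\rho\|_{L^2_t}\,(\int\nu\|\nb_v h\|^2)^{1/2}\nu^{-1/2}\sup\|h\|\lesssim \eps^3\nu$, which is $\ll(\eps\nu^{1/2})^2$. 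This uses the $\ln$ and $|k|^{1/2}$ weights to sum over $l$.

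\textbf{Zero mode and main obstacle.} For $h_0=g_0$ there is no Landau term, and only the $\nu\|\nb_v\cdot\|^2$ dissipation is available. The forcing is $\sum_l\rho_l\frac{l}{|l|^2}\cdot\nb_vg_{-l}$, estimated exactly as above via $\rho\in L^2_t$ and $\nb_vg_{\ne}\in L^2_t$ weighted by $\nu^{1/2}$; both factors are available from \eqref{hypo1}–\eqref{hypo2}. I expect the main obstacle to be the density estimate in the reaction-type nonlinear term, where $g_{k-l}$ is at high frequency. There the trace along $k(t-\tau)$ of $\mathcal{F}_v[{\bf D}_k\nb_v g_{k-l}]$ appears to require a derivative not controlled by $\mathcal{E}$ at low regularity. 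My plan is to use the $v$-trace bound in the form $\int|\hat F(k(t-\tau))|^2d\tau\lesssim|k|^{-1}\|\la v\ra^m F\|^2$ with $F=\nb_v g$. I would then pay the extra $\nb_v$ with $\nu^{-1/2}$ from dissipation, and compensate with the $\nu^{1/2}$ smallness of $\rho$. This is precisely why the threshold is $\nu^{1/2}$.
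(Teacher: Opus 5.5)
Your overall architecture matches the paper's: the wave operator, the density through \eqref{rep-rhok} and the line-trace lemma, the time-weighted multiplier energy, a separate zero-mode estimate, and a bootstrap closed by smallness of $\epsilon$ and $\nu$. The genuine difference is in the critical nonlinear term of the density estimate.

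\textbf{How the paper handles that term.} It never puts $\nabla_v g$ in $L^2_t$ there. It writes ${\bf D}_k[N[g_k]]=N_{a,k}+N_{b,k}$, moving $\nabla_v$ outside ${\bf D}_k$; the commutator is derivative-free by \eqref{bd:com1}. It then uses that at the trace point $\mathcal{F}_v[\nabla_v G](k(t-\tau))=ik(t-\tau)\hat{G}(k(t-\tau))$. The derivative thus becomes the factor $|k|(t-\tau)$, which the collisional factor absorbs via $\int_0^\infty S_k(t)|k|^2t^2\,dt\les\nu^{-1}$ (\eqref{bd-semi1}). This costs exactly $\nu^{-1/2}$ and keeps $h$ in $L^\infty_tL^2_{x,v}$, so only $\sup_t\mathcal{E}^s_m$ enters. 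Your first bullet (``$\sup_t$ of the energy'') works only with this device.

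\textbf{Your route.} You pay $\nabla_v$ with the $\nu\|\nabla_v\cdot\|^2$ part of the dissipation. This also costs exactly $\nu^{-1/2}$ and closes, but only with the right arrangement. You must apply Minkowski in $\tau$, bound $S_k\le 1$, and take the trace in $t$ for each fixed $\tau$. That gives $|k|^{-1/2}\|\rho_l\|_{L^2_t}\|\la v\ra^{1+}{\bf D}_k[\nabla_v g_{k-l}]\|_{L^2_tL^2_v}$, after which you apply Lemma \ref{lem-produ} pointwise in $\tau$. A Schur-type bound that treats $S_k(t-\tau)$ as a separate kernel, as you do for the linear terms, loses an extra $\|S_k\|_{L^2_t}\sim\nu^{-1/6}|k|^{-1/3}$ and does not close.

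\textbf{Trade-offs.} Your route needs no $N_a/N_b$ split, since you use \eqref{offD} directly. In the $\hat{K}_k$-terms it also avoids the $|\xi_1|$-component of $\xi_*(t-\tau,k)$, which is what forces Lemma \ref{lem-LK} to use $K_k'\in L^2$. The cost is that it consumes the dissipation of both $h_{\neq}$ and $h_0$ from \eqref{hypo2}--\eqref{hypo3}. The undifferentiated zero-mode part is then in $L^2_t$ only via $T\le\nu^{-1}$. The paper's route instead shows that the $\nu^{-1/2}$ loss is generated by the collisional decay inside the representation itself.

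\textbf{Corrections elsewhere.}
\begin{enumerate}
\item The $L^1_t$ norm of $\nu|k|tS_k(t)$ is $\sim\nu^{1/3}|k|^{-1/3}$, not $\nu^{1/3}|k|^{1/3}$. With your exponent, the $|k|^{1/2}$ in ${\rm B}_{k,s}$ would leave $|k|^{5/6}$, which the $|k|^{1/3}$ from enhanced dissipation cannot pay. With the correct exponent, Young's inequality (using $\sup_\xi|\nabla_\xi\hat{h}_k|\le\|vh_k\|_{L^1_v}$) gives $\nu^{1/6}(\int\mathcal{D}^s_m)^{1/2}$.
\item For the $\hat{K}_k$-terms you need $(1+|\xi_1|)\hat{K}_k\in L^1$ with an $O(|k|^{-2})$ bound. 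This comes from \eqref{est:Wk}, not \eqref{inf-Pk}.
\item In the energy estimate, $v^\alpha$ commutes with $v\cdot\nabla_x$. The transport commutator comes from $|\nabla_v|^\ell$ (the term $G^{\ell,\alpha}_{k,3}$). It is absorbed by interpolating between the $|\nabla_x|^{1/3}|\nabla_v|^{\ell}$ and $|\nabla_x|^{(\ell+1)/3}$ dissipation with $C_s$ large as in \eqref{Csig}; this is why $\mathfrak{E}^s_\alpha$ carries the mixed $|\nabla_x|^{\ell/3}$ part.
\item The $\nu\nabla_v\cdot(vh)$ part of $L$ produces $(3+2s)\nu\mathfrak{E}^s_\alpha$. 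For $k\neq0$ this is absorbed by $\nu^{2/3}\mathfrak{D}^s_\alpha$. For $h_0$ there is no enhanced dissipation, so you need Gronwall on $[0,\nu^{-1}]$; that is where ${\rm C}_3=2e^{C_{s,m}}$ comes from.
\end{enumerate}
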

In the following two subsections, we complete the proof of Proposition \ref{prop:stablity}. Since the proofs for the spatial dimensions $n=1,2$ are analogous and simpler, we restrict all subsequent proofs to the case $n=3$. 
\subsection{Estimates of the density}
In this subsection, we improve the bound in \eqref{hypo1}. Using the representation of the density \eqref{rep-rhok}, we estimate the terms individually.
\subsubsection{Initial contributions}\label{subsec:ini}
Recalling the definition of $S_k(t)$ in \eqref{S3},
taking the change of variable 
$t'=|k|t$, and then  using the Sobolev trace theorem, we find that
\begin{align*}
    \int_0^{T^*}\la C_s\nu^{\fr13}|k|^{\fr23}t\ra^\ell\big|S_k(t)(\widehat{h_{\rm in}})_k(kt)\big|^2dt\les& \fr{1}{|k|}\int_0^\infty \Big|(\widehat{h_{\rm in}})_k\Big(\fr{k}{|k|}t'\Big)\Big|^2 dt'\\
    \le&\fr{1}{|k|}\sup_{|\om|=1}\int_{-\infty}^{\infty}|(\widehat{h_{\rm in}})_k(\om t')|^2dt'\les \fr{1}{|k|}\|(\widehat{h_{\rm in}})_k(\eta)\|^2_{H^{1+}_\eta}.
\end{align*}
It follows that
\begin{align}
    \sum_{k\in\Z^3_*}\int_0^{T^*}{\rm B}_{k,s}^2(t)\big|S_k(t)(\widehat{h_{\rm in}})_k(kt)\big|^2dt\les\big\|\la v\ra^{1+}\ln(e+|\nb_x|)(h_{\rm in})_{\ne}\big\|^2_{L^2_{x,v}}.
\end{align}
Similarly, thanks to \eqref{semigroup-est} with $\tau=0$, we have
\begin{align*}
    &\int_0^{T^*} \la C_s\nu^{\fr13}|k|^{\fr23}t\ra^\ell\big|S_k(t;\xi_*(t,k))(\widehat{h_{\mathrm{in}}})_k(\xi_*(t,k))\big|^2dt\\
    \les&\fr{1}{|k|}\int_0^\infty \Big|(\widehat{h_{\mathrm{in}}})_k\Big(\xi_1\fr{k}{|k|}+\fr{k}{|k|}t'\Big)\Big|^2dt'
    \les\fr{1}{|k|}\|(\widehat{f_{\rm in}})_k(\eta)\|^2_{H^{1+}_\eta},
\end{align*}
and hence
\begin{align}
    \nn&\left\|\la C_s\nu^{\fr13}|k|^{\fr23}t\ra^\ell\int_{\R}\bar{\hat{K}}_k(\xi_1)S_k(t;\xi_*(t,k))(\widehat{h_{\mathrm{in}}})_k(\xi_*(t,k))d\xi_1\right\|_{L^2_t}\\
    \les\nn&\int_{\R}|\hat{K}_k(\xi_1)|\Big\|(\nu^{\fr13}|k|^{\fr23}t)^{\fr12}S_k(t;\xi_*(t,k))(\widehat{h_{\mathrm{in}}})_k(\xi_*(t,k))\Big\|_{L^2_t}d\xi_1\\
    \les\nn&\fr{1}{|k|^{\fr12}}\|(\widehat{h_{\rm in}})_k(\eta)\|_{H^{1+}_\eta}\|\hat{K}_k\|_{L^1_{\xi_1}}.
\end{align}
Combining this with \eqref{est:Wk}, we are led to
\begin{align}
    \nn&\left\|{\rm B}_{k,s}(t)\int_{\R}\bar{\hat{K}}_k(\xi_1)S_k(t;\xi_*(t,k))(\widehat{h_{\mathrm{in}}})_k(\xi_*(t,k))d\xi_1\right\|_{L^2_tL^2_k}\\
    \les& \big\|\la v\ra^{1+}\ln(e+|\nb_x|)(h_{\rm in})_{\ne}\big\|_{L^2_{x,v}}.
\end{align}

\subsubsection{Contributions from the linear term}\label{subsec:linear}
Recalling \eqref{S3}, we find that
\begin{align}\label{absorb}
\la C_s\nu^{\fr13}|k|^{\fr23}t\ra^{\fr{\ell}{2}}S_k(t-\tau)\les \la C_s\nu^{\fr13}|k|^{\fr23}\tau\ra^{\fr{\ell}{2}}S^{\fr12}_k(t-\tau),
\end{align}
and
\begin{align}\label{bd-semi1}
    \int_0^\infty S_k(t)(|k|t)^2dt\les \nu^{-1}.
\end{align}
Then, by Schur's test and  the Sobolev trace theorem, we have 
\begin{align}\label{es:rho-L}
    \nn&\nu^2\int_0^{T^*}\Big|{\rm B}_{k,s}(t)\int_0^t S_k(t-\tau)k(t-\tau)\cdot\nb_\xi\hat{h}_k(\tau,k(t-\tau))d\tau\Big|^2dt\\
   \le\nn& \nu^2|k|\int_0^{T^*}\int_0^tS_k(t-\tau)|k(t-\tau)|^2d\tau\int_0^t\big|{\rm A}_{k,s}(\tau)\nb_\xi\hat{h}_k(\tau,k(t-\tau))\big|^2d\tau dt\\
   \le\nn&\nu^2|k|\sup_t\int_0^tS_k(\tau)(|k|\tau)^2 d\tau \int_0^{T^*}\int_{\tau}^{T^*} \big|{\rm A}_{k,s}(\tau)\nb_\xi \hat{h}_k(\tau,k(t-\tau))\big|^2dtd\tau\\
   \les&\nu \int_0^{T^*}\sup_{|\om|=1}\int_{\R}\big|{\rm A}_{k,s}(\tau)\nb_\xi \hat{h}_k(\tau,\om t)\big|^2dtd\tau\les \nu\|{\rm A}_{k,s}\nb_\xi\hat{h}_k\|^2_{L^2_tH^{1+}_\eta}.
\end{align}
Thus,
\begin{align}\label{es:rho-L1}
    \nn&\nu\bigg\|{\rm B}_{k,s}(t)\int_0^t S_k(t-\tau)k(t-\tau)\cdot\nb_\xi\hat{h}_k(\tau,k(t-\tau))d\tau\bigg\|_{L^2_tL^2_k}\\
    \les&\nu^{\fr12}\|\la v\ra^3{\bf A}_s h_{\ne}\|_{L^2_tL^2_{x,v}}.
\end{align}

On the other hand, by using Lemma \ref{lem-LK}, we arrive at
\begin{align}\label{es:rho-L2}
    \nn&\nu\bigg\|{\rm B}_{k,s}(t)\int_0^t\int_{\R}\bar{\hat{K}}_k(\xi_1) S_k(t-\tau;\xi_*(t,k))\\
    \nn&\qquad\qquad\qquad\quad\times\xi_*(t-\tau,k)\cdot\nb_\xi\hat{h}_k(\tau,\xi_*(t-\tau,k))d\xi_1d\tau\bigg\|_{L^2_tL^2_k}\\
    \les&\nu^{\fr12}\|\la v\ra^3{\bf A}_s h_{\ne}\|_{L^2_tL^2_{x,v}}.
\end{align}

\subsubsection{Contributions from the commutator}\label{subsec:com}
By \eqref{exp-D_k}, we write
\begin{align}\label{com:DL}
    [{\bf D}_k,L][g_k]=[{\bf T}_k,\Dl_v][g_k]+\big({\bf T}_k[\nb_v\cdot(v g_k)]-\nb_v\cdot(v{\bf T}_k[g_k])\big)={\bf com}_{k,1}+{\bf com}_{k,2}.
\end{align} 
Recalling from Section \ref{sec-refor} that ${\bf D}_k^{-1}[h_k]=g_k$ with $g_k$ solving \eqref{eq-fk}, we infer from \eqref{bd:com2} and \eqref{onD} that
\begin{align*}
    \|\la v\ra^{2}{\bf com}_{k,1}(\tau)\|_{L^2_v}\les \frac{1}{|k|^2}\|\la v\ra^2{\bf D}_k^{-1}[h_k](\tau)\|_{H^1_v}\les \frac{1}{|k|^2}\|\la v\ra^2h_k(\tau)\|_{H^1_v}.
\end{align*}
Thus, by using \eqref{absorb} and the $L^1$ bound of $S_k(\cdot)$:
\begin{align}\label{bd:Sk1}
    \int_0^tS_k(t-\tau)d\tau\les \nu^{-\fr13}|k|^{-\fr23},
\end{align}
similar to \eqref{es:rho-L}, we obtain
\begin{align}\label{est:com1}
    \nn&\nu^2\int_0^{T^*}\Big|{\rm B}_{k,s}(t)\int_0^tS_k(t-\tau)\mathcal{F}_v[{\bf com}_{k,1}](\tau,k(t-\tau))d\tau\Big|^2dt\\
    \les\nn&\fr{\nu^2|k|}{\nu^{\fr13}|k|^{\fr23}}\int_0^{T^*}\int_\tau^{T^*}\big|{\rm A}_{k,s}(\tau)\mathcal{F}_v[{\bf com}_{k,1}](\tau,k(t-\tau))\big|^2dt d\tau \\
\les\nn&\fr{\nu^2}{\nu^{\fr13}|k|^{\fr23}}\int_0^{T^*}\|\la v\ra^{2}{\rm A}_{k,s}(\tau){\bf com}_{k,1}(\tau)\|^2_{L^2_v} d\tau\\
\les&\nu^{\fr53}\int_0^{T^*}\|\la v\ra^{2}{\rm A}_{k,s}(\tau)h_k(\tau)\|^2_{H^1_v} d\tau.
\end{align}
Consequently, 
\begin{align}\label{est:com1-1}
    \nn&\nu\Big\|{\rm B}_{k,s}(t)\int_0^tS_k(t-\tau)\mathcal{F}_v[{\bf com}_{k,1}](\tau,k(t-\tau))d\tau\Big\|_{L^2_tL^2_k}\\
\les&\nu^{\fr13}\Big(\nu^{\fr12}\big\|\nb_v{\bf A}_s(\la v\ra^{2}h_{\ne})\big\|_{L^2_tL^2_{x,v}} \Big)+\nu^{\fr23}\Big(\nu^{\fr16}\big\|{\bf A}_s(\la v\ra^{2}h_{\ne})\big\|_{L^2_tL^2_{x,v}}\Big).
\end{align}

On the other hand, thanks to \eqref{semigroup-est}, $S_k(t-\tau;\xi_*(t,k))$ can serve as $S_k(t-\tau)$. Moreover, the Sobolev trace theorem also applies to $\mathcal{F}_v[{\bf com}_{k,1}](\tau,\xi_*(t-\tau,k))$:
\begin{align}\label{trace}
    \nn&\int_{\R}|\mathcal{F}_v[{\bf com}_{k,1}](\tau,\xi_*(t-\tau,k))|^2dt\nn=\int_{\R}\Big|\mathcal{F}_v[{\bf com}_{k,1}](\tau,\xi_1\fr{k}{|k|}+k(t-\tau))\Big|^2dt\\
    =&\fr{1}{|k|}\int_{\R}\Big|\mathcal{F}_v[{\bf com}_{k,1}](\tau,\xi_1\fr{k}{|k|}+\fr{k}{|k|}t')\Big|^2dt'\les \fr{1}{|k|}\|\la v\ra^2{\bf com}_{k,1}(\tau)\|_{L^2_v}^2.
\end{align}
Therefore, similar to \eqref{est:com1} and \eqref{est:com1-1}, using Minkowski's inequality and \eqref{est:Wk}, we have
\begin{align}\label{est:com1-2}
    \nn&\nu\bigg\|{\rm B}_{k,s}(t)\int_0^t\int_{\R}\bar{\hat{K}}_k(\xi_1)S_k(t-\tau;\xi_*(t,k))\mathcal{F}_v[{\bf com}_{k,1}](\tau,\xi_*(t-\tau,k))d\xi_1d\tau\bigg\|_{L^2_tL^2_k}\\
    \les\nn&\nu\bigg\|\int_{\R}|\hat{K}_k(\xi_1)|\Big\|{\rm B}_{k,s}(t)\int_0^tS_k(t-\tau;\xi_*(t,k))\mathcal{F}_v[{\bf com}_{k,1}](\tau,\xi_*(t-\tau,k))d\tau\Big\|_{L^2_t}d\xi_1\bigg\|_{L^2_k}\\
    \les\nn&\nu\Big\|{\rm B}_{k,s}(t)\int_0^tS_k(t-\tau;\xi_*(t,k))\mathcal{F}_v[{\bf com}_{k,1}](\tau,\xi_*(t-\tau,k))d\tau\Big\|_{L^2_tL^2_k}\\
    \les&\nu^{\fr13}\Big(\nu^{\fr12}\big\|\nb_v{\bf A}_s(\la v\ra^{2}h_{\ne})\big\|_{L^2_tL^2_{x,v}} \Big)+\nu^{\fr23}\Big(\nu^{\fr16}\big\|{\bf A}_s(\la v\ra^{2}h_{\ne})\big\|_{L^2_tL^2_{x,v}}\Big).
\end{align}

For ${\bf com}_{k,2}$, by \eqref{bd:com3} and \eqref{onD}, we have
\begin{align*}
    \|\la v\ra^{2}{\bf com}_{k,2}(\tau)\|_{L^2_v}\les \frac{1}{|k|^2}\|\la v\ra^2{\bf D}_k^{-1}[h_k](\tau)\|_{L^2_v}\les \frac{1}{|k|^2}\|\la v\ra^2h_k(\tau)\|_{L^2_v}.
\end{align*}
Then similar to \eqref{est:com1-1} and \eqref{est:com1-2}, we arrive at
\begin{align}\label{est:com2-1}
    \nn&\nu\left\| {\rm B}_{k,s}(t)\int_0^tS_k(t-\tau)\mathcal{F}_v[{\bf com}_{k,2}](\tau,k(t-\tau))d\tau\right\|_{L^2_t}\\
    \les& \nu^{\fr23}\left(\nu^{\fr16}\big\|{\bf A}_s(\la v\ra^{2}h_{\ne})\big\|_{L^2_tL^2_{x,v}}\right),
\end{align}
and
\begin{align}\label{est:com2-2}
    \nn&\nu\Big\|{\rm B}_{k,s}(t)\int_0^t\int_{\R}\bar{\hat{K}}_k(\xi_1)S_k(t-\tau;\xi_*(t,k))\\
    &\qquad\qquad\quad\times\mathcal{F}_v[{\bf com}_{k,2}](\tau,\xi_*(t-\tau,k))d\xi_1d\tau\Big\|_{L^2_tL^2_k}
    \les\nu^{\fr23}\left(\nu^{\fr16}\big\|{\bf A}_s(\la v\ra^{2}h_{\ne})\big\|_{L^2_tL^2_{x,v}}\right).
\end{align}

\subsubsection{Contributions from the nonlinearity}\label{subsec:nol}
Recalling \eqref{nonlinearity},  we first split ${\bf D}_k\big[N[g_k]\big]$ into two parts:
\begin{align*}
{\bf D}_k\big[N[g_k]\big]=\sum_{\substack{l\in\Z^3_*}}\rho_l\frac{il}{|l|^2}\cdot \nb_v{\bf D}_k[{g}_{k-l}]+\sum_{\substack{l\in\Z^3_*}}\rho_l\frac{il}{|l|^2}\cdot [{\bf D}_k,\nb_v]{g}_{k-l}=N_{a,k}+N_{b,k}.
\end{align*}

\begin{align*}
    \mathcal{N}_a:=&\sum_{k\in\Z^3_*}\int_0^{T^*}\Big|{\rm B}_{k,s}(t)\int_0^tS_k(t-\tau)\mathcal{F}_v\big[N_{a,k}\big](\tau,k(t-\tau))d\tau\Big|^2dt\\
    =&\sum_{k\in\Z^3_*}\int_0^{T^*}\bigg|\Big(\sum_{\substack{l\in\Z^3_*, |l|\ge |k-l|}}+\sum_{\substack{l\in\Z^3_*, |l|< |k-l|}}\Big){\rm B}_{k,s}(t)\int_0^tS_k(t-\tau)\rho_l(\tau)\fr{l
    \cdot k(t-\tau)}{|l|^2}\\
    &\times \mathcal{F}_v\big[{\bf D}_k[g_{k-l}]\big](\tau,k(t-\tau))d\tau\bigg|^2dt\\
    \les&\mathcal{N}_a^{\rm HL}+\mathcal{N}_a^{\rm LH},
\end{align*}
and $\mathcal{N}_b$ can be further split into two parts analogously.

For $\mathcal{N}_a^{\rm HL}$, using the restriction $|l|\ge |k-l|$ with $|l|\ge1$ and \eqref{S2}, we have
\begin{align}
\fr{1}{|l|}\les \fr{1}{\la k-l\ra}\les |l|^{\fr12}\fr{\ln(e+|k-l|)}{\ln(e+|k-l|)\la k-l\ra^{\fr32}},
\end{align}
and
\begin{align}
    {\rm B}_{k,s}(t)S_k(t-\tau){\bf 1}_{|l|\ge|k-l|}\les_\ell |k|^{\fr12}{\rm A}_{l,s}(\tau)S_k^{\fr12}(t-\tau).
\end{align}
Then combining the above two inequalities with \eqref{bd-semi1}, \eqref{offD} and \eqref{onD}, we are led to
\begin{align}\label{est:NaHL}
    \mathcal{N}_a^{\rm HL}\les\nn&\sum_{k\in\Z^3_*}|k|\int_0^{T^*}\bigg[\sum_{\substack{l\in\Z^3_*, |l|\ge |k-l|}}\int_0^tS^{\fr12}_k(t-\tau)|k|(t-\tau)|{\rm B}_{l,s}(\tau)\rho_l(\tau)|\\
    \nn&\times \fr{\ln(e+|k-l|)}{\ln(e+|k-l|)\la k-l\ra^{\fr32}}|\mathcal{F}_v\big[{\bf D}_k[g_{k-l}]\big](\tau,k(t-\tau))|d\tau\bigg]^2dt\\
    \les\nn&\sum_{k\in\Z^3_*}|k|\int_0^{T^*}\sum_{l\in\Z^3_*}\fr{1}{\big(\ln(e+|k-l|)\big)^2\la k-l\ra^3}\int_0^tS_k(t-\tau)|k|^2(t-\tau)^2d\tau \\
    \nn&\times \sum_{l\in\Z^3_*}\int_0^t|{\rm B}_{l,s}(\tau)\rho_l(\tau)|^2\big|{\rm A}_{k-l,0}(\tau)\mathcal{F}_v\big[{\bf D}_k[g_{k-l}]\big](\tau,k(t-\tau))\big|^2d\tau dt\\
    \les\nn&\nu^{-1}\sum_{k\in\Z^3_*}|k|\sum_{l\in\Z^n_*}\int_0^{T^*}|{\rm B}_{l,s}(\tau)\rho_l(\tau)|^2\int_\tau^{T^*}\big|{\rm A}_{k-l,0}(\tau)\mathcal{F}_v\big[{\bf D}_k[g_{k-l}]\big](\tau,k(t-\tau))\big|^2 dtd\tau\\
    \les\nn&\nu^{-1}\sum_{k,l\in\Z^3_*}\int_0^{T^*}|{\rm B}_{l,s}(\tau)\rho_l(\tau)|^2\big\|\la v\ra^2{\rm A}_{k-l,0}(\tau){\bf D}_k[g_{k-l}](\tau)\big\|_{L^2_v}^2 d\tau\\
    \les\nn&\nu^{-1}\sum_{k,l\in\Z^3_*}\int_0^{T^*}|{\rm B}_{l,s}(\tau)\rho_l(\tau)|^2\big\|\la v\ra^2{\rm A}_{k-l,0}(\tau)h_{k-l}(\tau)\big\|_{L^2_v}^2 d\tau\\
    \les&\nu^{-1}\|{\bf B}_{s}\rho\|_{L^2_{t,x}}^2\|\la v\ra^2{\bf A}_{0}h\|_{L^\infty_tL^2_{x,v}}^2.
\end{align}

For $\mathcal{N}_a^{\rm LH}$, using the restriction $|l|< |k-l|$ with $|l|\ge1$ and \eqref{S2}, we find that
\begin{align}
    {\rm B}_{k,s}(t)S_k(t-\tau){\bf 1}_{|l|<|k-l|}\les |k|^{\fr12}{\rm A}_{k-l,s}(\tau)S_k^{\fr12}(t-\tau).
\end{align}
Then similar to \eqref{est:NaHL}, we have
\begin{align}\label{est:NaLH}
    \mathcal{N}_a^{\rm LH}\les\nn&\sum_{k\in\Z^3_*}|k|\int_0^{T^*}\bigg[\sum_{\substack{l\in\Z^3_*, |l|< |k-l|}}\int_0^tS^{\fr12}_k(t-\tau)|k|(t-\tau)\fr{\ln(e+|l|)|l|^{\fr12}|\rho_l(\tau)|}{\ln(e+|l|)|l|^{\fr32}}\\
    \nn&\times {\rm A}_{k-l,\ell}(\tau)|\mathcal{F}_v\big[{\bf D}_k[g_{k-l}]\big](\tau,k(t-\tau))|d\tau\bigg]^2dt\\
    \les\nn&\sum_{k\in\Z^3_*}|k|\int_0^{T^*}\sum_{l\in \Z^3_{*}}\fr{1}{\big(\ln(e+|l|)\big)^2|l|^{3}}\int_0^tS_k(t-\tau)|k|^2(t-\tau)^2d\tau\\
    \nn&\times\sum_{l\in \Z^3_{*}}\int_0^t|{\rm B}_{l,0}(\tau)\rho_l(\tau)|^2\big|{\rm A}_{k-l,s}(\tau)\mathcal{F}_v\big[{\bf D}_k[g_{k-l}]\big](\tau,k(t-\tau))\big|^2d\tau dt\\
    \les\nn&\nu^{-1}\sum_{k,l\in\Z^3_*}\int_0^{T^*}|{\rm B}_{l,0}(\tau)\rho_l(\tau)|^2\big\|\la v\ra^2{\rm A}_{k-l,s}(\tau)h_{k-l}(\tau)\big\|^2_{L^2_v}d\tau\\
    \les&\nu^{-1}\|{\bf B}_{0}\rho\|_{L^2_{t,x}}^2\big\|\la v\ra^2{\bf A}_{s}h_{\ne}(\tau)\big\|^2_{L^\infty_tL^2_{x,v}}.
\end{align}

As for $\mathcal{N}_b$, by \eqref{bd:com1}, \eqref{onD} and \eqref{bd:Sk1},
similar to \eqref{est:NaHL} and \eqref{est:NaLH}, we find that
    \begin{align}\label{est:NbHL}
    \mathcal{N}_b^{\rm HL}
    \les\nn&\nu^{-\fr13}\sum_{k,l\in\Z^3_*}|k|^{-\fr23}\int_0^{T^*}|{\rm B}_{l,s}(\tau)\rho_l(\tau)|^2\big\|\la v\ra^2{\rm A}_{k-l,0}(\tau)[{\bf D}_k,\nb_v]{g}_{k-l}(\tau)\big\|_{L^2_v}^2 d\tau\\
    \les&\nu^{-\fr13}\|{\bf B}_{s}\rho\|_{L^2_{t,x}}^2\|\la v\ra^2{\bf A}_{0}h\|_{L^\infty_tL^2_{x,v}}^2,
\end{align}
and
\begin{align}\label{est:NbLH}
    \mathcal{N}_b^{\rm LH}
    \les\nn&\nu^{-\fr13}\sum_{k,l\in\Z^3_*}|k|^{-\fr23}\int_0^{T^*}|{\rm B}_{l,0}(\tau)\rho_l(\tau)|^2\big\|\la v\ra^2{\rm A}_{k-l,s}(\tau)[{\bf D}_k,\nb_v]{g}_{k-l}(\tau)\big\|_{L^2_v}^2 d\tau\\
    \les&\nu^{-\fr13}\|{\bf B}_{0}\rho\|_{L^2_{t,x}}^2\|\la v\ra^2{\bf A}_{s}h_{\ne}\|_{L^\infty_tL^2_{x,v}}^2.
\end{align}

On the other hand, thanks to Corollary \ref{coro-NLK}, we have
\begin{align}\label{es:KNa}
\nn&\bigg\|{\rm B}_{k,\ell}(t)\int_0^t\int_{\R}\bar{\hat{K}}_k(\xi_1)S_k(t-\tau;\xi_*(t,k))\mathcal{F}_v\big[N_{a,k}\big](\tau,\xi_*(t-\tau,k))d\xi_1d\tau\bigg\|_{L^2_tL^2_k}\\
\les&\nu^{-\fr12}\Big(\|{\bf B}_{s}\rho\|_{L^2_{t,x}}\|\la v\ra^2{\bf A}_{0}h\|_{L^\infty_tL^2_{x,v}}+\|{\bf B}_{0}\rho\|_{L^2_{t,x}}\big\|\la v\ra^2{\bf A}_{s}h_{\ne}\big\|_{L^\infty_tL^2_{x,v}}\Big).
\end{align}
Finally, since \eqref{semigroup-est} ensures that $S_k(t-\tau;\xi_*(t,k))$ can serve as $S_k(t-\tau)$, by Minkowski's inequality and \eqref{est:Wk}, similar to \eqref{est:NbHL} and \eqref{est:NbLH}, one deduces that
\begin{align}\label{es:KNb}
\nn&\bigg\|{\rm B}_{k,s}(t)\int_0^t\int_{\R}\hat{K}_k(\xi_1)S_k(t-\tau;\xi_*(t,k))\mathcal{F}_v\big[N_{b,k}\big](\tau,\xi_*(t-\tau,k))d\xi_1d\tau\bigg\|_{L^2_tL^2_k}\\
\le\nn&\bigg\|\int_{\R}|\hat{K}_k(\xi_1)|\Big\|{\rm B}_{k,s}(t)\int_0^tS_k(t-\tau;\xi_*(t,k))\mathcal{F}_v\big[N_{b,k}\big](\tau,\xi_*(t-\tau,k))d\tau\Big\|_{L^2_t} d\xi_1\bigg\|_{L^2_k}\\
\les&\nu^{-\fr16}\left(\|{\bf B}_{s}\rho\|_{L^2_{t,x}}\|\la v\ra^2{\bf A}_{0}h\|_{L^\infty_tL^2_{x,v}}+\|{\bf B}_{0}\rho\|_{L^2_{t,x}}\|\la v\ra^2{\bf A}_{s}h_{\ne}\|_{L^\infty_tL^2_{x,v}}\right).
\end{align}

\noindent\underline{\bf Improvement of \eqref{hypo1}}: Collecting all the estimates from Section \ref{subsec:ini} to Section \ref{subsec:nol}, and noting that 
\begin{align*}
    \nu^{\fr13}\|\la v\ra^m{\bf A}_sh_{\ne}\|_{L^2_tL^2_{x,v}}^2+\nu^{\fr12}\|\nb_v(\la v\ra^m{\bf A}_sh_{\ne})\|_{L^2_tL^2_{x,v}}^2\les\int_0^{T^*}\mathcal{D}^s_{m}(h_{\ne}(t)) dt,\\
    \quad \|\la v\ra^m{\bf A}_sh\|_{L^\infty_tL^2_{x,v}}^2\les\sup_{0\le t\le T^*}\mathcal{E}^s_m(h(t)),
\end{align*}
we find that there exists a positive constant $C_*$, depending on $s$ and $m$ but independent of $\nu$, such that for all $s\ge0$ and $m\in\N$ with $m\ge3$, there holds
\begin{align}\label{bd:Bsrho}
    \|{\bf B}_s\rho\|_{L^2_tL^2_x}\le\nn& C_*\|\la v\ra^m\ln(e+|\nb_x|)(g_{\rm in})_{\ne}\|_{L^2_{x,v}}+C_*\nu^{\fr13}\Big(\int_0^{T^*}\mathcal{D}^s_{m}(h_{\ne}(t)) dt\Big)^{\fr12}\\
    &+C_*\nu^{-\fr12}\|{\bf B}_s\rho\|_{L^2_tL^2_x}\Big( \sup_{0\le t\le T^*}\mathcal{E}^s_m(h_{\ne}(t))+\sup_{0\le t\le T^*}\mathcal{E}^s_m(h_{0}(t))\Big)^{\fr12},
\end{align}
where we have used the fact
\begin{align}\label{bd:initial}
    \|\la v\ra^m\ln(e+|\nb_x|)(h_{\rm in})_{\ne}\|_{L^2_{x,v}}\les \|\la v\ra^m\ln(e+|\nb_x|)(g_{\rm in})_{\ne}\|_{L^2_{x,v}}
\end{align}
due to \eqref{offD}. By \eqref{initial-sta} and the bootstrap hypotheses \eqref{hypo}, we infer from \eqref{bd:Bsrho} that
\begin{align*}
    \|{\bf B}_s\rho\|_{L^2_tL^2_x}\le C_*\eps\nu^{\fr12}+8\nu^{\fr13}{\rm C_2}C_*(\eps\nu^{\fr12})+16\eps{\rm C_1(C_2+C_3)}C_*(\eps\nu^{\fr12}),
\end{align*}
which is sufficient to improve \eqref{hypo1}, provide we take
\begin{align}\label{det:C1}
    {\rm C}_1:=2C_*,\quad \nu\le \nu_1:=\fr{1}{(8{\rm C}_2)^3},\quad \eps\le\eps_1:=\fr{1}{16{\rm C_1(C_2+C_3)}},
\end{align}
where ${\rm C_2}$ and ${\rm C_3}$ will be determined in \eqref{det:C2} and \eqref{det:C3}, respectively.

\subsection{Estimates of the distribution}
This subsection aims to improve the bounds in \eqref{hypo2} and \eqref{hypo3} by employing time-weighted energy estimates and the Fourier multiplier method.
\subsubsection{Multiplier}\label{sec:multi} Inspirited by \cite{DengWuZhang2021}, let us define the multiplier that captures the enhanced dissipation as follows.
Assume that $\varphi\in C^\infty(\R)$ be a real-valued function such that 
\begin{align}\label{prop-varphi}
0\le\varphi\le1,\quad 0\le\varphi'\le\fr14,\quad{\rm and}\quad \varphi'=\fr14\ \ {\rm on}\ \ [-1,1].
\end{align}
Define
\[
\mathcal{M}_1(k,\xi)=1+{\bf 1}_{|k|\ne0}\varphi(\nu^{\fr13}|k|^{-\fr43}k\cdot \xi).
\]
Then 
\begin{subequations}\label{prop-M1}
\begin{gather}
1\le \mathcal{M}_1(k,\xi)\le2,\\
k\cdot\nb_{\xi}\mathcal{M}_1(k,\xi)=\nu^{\fr13}|k|^{\fr23}\varphi'(\nu^{\fr13}|k|^{-\fr43}k\cdot\xi),\\
\label{enhan-dis}\nu|\xi|^2+k\cdot\nb_{\xi}\mathcal{M}_1(k,\xi)\ge\fr14\nu^{\fr13}|k|^{\fr23}.
\end{gather}
\end{subequations}
Now we define the multiplier 
\begin{align}\label{def-M}
\mathcal{M}(k,\xi)=\begin{cases}
{\mathcal{M}_1(k,\xi)},\ \, \quad \quad\quad\  \ \,\quad{\rm if}\  n=1,2;\\
\ln(e+|k|){\mathcal{M}_1(k,\xi)}, \quad{\rm if}\ n=3.
\end{cases}
\end{align}
\subsubsection{Gains of Sobolev regularity}
Now we investigate the evolution of $\mathcal{E}^{s}_{m}(h(t))$. As mentioned above, here we only focus on the case $n=3$. To begin with, let us rewrite \eqref{eq-hatf} as follows
\begin{align}\label{eq-homo-h}
    \pr_t\hat{h}_k(t,\xi)-k\cdot\nb_\xi\hat{h}_k(t,\xi)+\nu|\xi|^2\hat{h}_k(t,\xi)=R_k(t,\xi),
\end{align}
where
\begin{align}\label{def-R_k}
R_k(t,\xi):=\mathcal{F}_v\big[\mathbf{D}_k\big[N[g_k]\big]\big](\xi)+\nu\mathcal{F}_v\big[[\mathbf{D}_k,L][g_k]\big](\xi)-\nu\xi\cdot\nb_\xi\hat{h}_k(t,\xi).
\end{align}
Then 
\begin{align*}
    &\pr_t\big(\mathcal{M}(k,\xi)\pr_\xi^\al\hat{h}_k\big)-k\cdot\nb_\xi\big(\mathcal{M}(k,\xi)\pr_\xi^\al\hat{h}_k\big)+k\cdot\nb_\xi\mathcal{M}(k,\xi)\pr_{\xi}^\al\hat{h}_k+\nu|\xi|^2\mathcal{M}(k,\xi)\pr_\xi^\al\hat{h}_k\\
    =&\nu\mathcal{M}(k,\xi)[|\xi|^2,\pr_\xi^\al]\hat{h}_k+\mathcal{M}(k,\xi)\pr_\xi^\al R_k(t,\xi).
\end{align*}
By \eqref{prop-varphi} and \eqref{prop-M1}, we infer that
\begin{align*}
    &k\cdot\nb_\xi\mathcal{M}(k,\xi)\pr_\xi^\al\hat{h}_k\mathcal{M}(k,\xi)\pr_\xi^\al\bar{\hat{h}}_k+\nu|\xi|^2\big|\mathcal{M}(k,\xi)\pr_\xi^\al\hat{h}_k\big|^2\\
    =&\left(\frac{k\cdot\nb_\xi\mathcal{M}_1(k,\xi)}{\mathcal{M}_1(k,\xi)}+\nu|\xi|^2\right)\big|\mathcal{M}(k,\xi)\pr_\xi^\al\hat{h}_k\big|^2\\
    \ge&\fr18\nu^{\fr13}|k|^{\fr23}\big|\mathcal{M}(k,\xi)\pr_\xi^\al\hat{h}_k\big|^2+\fr12\nu|\xi|^2\big|\mathcal{M}(k,\xi)\pr_\xi^\al\hat{h}_k\big|^2.
\end{align*}
Thus, we have
\begin{align*}
    &\fr{d}{dt}\big\||\xi|^\ell \mathcal{M}(\pr_\xi^\al \hat{h}_k)\big\|_{L^2_\xi}^2+\nu\big\||\xi|^{\ell+1}\mathcal{M}(\pr_\xi^\al \hat{h}_k)\big\|_{L^2_\xi}^2+\fr{1}{4}\nu^{\fr13}\big\||k|^{\fr13}|\xi|^\ell \mathcal{M}(\pr_\xi^\al \hat{h}_k)\big\|_{L^2_\xi}^2\\
    \le&2\nu\frak{Re}\int_\xi |\xi|^\ell\mathcal{M}(k,\xi)[|\xi|^2,\pr_\xi^\al]\hat{h}_k\big(|\xi|^\ell\mathcal{M}(k,\xi)\pr_\xi^\al\bar{\hat{h}}_k\big)d\xi\\
    &+2\frak{Re}\int_\xi |\xi|^\ell\mathcal{M}(k,\xi)\pr_\xi^\al R_k(t,\xi)\big(|\xi|^\ell\mathcal{M}(k,\xi)\pr_\xi^\al\bar{\hat{h}}_k\big)d\xi\\
     &-2\ell\frak{Re}\int_{\xi}k\cdot\xi |\xi|^{2\ell-2}\big|\mathcal{M}(k,\xi)\pr_\xi^\al\hat{h}_k\big|^2d\xi\\
    =:&G_{k,1}^{\ell,\al}(t)+G_{k,2}^{\ell,\al}(t)+G_{k,3}^{\ell,\al}(t),
\end{align*}
and
\begin{align*}
    &\fr{d}{dt}\big\||k|^{\fr\ell3} \mathcal{M}(\pr_\xi^\al \hat{h}_k)\big\|_{L^2_\xi}^2+\nu\big\||\xi||k|^{\fr{\ell}{3}}\mathcal{M}(\pr_\xi^\al \hat{h}_k)\big\|_{L^2_\xi}^2+\fr{1}{4}\nu^{\fr13}\big\||k|^{\fr{\ell+1}{3}} \mathcal{M}(\pr_\xi^\al \hat{h}_k)\big\|_{L^2_\xi}^2\\
    \le&2\nu\frak{Re}\int_\xi |k|^{\fr\ell3}\mathcal{M}(k,\xi)[|\xi|^2,\pr_\xi^\al]\hat{h}_k\big(|k|^{\fr\ell3}\mathcal{M}(k,\xi)\pr_\xi^\al\bar{\hat{h}}_k\big)d\xi\\
    &+2\frak{Re}\int_\xi |k|^{\fr\ell3}\mathcal{M}(k,\xi)\pr_\xi^\al R_k(t,\xi)\big(|k|^{\fr\ell3}\mathcal{M}(k,\xi)\pr_\xi^\al\bar{\hat{h}}_k\big)d\xi\\
    =:&H_{k,1}^{\ell,\al}(t)+H_{k,2}^{\ell,\al}(t).
\end{align*}
It follows from the above two inequalities that
\begin{align}\label{es:en-s-al1}
    \fr{d}{dt}\frak{E}^s_{\al}(h(t))+\frak{D}^{s}_{\al}(h(t))
    =\nn&\sum_{1\le\ell\le s}\sum_{k\in\Z^3}\Big({\ell\kappa \nu}(\kappa\nu t)^{\ell-1}\big\||\nb_v|^\ell\mathcal{M}(v^\al h_{k}(t))\big\|_{L^2_v}^2\\
    \nn&\qquad\qquad\quad+C_s\ell\kappa \nu^{\fr13}(C_s\kappa\nu^{\fr13} t)^{\ell-1}\big\||\nb_x|^{\fr{\ell}{3}}\mathcal{M}(v^\al h_{k}(t))\big\|_{L^2_v}^2\Big)\\
    &+\sum_{0\le\ell\le s}\sum_{k\in\Z^3}\sum_{1\le j\le3}(\kappa \nu t)^\ell G^{\ell,\al}_{k,j}+\sum_{0\le\ell\le s}\sum_{k\in\Z^3}\sum_{1\le j\le2}(C_s\kappa \nu^{\fr13} t)^\ell H^{\ell,\al}_{k,j}.
\end{align}

We first bound the sum involving $G^{\ell,\al}_{k,3}$. Clearly, $G^{\ell,\al}_{k,3}=0$ if $k=0$. For $k\ne0$,  interpolating between the two enhanced dissipation terms in \eqref{def-Dis-al}:
\[
|k|^{\fr23}|\xi|^{\ell-1}=\big(|k|^{\fr13}|\xi|^\ell\big)^{1-\fr{1}{\ell}}\big(|k|^{\fr{\ell+1}{3}}\big)^{\fr{1}{\ell}},
\]
and choosing $C_s$ so large that
\begin{align}\label{Csig}
\fr{2s}{C_s}\le \fr{1}{32},
\end{align}
we find that
\begin{align*}
    \sum_{\substack{0\le\ell\le s\\ k\in\Z^3}}(\kappa\nu t)^\ell G^{\ell,\al}_{k,3}
    \le&2\sum_{\substack{1\le\ell\le s\\ k\in\Z^3_*}}\ell (\kappa\nu t)^\ell\big\||k|^{\fr23}|\xi|^{\ell-1}\mathcal{M}(k,\xi)\pr_\xi^\al\hat{h}_k\big\|_{L^2_\xi}\big\||k|^{\fr13}|\xi|^\ell\mathcal{M}(k,\xi)\pr_\xi^\al{\hat{h}}_k\big\|_{L^2_\xi}\\
    \le&\sum_{\substack{1\le\ell\le s\\ k\in\Z^3_*}}\fr{2\ell}{\sqrt{C_s}} \Big(\nu^\fr{1}{6}(C_s\kappa\nu^{\fr13}t)^{\fr{\ell}{2}}\big\||k|^{\fr{\ell+1}{3}}\mathcal{M}(k,\xi)\pr_\xi^\al\hat{h}_k\big\|_{L^2_\xi}\Big)^{\fr{1}{\ell}}\\
    &\times\Big(\nu^{\fr16}(\kappa\nu t)^{\fr{\ell}{2}}\big\||k|^{\fr13}|\xi|^{\ell}\mathcal{M}(k,\xi)\pr_\xi^\al\hat{h}_k\big\|_{L^2_\xi}\Big)^{\fr{\ell-1}{\ell}+1}\\
    \le&\fr{2s}{\sqrt{C_s}}\sum_{\substack{1\le\ell\le s\\ k\in\Z^3_*}} \Big[\nu^\fr{1}{3}(C_s\kappa\nu^{\fr13}t)^{\ell}\big\||k|^{\fr{\ell+1}{3}}\mathcal{M}(k,\xi)\pr_\xi^\al\hat{h}_k\big\|_{L^2_\xi}^2\\
    &+\nu^{\fr13}(\kappa\nu t)^{\ell}\big\||k|^{\fr13}|\xi|^{\ell}\mathcal{M}(k,\xi)\pr_\xi^\al\hat{h}_k\big\|_{L^2_\xi}^2\Big]
    \le \fr18\frak{D}^s_{\al}(h_{\ne}(t)).
\end{align*}

Next, choosing $\kappa$ so small that
\begin{align}\label{kappa}
    C_ss\kappa\le\fr{1}{32},
\end{align}
one easily deduces that
\begin{align*}
    &\sum_{1\le\ell\le s}\sum_{k\in\Z^3}\Big({\ell\kappa \nu}(\kappa\nu t)^{\ell-1}\big\||\nb_v|^\ell\mathcal{M}(v^\al h_{k}(t))\big\|_{L^2_v}^2\\
    \nn&+C_s\ell\kappa \nu^{\fr13}(C_s\kappa\nu^{\fr13} t)^{\ell-1}\big\||\nb_x|^{\fr{\ell}{3}}\mathcal{M}(v^\al h_{k}(t))\big\|_{L^2_v}^2\Big)\\
    \le&s\kappa\sum_{0\le\ell\le s-1}\sum_{k\in\Z^3} \nu(\kappa\nu t)^{\ell}\big\||\nb_v|^{\ell+1}\mathcal{M}(v^\al h_{k}(t))\big\|_{L^2_v}^2\\
    &+C_ss\kappa\sum_{0\le\ell\le s-1}\sum_{k\in\Z^3} \nu^{\fr13}(C_s\kappa\nu^{\fr13} t)^{\ell}\big\||\nb_x|^{\fr{\ell+1}{3}}\mathcal{M}(v^\al h_{k}(t))\big\|_{L^2_v}^2\Big)
    \le \fr18\frak{D}^s_{\al}(h(t)).
\end{align*}
Substituting the above two estimates into \eqref{es:en-s-al1}, we are led to
\begin{align}\label{es:en-s-al2}
    \nn&\fr{d}{dt}\frak{E}^s_{\al}(h_{\ne}(t))+\fr34\frak{D}^{s}_{\al}(h_{\ne}(t))\\
    \le&\sum_{0\le\ell\le s}\sum_{k\in\Z^3_*}\sum_{1\le j\le2}(\kappa \nu t)^\ell G^{\ell,\al}_{k,j}+\sum_{0\le\ell\le s}\sum_{k\in\Z^3_*}\sum_{1\le j\le2}(C_s\kappa \nu^{\fr13} t)^\ell H^{\ell,\al}_{k,j}.
\end{align}
and
\begin{align}\label{es:en-s-al2-0}
    \nn&\fr{d}{dt}\frak{E}^s_{\al}(h_0(t))+\fr34\frak{D}^{s}_{\al}(h_0(t))\\
    \le&\sum_{0\le\ell\le s}\sum_{1\le j\le2}(\kappa \nu t)^\ell G^{\ell,\al}_{0,j}+\sum_{0\le\ell\le s}\sum_{1\le j\le2}(C_s\kappa \nu^{\fr13} t)^\ell H^{\ell,\al}_{0,j}.
\end{align}
In the following, we bound the right-hand side of \eqref{es:en-s-al2}.

\noindent $\diamond$ \underline{Treatments of $G_{k,1}^{\ell,\al}$ and $H_{k,1}^{\ell,\al}$}. 
Clearly, 
\[
[\pr_\xi^\al,|\xi|^2]\hat{h}_k=2\sum_{i=1}^3\al_i\xi_i\pr_\xi^{\al-e_i}\hat{h}_k+\sum_{i=1}^3\al_i(\al_i-1)\pr_\xi^{\al-2e_i}\hat{h}_k.
\]
The non-zero mode $h_{\ne}$ and the zero mode $h_0$ can be treated in different ways, since the former exhibits an enhanced dissipation effect. More precisely, for the non-zero mode $h_{\ne}$,
\begin{align*}
    \nn&\sum_{0\le\ell\le s}\sum_{k\in\Z^3_*}(\kappa\nu t)^\ell G^{\ell,\al}_{k,1}\\
    \le\nn&4\nu\sum_{i=1}^3\al_i\sum_{0\le\ell\le s}(\kappa\nu t)^\ell\big\||\nb_v|^{\ell}\mathcal{M}(v^{\al-e_i}h_{\ne})\big\|_{L^2_{x,v}}\big\||\nb_v|^{\ell+1}\mathcal{M}(v^\al h_{\ne})\big\|_{L^2_{x,v}}\\
    &+2\nu\sum_{i=1}^3\al_i(\al_i-1)\sum_{0\le\ell\le s}(\kappa\nu t)^\ell\big\||\nb_v|^{\ell}\mathcal{M}(v^{\al-2e_i}h_{\ne})\big\|_{L^2_{x,v}}\big\||\nb_v|^{\ell}\mathcal{M}(v^\al h_{\ne})\big\|_{L^2_{x,v}}\\
    \le\nn&\fr{1}{16}\sum_{0\le\ell\le s}(\kappa\nu t)^\ell\Big(\nu\big\||\nb_v|^{\ell+1}\mathcal{M}(v^\al h_{\ne})\big\|_{L^2_{x,v}}^2\Big)+C_{|\al|}\nu\sum_{i=1}^3\sum_{0\le\ell\le s}(\kappa\nu t)^{\ell}\big\||\nb_v|^\ell\mathcal{M}(v^{\al-e_i}h_{\ne})\big\|_{L^2_{x,v}}^2\\
    &+C_{|\al|}\nu \sum_{i=1}^3\sum_{0\le\ell\le s}(\kappa\nu t)^{\ell}\big\||\nb_v|^\ell\mathcal{M}(v^{\al-2e_i}h_{\ne})\big\|_{L^2_{x,v}}\big\||\nb_v|^\ell\mathcal{M}(v^{\al}h_{\ne})\big\|_{L^2_{x,v}},
\end{align*}
and similarly,
\begin{align*}
    \nn&\sum_{0\le\ell\le s}\sum_{k\in\Z^3_*}(C_s\kappa\nu^{\fr13} t)^\ell H^{\ell,\al}_{k,1}
    \le\fr{1}{64}\sum_{0\le\ell\le s}(C_s\kappa\nu^{\fr13} t)^\ell\Big(\nu\big\||\nb_v||\nb_x|^{\fr{\ell}{3}}\mathcal{M}(v^\al h_{\ne})\big\|_{L^2_{x,v}}^2\Big)\\
    \nn&+C_{|\al|}\nu\sum_{i=1}^3\sum_{0\le\ell \le s}(C_s\kappa\nu^{\fr13} t)^{\ell}\big\||\nb_x|^{\fr{\ell}{3}}\mathcal{M}(v^{\al-e_i}h_{\ne})\big\|_{L^2_{x,v}}^2\\
    &+C_{|\al|}\nu \sum_{i=1}^3\sum_{0\le\ell \le s}(C_s\kappa\nu^{\fr13} t)^{\ell}\big\||\nb_x|^{\fr{\ell}{3}}\mathcal{M}(v^{\al-2e_i}h_{\ne})\big\|_{L^2_{x,v}}\big\||\nb_x|^{\fr{\ell}{3}}\mathcal{M}(v^{\al}h_{\ne})\big\|_{L^2_{x,v}}.
\end{align*}
It follows from the above two estimates that
\begin{align}\label{es:com-v1}
    \nn&\sum_{0\le\ell\le s}\sum_{k\in\Z^3_*}(\kappa\nu t)^\ell G^{\ell,\al}_{k,1}+\sum_{0\le\ell\le s}\sum_{k\in\Z^3_*}(C_s\kappa\nu^{\fr13} t)^\ell H^{\ell,\al}_{k,1}\\
    \le\nn&\fr{1}{8}\frak{D}_\al^s(h_{\ne}(t))+C_{|\al|}\nu^{\fr23}\sum_{i=1}^3\frak{D}_{\al-e_i}^s(h_{\ne}(t))\\
    &+C_{|\al|}\nu^{\fr23}\sum_{i=1}^3\big(\frak{D}_{\al-2e_i}^s(h_{\ne}(t))\big)^{\fr12}\big(\frak{D}_{\al}^s(h_{\ne}(t))\big)^{\fr12}.
\end{align}
However, for the zero mode $h_0$, enhanced dissipation is unavailable. Now \eqref{es:com-v1} is replaced by  
\begin{align}\label{es:com-v2}
    \nn&\sum_{0\le\ell\le s}(\kappa\nu t)^\ell G^{\ell,\al}_{0,1}+\sum_{0\le\ell\le s}(C_s\kappa\nu^{\fr13} t)^\ell H^{\ell,\al}_{0,1}\\
    \le&\fr{1}{8}\frak{D}_\al^s(h_{0}(t))+C_{|\al|}\nu\sum_{i=1}^3\frak{E}_{\al-e_i}^s(h_{0}(t))+C_{|\al|}\nu\sum_{i=1}^3\big(\frak{E}_{\al-2e_i}^s(h_{0}(t))\big)^{\fr12}\big(\frak{E}_{\al}^s(h_{0}(t))\big)^{\fr12}.
\end{align}

\noindent $\diamond$ \underline{Treatments of $G_{k,2}^{\ell,\al}$ and $H_{k,2}^{\ell,\al}$}. 
According to \eqref{def-R_k} , we  write
\begin{align*}
G_{k,2}^{\ell,\al}:=G_{k,2;1)}^{\ell,\al}+G_{k,2;2)}^{\ell,\al}+G_{k,2;3)}^{\ell,\al},
\end{align*}
and
\[
H_{k,2}^{\ell,\al}:=H_{k,2;1)}^{\ell,\al}+H_{k,2;2)}^{\ell,\al}+H_{k,2;3)}^{\ell,\al}.
\]
We first deal with the nonlinear estimates stemming from $G_{k,2;1)}^{\ell,\al}$ and $H_{k,2;1)}^{\ell,\al}$. Noting that the wave operator ${\bf D}_k$ reduces to the identity operator when $k=0$, we split the estimates into the cases $k\ne0$ and $k=0$.

{\it Case 1: $k\ne0$.} By \eqref{offD}, we have
\begin{align}\label{est:Dnbg}
    \big\||\nb_v|^\ell(v^{\al}{\bf D}_k[\nb_vg_{k-l}])\big\|_{L^2_v}\les\nn& \big\||\nb_v|^\ell(v^{\al}\nb_vg_{k-l})\big\|_{L^2_v}+\sum_{|\beta|\le\ell}\big\|\pr_v^{\beta}(\la v \ra^{2}\nb_vg_{k-l})\big\|_{L^2_v}\\
    \les&\big\||\nb_v|^{\ell+1}\big(\la v \ra^{\max\{2,|\al|\}} g_{k-l}\big)\big\|_{L^2_v}+\sum_{|\beta|\le\ell}\big\|\pr_v^\beta(\la v \ra^{\max\{2,|\al|-1\}} g_{k-l})\big\|_{L^2_v}.
\end{align}
This, together with \eqref{log-produ} with $s=0$, implies that
\begin{align}
    \nn&\sum_{0\le\ell\le s}\sum_{k\in\Z^3_*}(\kappa \nu t)^\ell G_{k,2;1)}^{\ell,\al}\\
    \les\nn&\sum_{0\le\ell\le s}(\kappa \nu t)^\ell\sum_{k\in\Z^3_*} \sum_{l\in\Z^3_*}{\rm A}_{k,0}\fr{|\rho_l|}{|l|}\big\||\nb_v|^{\ell}\big(v^\al \mathbf{D}_k[\nb_vg_{k-l}]\big)\big\|_{L^2_v}\big\||\nb_v|^{\ell}\mathcal{M}(v^\al {h}_k)\big\|_{L^2_v}\\
    \les\nn&\sum_{0\le\ell\le s}(\kappa \nu t)^\ell\sum_{k\in\Z^3_*} \sum_{l\in\Z^3_*}{\rm A}_{k,0}\fr{|\rho_l|}{|l|}\Big(\big\||\nb_v|^{\ell+1}\big(\la v \ra^{\max\{2,|\al|\}} g_{k-l}\big)\big\|_{L^2_v}\\
    \nn&+\sum_{|\beta|\le\ell}\big\|\pr_v^\beta(\la v \ra^{\max\{2,|\al|-1\}} g_{k-l})\big\|_{L^2_v}\Big)\big\||\nb_v|^{\ell}\mathcal{M}(v^\al {h}_k)\big\|_{L^2_v}\\
    \les\nn&\sum_{0\le\ell\le s}(\kappa \nu t)^\ell\|{\bf B}_0\rho\|_{L^2_x}\Big(\big\|{\bf A}_0|\nb_v|^{\ell+1}\big(\la v \ra^{\max\{2,|\al|\}} g\big)\big\|_{L^2_{x,v}}\\
    \nn&+\sum_{|\beta|\le\ell}\big\|{\bf A}_0\pr_v^\beta(\la v \ra^{\max\{2,|\al|-1\}} g)\big\|_{L^2_{x,v}}\Big)\big\||\nb_v|^{\ell}\mathcal{M}(v^\al {h}_{\ne})\big\|_{L^2_{x,v}}.
\end{align}
Furthermore, in view of \eqref{onD}, using the facts $\nu t\le1$ and $g_0=h_0$, it holds that
\begin{align}
    \nn&(\kappa\nu t)^{\fr{\ell}{2}}\Big(\big\|{\bf A}_0|\nb_v|^{\ell+1}\big(\la v \ra^{\max\{2,|\al|\}} g\big)\big\|_{L^2_{x,v}}
    +\sum_{|\beta|\le\ell}\big\|{\bf A}_0\pr_v^\beta(\la v \ra^{\max\{2,|\al|-1\}} g)\big\|_{L^2_{x,v}}\Big)\\
    \les \nn&(\kappa\nu t)^{\fr{\ell}{2}}\big\||\nb_v|^{\ell+1}\big(\la v \ra^{\max\{2,|\al|\}} g_0\big)\big\|_{L^2_{v}}
    +\sum_{|\beta|\le\ell}(\kappa\nu t)^{\fr{|\beta|}{2}}\big\|\pr_v^\beta(\la v \ra^{\max\{2,|\al|-1\}} g_0)\big\|_{L^2_{v}}\\
    \nn&+(\kappa\nu t)^{\fr{\ell}{2}}\big\||\nb_v|^{\ell+1}\mathcal{M}\big(\la v \ra^{\max\{2,|\al|\}} h_{\ne}\big)\big\|_{L^2_{x,v}}
    +\sum_{|\beta|\le\ell}(\kappa\nu t)^{\fr{|\beta|}{2}}\big\|\pr_v^\beta\mathcal{M}(\la v \ra^{\max\{2,|\al|\}} h_{\ne})\big\|_{L^2_{x,v}}.
\end{align}
Moreover,
\begin{align*}
    \sum_{0\le\ell\le s}(\kappa\nu t)^\ell \big\||\nb_v|^{\ell}\mathcal{M}(v^\al {h}_{\ne})\big\|_{L^2_{x,v}}^2\le \frak{E}^s_\al(h_{\ne}(t))\le 4\nu^{-\fr13}\frak{D}^s_\al (h_{\ne}(t)).
\end{align*}
It follows from the above three estimates that
\begin{align}\label{es:G21-ne}
\sum_{0\le\ell\le s}\sum_{k\in\Z^3_*}(\kappa \nu t)^\ell G_{k,2;1)}^{\ell,\al}
\les\nn&\nu^{-\fr12}\big(\mathcal{D}^s_{\max\{2,|\al|\}}(h_0(t))\big)^{\fr12}\big(\frak{E}^s_\al(h_{\ne}(t))\big)^{\fr12}\|{\bf B}_0\rho\|_{L^2_x}\\
\nn&+\nu^{-\fr16}\big(\mathcal{E}^s_{\max\{2,|\al|-1\}}(h_0(t))\big)^{\fr12}\big(\frak{D}^s_\al(h_{\ne}(t))\big)^{\fr12}\|{\bf B}_0\rho\|_{L^2_x}\\
\nn&+\nu^{-\fr12}\big(\mathcal{D}^s_{\max\{2,|\al|\}}(h_{\ne}(t))\big)^{\fr12}\big(\frak{E}^s_\al(h_{\ne}(t))\big)^{\fr12}\|{\bf B}_0\rho\|_{L^2_x}\\
&+\nu^{-\fr16}\big(\mathcal{E}^s_{\max\{2,|\al|\}}(h_{\ne}(t))\big)^{\fr12}\big(\frak{D}^s_\al(h_{\ne}(t))\big)^{\fr12}\|{\bf B}_0\rho\|_{L^2_x}.
\end{align}

The nonlinearity in $H_{k,2;1)}^{\ell,\al}$ can be treated in a similar manner. In fact,
using \eqref{offD}, \eqref{est:Dnbg} with $\ell=0$, \eqref{log-produ}, we find that
\begin{align*}
    &\sum_{0\le\ell\le s}\sum_{k\in\Z^3_*}(C_s\kappa \nu^{\fr13} t)^\ell H_{k,2;1)}^{\ell,\al}\\
    \les&\sum_{k\in\Z^3_*}\sum_{l\in\Z^3_*}{\rm A}_{k,s}(t) \fr{|\rho_l|}{|l|}\Big\|\big(v^\al \mathbf{D}_k[\nb_vg_{k-l}]\big)\Big\|_{L^2_v}\big\|{\rm A}_{k,s}(t)(v^\al {h}_k)\big\|_{L^2_v}\\
    \les &\|{\bf B}_0(t)\rho\|_{L^2_x}\Big(\big\||\nb_v|{\bf A}_s(t)\big(\la v \ra^{\max\{2,|\al|\}} g_{\ne}\big)\big\|_{L^2_{x,v}}\\
    &\qquad\qquad\qquad\qquad\qquad+\big\|{\bf A}_s(\la v \ra^{\max\{2,|\al|-1\}} g_{\ne })\big\|_{L^2_{x,v}}\Big)\|{\bf A}_s(t)(v^\al h_{\ne})\|_{L^2_{x,v}}\\
    &+\|{\bf B}_s(t)\rho\|_{L^2_x}\Big(\big\||\nb_v|{\bf A}_0(t)\big(\la v \ra^{\max\{2,|\al|\}} g\big)\big\|_{L^2_{x,v}}\\
    &\qquad\qquad\qquad\qquad\qquad+\big\|{\bf A}_0(t)(\la v \ra^{\max\{2,|\al|-1\}} g)\big\|_{L^2_{x,v}}\Big)\|{\bf A}_s(t)(v^\al h_{\ne})\|_{L^2_{x,v}}.
\end{align*}
Then noting that
\begin{align*}
    \|{\bf A}_s(v^\al h_{\ne})\|_{L^2_{x,v}}\le \big(\frak{E}^s_\al(h_{\ne}(t))\big)^{\fr12}\le 2\nu^{-\fr16}\big(\frak{D}^s_\al (h_{\ne}(t))\big)^{\fr12},
\end{align*}
using \eqref{onD}, similar to \eqref{es:G21-ne}, we have
\begin{align}\label{es:H21-ne}
    \sum_{0\le\ell\le s}\sum_{k\in\Z^3_*}(C_s\kappa \nu^{\fr13} t)^\ell H_{k,2;1)}^{\ell,\al}
    \les \nn&\nu^{-\fr12}\big(\mathcal{D}^s_{\max\{2,|\al|\}}(h_{\ne}(t))\big)^{\fr12}\big(\frak{E}^s_\al(h_{\ne}(t))\big)^{\fr12}\|{\bf B}_0\rho\|_{L^2_x}\\
\nn&+\nu^{-\fr16}\big(\mathcal{E}^s_{\max\{2,|\al|\}}(h_{\ne}(t))\big)^{\fr12}\big(\frak{D}^s_\al(h_{\ne}(t))\big)^{\fr12}\|{\bf B}_0\rho\|_{L^2_x}\\
    \nn&+\nu^{-\fr12}\big(\mathcal{D}^0_{\max\{2,|\al|\}}(h_{\ne}(t))\big)^{\fr12}\big(\frak{E}^s_\al(h_{\ne}(t))\big)^{\fr12}\|{\bf B}_s(t)\rho\|_{L^2_x}\\
\nn&+\nu^{-\fr16}\big(\mathcal{E}^0_{\max\{2,|\al|\}}(h_{\ne}(t))\big)^{\fr12}\big(\frak{D}^s_\al(h_{\ne}(t))\big)^{\fr12}\|{\bf B}_s(t)\rho\|_{L^2_x}\\
    \nn&+\nu^{-\fr12}\big(\mathcal{D}^0_{\max\{2,|\al|\}}(h_{0}(t))\big)^{\fr12}\big(\frak{E}^s_\al(h_{\ne}(t))\big)^{\fr12}\|{\bf B}_s(t)\rho\|_{L^2_x}\\
&+\nu^{-\fr16}\big(\mathcal{E}^0_{\max\{2,|\al|\}}(h_{0}(t))\big)^{\fr12}\big(\frak{D}^s_\al(h_{\ne}(t))\big)^{\fr12}\|{\bf B}_s(t)\rho\|_{L^2_x}.
\end{align}

{\it Case 2: $k=0$.} This case is much easier to handle, since the nonlinear term ${\bf D}_k[N[g_k]]$ now reduces to $\sum_{l\in\Z^3_{*}}\rho_l\fr{il}{|l|^2}\cdot\nb_vg_{-l}$.  We skip the details and state the result here:
\begin{align}\label{es:GH21-0}
    \nn&\sum_{0\le\ell\le s}(\kappa \nu t)^\ell G_{0,2;1)}^{\ell,\al}+\sum_{0\le\ell\le s}(C_s\kappa \nu^{\fr13} t)^\ell H_{0,2;1)}^{\ell,\al}\\
    \les\nn&\nu^{-\fr12}\big(\mathcal{D}^s_{\max\{2,|\al|\}}(h_{\ne}(t))\big)^{\fr12}\big(\frak{E}^s_\al(h_{0}(t))\big)^{\fr12}\|\rho\|_{L^2_x}\\
&+\nu^{-\fr16}\big(\mathcal{D}^s_{\max\{2,|\al|-1\}}(h_{\ne}(t))\big)^{\fr12}\big(\frak{E}^s_\al(h_{0}(t))\big)^{\fr12}\|\rho\|_{L^2_x}.
\end{align}

Next we investigate $G^{\ell,\al}_{k,2;2)}$ and $H^{\ell,\al}_{k,2;2)}$. Note first that $G^{\ell,\al}_{k,2;2)}$ and $H^{\ell,\al}_{k,2;2)}$ vanish when $k=0$. Here we use the notation introduced in \eqref{com:DL}.   Thanks to \eqref{bd:com2}, \eqref{bd:com3} and \eqref{onD}, using the fact $\nu t\le1$, we have
\begin{align}\label{est:Gk22}
\nn&\sum_{0\le\ell\le s}\sum_{k\in\Z^3_*}(\kappa \nu t)^\ell G^{\ell,\al}_{k,2;2)}\\
=\nn&2\nu\sum_{i=1}^2\sum_{0\le\ell\le s}\sum_{k\in\Z^3_*}(\kappa \nu t)^\ell\frak{Re}\int_{\xi}|\xi|^\ell\mathcal{M}(k,\xi)\pr_{\xi}^\al \mathcal{F}_v[{\bf com}_{k,i}]\big(|\xi|^\ell\mathcal{M}(k,\xi)\pr_\xi^\al\bar{\hat{h}}_k\big)d\xi\\
\les\nn&\nu \sum_{i=1}^2\sum_{0\le\ell\le s}\sum_{k\in\Z^3_*}(\kappa \nu t)^\ell\ln(e+|k|)\big\||\nb_v|^\ell (v^\al {\bf com}_{k,i})\big\|_{L^2_v}\big\||\nb_v|^\ell\mathcal{M}(v^\al h_k)\big\|_{L^2_v}\\
\les\nn&\nu \sum_{0\le\ell\le s}\sum_{|\beta|\le\ell+1}(\kappa \nu t)^\ell\big\|\mathcal{M}\pr_v^\beta (\la v\ra^2h_{\ne})\big\|_{L^2_{x,v}}\big\||\nb_v|^\ell\mathcal{M}(v^\al h_{\ne})\big\|_{L^2_{x,v}}\\
\les\nn&\nu^{\fr13} \sum_{0\le\ell\le s}\Big(\nu^{\fr12}(\kappa \nu t)^{\fr{\ell}{2}}\big\||\nb_v|^{\ell+1}\mathcal{M}(\la v\ra^2h_{\ne})\big\|_{L^2_{x,v}}\Big)\Big(\nu^{\fr16}(\kappa \nu t)^{\fr{\ell}{2}}\big\||\nb_v|^\ell\mathcal{M}(v^\al h_{\ne})\big\|_{L^2_{x,v}}\Big)\\
\nn&+\nu^{\fr23} \sum_{0\le\ell\le s}\sum_{|\beta|\le \ell}\Big(\nu^{\fr16}(\kappa \nu t)^{\fr{|\beta|}{2}}\big\|\mathcal{M}\pr_v^\beta (\la v\ra^2h_{\ne})\big\|_{L^2_{x,v}}\Big)\Big(\nu^{\fr16}(\kappa \nu t)^{\fr{\ell}{2}}\big\||\nb_v|^\ell\mathcal{M}(v^\al h_{\ne})\big\|_{L^2_{x,v}}\Big)\\
\les&\nu^{\fr13}\big(\mathcal{D}^s_{2}(h_{\ne}(t))\big)^{\fr12}\big(\frak{D}^s_{\al}(h_{\ne}(t))\big)^{\fr12}
+\nu^{\fr23}\big(\mathcal{D}^s_{2}(h_{\ne}(t))\big)^{\fr12}\big(\frak{D}^s_{\al}(h_{\ne}(t))\big)^{\fr12}.
\end{align}
Similarly, we have
\begin{align}\label{est:Hk22}
\nn&\sum_{0\le\ell\le s}\sum_{k\in\Z^3_*}(C_s\kappa \nu^{\fr13} t)^\ell H^{\ell,\al}_{k,2;2)}\\
\les\nn&\nu \sum_{i=1}^2\sum_{0\le\ell\le s}\sum_{k\in\Z^3_*}(C_s\kappa \nu^{\fr13} t)^\ell\ln(e+|k|)\big\||k|^{\fr{\ell}{3}} (v^\al {\bf com}_{k,i})\big\|_{L^2_v}\big\||k|^{\fr{\ell}{3}}\mathcal{M}(v^\al h_k)\big\|_{L^2_v}\\
\les\nn&\nu \sum_{0\le\ell\le s}\sum_{|\beta|\le1}(C_s\kappa \nu^{\fr13} t)^\ell\big\||\nb_x|^{\fr{\ell}{3}}\mathcal{M}\pr_v^\beta (\la v\ra^2h_{\ne})\big\|_{L^2_{x,v}}\big\||\nb_x|^{\fr{\ell}{3}}\mathcal{M}(v^\al h_{\ne})\big\|_{L^2_{x,v}}\\
\les\nn&\nu^{\fr13} \sum_{0\le\ell\le s}\Big(\nu^{\fr12}(C_s\kappa \nu^{\fr13} t)^{\fr{\ell}{2}}\big\||\nb_v||\nb_x|^{\fr{\ell}{3}}\mathcal{M} (\la v\ra^2h_{\ne})\big\|_{L^2_{x,v}}\Big)\\
\nn&\qquad\ \, \quad\times\Big(\nu^{\fr16}(C_s\kappa \nu^{\fr13} t)^{\fr{\ell}{2}}\big\||\nb_x|^{\fr{\ell}{3}}\mathcal{M}(v^\al h_{\ne})\big\|_{L^2_{x,v}}\Big)\\
\nn&+\nu^{\fr23} \sum_{0\le\ell\le s}\Big(\nu^{\fr16}(C_s\kappa \nu^{\fr13} t)^{\fr{\ell}{2}}\big\||\nb_x|^{\fr{\ell}{3}}\mathcal{M} (\la v\ra^2h_{\ne})\big\|_{L^2_{x,v}}\Big)\\
\nn&\qquad\ \ \quad\times\Big(\nu^{\fr16}(C_s\kappa \nu^{\fr13} t)^{\fr{\ell}{2}}\big\||\nb_x|^{\fr{\ell}{3}}\mathcal{M}(v^\al h_{\ne})\big\|_{L^2_{x,v}}\Big)\\
\les&\nu^{\fr13}\big(\mathcal{D}_2^s(h_{\ne}(t))\big)^{\fr12}\big(\frak{D}_{\alpha}^s(h_{\ne}(t))\big)^{\fr12}.
\end{align}

Finally, we turn to estimate $G^{\ell,\al}_{k,2;3)} $ and $H^{\ell,\al}_{k,2;3)}$. We split the estimates into the cases $k\ne$ and $k=0$ as above. 

{\it Case 1: $k\ne0$.}
Noting that
\[
\pr_\xi^\al(\xi\cdot\nb_\xi\hat{h}_k)=\xi\cdot\nb_\xi\pr_\xi^\al\hat{h}_k+|\al|\pr_\xi^\al\hat{h}_k,
\]
then
\begin{align*}
    &\sum_{0\le\ell\le s}\sum_{k\in\Z^3_*}(\kappa \nu t)^\ell G_{k,2;3)}^{\ell,\al}\\
    =&-2\nu\sum_{0\le\ell\le s}\sum_{k\in\Z^3_*}(\kappa \nu t)^\ell\frak{Re}\int_{\xi}\xi\cdot\nb_\xi\big(|\xi|^{\ell}\mathcal{M}(k,\xi)\pr_{\xi}^\al\hat{h}_k\big)\big(|\xi|^\ell\mathcal{M}(k,\xi)\pr_\xi^\al\bar{\hat{h}}_k\big)d\xi\\
    &+2\nu \sum_{0\le\ell\le s}\sum_{k\in\Z^3_*}(\kappa \nu t)^\ell\frak{Re}\int_{\xi}\xi\cdot\nb_\xi\big(|\xi|^{\ell}\mathcal{M}(k,\xi)\big)\pr_{\xi}^\al\hat{h}_k\big(|\xi|^\ell\mathcal{M}(k,\xi)\pr_\xi^\al\bar{\hat{h}}_k\big)d\xi\\
    &-2|\al|\nu \sum_{0\le\ell\le s}(\kappa \nu t)^\ell\big\||\nb_v|^{\ell}\mathcal{M}(v^\al h_{\ne})\big\|^2_{L^2_{x,v}}\\
    =&{ \nu\sum_{0\le\ell\le s}(3+2\ell-2|\al|)(\kappa \nu t)^\ell\big\||\nb_v|^{\ell}\mathcal{M}(v^\al h_{\ne})\big\|^2_{L^2_{x,v}}}\\
    &+2\nu\sum_{0\le\ell\le s}\sum_{k\in\Z^3_*}(\kappa \nu t)^\ell\frak{Re}\int_{\xi}|\xi|^{\ell}\xi\cdot\nb_\xi \mathcal{M}(k,\xi)\big)\pr_{\xi}^\al\hat{h}_k\big(|\xi|^\ell\mathcal{M}(k,\xi)\pr_\xi^\al\bar{\hat{h}}_k\big)d\xi.
\end{align*}
Recalling the definition of $\mathcal{M}$, we have
\[
|\nb_\xi \mathcal{M}(k,\xi)|\le\fr14\nu^{\fr13}|k|^{-\fr13}\ln(e+|k|){\bf 1}_{|k|\ne0}.
\]
Then
\begin{align*}
    &2\nu\sum_{0\le\ell\le s}\sum_{k\in\Z^3_*}(\kappa \nu t)^\ell\frak{Re}\int_{\xi}|\xi|^{\ell}\xi\cdot\nb_\xi \mathcal{M}(k,\xi)\big)\pr_{\xi}^\al\hat{h}_k\big(|\xi|^\ell\mathcal{M}(k,\xi)\pr_\xi^\al\bar{\hat{h}}_k\big)d\xi\\
    \le&\fr12\nu^{\fr43}\sum_{0\le\ell\le s}(\kappa \nu t)^\ell\big\||\nb_v|^{\ell}\mathcal{M}(v^\al{h}_{\ne})\big\|_{L^2_{x,v}}\big\||\nb_v|^{\ell+1}\mathcal{M}(v^\al{h}_{\ne})\big\|_{L^2_{x,v}}.
\end{align*}
Similarly,
\begin{align*}
    \nn&\sum_{0\le\ell\le s}\sum_{k\in\Z^3_*}(C_s\kappa \nu^{\fr13} t)^\ell H_{k,2;3)}^{\ell,\al}\\
    \le& \nu (3-2|\al|)\sum_{0\le\ell\le s}(C_s\kappa \nu^{\fr13}t)^{\ell}\big\||\nb_x|^{\fr{\ell}{3}}\mathcal{M}(v^\al h_{\ne})\big\|^2_{L^2_{x,v}}\\
    &+\fr12\nu^{\fr43}\sum_{0\le\ell\le s}(C_s\kappa \nu^{\fr13} t)^\ell\big\||\nb_x|^{\fr{\ell}{3}}\mathcal{M}(v^\al{h}_{\ne})\big\|_{L^2_{x,v}}\big\||\nb_v||\nb_x|^{\fr{\ell}{3}}\mathcal{M}(v^\al{h}_{\ne})\big\|_{L^2_{x,v}}.
\end{align*}
It follows that
\begin{align}\label{es: G23-ne}
    \nn&\sum_{0\le\ell\le s}\sum_{k\in\Z^3_*}(\kappa \nu t)^\ell G_{k,2;3)}^{\ell,\al}+\sum_{0\le\ell\le s}\sum_{k\in\Z^3_*}(C_s\kappa \nu^{\fr13} t)^\ell H_{k,2;3)}^{\ell,\al}\\
    \le&  (3+2s)\nu \frak{E}^s_\al(h_{\ne}(t))+\nu^{\fr23}\frak{D}^s_{\al}(h_{\ne}(t))\les \big(4(3+2s)+1\big)\nu^{\fr23}\frak{D}^s_{\al}(h_{\ne}(t)).
\end{align}

{\it Case 2: $k=0$.} Now $M(0,\xi)\equiv1$, and we cannot use $\frak{D}^s_{\al}(h_0(t))$ to bound $\frak{E}^s_{\al}(h_0(t))$. Thus \eqref{es: G23-ne} reduces to
\begin{align}\label{es: G23-0}
    \sum_{0\le\ell\le s}(\kappa \nu t)^\ell G_{0,2;3)}^{\ell,\al}+\sum_{0\le\ell\le s}(C_s\kappa \nu^{\fr13} t)^\ell H_{0,2;3)}^{\ell,\al}
    \le { (3+2s)\nu \frak{E}^s_\al(h_{0}(t))}.
\end{align}

\noindent\underline{\bf Improvement of \eqref{hypo2}}:
Substituting \eqref{es:com-v1}, \eqref{es:G21-ne}, \eqref{es:H21-ne}, and \eqref{est:Gk22}--\eqref{es: G23-ne} into \eqref{es:en-s-al2}, summing over $|\al|\le m$, then integrating the resulting inequality w.r.t the time variable $t$ over $[0,T^*]$ and using \eqref{bd:initial}, we find that there exist two positive constants $\nu_{s,m}<1<C_{s,m}$ depending on $s$ and $m$ but independent of $\nu$, such that for all $0<\nu\le \nu_{s,m}$,
\begin{align}\label{es:h-ne}
    \nn&\mathcal{E}^s_{m}(h_{\ne}(t))+\fr14\int_0^{T^*}\mathcal{D}^{s}_{m}(h_{\ne}(t))dt\\
    \le\nn&C_{s,m}\|\la v\ra^m \ln(e+|\nb_x|)(g_{\rm in})_{\ne}\|_{L^2_{x,v}}^2\\
    \nn&+C_{s,m}\nu^{-\fr12}\int_0^{T^*}\big(\mathcal{D}^s_m(h_{\ne}(t))\big)^\fr12\big(\mathcal{E}^s_m(h_{\ne}(t))\big)^\fr12\|{\bf B}_s(t)\rho\|_{L^2_x}dt\\
    \nn&+C_{s,m}\nu^{-\fr12}\int_0^{T^*}\big(\mathcal{D}^s_m(h_{0}(t))\big)^\fr12\big(\mathcal{E}^s_m(h_{\ne}(t))\big)^\fr12\|{\bf B}_s(t)\rho\|_{L^2_x}dt\\
    &+C_{s,m}\nu^{-\fr16}\int_0^{T^*}\big(\mathcal{E}^s_m(h_{0}(t))\big)^\fr12\big(\mathcal{D}^s_m(h_{\ne}(t))\big)^\fr12\|{\bf B}_s(t)\rho\|_{L^2_x}dt.
\end{align}
Using the bootstrap hypotheses \eqref{hypo}, for the initial data $g_{\rm in}$ satisfying \eqref{initial-sta}, one deduces from the above inequality that
\begin{align}
    \nn&\mathcal{E}^s_{m}(h_{\ne}(t))+\fr14\int_0^{T^*}\mathcal{D}^{s}_{m}(h_{\ne}(t))dt\\
    \le\nn&C_{s,m}(\eps\nu^{\fr12})^2+128 C_{s,m}{\rm C}_1{\rm C}_2^2\eps(\eps\nu^{\fr12})^2+256C_{s,m}{\rm C}_1{\rm C}_2{\rm C}_3\eps(\eps\nu^{\fr12})^2,
\end{align}
which is sufficient to improve \eqref{hypo2}, provided we  take
\begin{align}\label{det:C2}
{\rm C}_2:=C_{s,m}\quad {\rm and }\quad \eps\le\eps_2:=\fr{1}{256{\rm C}_1({\rm C}_2+{\rm C}_3)}.
\end{align}

\noindent\underline{\bf Improvement of \eqref{hypo3}}:
Substituting \eqref{es:com-v2}, \eqref{es:GH21-0} and \eqref{es: G23-0} into \eqref{es:en-s-al2-0}, and summing over $|\al|\le m$, we are led to
\begin{align}
    \fr{d}{dt}\mathcal{E}^s_{m}(h_0(t))+\fr14\mathcal{D}^{s}_{m}(h_0(t))\le\nn& C_{s,m}\nu\mathcal{E}^s_m(h_0(t))
    +C_{s,m}\nu^{-\fr12}\big(\mathcal{D}_{m}^s(h_{\ne}(t))\big)^{\fr12}\big(\mathcal{E}^s_m(h_0(t))\big)^{\fr12}\|\rho\|_{L^2_x}.
\end{align}
Noting that $\nu t\le1$, by using Gronwall's inequality, in view of the bootstrap hypotheses \eqref{hypo} and \eqref{initial-sta}, we have 
\begin{align}
    \mathcal{E}^s_{m}(h_0(t))+\fr14\int_0^{T^*}\mathcal{D}^{s}_{m}(h_0(t))dt\le e^{C_{s,m}}\left(2(\eps\nu^{\fr12})^2+128C_{s,m}{\rm C_1C_2C_3}\eps(\eps\nu^{\fr12})^2\right),
\end{align}
which is sufficient to improve \eqref{hypo3}, provide we take
\begin{align}\label{det:C3}
    {\rm C}_3:=2e^{C_{s,m}},\quad{\rm and}\quad \eps\le\eps_3:=\fr{1}{64C_{s,m}{\rm C_1C_2}}.
\end{align}
The proof of Proposition \ref{prop:stablity} is complete.

We are now in a position to prove Theorem \ref{thm:main1}.

\noindent\underline{Proof of Theorem \ref{thm:main1}}: Recalling \eqref{det:C1}, \eqref{det:C2} and \eqref{det:C3}, now let us choose 
\begin{align*}
    \nu_0:=\min\{\nu_1,\nu_{s,m}\},\quad{\rm and}\quad \eps_0:=\min\{\eps_1,\eps_2,\eps_3\}.
\end{align*}
Then the long-time bounds \eqref{bd:distribution} and \eqref{bd:E} follow from Proposition \ref{prop:stablity} with $s=0$ immediately. For the improved bound \eqref{bd:improve}, we first consider the case $s\in\N\setminus\{0\}$. Indeed, recalling the definition of the energy functional $\mathcal{E}^s_m(h(t))$ in \eqref{def-En}, using the bounds established for $h_{\ne}$ and $h_0$ in Proposition \ref{prop:stablity} with  $s$ replaced by $3s$, together with the boundedness of the wave operator ${\bf D}_k$ \eqref{offD}, we arrive at \eqref{bd:improve}. The general case $s>0$  can be obtained by interpolation immediately. 

Once \eqref{bd:improve} holds, one can use $g(\frac{1}{2}\nu^{-1},x,v)$ as a new initial data, which satisfies both the regularity and smallness requirements in Theorem 1 of \cite{bedrossian2017suppression}. Thus it holds that
\begin{align*}
\|E(t)\|_{L^2}\lesssim& \frac{\nu^{\frac12}}{\langle t-\frac12\nu^{-1}\rangle}, \\
\|\la v\ra^m\mathbb{P}_{\neq}g(t)\|_{L^2}\leq& C \|\la v\ra^m\mathbb{P}_{\neq}g_{\rm in}\|_{L^2}\langle \nu^{1/3}(t-\frac12\nu^{-1})\rangle^{-s},\quad{\rm for}\quad n=1,2,\\
            \|\la v\ra^m\ln(e+|\nb_x|)\mathbb{P}_{\neq}g(t)\|_{L^2}\leq& C \|\la v\ra^m\ln(e+|\nb_x|)\mathbb{P}_{\neq}g_{\rm in}\|_{L^2}\langle \nu^{1/3}(t-\frac12\nu^{-1})\rangle^{-s},\quad{\rm for}\quad n=3.
\end{align*} 
As a result, the Landau damping estimate \eqref{Lan-dam} and the enhanced dissipation estimate \eqref{eq: enhanced-dissipation} hold for all $t\ge\nu^{-1}$. It remains to prove \eqref{eq: enhanced-dissipation} for all $t\le\nu^{-1}$. The key point is that we can extract a factor on the right-hand side of \eqref{eq: enhanced-dissipation} that depends only on the non-zero mode $(g_{\rm in})_{\ne}$  of the initial data (one can further reduce $\nu_0$ and $\eps_0$ if necessary).
More precisely, by \eqref{bd:Bsrho} and Proposition \ref{prop:stablity}, we have
\begin{align}
    \nn\|{\bf B}_s\rho\|_{L^2_{t,x}}\le C\|\la v\ra^m\ln(e+|\nb_x|)(g_{\rm in})_{\ne}\|_{L^2_{x,v}}+C\nu^{\fr13}\Big(\int_0^{T^*}\mathcal{D}^s_{m}(h_{\ne}(t)) dt\Big)^{\fr12}.
\end{align}
Combining this with \eqref{es:h-ne} and Proposition \ref{prop:stablity},  one deduces that
\begin{align*}
    \nn&\mathcal{E}^s_{m}(h_{\ne}(t))+\fr{1}{16}\int_0^{T^*}\mathcal{D}^{s}_{m}(h_{\ne}(t))dt\\
    \le\nn& C\|\la v\ra^m\ln(e+|\nb_x|)(g_{\rm in})_{\ne}\|_{L^2_{x,v}}+C\nu^{-1}\int_0^{T^*}\mathcal{D}^s_m(h_{\ne}(t))\mathcal{E}^s_m(h_{\ne}(t))dt\\
    \nn&+C\nu^{-1}\int_0^{T^*}\mathcal{D}^s_m(h_{0}(t))\mathcal{E}^s_m(h_{\ne}(t))dt.
\end{align*}
Then \eqref{eq: enhanced-dissipation} follows from Gronwall's inequality and Proposition \ref{prop:stablity} immediately. This completes the proof of Theorem \ref{thm:main1}.

\section{Instability estimates}\label{sec:Insta}
We construct the initial perturbation as follows
\begin{align}\label{5.1}
    g_{\rm in}(x,v)=\underbrace{G_{n-1}(f^0(v_1)-\mu(v_1))}_{\text{zero mode with size $\nu^{1/2-}$}}
    +\underbrace{G_{n-1}\Big(\gamma^{\frac12}\nu^{\beta}\mathfrak{Re}\big(\frac{e_{\gamma,\lambda}(v_1)}{\hat{e}_{\gamma, \lambda}(0)}e^{i x_1}\big)\Big)}_{\text{non-zero modes with size $\nu^{1/2+}$}}, \quad \text{with}\quad \beta>\frac12
\end{align}
where $G_{n-1}=\left\{\begin{aligned}
    &\frac{1}{2\pi}e^{-(v_2^2+v_3^2)/2},\quad n=3\\
    &\frac{1}{(2\pi)^{\fr12}}e^{-v_2^2/2},\quad n=2\\
    &1, \quad n=1.
\end{aligned}\right.$ and the perturbed new homogeneous background distribution $f^0(v)$ (see \eqref{def-f0}) creates an unstable eigenvalue $\lambda$ for the linearized collisionless operator $\mathbb{VP}_{f^0}$ with the associated eigenfunction $G_{n-1}e_{\gamma,\lambda}(v_1)e^{i x_1}$. The rest of the paper will focus on $n=1$. We will first prove, see Lemma \ref{lem-lm}, that by choosing suitable $M$ and $\gamma$, the operator $\mathbb{VP}_{f^0}$ has an unstable eigenvalue $\lambda$ whose real part $\lambda_{\rm r}\geq c\gamma$ with $c>0$ independent of $\gamma$. 
\subsection{Reformulation} As mentioned in section \ref{sec:sec-insta}, to prove the instability result, we need to construct a spatially homogeneous solution $\tl{f}^0(t,v)$ to the VPFP equations near the global Maxwellian $\mu(v)$. Recalling the definition of $\tl{\mu}(v)$  in \eqref{def:tlmu}, we first define a spatially homogeneous perturbation $f^{\rm e}(t,v)$ around the global Maxwellian $\mu(v)$ as follows:
\begin{align}\label{eq:fe}
    \partial_tf^{\rm e}-\nu L[f^{\rm e}]=0,\quad {\rm with}\quad f^{\rm e}|_{t=0}=\fr{\tilde{\mu}(v)}{1+M\gamma^2m_{\sigma}},
\end{align}
where $\displaystyle m_\sig:=\int_{\R}\sig(v)dv$.
Then we introduce
\begin{align}\label{eq:tlf}
\tl{f}^0(t,v):=\frac{1}{1+M\gamma^2m_\sigma}\fr{1}{(2\pi)^{\fr12}}e^{-\fr{|v|^2}{2}}+f^{\rm e}(t,v),
\end{align}
which solves the Vlasov-Poisson-Fokker-Planck equation \eqref{VFP} with initial data
\begin{align}\label{def-f0}
\tl{f}^{0}|_{t=0}=f^0(v):=\fr{1}{1+M\gamma^2m_\sig}\left[\fr{1}{(2\pi)^{\fr12}}e^{-\fr{v^2}{2}}+\tl{\mu}(v)\right].
\end{align}
By conservation of mass, we have
\begin{align*}
\int_{\R}\tl{f}^0(t,v)dv=\int_{\R}f^0(v)dv=1.
\end{align*}
Now we introduce the perturbations around $\tl{f}^0(t,v)$ by
\[
\tl f(t,x,v):=F(t,x,v)-\tl{f}^0(t,v),
\]
and
\[
\tl{\rho}(t,x):=\int_{\R}\tl{f}(t,x,v)dv=\int_{\R}F(t,x,v)dv-1.
\]
Then the initial condition \eqref{initial-F} ensures that
\begin{align*}
\int_{\T}\tl{\rho}(t,x)dx=\int_{\T\times\R}F_{\rm in}(x,v)dvdx-2\pi=0.
\end{align*}
We thus find that $\tl{f}(t,x,v)$ solves
\begin{align}\label{eq-tlf}
\begin{cases}
\pr_t\tl{f}+v\pr_x\tl{f}+\tl{E}\pr_v \mu_1+\tl{E}\pr_vf^{\rm e}-\nu L[\tl{f}]=-\tl{E}\pr_v\tl{f},\\
\tl{E}=-\pr_x(-\pr_{xx})^{-1}\tl\rho,
\end{cases}
\end{align}
where 
\[
\mu_1(v):=\fr{1}{1+M\gamma^2m_\sig}\fr{1}{(2\pi)^{\fr12}}e^{-\fr{|v|^2}{2}}.
\]
In the following, we rewrite \eqref{eq-tlf} as
\begin{align}\label{eq-tlf'}
\begin{cases}
\pr_t\tl{f}+v\pr_x\tl{f}+\tl{E}\pr_v f^0-\nu L[\tl{f}]=\frak{N},\\
\tl{E}=-\pr_x(-\pr_{xx})^{-1}\tl\rho,
\end{cases}
\end{align}
where
\begin{align}\label{def:NL-N}
\frak{N}(t,x,v)=-\tl{E}\pr_v\tl{f}-\tl{E}\pr_v(f^{\rm e}-\mu_2),
\end{align}
and
\begin{align}\label{def:mu2}
\mu_2(v):=f^{\rm e}(0,v)=\fr{\tl{\mu}(v)}{1+M\gamma^2m_\sigma}=\fr{M\gamma \sigma (\fr{v}{\gamma})}{1+M\gamma^2m_\sigma}.
\end{align}

The next strategy is to show that the linearized system of \eqref{eq-tlf'} admits solutions with exponential growth. At first, instead of considering the linearized system of \eqref{eq-tlf'} directly, we investigate the linearized system without collision:
\begin{align}\label{LVP}
\pr_tf+v\pr_xf-\pr_x(-\pr_{xx})^{-1}\frak{I}[f]\pr_vf^0=0.
\end{align}
where 
\begin{align}\label{def-I}
    \frak{I}[f](t,x):=\int_{\R}f(t,x,v)dv,\quad{\rm with}\quad \int_{\mathbb{T}}\frak{I}[f](t,x)dx=0.
\end{align}
On the Fourier side, \eqref{LVP} can be rewritten as
\begin{align}\label{eq-f_k}
\pr_tf_k+ikvf_k-ik^{-1}\frak{I}[f_k]\pr_vf^0=0.
\end{align}
Let us denote the linearized Vlasov-Poisson operator around $f^0(v)$ at mode 1 by
\[
\mathbb{L}_1f:=-ivf+i\int_{\R}f(v)dv\pr_vf^0.
\]
\begin{lem}\label{lem: eigenvalue and Wronskian}
    Let $\lambda=\lm_{\rm r}+i\lm_{\rm i}$ be an unstable eigenvalue of $\mathbb{L}_1$, with the associated eigenfunction $e_{\gamma, \lm}$, namely, $\lm_{\rm r}>0$, and 
    \begin{align}\label{eigen}
\mathbb{L}_1e_{\gamma,\lm}=\lm e_{\gamma,\lm}.
\end{align} 
Then, $\lm$ is a root of 
\begin{align}\label{Psi=0}
    \Psi_\gamma (\lm,M)=0,
\end{align}
and the Fourier transform of the eigenfunction has the following expression
\begin{align}\label{exp-eigen}
    \hat{e}_{\gamma,\lm}(\xi)=\begin{cases}
    \displaystyle e^{\lm\xi}\int_{+\infty}^\xi\zeta e^{-\lm\zeta}\widehat{f^0}(\zeta)d\zeta \hat{e}_{\gamma,\lm}(0),\quad \xi\ge0\\[2mm]
    \displaystyle e^{\lm\xi}\hat{e}_{\gamma,\lm}(0)+e^{\lm\xi}\int_0^\xi\zeta e^{-\lm\zeta}\widehat{f^0}(\zeta)d\zeta \hat{e}_{\gamma,\lm}(0),\quad \xi\le0.
    \end{cases}
\end{align}
Here, 
\begin{align*}
\Psi_\gamma(\lm,M)
:=&1+\int_0^\infty\zeta e^{-\lm \zeta}\widehat{f^0}(\zeta)d\zeta\\
=&1+\fr{1}{1+M\gamma^2m_\sig}\left[\int_0^\infty e^{-\lm t}te^{-\fr{t^2}{2}}dt+M\int_0^\infty e^{-\fr{\lm t}{\gamma}}t \hat{\sig}( t) dt\right].
\end{align*}
\end{lem}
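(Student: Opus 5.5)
The proof goes through the Fourier transform in $v$, which turns the eigenvalue equation \eqref{eigen} into a first-order ODE in $\xi$. The condition $\lambda_{\rm r}>0$ then selects a unique decaying solution, and the consistency of that solution at $\xi=0$ is exactly \eqref{Psi=0}. Write $e=e_{\gamma,\lm}$ and $\rho_e=\int_{\R}e(v)dv=\hat e(0)$; the normalization of $\mathcal{F}_v$ is $\hat f(\xi)=\int e^{-i\xi v}f(v)dv$, so that $\widehat{vf}=i\pr_\xi\hat f$ and $\widehat{\pr_vf}=i\xi\hat f$.

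\textbf{Step 1 (the ODE).} The eigenvalue equation reads $-ive+i\rho_e\pr_vf^0=\lm e$. Taking the Fourier transform gives
\begin{align*}
\pr_\xi\hat e(\xi)-\xi\,\widehat{f^0}(\xi)\,\hat e(0)=\lm\hat e(\xi),
\end{align*}
that is, $\pr_\xi\big(e^{-\lm\xi}\hat e\big)=\xi e^{-\lm\xi}\widehat{f^0}(\xi)\hat e(0)$. Integrating from $0$ yields, for all $\xi\in\R$,
\begin{align*}
\hat e(\xi)=e^{\lm\xi}\Big(\hat e(0)+\hat e(0)\int_0^\xi\zeta e^{-\lm\zeta}\widehat{f^0}(\zeta)d\zeta\Big).
\end{align*}
This is already the claimed formula for $\xi\le0$. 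Note that $\hat e(0)\neq0$: otherwise $\hat e=e^{\lm\xi}\hat e(0)\equiv0$ would follow.

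\textbf{Step 2 (decay at $+\infty$).} Since $e\in L^2$ (or $L^1$), $\hat e$ cannot grow at $+\infty$. However, $|e^{\lm\xi}|=e^{\lm_{\rm r}\xi}$ grows because $\lm_{\rm r}>0$. Moreover $\widehat{f^0}$ is Gaussian plus $M\gamma^2\hat\sig(\gamma\,\cdot)$, which decays rapidly, so $\int_0^\infty\zeta e^{-\lm\zeta}\widehat{f^0}d\zeta$ converges absolutely. Hence the bracket must tend to $0$ as $\xi\to+\infty$, which gives
\begin{align*}
1+\int_0^\infty\zeta e^{-\lm\zeta}\widehat{f^0}(\zeta)d\zeta=0,
\end{align*}
namely $\Psi_\gamma(\lm,M)=0$. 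Substituting $1=-\int_0^\infty$ into the bracket for $\xi\ge0$ turns it into $\hat e(0)\int_{+\infty}^\xi\zeta e^{-\lm\zeta}\widehat{f^0}d\zeta$, which is the first line of \eqref{exp-eigen}.

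\textbf{Step 3 (explicit form of $\Psi_\gamma$).} Compute $\widehat{f^0}$ from \eqref{def-f0}. The Gaussian part gives $e^{-\zeta^2/2}$ up to the factor $(1+M\gamma^2m_\sig)^{-1}$, and $\widehat{\tl\mu}(\zeta)=M\gamma^2\hat\sig(\gamma\zeta)$. Substituting $t=\gamma\zeta$ in the $\sig$-term produces $M\int_0^\infty e^{-\lm t/\gamma}t\hat\sig(t)dt$.

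The only delicate point is the justification in Step 2 that $\hat e$ cannot grow. Growth like $e^{\lm_{\rm r}\xi}$ is incompatible with $\hat e\in L^2_\xi$, so this is immediate once we interpret $e$ in $L^2$ and check that the remainder $e^{\lm\xi}\int_\xi^\infty\zeta e^{-\lm\zeta}\widehat{f^0}d\zeta$ is bounded, which follows from the rapid decay of $\widehat{f^0}$.
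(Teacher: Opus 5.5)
Your proposal is correct and follows the paper's proof. Both Fourier transform \eqref{eigen} into the ODE $\partial_\xi\hat e-\xi\widehat{f^0}\hat e(0)=\lambda\hat e$ and use $\lambda_{\rm r}>0$ to force decay at $+\infty$. The paper phrases the dispersion relation $\Psi_\gamma=0$ as continuity at $\xi=0$ of the two one-sided solutions, whereas you integrate from $0$ and read off $\Psi_\gamma=0$ from the decay condition; this is the same argument, and your observation that $\hat e(0)\neq0$ fills in a detail the paper leaves implicit.
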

\begin{proof}
    Taking the Fourier transform of \eqref{eigen} in $v$, we arrive at
\begin{align}\label{eq:hat-e}
    \fr{d}{d\xi}\hat{e}_{\gamma,\lm}(\xi)+i\hat{e}_{\gamma,\lm}(0)\widehat{\pr_vf^0}(\xi)=\lm\hat{e}_{\gamma,\lm}(\xi),
\end{align}
We obtain \eqref{exp-eigen} directly by solving the ODE with vanishing condition at infinity. To ensure the continuity of $\hat{e}_{\gamma,\lm}(\xi)$ at $\xi=0$, we require $\Psi_\gamma(\lm,M)=0$. 
\end{proof}

To construct solutions to \eqref{LVP} with exponential growth, we want to adjust the parameter $M$ and prove the existence of a root $\lm$ of \eqref{Psi=0} with positive real part.
This will be the goal of our next section.

\subsection{Generation of unstable eigenvalue}
To achieve \eqref{Psi=0}, we first  seek special zeros of the function $\Psi_\gamma(\lm,M)$ with $\lm=0$.
Before proceeding any further, to simplify the presentation, we introduce the following notation. Let
\[
B_1:=\int_0^\infty  t \hat{\sigma}(t) dt,\quad B_2:=\int_0^\infty  t^2 \hat{\sigma}(t) dt,
\]
and  $\gamma_0$ be a small positive constant such that
\begin{align}\label{small-gamma0}
    0<\gamma_0\le\min\left\{1,\sqrt{\fr{|B_1|}{4|m_\sig|}}, \sqrt{\fr{3|B_1|}{(3|B_1|+8)|m_\sig|}}, \fr35\sqrt{\fr{2}{\pi}}\fr{|B_2|}{|B_1|}\right\}.
\end{align}
Assume that  the function $\sig$ is chosen such that 
\begin{align}\label{H-sig1}
    B_1<0, \quad{\rm and}\quad B_2<0.
\end{align}
For each $\gamma\in(0,\gamma_0]$, this enables us to define a positive constant
\begin{align}\label{M_0}
M_0:=-\fr{2}{B_1+\gamma^2m_\sig},
\end{align}
such that
\begin{align}\label{Psi0}
    \Psi_\gamma(0,M_0)=0.
\end{align}

To obtain a suitable unstable eigenvalue (with positive real part) starting from \eqref{Psi0} and satisfying \eqref{Psi=0}, we treat $\lm$ as a function of $M$ and solve the ODE governing the evolution of $\lm(M)$ with respect to $M$. To this end, we introduce two functions
\begin{align*}
    N_\gamma(\lm,M)=&\int_0^\infty e^{-\frac{\lambda t}{\gamma} } t \hat{\sigma}(t) dt - \gamma^2 m_\sigma \int_0^\infty e^{-\lambda t} t e^{-\frac{t^2}{2}} dt,\\
    D_\gamma(\lm,M)=&(1+M\gamma^2 m_\sigma) \left[ \int_0^\infty e^{-\lambda t}t^2  e^{-\frac{t^2}{2}} dt + \frac{M}{\gamma} \int_0^\infty e^{-\frac{\lambda t}{\gamma} } t^2  \hat{\sigma}(t) dt \right].
\end{align*}
Noting that
\begin{align*}
   \frac{\partial \Psi_\gamma}{\partial \lambda}(\lm,M) = -\frac{D_\gamma(\lm,M)}{(1 + M\gamma^2 m_\sigma)^2},\quad {\rm and}\quad \frac{\partial \Psi_\gamma}{\partial M}  = \frac{ N_\gamma(\lm,M) }{(1 + M\gamma^2 m_\sigma)^2},
\end{align*}
by the chain rule, and in view of \eqref{Psi0}, it is natural to introduce the following ODE:
\begin{align}\label{ODE}
\begin{cases}
    \fr{d\lm}{dM}=\fr{N_\gamma(\lm,M)}{D_\gamma(\lm,M)}, \quad (\lm_{\rm r},\lm_{\rm i},M)\in\mathsf{R}_\dl,\\[2mm]
    \lm(M_0)
    =0,
\end{cases}
\end{align}
where the rectangle $\mathsf{R}_\dl$ is defined by 
\begin{align}\label{def:dom}
\mathsf{R}_\dl:=\left\{(\lm_{\rm r},\lm_{\rm i},M): 0\le \lm_{\rm r}\le \dl\gamma, |\lm_{\rm i}|\le \dl\gamma, M_0\le M\le M_0+1\right\},
\end{align}
with $\dl>0$  a small constant independent of $\gamma$ to be determined later.

We shall establish the following lemma, which asserts that there exist unstable eigenvalues $\lm(M)$, together with some positive constants $M$, satisfying \eqref{Psi=0}. The key point is that the results hold uniformly for all $\gamma\in(0,\gamma_0]$.
\begin{lem}\label{lem-lm}
Assume that $\gamma_0$ satisfies \eqref{small-gamma0} and $\gamma\in(0,\gamma_0]$. The following conclusions hold:
\begin{enumerate}
    \item[(i)] There exists a  constant $\delta\in(0,1)$ independent of $\gamma$, such that $\fr{N_\gamma(\lm,M)}{D_\gamma(\lm,M)}$ is continuous on $\mathsf{R}_\dl$, and Lipchitz with respect to $\lm$.
    \item[(ii)] There exists a  constant $\varepsilon\in(0,1)$ independent of $\gamma$, such that the equation \eqref{ODE} admits a unique solution $\lm=\lm(M)$ for $M\in[M_0,M_0+\varepsilon]$. Moreover, there exist two positive constants $\underline{c}$ and $\bar{C}$ independent of $\gamma$, such that for $M\in[M_0, M_0+\eps]$, there hold
    \begin{align}\label{bd:lm_r}
    \underline{c}(M-M_0)\gamma \le\lm_{\rm r}(M)\le\bar{C}(M-M_0)\gamma,
    \end{align}
and
\begin{align}\label{bd:lm_i}
    |\lm_{\rm i}(M)|\le \bar{C}(M-M_0)\gamma.
\end{align}

\item[(iii)] 
The solution $\lm=\lm(M)$ satisfies 
\begin{align}\label{equiv}
    \Psi_\gamma(\lm(M),M)\equiv0,\quad {\rm for} \ \ M\in[M_0,M_0+\varepsilon].
\end{align}
\end{enumerate}
\end{lem}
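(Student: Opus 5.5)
The plan is to treat \eqref{ODE} as a two-dimensional real ODE (for $(\lm_{\rm r},\lm_{\rm i})$) in the parameter $M$, and to get the estimates uniformly in $\gamma$ by rescaling $\lm=\gamma\Lambda$.

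\emph{Step (i): continuity and Lipschitz bound.} Write $\lm=\gamma\Lambda$ with $|\Lambda|\le\sqrt2\dl$ and $\mathfrak{Re}\Lambda\ge0$. Then $e^{-\lm t/\gamma}=e^{-\Lambda t}$, and $e^{-\lm t}=e^{-\gamma\Lambda t}$ has modulus at most $1$. Using the decay of $\hat\sig$ (take $\sig$ Schwartz) and of $te^{-t^2/2}$, both $N_\gamma$ and $\gamma D_\gamma$ are analytic in $\Lambda$ on this set, with bounds independent of $\gamma\le\gamma_0$. At $\Lambda=0$ we have
\[
N_\gamma=B_1-\gamma^2m_\sig\sqrt{\pi/2}\cdot(\text{const}),\qquad \gamma D_\gamma\approx(1+M\gamma^2m_\sig)\big(\gamma\sqrt{\pi/2}+MB_2\big).
\]
The constraints \eqref{small-gamma0} are designed exactly so that $|N_\gamma|\ge|B_1|/2$ and $|\gamma D_\gamma|\ge c|B_2|>0$, with $1+M\gamma^2m_\sig\ge1/2$, for $M\in[M_0,M_0+1]$. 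The condition $\gamma_0\le\frac35\sqrt{2/\pi}\,|B_2|/|B_1|$ controls $\gamma\sqrt{\pi/2}$ against $M|B_2|$, with $M\approx 2/|B_1|$. Since $\partial_\Lambda$ of each quantity is uniformly bounded, choosing $\dl$ small keeps $|\gamma D_\gamma|\ge c$ on $\mathsf R_\dl$. Hence $N_\gamma/D_\gamma=\gamma\,N_\gamma/(\gamma D_\gamma)$ is continuous, and Lipschitz in $\lm$ with constant $\gamma\cdot O(1)\cdot\gamma^{-1}=O(1)$, because $\partial_\lm=\gamma^{-1}\partial_\Lambda$.

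\emph{Step (ii): existence and the bounds \eqref{bd:lm_r}--\eqref{bd:lm_i}.} By Picard--Lindelöf, the solution exists as long as it stays in $\mathsf R_\dl$. At $\lm=0$,
\[
\frac{N_\gamma}{D_\gamma}=\gamma\,\frac{N_\gamma}{\gamma D_\gamma}\approx\gamma\,\frac{B_1}{MB_2}>0,
\]
a real number of size $\sim\gamma$. Its perturbation over $\mathsf R_\dl$ is $O(\dl\gamma)$. So for $\dl$ small,
\[
\mathfrak{Re}\frac{d\lm}{dM}\in[\underline c\gamma,\bar C\gamma],\qquad \Big|\mathfrak{Im}\frac{d\lm}{dM}\Big|\le\bar C\gamma.
\]
Integrating from $M_0$ gives \eqref{bd:lm_r} and \eqref{bd:lm_i}. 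With $\eps=\dl/\bar C$, the trajectory stays in $\mathsf R_\dl$ (in particular $\lm_{\rm r}\ge0$), so the solution extends to $[M_0,M_0+\eps]$.

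\emph{Step (iii): the identity \eqref{equiv}.} Compute
\[
\frac{d}{dM}\Psi_\gamma(\lm(M),M)=\partial_\lm\Psi_\gamma\,\lm'+\partial_M\Psi_\gamma=\frac{-D_\gamma\frac{N_\gamma}{D_\gamma}+N_\gamma}{(1+M\gamma^2m_\sig)^2}=0,
\]
and $\Psi_\gamma(0,M_0)=0$ by \eqref{Psi0}. One point needs care: the derivative formulas are for $\Psi_\gamma$ as a holomorphic function of $\lm$. Since $\Psi_\gamma$ is holomorphic, the chain rule along the complex-valued curve $\lm(M)$ holds. When $\lm_{\rm r}=0$ the integrals still converge thanks to the Gaussian and Schwartz decay, so the argument holds there too.

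The main obstacle is Step (i): the uniform-in-$\gamma$ lower bound on $|\gamma D_\gamma|$ and on the sign of $N_\gamma$. This is where the specific thresholds in \eqref{small-gamma0} are used. I would first verify these at $\lm=0$ explicitly, and then control the $\lm$-dependence through the scaled derivative bounds.
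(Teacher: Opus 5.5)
Your proposal is correct and follows the paper's skeleton. The rescaling $\lambda=\gamma\Lambda$ is a compact way of packaging the paper's hand estimates $|\mathrm F|\le C_{\sigma,1}\delta$, $|\mathrm G|\le C_{\sigma,2}\delta/\gamma$, $|\partial_\lambda N_\gamma|\lesssim\gamma^{-1}$ and $|\partial_\lambda D_\gamma|\lesssim\gamma^{-2}$; the paper derives these from explicit bounds on $|1-e^{-\lambda t/\gamma}|$. Your step (iii) is the paper's.

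The genuine difference is how existence in (ii) is obtained. The paper applies the Banach fixed-point theorem to $(\mathcal T\lambda)(M)=\int_{M_0}^M \frac{N_\gamma}{D_\gamma}(\lambda(s),s)\,ds$ on the space $X$ of continuous curves with $0\le\lambda_{\rm r}\le\delta\gamma$ and $|\lambda_{\rm i}|\le\delta\gamma$. There, the bounds $|N_\gamma/D_\gamma|\lesssim\gamma$ and $\mathfrak{Re}(N_\gamma/D_\gamma)\ge\underline c\gamma$ give $\mathcal T(X)\subset X$, and the $\gamma$-uniform constant in \eqref{Lip} makes $\varepsilon\le 1/(2\,{\rm Lip})$ independent of $\gamma$. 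In your local-existence-plus-continuation argument, the $\gamma$-independence of $\varepsilon=\delta/\bar C$ comes from the a priori bounds alone. The Lipschitz bound is used only for uniqueness, where any constant would do. So a uniform lifespan does not by itself require the contraction formulation. What the fixed-point formulation does buy is that it handles the starting point $\lambda=0$ for free. That point lies on the edge $\lambda_{\rm r}=0$ of the set where $N_\gamma/D_\gamma$ is defined; for a merely Schwartz $\hat\sigma$ the integrals may diverge when $\lambda_{\rm r}<0$.

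Two small repairs are needed on your side.

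First, Picard--Lindel\"of as usually stated needs a full neighbourhood of the initial point, so ``the solution exists as long as it stays in $\mathsf R_\delta$'' needs one more line. For instance, solve instead with the field $\frac{N_\gamma}{D_\gamma}(\Pi\lambda,M)$, where $\Pi$ is the $1$-Lipschitz clamp onto $[0,\delta\gamma]\times[-\delta\gamma,\delta\gamma]$. This field is globally Lipschitz in $\lambda$, its real part stays in $[\underline c\gamma,\bar C\gamma]$, and its imaginary part is bounded by $\bar C\gamma$. Your a priori bounds then keep the solution in the rectangle for $M-M_0\le\delta/\bar C$, where $\Pi\lambda=\lambda$.

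Second, when $m_\sigma<0$ the thresholds in \eqref{small-gamma0} do not give $1+M\gamma^2m_\sigma\ge\frac12$ on all of $[M_0,M_0+1]$; the paper's \eqref{uni-gamma2} has the same slip. They only give a smaller $\gamma$-independent positive lower bound, which is all your argument actually uses.
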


\begin{proof}
To prove (i), we write
\begin{align*}
    N_\gamma(\lm,M)=&B_1 - \gamma^2 m_\sigma+{\rm F}(\lm,M),\\
    D_\gamma(\lm,M)=&(1+M\gamma^2 m_\sigma) \left[ \sqrt{\fr{\pi}{2}} + \frac{M}{\gamma} B_2 \right]+{\rm G}(\lm, M)
\end{align*}
where
\[
{\rm F}(\lm,M):=\int_0^\infty (e^{-\frac{\lambda t}{\gamma} }-1) t \hat{\sigma}(t) dt - \gamma^2 m_\sigma \int_0^\infty (e^{-\lambda t}-1) t e^{-\frac{t^2}{2}} dt,
\]
and
\[
{\rm G}(\lm, M):=(1+M\gamma^2 m_\sigma) \left[ \displaystyle \int_0^\infty(e^{-\lambda t}-1) t^2  e^{-\frac{t^2}{2}} dt + \frac{M}{\gamma} \int_0^\infty (e^{-\frac{\lambda t}{\gamma} }-1)t^2  \hat{\sigma}(t) dt \right].
\]
Note that for all $\lm\in\mathbb{C}$ and $t\ge0$, there hold
\begin{align*}
    |1-e^{-\frac{\lambda t}{\gamma}}|=&\left|1-e^{-\frac{\lambda_{\rm r}t}{\gamma}}\Big(\cos\frac{\lambda_{\rm i}t}{\gamma}-i\sin \frac{\lambda_{\rm i}t}{\gamma} \Big) \right|\\
    \le&\Big|1-e^{-\frac{\lambda_{\rm r}t}{\gamma}}\cos\frac{\lambda_{\rm i}t}{\gamma}\Big|+e^{-\frac{\lambda_{\rm r}t}{\gamma}}\Big|\sin \frac{\lambda_{\rm i}t}{\gamma}\Big|\\
    \le& \frac{|\lm_{\rm r}|t}{\gamma}e^{-\fr{\min\{\lm_{\rm r},0\}t}{\gamma}}+\fr12e^{-\fr{\lm_{\rm r}t}{\gamma}}\left(\fr{\lm_{\rm i}t}{\gamma}\right)^2+e^{-\fr{\lm_{\rm r}t}{\gamma}}\fr{|\lm_{\rm i}|t}{\gamma},
\end{align*}
and 
\begin{align*}
    |1-e^{-\lambda t}|
    \le& |\lm_{\rm r}|te^{-\min\{\lm_{\rm r},0\}t}+\fr12e^{-\lm_{\rm r}t}\left(\lm_{\rm i}t\right)^2+e^{-\lm_{\rm r}t}|\lm_{\rm i}|t.
\end{align*}
Then there exist two positive constants $C_{\sig,1}$ and $C_{\sig,2}$ depending on the function $\sig(\cdot)$, but independent of $\gamma$,  such that for all $\gamma\in(0,\gamma_0]$,  and $(\lm_{\rm r},\lm_i,M)\in\mathsf{R}_\dl$, we have
\begin{align}\label{ub-|F|}
    |{\rm F}(\lm,M)|
    \le\nn& \fr{|\lm_{\rm r}|}{\gamma}\int_0^\infty  t^2 |\hat{\sigma}(t)| dt+\fr{\lm_{\rm i}^2}{2\gamma^2}\int_0^\infty t^3|\hat{\sig}(t)|dt+\fr{|\lm_{\rm i}|}{\gamma}\int_0^\infty e^{-\fr{\lm_{\rm r}t}{\gamma}}t^2|\hat{\sig}(t)|dt\\
    \nn&+ \gamma^2 |m_\sigma|\left( |\lm_{\rm r}|\int_0^\infty  t^2 e^{-\frac{t^2}{2}} dt+\fr{\lm_{\rm i}^2}{2}\int_0^\infty  t^3 e^{-\frac{t^2}{2}} dt+|\lm_{\rm i}|\int_0^\infty  t^2 e^{-\frac{t^2}{2}} dt\right)\\
    \le\nn&2\dl \int_0^\infty  t^2 |\hat{\sigma}(t)| dt+\fr{\dl^2}{2}\int_0^\infty t^3|\hat{\sig}(t)|dt\\
    &+\gamma^2|m_\sig|\left(2\dl \gamma\int_0^\infty  t^2 e^{-\fr{t^2}{2}}dt+\fr{\dl^2\gamma^2}{2}\int_0^\infty t^3e^{-\fr{t^2}{2}}dt\right)\le C_{\sig,1}\dl,
\end{align}
and
\begin{align}\label{ub-|G|}
    |{\rm G}(\lm,M)|
    \le\nn&\fr{2(M_0+1)}{\gamma}\left(2\dl \int_0^\infty  t^2 |\hat{\sigma}(t)| dt+\fr{\dl^2}{2}\int_0^\infty t^3|\hat{\sig}(t)|dt\right)\\
    &+2\left(2\dl \gamma\int_0^\infty  t^2 e^{-\fr{t^2}{2}}dt+\fr{\dl^2\gamma^2}{2}\int_0^\infty t^3e^{-\fr{t^2}{2}}dt\right)\le \fr{C_{\sig,2}\dl}{\gamma},
\end{align}
where we have used
\begin{align}\label{uni-gamma2}\fr12\le 1+M\gamma^2m_\sig\le 2,\quad{\rm for \  all}\ \gamma\in(0,\gamma_0],\ M\in[M_0,M_0+1].
\end{align}
Moreover, noting that
\begin{align*}
    \frac{\partial N_\gamma(\lm,M)}{\partial\lambda}=&-\frac{1}{\gamma}\int_0^\infty e^{-\frac{\lambda t}{\gamma} } t^2 \hat{\sigma}(t) dt + \gamma^2 m_\sigma \int_0^\infty e^{-\lambda t} t^2 e^{-\frac{t^2}{2}} dt,\\
    \frac{\partial D_\gamma(\lm,M)}{\partial\lambda}=&-(1+M\gamma^2 m_\sigma) \left[ \int_0^\infty e^{-\lambda t}t^3  e^{-\frac{t^2}{2}} dt + \frac{M}{\gamma^2} \int_0^\infty e^{-\frac{\lambda t}{\gamma} } t^3  \hat{\sigma}(t) dt \right],
\end{align*}
repeating the above argument, we find that 
there exist two positive constants $C_{\sig,3}$ and $C_{\sig,4}$ depending on the function $\sig(\cdot)$, but independent of $\gamma$,  such that for all  $\gamma\in(0,\gamma_0]$,  and $(\lm_{\rm r},\lm_i,M)\in\mathsf{R}_\dl$, there hold
\begin{align}\label{ub:dN-dD}
\left|\frac{\partial N_\gamma(\lm,M)}{\partial\lambda}\right|\le \fr{C_{\sig,3}}{\gamma},\quad
\left|\frac{\partial D_\gamma(\lm,M)}{\partial\lambda}\right|\le \fr{C_{\sig,4}}{\gamma^2}.
\end{align}

For all $\gamma\in(0,\gamma_0]$ and $M\in[M_0,M_0+1]$, it is easy to verify that
\begin{gather}
    \label{uni-gamma1}\fr{3}{4}|B_1|\le|B_1|\pm\gamma^2m_\sig\le\fr{5}{4}|B_1|,\quad \fr{8}{5|B_1|}\le M_0\le\fr{8}{3|B_1|},\\
    \label{uni-gamma3}\fr{1}{\gamma}\fr{|B_2|}{|B_1|}\le \frac{M}{\gamma} |B_2|-\sqrt{\fr{\pi}{2}}\le \fr{1}{\gamma}\left(\fr{8|B_2|}{3|B_1|}+|B_2|\right).
\end{gather}
Now we choose $\dl$ sufficiently small  such that
\begin{align}\label{small-dl1}
    C_{\sig,1}\dl\le \fr14|B_1|,\quad C_{\sig,2}\dl\le \fr14\fr{|B_2|}{|B_1|}.
\end{align}
Then it follows from  \eqref{ub-|F|}--\eqref{small-dl1} that
\begin{align}\label{lb-D}
    \left| D_\gamma(\lm,M)\right|
    \ge\fr12 \left ( \frac{M}{\gamma} |B_2|-\sqrt{\fr{\pi}{2}}  \right)-|{\rm G}(\lm,M)|\ge \fr{1}{2\gamma}\fr{|B_2|}{|B_1|}-\fr{C_{\sig,2}}{\gamma}\dl
    \ge\fr{1}{4\gamma}\fr{|B_2|}{|B_1|},
\end{align}
and
\begin{align}\label{ub-D}
    \left| D_\gamma(\lm,M)\right|\le\nn&2 \left ( \frac{M}{\gamma} |B_2|-\sqrt{\fr{\pi}{2}}  \right)+|{\rm G}(\lm,M)|\le \fr{2}{\gamma}\left(\fr{8|B_2|}{3|B_1|}+|B_2|\right)+\fr{C_{\sig,2}}{\gamma}\dl\\
    \le&\fr{1}{\gamma}\left(\fr{67}{12}\fr{|B_2|}{|B_1|}+2|B_2|\right).
\end{align}
and
\begin{align}\label{ub-N}
|N_\gamma(\lm,M)|\le |B_1-\gamma^2m_\sig|+|{\rm F}(\lm,M)|\le \fr32|B_1|.
\end{align}
Then
\begin{align}\label{Lip}
    \left|\fr{\partial}{\partial\lambda}\left(\fr{N_\gamma(\lm,M)}{D_\gamma(\lm,M)}\right)\right|=\left| \fr{\pr_\lm N_\gamma D_\gamma-N_\gamma\pr_\lm D_\gamma}{D_\gamma^2}\right|\le \fr{4C_{\sig,3}|B_1|}{|B_2|}+\fr{24|B_1|^3C_{\sig,4}}{|B_2|^2}=:{\rm Lip}.
\end{align}

Next, we prove (ii). To ensure a lifespan uniform in $\gamma$, we employ the Banach fixed-point theorem here, rather than appealing directly to the Picard-Lindel\"of theorem. Let us define
\begin{align}
    X:=\left\{\lm\in C([M_0,M_0+\varepsilon];\mathbb{C}): 0\le \lm_{\rm r}\le \dl\gamma, |\lm_{\rm i}|\le \dl\gamma\right\}, 
\end{align}
where $\varepsilon>0$  is to be determined later. Consider a mapping $\mathcal{T}$  on $X$:
\begin{align*}
(\mathcal{T}\lm)(M)=\int_{M_0}^M\fr{N_\gamma(\lm(s),s)}{D_\gamma(\lm(s),s)}ds.
\end{align*}
From \eqref{lb-D} and \eqref{ub-N}, we have
\begin{align*}
    \left|\fr{N_\gamma(\lm(s),s)}{D_\gamma(\lm(s),s)}\right|\le \fr{6|B_1|^2}{|B_2|}\gamma.
\end{align*}
Thus,
\begin{align}\label{bd-into}
    |(\mathcal{T}\lm)(M)|\le \varepsilon\fr{6|B_1|^2}{|B_2|}\gamma \le \dl\gamma, 
\end{align}
provide 
\begin{align}\label{small-vareps1}
    \varepsilon\le\fr{|B_2|}{6|B_1|^2}\dl.
\end{align}
Moreover, note that
\begin{align*}
    \frak{Re}\fr{N_\gamma}{D_\gamma}
    = \fr{\big(B_1-\gamma^2m_\sig+\frak{Re}{\rm F}\big)\left((1+M\gamma^2 m_\sigma) \big(  \sqrt{\fr{\pi}{2}} + \frac{M}{\gamma} B_2 \big)+\frak{Re}{\rm G}\right)+\frak{Im}{\rm F}\frak{Im}{\rm G}}{|D_\gamma|^2}.
\end{align*}
Then we infer from \eqref{ub-|F|}--\eqref{uni-gamma2},  \eqref{uni-gamma1}--\eqref{small-dl1} that
\begin{align*}
\nn&\big(B_1-\gamma^2m_\sig+\frak{Re}{\rm F}\big)\left((1+M\gamma^2 m_\sigma) \Big(  \sqrt{\fr{\pi}{2}} + \frac{M}{\gamma} B_2 \Big)+\frak{Re}{\rm G}\right)+\frak{Im}{\rm F}\frak{Im}{\rm G}\\
\ge\nn&\big(|B_1|+\gamma^2m_\sig-\frak{Re}{\rm F}\big)\left((1+M\gamma^2 m_\sigma) \Big(  \frac{M}{\gamma} |B_2|-\sqrt{\fr{\pi}{2}}   \Big)-\frak{Re}{\rm G}\right)-\fr{C_{\sig,1}C_{\sig,2}\dl^2}{\gamma}\\
\ge&\fr{|B_1|}{2}\cdot\fr{|B_2|}{4\gamma|B_1|}-\fr{|B_2|}{16\gamma}=\fr{1}{16\gamma}|B_2|.
\end{align*}
Combining this with \eqref{ub-D}, we are led to
\begin{align}
    \frak{Re}\fr{N_\gamma}{D_\gamma}\ge\fr{|B_1|^2}{16(6+2|B_1|)^2|B_2|}\gamma=:\underline{c}\gamma.
\end{align}
Consequently, 
\begin{align}\label{re-nonegtive}
    \frak{Re}(\mathcal{T}\lm)(M)\ge \underline{c}\gamma (M-M_0)\ge0.
\end{align}
Then one can see from \eqref{bd-into} an \eqref{re-nonegtive} that $\mathcal{T}$ maps $X$ into $X$.

On the other hand, thanks to \eqref{Lip}, we find that
\begin{align}
    |(\mathcal{T}\lm_1)(M)-(\mathcal{T}\lm_2)(M)|\le  \fr12\|\lm_1-\lm_2\|_{C[M_0,M_0+\varepsilon]},
\end{align}
provided
\begin{align}\label{small-vareps2}
\varepsilon\le \fr{1}{2{\rm Lip}}.
\end{align}
Combining \eqref{small-vareps1} with \eqref{small-vareps2}, now we take 
\begin{align}
    \varepsilon:=\min\left\{\fr{|B_2|}{6|B_1|^2}\dl,\fr{1}{2{\rm Lip}}\right\}.
\end{align}
Then $\mathcal{T}: X\rightarrow X$ is a contraction mapping. Taking $\bar{C}:=\fr{6|B_1|^2}{|B_2|}$, then \eqref{bd:lm_r} and \eqref{bd:lm_i} follow immediately.

Finally, we prove (iii). Let $\lm(M), M\in[M_0,M_0+\varepsilon]$ be the solution obtained in (ii). Consider a function
\[
\Xi(M):=\Psi_\gamma(\lm(M),M),\quad M\in[M_0,M_0+\varepsilon].
\]
One easily deduces that
\begin{align*}
    \fr{d\,\Xi}{dM}=\fr{\pr\Psi_\gamma}{\pr\lm}\fr{d\lm}{dM}+\fr{\pr \Psi_\gamma}{\pr M}=0.
\end{align*}
Combining this with \eqref{Psi0} yields \eqref{equiv}. This completes the proof of Lemma \ref{lem-lm}.
\end{proof}

In the following, let us fix $M=M_0+\varepsilon$ determined in Lemma \ref{lem-lm}. Now we  solve the eigenvalue problem \eqref{eigen} with 
\begin{align}\label{lm}
    \lm=\lm(M_0+\varepsilon).
\end{align}
Using the eigenfunction given in \eqref{exp-eigen}, we then construct a solution to \eqref{LVP} with exponential growth. We state the result in the following lemma.
\begin{lem}\label{lem:f*}
Let 
\begin{align*}
    f^*(t,x,v):=e^{-t\mathbb{VP}_{f^0}}\frak{Re}\big(e^{ix} e_{\gamma,\lm}(v)\big).
\end{align*}
Then
\begin{align}\label{def:slvp}
    f^*(t,x,v)=\frak{Re}\big({e^{ix+\lm t}e_{\gamma,\lm}(v)}\big)
\end{align}
solves the linear Vlasolv-Poisson equation \eqref{LVP}. Moreover, it holds that
\begin{align}\label{L2:f*}
    \|\la v\ra^m f^*(t)\|_{L^2_{x,v}}=e^{\lm_{\rm r}t}\|\la v\ra^m \big(e^{ix} e_{\gamma,\lm}(v)\big)\|_{L^2_{x,v}}, \quad {\rm for \  all} \ m\in\R.
\end{align}
\end{lem}
\begin{proof}
The expression \eqref{def:slvp} follows directly from the fact that $\lambda$ and $e^{ix}e_{
\gamma,\lm}$ are the eigenvalue and associated eigenfunction of the operator $\mathbb{VP}_{f^0}$, and \eqref{L2:f*} follows from \eqref{def:slvp} immediately.
\end{proof}

To conclude this section, we provide upper and lower bounds for the eigenfunction $e_{\lm,\gamma}$ in the weighted $L^2_v$-space.

\begin{lem}\label{lem-eigen}
    Let $e_{\gamma,\lm}(v)$ be the eigenfunction given in \eqref{exp-eigen} with $\lm=\lm(M_0+\varepsilon)$ determined in Lemma \ref{lem-lm}, and $\gamma\in(0,\gamma_0]$. Then for any $m\ge0$, and $j\in\N$, there are two positive constants $\frak{c}<\frak{C}$, independent of $\gamma$, such that 
    \begin{align}\label{lb-eign}
        \|e_{\gamma,\lm}\|_{L^2_v}\ge \frak{c}\gamma^{-\fr12}|\hat{e}_{\gamma,\lm}(0)|,
    \end{align}
    and 
    \begin{align}\label{ub-eign}
        \|\la v\ra^m \pr_v^je_{\gamma,\lm}\|_{L^2_v}\le \frak{C}\gamma^{-j-\fr12}|\hat{e}_{\gamma,\lm}(0)|.
    \end{align}
\end{lem}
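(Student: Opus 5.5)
The plan is to work entirely on the Fourier side, where the explicit formula \eqref{exp-eigen} is available, and to move to physical space with Plancherel: $\|\la v\ra^m\pr_v^j e_{\gamma,\lm}\|_{L^2_v}\approx\sum_{m'\le m}\|\pr_\xi^{m'}(\xi^j\hat e_{\gamma,\lm})\|_{L^2_\xi}$ (with $m$ an integer, the general case following by interpolation). After normalizing $\hat e_{\gamma,\lm}(0)=1$, write $\widehat{f^0}(\zeta)=\fr{1}{1+M\gamma^2m_\sig}\big(e^{-\zeta^2/2}+M\gamma^2\hat\sig(\gamma\zeta)\big)$; the second piece lives on the scale $|\zeta|\sim\gamma^{-1}$. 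It is then natural to rescale $\xi=\eta/\gamma$ and $\lm=\gamma\Lambda$, where $|\Lambda|\le 2\dl$ and $\mathfrak{Re}\,\Lambda\ge\underline{c}\varepsilon>0$ by Lemma \ref{lem-lm}. In the rescaled variable, $\hat e(\eta/\gamma)=e^{\Lambda\eta}\big(\text{const}-\int^{\eta}\cdots\big)$, where the integrand is $M\zeta' e^{-\Lambda\zeta'}\hat\sig(\zeta')\,d\zeta'$ plus a Maxwellian contribution. The Maxwellian contribution is concentrated in $|\zeta'|\lesssim\gamma$ and has total mass $O(1)$, so its effect is a bounded jump near the origin.

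The first step handles $\xi\ge0$. There $\hat e(\xi)=-e^{\lm\xi}\int_\xi^\infty\zeta e^{-\lm\zeta}\widehat{f^0}(\zeta)d\zeta$, and since $\mathfrak{Re}\,\lm>0$ the factor $e^{\lm(\xi-\zeta)}$ has modulus at most $1$ when $\zeta\ge\xi$. Using the Schwartz decay of $\hat\sig$ and the Gaussian, I get $|\pr_\xi^{a}\hat e(\xi)|\lesssim \gamma^{a}\langle\gamma\xi\rangle^{-K}$ for every $K$. The bound for $a=0$ and $\xi\lesssim\gamma^{-1}$ comes from $\int_\xi^\infty\zeta M\gamma^2|\hat\sig(\gamma\zeta)|\,d\zeta\lesssim M\int|\zeta'\hat\sig|\lesssim1$. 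Derivatives in $\xi$ come from differentiating the ODE \eqref{eq:hat-e}, which gives $\hat e'=\lm\hat e+\xi\widehat{f^0}$. Iterating it shows that each $\xi$-derivative gains $\gamma$ from the $\hat\sig(\gamma\cdot)$ part. The Gaussian part is smooth on the $O(1)$ scale, and it also produces only $O(1)$ contributions confined to $|\xi|\lesssim1$.

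The second step handles $\xi\le0$. I use the form $\hat e(\xi)=e^{\lm\xi}\big(1+\int_0^\xi\zeta e^{-\lm\zeta}\widehat{f^0}d\zeta\big)$. Here $\mathfrak{Re}(\lm\xi)\le0$, and the integrand carries $e^{-\lm\zeta}$ with $|e^{\lm(\xi-\zeta)}|\le1$ for $\xi\le\zeta\le0$. The dispersion relation $\Psi_\gamma(\lm,M)=0$ says the two representations agree at $0$. Combining it with the decay of $\hat\sig$, the bracket equals $-\int_{-\infty}^\xi\cdots$ up to a term that is exponentially decaying in $|\gamma\xi|$; this is the point that needs care. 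Moreover, $\widehat{f^0}$ is real and even because $\sig$ is even. This yields the same bound $|\pr_\xi^a\hat e(\xi)|\lesssim\gamma^a\langle\gamma\xi\rangle^{-K}$, with constants uniform in $\gamma$. The one place where uniformity is delicate is the tail $e^{\lm\xi}$, which decays only at rate $\mathfrak{Re}\,\lm\approx\gamma$. That rate is still the $\gamma^{-1}$ scale, so it is harmless.

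Given these pointwise bounds, \eqref{ub-eign} follows by computation. We have $\|\xi^j\pr_\xi^{a}\hat e\|_{L^2_\xi}\lesssim\gamma^{a}\big(\int|\xi|^{2j}\langle\gamma\xi\rangle^{-2K}d\xi\big)^{1/2}\lesssim\gamma^{a-j-\fr12}$. Cross terms from the Leibniz rule are smaller still. For the lower bound \eqref{lb-eign}, I use Plancherel: $\|e\|_{L^2_v}\approx\|\hat e\|_{L^2_\xi}$. Since $\hat e(0)=1$ and $|\hat e'(\xi)|\lesssim\gamma+|\xi|e^{-\xi^2/2}$, the Gaussian part may shift the value by $O(1)$ near $\xi=0$. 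So I instead evaluate at $\xi\approx -C$, outside the Maxwellian layer. There $\hat e\approx e^{\lm\xi}(1+O(1))$, and on the $\gamma^{-1}$ scale the function equals $\Phi(\gamma\xi)$ up to $o(1)$ errors, where $\Phi(\eta)=e^{\Lambda\eta}\big(c_0-M\int_{0}^{\eta}\zeta' e^{-\Lambda\zeta'}\hat\sig(\zeta')d\zeta'\big)$ is a fixed nonzero profile. The main obstacle is to show that $\Phi$ is not identically zero on a set of $\eta$-measure $\gtrsim1$, uniformly in $\gamma$. I would argue by continuity in $\Lambda$: $\Phi$ is a nontrivial analytic-in-$\eta$ solution of the rescaled ODE whose values at $\eta=0$ are bounded below. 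Then $\|\hat e\|_{L^2_\xi}\ge\gamma^{-1/2}\|\Phi\|_{L^2(I)}\gtrsim\gamma^{-1/2}$, which gives \eqref{lb-eign}.
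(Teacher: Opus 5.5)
Your lower bound \eqref{lb-eign} has a gap, exactly at the step you call the main obstacle. Your argument reduces everything to a uniform bound $|\Phi(0)|=|c_0|\gtrsim1$, but you only assert it. ``Continuity in $\Lambda$'' cannot supply it: continuity, even combined with compactness in $\Lambda$, $M$ and $c_0$ (all of which move with $\gamma$), only propagates a non-vanishing that must first be established. Your own remark that the Maxwellian ``may shift the value by $O(1)$'' leaves open precisely the cancellation $c_0\approx0$ that your argument must rule out. (Analyticity in $\eta$ is also not available for a merely smooth $\sigma$, though continuity would suffice there.)

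The missing idea is the \emph{sign} of the Maxwellian contribution: for $\xi\le0$, $\int_0^\xi\zeta e^{-\zeta^2/2}\,d\zeta=1-e^{-\xi^2/2}\ge0$, so it reinforces the normalization instead of cancelling it. This is the paper's proof, and it needs neither a detour away from the Maxwellian layer nor a limiting profile. With $Z:=1+M\gamma^2m_\sigma\in[\frac12,2]$, write $\hat e_{\gamma,\lambda}(\xi)/\hat e_{\gamma,\lambda}(0)$ on $\xi\le0$ as
\[
e^{\lambda\xi}\Big(1+\frac{1-e^{-\xi^2/2}}{Z}\Big)+\frac{e^{\lambda\xi}}{Z}\int_0^\xi(e^{-\lambda\zeta}-1)\zeta e^{-\zeta^2/2}\,d\zeta+\frac{M\gamma^2e^{\lambda\xi}}{Z}\int_0^\xi\zeta e^{-\lambda\zeta}\hat\sigma(\gamma\zeta)\,d\zeta .
\]
The first term has modulus at least $e^{\lambda_{\rm r}\xi}$, whose $L^2([-c/\gamma,0])$ norm is $\gtrsim c^{1/2}\gamma^{-1/2}$ since $\lambda_{\rm r}\lesssim\gamma$. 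The second term is $O(\delta\gamma)$ pointwise and the third is $O(M\|\hat\sigma\|_{L^\infty}\gamma^2\xi^2)$, so in that norm they are $O(\delta(c\gamma)^{1/2})$ and $O(c^{5/2}\gamma^{-1/2})$. A small $c$ independent of $\gamma$ then closes the argument. In your notation the same computation gives $c_0=1+Z^{-1}\int_0^\infty te^{\lambda t}e^{-t^2/2}\,dt$, hence $\mathfrak{Re}\,c_0\ge\frac32-O(\delta\gamma)$. After that, a uniform Lipschitz bound on $\Phi$ near $\eta=0$ is all you need; no continuity in $\Lambda$ is required.

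Your upper bound \eqref{ub-eign} is fine and carries the same content as the paper's argument. The paper uses Young's inequality, convolving $\mathbf{1}_{\eta\le0}\langle\eta\rangle^{j}e^{\lambda_{\rm r}\eta}$ (of $L^2$ norm $\approx\gamma^{-j-1/2}$) against $\zeta\widehat{f^0}\in L^1$, where you use pointwise decay. Your claim $|\partial_\xi^a\hat e_{\gamma,\lambda}|\lesssim\gamma^a\langle\gamma\xi\rangle^{-K}$ is false as stated because of the terms $(\xi e^{-\xi^2/2})^{(l)}$. As you partly acknowledge, these are $O(1)$ and Gaussian-localized, hence harmless.

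You should drop the remark that $\Psi_\gamma(\lambda,M)=0$ turns the bracket into $-\int_{-\infty}^\xi\cdots$. For $\zeta<0$ one has $|e^{-\lambda\zeta}|=e^{\lambda_{\rm r}|\zeta|}$, so that integral need not converge when $\hat\sigma$ has only Schwartz decay. Moreover, the dispersion relation involves $\int_0^\infty$, not $\int_{-\infty}^0$. The remark is also unnecessary: splitting $\int_\xi^0$ at $\xi/2$ and using $|e^{\lambda(\xi-\zeta)}|\le1$ for $\xi\le\zeta\le0$ already gives the decay on $\xi\le0$.
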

\begin{proof}
To prove \eqref{lb-eign}, for $\xi\le0$, recalling \eqref{def-f0} and \eqref{exp-eigen}, we write
\begin{align}\label{lb-eigen1}
    \hat{e}_{\gamma,\lm}(\xi)=\nn& e^{\lm\xi}\Big(1+\int_0^\xi\fr{\zeta e^{-\fr{\zeta^2}{2}}}{1+M\gamma^2m_\sig}d\zeta\Big)\hat{e}_{\gamma,\lm}(0)+ e^{\lm\xi}\int_0^\xi\fr{ (e^{-\lm\zeta}-1)\zeta e^{-\fr{\zeta^2}{2}}}{1+M\gamma^2m_\sig}d\zeta\hat{e}_{\gamma,\lm}(0)\\
    &+e^{\lm\xi}\int_0^\xi \zeta e^{-\lm \zeta} \fr{M\gamma^2\hat{\sig}(\gamma\zeta)}{1+M\gamma^2m_\sig}d\zeta \hat{e}_{\gamma,\lm}(0).
\end{align}
Clearly, by using \eqref{uni-gamma2}, we have 
\begin{align}
   \nn& e^{\lm_{\rm r}\xi}\bigg|\int_0^\xi\fr{ (e^{-\lm\zeta}-1)\zeta e^{-\fr{\zeta^2}{2}}}{1+M\gamma^2m_\sig}d\zeta\bigg|\le e^{\lm_{\rm r}\xi}\int_0^{-\xi}\fr{ |e^{\lm t}-1|t e^{-\fr{t^2}{2}}}{1+M\gamma^2m_\sig}dt\\
   \nn\le&\fr{1}{1+M\gamma^2m_\sig}e^{\lm_{\rm r}\xi}\left[\int_0^{-\xi}(\lm_{\rm r}+|\lm_{\rm i}|)e^{\lm_{\rm r}t}t^2e^{-\fr{t^2}{2}}dt+\fr{\lm_{\rm i}^2}{2}e^{\lm_{\rm r}t}t^3e^{-\fr{t^2}{2}}dt\right]\\
   \le\nn&4\dl\gamma\int_0^{\infty}(t^2+t^3)e^{-\fr{t^2}{2}}dt,
\end{align}
where $\dl$ is the constant determined  in Lemma \ref{lem-lm}. Then
\begin{align}\label{ub-eigen1}
     \bigg\|e^{\lm_{\rm r}\xi}\int_0^\xi\fr{ (e^{-\lm\zeta}-1)\zeta e^{-\fr{\zeta^2}{2}}}{1+M\gamma^2m_\sig}d\zeta\bigg\|_{L^2_\xi([-\fr{c}{\gamma},0])}\le 4\dl(c\gamma)^{\fr12}\int_0^{\infty}(t^2+t^3)e^{-\fr{t^2}{2}}dt.
\end{align}
Moreover,
\begin{align*}
    \bigg|e^{\lm\xi}\int_0^\xi \zeta e^{-\lm \zeta} \fr{M\gamma^2\hat{\sig}(\gamma\zeta)}{1+M\gamma^2m_\sig}d\zeta\bigg|=&\fr{M\gamma^2}{1+M\gamma^2m_\sig}e^{\lm_{\rm r}\xi}\bigg|\int_0^{-\xi} t e^{\lm t} \hat{\sig}(\gamma t)dt\bigg|
    \le M\|\hat{\sig}\|_{L^\infty}\gamma^2\xi^2.
\end{align*}
Then
\begin{align}\label{ub-eigen2}
    \bigg\|e^{\lm\xi}\int_0^\xi \zeta e^{-\lm \zeta} \fr{M\gamma^2\hat{\sig}(\gamma\zeta)}{1+M\gamma^2m_\sig}d\zeta\bigg\|_{L^2_{\xi}[-\fr{c}{\gamma},0]}\le \fr{M\|\hat{\sig}\|_{L^\infty}c^{\fr52}}{\sqrt{5}}\gamma^{-\fr12}.
\end{align}
Noting that $\displaystyle\int_0^\xi\fr{\zeta e^{-\fr{\zeta^2}{2}}}{1+M\gamma^2m_\sig}d\zeta\ge0$, taking $c$ sufficiently small but independent of $\gamma$ (one can further reduce $\gamma_0$ if necessary),  we then 
infer from \eqref{lb-eigen1}--\eqref{ub-eigen2}, \eqref{bd:lm_r}  and \eqref{uni-gamma1} that 
\begin{align}
    \nn&\|e_{\gamma,\lm}\|_{L^2_v}\ge \|\hat{e}_{\gamma,\lm}\|_{L^2_{\xi}([-\fr{c}{\gamma},0])}\\
    \ge\nn& \left[\fr{1}{(2\lm_{\rm r})^\fr12}\sqrt{1-e^{-\fr{2\lm_{r}c}{\gamma}}}-4\dl(c\gamma)^{\fr12}\int_0^{\infty}(t^2+t^3)e^{-\fr{t^2}{2}}dt-\fr{M\|\hat{\sig}\|_{L^\infty}c^{\fr52}}{\sqrt{5}}\gamma^{-\fr12}\right]|\hat{e}_{\gamma,\lm}(0)|\\
    \ge\nn&\left[\left(\sqrt{\fr{\underline{c}}{2\bar{C}}}-4\dl\gamma_0 \int_0^{\infty}(t^2+t^3)e^{-\fr{t^2}{2}}dt \right)c^{\fr12}-\big(\fr{8}{3\sqrt{5}|B_1|}+\fr{1}{\sqrt{5}})\|\hat{\sig}\|_{L^\infty}c^{\fr52}\right]\gamma^{-\fr12}|\hat{e}_{\gamma,\lm}(0)|\\ 
    \ge\nn&\frak{c}\gamma^{-\fr12}|\hat{e}_{\gamma,\lm}(0)|,
\end{align}
for some $\frak{c}>0$ independent of $\gamma$. This proves \eqref{lb-eign}.

Next, we turn to prove \eqref{ub-eign}. From \eqref{eq:hat-e}, we find that for $\ell\in\N$,
\[
\hat{e}_{\gamma,\lm}^{(\ell)}(\xi)=\lm^{\ell}\hat{e}_{\gamma,\lm}(\xi)+\hat{e}_{\gamma,\lm}(0)\sum_{l=0}^{\ell-1}\lm^{\ell-1-l}\big(\xi\widehat{f^0}(\xi)\big)^{(l)}.
\]
Then for $m,j\in\N$,
\begin{align}
\big(\xi^j\hat{e}_{\gamma,\lm}(\xi)\big)^{(m)}
 =\nn& \hat{e}_{\gamma,\lambda}(\xi) \sum_{\ell=0}^{m} \binom{m}{\ell} \lambda^\ell (\xi^j)^{(m-\ell)} \\
\nn&+ \hat{e}_{\gamma,\lambda}(0) \sum_{l=0}^{m-1} \left( \sum_{\ell=l+1}^{m} \binom{m}{\ell} \lambda^{\ell-1-l} (\xi^j)^{(m-\ell)} \right)  \big( \xi \widehat{f^0}(\xi) \big)^{(l)}=I_1(\xi)+I_2(\xi),
\end{align}
where the second term $I_2(\xi)$ on the right hand side vanishes when $m=0$. For $I_1(\xi)$, recalling \eqref{exp-eigen}, we find that there exists positive constant $C_{m,j}$ depending on $m$ and $j$, but independent of $\gamma$, such that
\begin{align}
    |I_1(\xi)|\le\nn& C_{m,j}\la \xi\ra^j|\hat{e}_{\gamma,\lm}(\xi)|
    \le\nn C_{m,j}\la \xi\ra^j\left|\int^{+\infty}_\xi\zeta e^{\lm(\xi-\zeta)}\widehat{f^0}(\zeta)d\zeta\right| |\hat{e}_{\gamma,\lm}(0)|{\bf 1}_{\xi\ge0}\\
\nn&+C_{m,j}\la \xi\ra^j\left(|e^{\lm\xi}|+\left|\int_0^\xi e^{\lm(\xi-\zeta)}\zeta\widehat{f^0}(\zeta)d\zeta\right|\right)|\hat{e}_{\gamma,\lm}(0)|{\bf1}_{\xi\le0}=I_{1,+}(\xi)+I_{1,-}(\xi).
\end{align}
By Young's inequality for convolutions, we are led to
\begin{align}\label{est:I+}
\|I_{1,+}\|_{L^2_\xi}\les\nn& \left\| \int_{\R}{\bf1}_{\xi-\zeta\le0} \la \xi-\zeta\ra^j e^{\lm_{\rm r}(\xi-\zeta)}|\zeta\widehat{f^0}(\zeta)|d\zeta\right\|_{L^2_{\xi\ge0}}|\hat{e}_{\gamma,\lm}(0)|\\
\nn&+\left\| \int_{\R}{\bf1}_{\xi-\zeta\le0}  e^{\lm_{\rm r}(\xi-\zeta)}\la \zeta\ra^{j+1}|\widehat{f^0}(\zeta)|d\zeta\right\|_{L^2_{\xi\ge0}}|\hat{e}_{\gamma,\lm}(0)|\\
\les\nn&\left(\int_{-\infty}^0\big|\la\eta\ra^je^{\lm_r\eta}\big|^2d\eta\right)^\fr12\|\zeta\widehat{f^0}(\zeta)\|_{L^1_\zeta}|\hat{e}_{\gamma,\lm}(0)|\\
\nn&+\left(\int_{-\infty}^0\big|e^{\lm_r\eta}\big|^2d\eta\right)^\fr12\|\la\zeta\ra^{j+1}\widehat{f^0}(\zeta)\|_{L^1_\zeta}|\hat{e}_{\gamma,\lm}(0)|\\
\les& \gamma^{-j-\fr12}|\hat{e}_{\gamma,\lm}(0)|,
\end{align}
and 
\begin{align}\label{est:I-}
    \|I_{1,-}\|_{L^2_\xi}\les\nn& \gamma^{-j-\fr12}|\hat{e}_{\gamma,\lm}(0)|+\left\| \int_{\R}{\bf1}_{\xi-\zeta\le0} \la \xi-\zeta\ra^j e^{\lm_{\rm r}(\xi-\zeta)}|\zeta\widehat{f^0}(\zeta)|d\zeta\right\|_{L^2_{\xi\le0}}|\hat{e}_{\gamma,\lm}(0)|\\
&+\left\| \int_{\R}{\bf1}_{\xi-\zeta\le0}  e^{\lm_{\rm r}(\xi-\zeta)}\la \zeta\ra^{j+1}|\widehat{f^0}(\zeta)|d\zeta\right\|_{L^2_{\xi\le0}}|\hat{e}_{\gamma,\lm}(0)|\les \gamma^{-j-\fr12}|\hat{e}_{\gamma,\lm}(0)|.
\end{align}

To bound $I_2(\xi)$, let us denote 
\begin{align}\label{def-sig1}
    \sig_1(\eta):=\eta\hat{\sig}(\eta).
\end{align}
Then we can write
\[
\xi\widehat{f^0}(\xi)=\fr{\xi e^{-\fr{\xi^2}{2}}+M\gamma \sig_1(\gamma\xi)}{1+M\gamma^2m_\sig},
\]
and
\[
\big( \xi \widehat{f^0}(\xi) \big)^{(l)}=\fr{(\xi e^{-\fr{\xi^2}{2}})^{(l)}+M\gamma^{l+1} \sig_1^{(l)}(\gamma\xi)}{1+M\gamma^2m_\sig}.
\]
Then there exists a positive constant $C_{m,j}$ depending on $m$ and $j$ but independent of $\gamma$, such that
\begin{align*}
    |I_2(\xi)|\le C_{m,j}\la \xi\ra^j\sum_{l=0}^{m-1}\left(|(\xi e^{-\fr{\xi^2}{2}})^{(l)}|+\gamma^{l+1}|\sig_1^{(l)}(\gamma\xi)|\right)|\hat{e}_{\gamma,\lambda}(0)|.
\end{align*}
Consequently,
\begin{align}\label{est:I2}
    \|I_2\|_{L^2_\xi}\le C\Big(1+\gamma^{-j+\fr12}\|\la \eta\ra^j \sig_1^{(l)}(\eta)\|_{L^2_\eta}\Big)|\hat{e}_{\gamma,\lambda}(0)|.
\end{align}
Then \eqref{ub-eign} follows from \eqref{est:I+}, \eqref{est:I-} and \eqref{est:I2} immediately.
\end{proof}

\subsection{Semigroup estimates}
In this section, we incorporate the Fokker-Planck collision term $\nu L[\cdot]$ and establish the corresponding space-time estimates of the linearized equation of \eqref{eq-tlf'}. To this end,  let  $\mathbb{S}(t)$ denote the semigroup associated with its   homogeneous linearized version, namely, $\mathbb{S}(t)f_{\rm in}$ solves
\begin{align}\label{linear-f}
    \pr_tf+v\pr_xf-\pr_x(-\pr_{xx})^{-1}\frak{I}[f]\pr_vf^0-\nu L[f]=0,\quad f|_{t=0}=f_{\rm in},
\end{align}
where $\frak{I}[f]$ is defined as in \eqref{def-I}.
Then the solution $\tl{f}$ to \eqref{eq-tlf'} can be given as
\begin{align}\label{exp:tlf}
\tl{f}(t,x,v)=\mathbb{S}(t)\tl{f}_{\rm in}+\int_0^t\mathbb{S}(t-\tau)[\frak{N}(\tau,x,v)]d\tau.
\end{align}
Integrating w.r.t. $v$ over $\R$, we are led to
\begin{align}\label{exp:tlrho}
\tl{\rho}(t,x)=\frak{I}\circ\mathbb{S}(t)\tl{f}_{\rm in}+\int_0^t\frak{I}\circ\mathbb{S}(t-\tau)[\frak{N}(\tau,x,v)]d\tau.
\end{align}

The appearance of the non-local term in \eqref{linear-f} leads us to solve $\frak{I}[f]$ first. For this purpose, taking Fourier transform of \eqref{linear-f}, we have
\begin{align*}
\pr_t\hat{f}_k(t,\xi)-k\pr_\xi\hat{f}_k+\nu \xi\pr_\xi\hat{f}_k(t,\xi)-ik^{-1}\hat{f}_k(t,0)\widehat{\pr_vf^0}(\xi)+\nu\xi^2\hat{f}_k(\xi)=0.
\end{align*}
To absorb the transport terms $ -k\pr_\xi\hat{f}_k+\nu \xi\pr_\xi\hat{f}_k(t,\xi)$, like \cite{bedrossian2017suppression},  we introduce the corresponding characteristic curve  
\begin{align*}
\tl{\eta}(t;k,\eta)=e^{\nu t}(\eta-kt^{\rm ap}),\quad{\rm with}\quad t^{\rm ap} =\frac{1-e^{-\nu t}}{\nu}.
\end{align*}
Here `{\rm ap}' stands for `approximate'. Now we define 
\[
\hat{g}_k(t,\eta):=\hat{f}_k(t,\tl{\eta}(t;k,\eta)),\quad{\rm or}\quad \hat{f}_k(t,\xi)=\hat{g}_k(t,e^{-\nu t}\xi+kt^{\rm ap}).
\]
Then $\hat{g}_k(t,\eta)$ solves
\[
\pr_t\hat{g}_k(t,\eta)+\nu \tl{\eta}(t;k,\eta)^2\hat{g}_k(t,\eta)=ik^{-1}\hat{f}_k(t,0)\widehat{\pr_vf^0}(\tl{\eta}(t;k,\eta)).
\]
Let
\begin{align*}
{\bf S}_k(t,\tau; \eta):=\exp\left(-\nu\int_\tau^t|\tl{\eta}(s;k,\eta)|^2ds\right)=\exp\left(-\nu\int_\tau^t\left|e^{\nu s}\eta-k\fr{e^{\nu s}-1}{\nu}\right|^2ds\right).
\end{align*}
Then
\begin{align}\label{exp:g_k}
    \hat{g}_k(t,\eta)={\bf S}_k(t,0;\eta)\hat{g}_k(0,\eta)+ik^{-1}\int_0^t{\bf S}_k(t,\tau;\eta)\hat{f}_k(\tau,0)\widehat{\pr_vf^0}(\tl{\eta}(\tau;k,\eta))d\tau.
\end{align}
Noting that $\frak{I}[f_k](t)=\hat{f}_k(t,0)=\hat{g}_k(t,kt^{\rm ap})$,
taking $\eta=kt^{\rm ap}$ in \eqref{exp:g_k}, we obtain
\begin{align}\label{eq:Ifk}
    \frak{I}[f_k](t)={\bf S}_k(t)(\hat{f}_{\rm in})_k(kt^{\rm ap})+ik^{-1}\int_0^t{\bf S}_k(t-\tau)\frak{I}[f_k](\tau)ik(t-\tau)^{\rm ap}\widehat{f^0}(k(t-\tau)^{\rm ap})d\tau,
\end{align}
where
\begin{align*}
{\bf S}_k(t-\tau)={\bf S}_k(t,\tau):=&{\bf S}_k\left(t,\tau;  kt^{\rm ap}\right)
=\exp\left(-\fr{|k|^2}{\nu }\left[t-\tau-2(t-\tau)^{\rm ap}+\fr{1-e^{-2\nu(t-\tau)}}{2\nu}\right]\right).
\end{align*}
In particular,
\begin{align*}
{\bf S}_k(t):={\bf S}_k\left(t,0; kt^{\rm ap}\right)=\exp\left\{-\fr{|k|^2}{\nu }\left(t-2t^{\rm ap}+\fr{1-e^{-2\nu t}}{2\nu}\right)\right\}.
\end{align*}

Since the background $f^0$ in \eqref{eq:Ifk} is a perturbation of the global Maxwellian, which may fail to satisfy the Penrose stability condition, we define a new unknown 
\[
\theta_k(t):=\frak{I}[f_k(t)]e^{-C_0\gamma t}
\]
to factor out the potential growth $e^{C_0\gamma t}$, where $C_0$ is a positive constant independent of $\gamma$, to be determined later. 
Then we infer from \eqref{eq:Ifk} that $\theta_k(t)$ satisfies the following  Volterra equation
\begin{align}\label{eq:theta_k}
    \theta_k(t)=H_k(t)e^{-C_0\gamma t}+\int_0^t e^{-C_0\gamma(t-\tau)}{\bf K}^{\nu}_k(t-\tau)\theta_k(\tau)d\tau,
\end{align} 
where
\begin{align}
    \mathbf{K}_k^\nu(t):=- {\bf S}_k(t)t^{\rm ap}\widehat{f^0}(kt^{\rm ap}),\quad {\rm and}\quad H_k(t):={\bf S}_k(t)(\hat{f}_{\rm in})_k(kt^{\rm ap}).
\end{align}
Taking Fourier-Laplace transform of \eqref{eq:theta_k}, we are led to
\begin{align}\label{exp:Ltheta_k}
    \mathcal{L}[\theta_k](\lm)=\mathcal{L}[H_k](\lambda+C_0\gamma)+\mathcal{L}[{\bf K}^\nu_k](\lm+C_0\gamma)\mathcal{L}[\theta_k](\lm),
\end{align}
with
\begin{align}
    \mathcal{L}[{\bf K}_k^{\nu}](\lm+C_0\gamma)=\nn&\int_0^\infty e^{-(\lm+C_0\gamma)t}{\bf K}_k^{\nu}(t)dt
    =-\int_0^\infty e^{-(\lm+C_0\gamma)t}{\bf S}_k(t)t^{\rm ap}\widehat{f^0}(kt^{\rm ap})dt.
\end{align}

To solve $\mathcal{L}[\theta_k](\lm)$, we establish the following lemma.
\begin{lem}\label{lem:Penrose}
There exist small constants $\underline{\nu},\underline{\gamma}, \kappa_0,\kappa_1>0$, and a large constant $C_0\ge1$, all independent of $\nu$ and $\gamma$, such that for all $\nu\in[0,\underline{\nu})$, $\gamma\in (\fr{\nu^{\fr13}}{\kappa_1},\underline{\gamma})$, there holds
\begin{align}\label{Penrose}
    \inf_{k\in\Z_*,\frak{Re}\lm\ge0}|1-\mathcal{L}[{\bf K}^{\nu}_k](\lm+C_0\gamma)|\ge \fr{\kappa_0}{8}.
\end{align}
\end{lem}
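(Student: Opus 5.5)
Our plan is to compare $\mathcal{L}[{\bf K}^\nu_k](\lm+C_0\gamma)$ with two simpler symbols and treat the errors perturbatively. Write $f^0=\mu_1+\mu_2$, where $\mu_1$ is the rescaled Maxwellian and $\mu_2=M\gamma\sigma(\cdot/\gamma)/(1+M\gamma^2m_\sig)$. Correspondingly, split ${\bf K}^\nu_k={\bf K}^\nu_{k,1}+{\bf K}^\nu_{k,2}$. For the Maxwellian part, the collisionless kernel is
\[
{\bf K}^0_{k,1}(t)=-t\widehat{\mu_1}(kt)=-\fr{t e^{-k^2t^2/2}}{1+M\gamma^2m_\sig}.
\]
By the Penrose condition for $\mu$ (used in \eqref{inf-Pk}), together with \eqref{uni-gamma2} and the smallness of $\gamma^2$, we have
\[
\inf_{k\ne0,\ \frak{Re}\lm\ge0}\big|1-\mathcal{L}[{\bf K}^0_{k,1}](\lm)\big|\ge\kappa_0,
\]
with $\kappa_0$ independent of $\gamma$. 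Since $|e^{-C_0\gamma t}-1|\le C_0\gamma t$ and $\int_0^\infty t^2 e^{-k^2t^2/2}dt\les |k|^{-3}$, shifting $\lm\to\lm+C_0\gamma$ changes this symbol by at most $CC_0\gamma\le\kappa_0/8$, provided $\underline\gamma$ is small relative to $C_0$.

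Next we handle the collisional corrections to the Maxwellian part. We have $t^{\rm ap}=t+O(\nu t^2)$, and ${\bf S}_k(t)\le e^{-c\nu k^2t^3}$ with $|1-{\bf S}_k(t)|\les \nu k^2t^3$. Because $\widehat{\mu_1}(kt)$ localizes to $t\les|k|^{-1}$, the difference $\|{\bf K}^\nu_{k,1}-{\bf K}^0_{k,1}\|_{L^1_t}$ is $O(\nu)$ uniformly in $k$. This bounds the symbol difference by $\kappa_0/8$ once $\underline\nu$ is small. We split at $t\sim |k|^{-1}\nu^{-\delta}$ to handle the region where $t^{\rm ap}$ is not close to $t$.

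The main obstacle is the $\mu_2$ part, which is what destroys Penrose stability and forces the shift by $C_0\gamma$. Its kernel is
\[
{\bf K}^\nu_{k,2}(t)=-{\bf S}_k(t)\,t^{\rm ap}\,\fr{M\gamma^2\hat\sig(\gamma kt^{\rm ap})}{1+M\gamma^2m_\sig}.
\]
This kernel is of size $\gamma^2 t$ on the time scale $t\sim(\gamma|k|)^{-1}$, so its $L^1_t$ norm is $O(1)$ and it is not small. With the damping factor $e^{-C_0\gamma t}$, however, we get
\[
\int_0^\infty e^{-C_0\gamma t}\,\gamma^2 t\,|\hat\sig(\gamma kt)|\,dt=\int_0^\infty e^{-C_0 s/|k|}\,\fr{s|\hat\sig(s)|}{k^2}\,ds,
\]
using $t^{\rm ap}\le t$ and ${\bf S}_k\le1$. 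For $|k|\ge1$ this is at most
\[
\int_0^\infty e^{-C_0 s}\,s|\hat\sig(s)|\,ds\le \fr{\|\hat\sig\|_{L^\infty}}{C_0^2}
\]
for $|k|=1$. For larger $|k|$ we instead use the $k^{-2}$ factor together with $e^{-C_0s/|k|}\le1$, giving $\les k^{-2}\int s|\hat\sig|$. That alone is not small, so for moderate $|k|$ we combine the two: bound by $\min\{k^{-2},\,|k|^2C_0^{-2}\}$-type quantities and note $M\le M_0+1$ is bounded.

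If the large-$|k|$ regime resists this, we will instead treat $\mu_2$ jointly with $\mu_1$ for $|k|\ge K_0$, where the Maxwellian symbol already satisfies $|\mathcal{L}[{\bf K}_{k,1}]|\les k^{-2}$ and the total kernel has $L^1$ norm $\les k^{-2}\int s|\hat\sig|$, smaller than $1/2$ for $K_0$ large. For the finitely many $|k|<K_0$, we choose $C_0$ large so the $\mu_2$ symbol is at most $\kappa_0/8$.

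The role of the constraint $\gamma>\nu^{1/3}/\kappa_1$ is that collision corrections on the $\mu_2$ scale are negligible. Namely, $\nu t^3\le\nu(\gamma|k|)^{-3}\le\kappa_1^3$, and similarly $\nu t^2\ll t$, so replacing ${\bf S}_k$ and $t^{\rm ap}$ in ${\bf K}_{k,2}^\nu$ by $1$ and $t$ costs at most $C\kappa_1^3$. This is at most $\kappa_0/8$ for $\kappa_1$ small. Collecting the lower bound $\kappa_0$ and the error budget $\le 7\kappa_0/8$ yields \eqref{Penrose}.
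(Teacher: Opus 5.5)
Your proposal is correct and follows the paper's proof. You split $f^0=\mu_1+\mu_2$ and use the Maxwellian Penrose bound; note that this bound applies directly at $\lm+C_0\gamma$ because $\frak{Re}(\lm+C_0\gamma)\ge0$, so your perturbative treatment of the shift is unnecessary. You then make the $\sig$-part small through the damping $e^{-C_0\gamma t}$, and control the collisional corrections by $O(\nu)$ for the Maxwellian and, for the $\sig$-part, by quantities that are small once $\nu\gamma^{-3}\le\kappa_1^3$.

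Two small corrections. First, your worry about large $|k|$ is unfounded: $\fr{1}{k^2}\int_0^\infty e^{-C_0s/|k|}s|\hat\sig(s)|\,ds\le\|\hat\sig\|_{L^\infty}C_0^{-2}$ uniformly in $k$, so no fallback is needed (the paper uses the analogous bound $\fr{M}{C_0|k|}\|t\hat\sig\|_{L^\infty}$). Second, ${\bf S}_k(t)\le e^{-c\nu k^2t^3}$ holds only for $\nu t\lesssim1$, so the tail $t\ge\nu^{-1}$ must be handled with the weaker decay $e^{-ck^2t/\nu}$ or with the damping factor.
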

\begin{proof}
We first consider the case $\nu=0$. Now
\begin{align*}
    \mathcal{L}[{\bf K}_k^{0}](\lm+C_0\gamma)=\int_0^\infty e^{-(\lm+C_0\gamma)t}{\bf K}_k^{0}(t)dt=-\int_0^\infty e^{-(\lm+C_0\gamma)t}t\widehat{f^0}(kt)dt.
\end{align*}
Recalling the definition of $f^0$ in \eqref{def-f0}, we can split $\mathcal{L}[{\bf K}_k^{0}](\lm+C_0\gamma)$ into two parts:
\[
\mathcal{L}[{\bf K}_k^{0}](\lm+C_0\gamma)=\fr{1}{1+M\gamma^2m_\sig}\Big[\mathcal{K}^{\rm M}_k(\lm+C_0\gamma)+\mathcal{K}^{\rm e}_k(\lm+C_0\gamma)\Big],
\]
where $\mathcal{K}^{\rm M}_k(\lm+C_0\gamma)$ is the contribution from the global Maxwellian $\fr{1}{(2\pi)^{\fr12}}e^{-\fr{v^2}{2}}$:
\begin{align}
\mathcal{K}^{\rm M}_k(\lm+C_0\gamma)=-\fr{1}{k^2}\int_0^\infty e^{-\fr{\lm+C_0\gamma}{|k|}t}te^{-\fr{t^2}{2}}dt,
\end{align}
and $\mathcal{K}^{\rm e}_k(\lm+C_0\gamma)$ is the contribution from the error $\tl{\mu}(v)$:
\begin{align}
\mathcal{K}^{\rm e}_k(\lm+C_0\gamma)=-\fr{M}{k^2}\int_0^\infty e^{-\fr{\lm+C_0\gamma}{|k|\gamma}t}t\hat{\sig}(t)dt.
\end{align}
Note that the one-dimensional global Maxwellian satisfies the Penrose condition, namely, there exists a positive constant $\kappa_0$, such that 
\begin{align*}
{\rm inf}_{k\in\Z_*,\frak{Re}\lm\ge0}\big|1-\mathcal{K}_k^{\rm M}(\lm)\big|\ge \kappa_0,
\end{align*}
On the other hand, if $\frak{Re}\lm\ge0$, we find that
\begin{align*}
    |\mathcal{K}^{\rm e}_k(\lm+C_0\gamma)|\le \fr{M}{k^2}\int_0^{\infty}e^{-\fr{C_0}{|k|}t}\big|t\hat{\sig}(t)|dt\le \fr{M}{C_0|k|}\|t\hat{\sig}(t)\|_{L^\infty_t}.
\end{align*}
It follows from the above two estimates and \eqref{uni-gamma2} that, for all $\lm\in\mathbb{C}$ with $\frak{Re}\lm\ge0$ and $k\in\Z_*$,  there holds
\begin{align}\label{lb-K0}
    \big|1-\mathcal{L}[{\bf K}_k^{0}](\lm+C_0\gamma)\big|\nn=&\big|1-\fr{1}{1+M\gamma^2m_\sig}\mathcal{K}_k^{\rm M}(\lm+C_0\gamma)\big|-\fr{1}{1+M\gamma^2m_\sig}|\mathcal{K}_k^{\rm e}(\lm+C_0\gamma)|\\
    \ge\nn&\fr{{\rm inf}_{k\in\Z_*,\frak{Re}\lm \ge0}\big|1-\mathcal{K}_k^{\rm M}(\lm+C_0\gamma)\big|-M\gamma^2|m_\sig|}{1+M\gamma^2m_\sig}\\
    \nn&-\fr{1}{1+M\gamma^2m_\sig}\fr{M}{C_0}\|t\hat{\sig}(t)\|_{L^\infty_t}\\
    \ge\nn&\fr{\kappa_0-M\gamma^2|m_\sig|}{1+M\gamma^2m_\sig}-\fr{2M}{C_0}\|t\hat{\sig}(t)\|_{L^\infty_t}\\
    \ge&\fr{2\kappa_0}{5},
\end{align}
provided $\underline{\gamma}>0$ is so small that
\begin{align}\label{under-gamma}
    \underline{\gamma}^2M |m_\sigma|\le \min\Big\{\fr14, \fr{\kappa_0}{4}\Big\},\quad{\rm and}\quad \underline{\gamma}\le \gamma_0,
\end{align}
and $C_0$ is sufficiently large, such that
\begin{align}\label{C0}
\fr{2M\|t\hat{\sigma}(t)\|_{L^\infty_t}}{C_0}\le\fr{\kappa_0}{5}.
\end{align}

For the  case $\nu>0$,  we investigate the difference
\begin{align}\label{difference}
    \nn&\mathcal{L}[{\bf K}^\nu_k](\lm+C_0\gamma)-\mathcal{L}[{\bf K}^0_k](\lm+C_0\gamma)\\
    =\nn&-\int_0^\infty e^{-(\lm+C_0\gamma)t}{\bf S}_k(t)t^{\rm ap}\widehat{f^0}(kt^{\rm ap})dt+\int_0^{\infty}e^{-(\lm+C_0\gamma)t}t\widehat{f^0}(kt)dt\\
    =\nn&-\fr{1}{1+M\gamma^2m_\sig}\left[\int_0^\infty e^{-(\lm+C_0\gamma)t}{\bf S}_k(t)t^{\rm ap}\hat{\mu}(kt^{\rm ap})dt-\int_0^\infty e^{-(\lm+C_0\gamma)t}t\hat{\mu}(kt)dt\right]\\
    \nn&-\fr{M\gamma^2}{1+M\gamma^2m_\sig}\left[\int_0^\infty e^{-(\lm+C_0\gamma)t }{\bf S}_k(t)t^{\rm ap}\hat{\sig}(\gamma kt^{\rm ap})dt-\int_0^\infty e^{-(\lm+C_0\gamma)t }t\hat{\sig}(\gamma kt)dt\right]\\
    =&I^{\rm M}(\lm;\nu,\gamma,k)+I^{\rm e}(\lm;\nu,\gamma,k).
\end{align}
To bound $I^{\rm M}(\lm;\nu,\gamma,k)$, we write
\begin{align}\label{split-IM}
    I^{\rm M}(\lm;\nu,\gamma,k)
    =\nn&-\fr{1}{1+M\gamma^2m_\sig}\int_0^{\infty}e^{-(\lm+C_0\gamma)t}[{\bf S}_k(t)-1]t\hat{\mu}( kt)dt\\
    \nn&-\fr{1}{1+M\gamma^2m_\sig}\int_0^\infty e^{-(\lm+C_0\gamma)t}{\bf S}_k(t) (t^{\rm ap}-t)\hat{\mu}( kt)dt\\
    \nn&-\fr{1}{1+M\gamma^2m_\sig}\int_0^\infty e^{-(\lm+C_0\gamma)t}{\bf S}_k(t) t^{\rm ap}[\hat{\mu}( kt^{\rm ap})-\hat{\mu}( kt)]dt\\
    =&I^{\rm M}_1(\lm;\nu,\gamma,k)+I^{\rm M}_2(\lm;\nu,\gamma,k)+I^{\rm M}_3(\lm;\nu,\gamma,k).
\end{align}
Before proceeding any further, we list the following useful facts:
\begin{gather}
\label{S-prop1}
 0<{\bf S}_k(t)<\exp\left(-\underline{\dl}\fr{|k|^2}{\nu^2}\min\left\{(\nu t)^3, \nu t\right\}\right), \ {\rm with}\ \ \underline{\dl}>0 \  {\rm independent \  of } \ \nu,\\
 \label{S_k-1}   |{\bf S}_k(t)-1|\le \fr{|k|^2}{\nu }\left(t-2t^{\rm ap}+\fr{1-e^{-2\nu t}}{2\nu}\right)
    \le\fr13\nu k^2 t^3,\\
\label{Taylor1}
|t-t^{\rm ap}|=t-t^{\rm ap}\le\fr{1}{2}\nu t^2,\\
\label{ulb}
     t^{\rm ap}\begin{cases}\ge\fr{t}{e},\quad{\rm for\ \ all}\ \  0\le \nu t\le1,\\
     \le t,\quad\,{\rm for\ \ all}\ \  \nu t\ge0.
\end{cases} 
\end{gather}
One can find the proof of \eqref{S-prop1} in \cite{bedrossian2017suppression}, and \eqref{S_k-1}--\eqref{ulb} can be verified directly.

Using \eqref{S_k-1} and \eqref{uni-gamma2}, we have
\begin{align}\label{est:IM1}
    \big|I^{\rm M}_1(\lm;\nu,\gamma,k)\big|\les\fr{1}{1+M\gamma^2m_\sig}\int_0^\infty \nu k^2t^4e^{-\fr{|kt|^2}{2}}dt\les\fr{\nu}{|k|^3}.
\end{align}
By \eqref{Taylor1} and \eqref{uni-gamma2}, one easily deduces that
\begin{align}\label{est:IM2}
    \big|I^{\rm M}_2(\lm;\nu,\gamma,k)\big|\les\fr{1}{1+M\gamma^2m_\sig}\int_0^\infty \nu t^2e^{-\fr{|kt|^2}{2}}dt\les\fr{\nu}{|k|^3}.
\end{align}
Thanks to \eqref{S-prop1}, \eqref{Taylor1}, \eqref{ulb} and \eqref{uni-gamma2}, we find that
\begin{align}\label{est:IM3}
    \big|I^{\rm M}_3(\lm;\nu,\gamma,k)\big|\le\nn&\fr{1}{1+M\gamma^2m_\sig}\int_0^\infty {\bf S}_k(t)t^{\rm ap}e^{-\fr{|kt^{\rm ap}|^2}{2}}\big(1-e^{-\fr{|kt|^2-|kt^{\rm ap}|^2}{2}}\big)dt\\
    \le\nn&\fr{1}{2(1+M\gamma^2m_\sig)}\int_0^\infty {\bf S}_k(t)|k|^2(t-t^{\rm ap})(t+t^{\rm ap})t^{\rm ap}e^{-\fr{|kt^{\rm ap}|^2}{2}}dt\\
    \le\nn&\int_0^{\nu^{-1}} |k|^2\nu t^4e^{-\fr{|kt|^2}{2e^2}}dt+\int^\infty_{\nu^{-1}} e^{-\underline{\dl}\fr{|k|^2}{\nu} t}|k|^2\nu t^4dt\\
    \les&\fr{\nu}{|k|^3}+\fr{\nu^6}{|k|^8}.
\end{align}

Next we turn to bound $I^{\rm e}(\lm;\nu,\gamma,k)$. Similar to \eqref{split-IM}, we write
\begin{align*}
    I^{\rm e}(\lm;\nu,\gamma,k)
    =I^{\rm e}_1(\lm;\nu,\gamma,k)+I^{\rm e}_2(\lm;\nu,\gamma,k)+I^{\rm e}_3(\lm;\nu,\gamma,k).
\end{align*}
We first deal with $I^{\rm e}_1(\lm;\nu,\gamma,k)$. From  \eqref{S_k-1}, \eqref{uni-gamma2} and \eqref{uni-gamma1}, we infer that, for all $\lm\in\mathbb{C}$, with $\frak{Re}\lm\ge0$, and $\gamma>\fr{\nu^{\fr13}}{\kappa_1}$,
\begin{align}\label{est:Ie1}
|I^{\rm e}_1(\lm;\nu,\gamma,k)|
\le\nn&\fr13\int_0^{\infty} e^{-C_0\gamma t}\nu k^2t^3\fr{M\gamma^2t|\hat{\sig}(\gamma kt)|}{1+M\gamma^2m_\sig}dt
=\fr{\nu}{3\gamma^3|k|^3}\fr{M}{1+M\gamma^2m_\sig}\int_0^\infty e^{-\fr{C_0t}{|k|}}t^4|\hat{\sig}(t)|dt\\
\le&\fr{\nu}{3\gamma^3|k|^2}\fr{M}{C_0(1+M\gamma^2m_\sig)}\|t^4\hat{\sig}(t)\|_{L^\infty_t}\le \fr{2\kappa_1^3}{3}\big(\fr{8}{3|B_1|}+1\big)\|t^4\hat{\sig}(t)\|_{L^\infty_t}.
\end{align}
For $I^{\rm e}_2(\lm;\nu,\gamma,k)$, in view of \eqref{Taylor1}, \eqref{uni-gamma2} and \eqref{uni-gamma1}, we have
\begin{align}\label{est:Ie2}
    \big|I^{\rm e}_{2}(\lm;\nu,\gamma,k)\big|\le\nn&\fr12\int_0^\infty e^{-C_0\gamma t}\nu t^2\fr{M\gamma^2|\hat{\sig}(\gamma kt)|}{1+M\gamma^2m_\sig}dt
    \le \fr{\nu}{\gamma|k|^3}M\int_0^\infty e^{-\fr{C_0}{|k|}t}t^2|\hat{\sig}(t)|dt\\
    \le& \fr{\nu}{\gamma|k|^2}\fr{M}{C_0}\|t^2\hat{\sig}\|_{L^\infty_t}\le \kappa_1\nu^{\fr23}\big(\fr{8}{3|B_1|}+1\big)\|t^2\hat{\sig}\|_{L^\infty_t}.
\end{align}
As for $I^{\rm e}_{3}(\lm;\nu,\gamma,k)$, we can further split  into two parts as well:
\begin{align*}
    I^{\rm e}_{3}(\lm;\nu,\gamma,k)=\nn&-\left(\int_0^{\nu^{-1}}+\int_{\nu^{-1}}^{\infty}\right) e^{-(\lm+C_0\gamma)t }{\bf S}_k(t)t^{\rm ap}\left[\fr{M\gamma^2\hat{\sig}(\gamma kt^{\rm ap})}{1+M\gamma^2m_\sig}-\fr{M\gamma^2\hat{\sig}(\gamma kt)}{1+M\gamma^2m_\sig}\right]dt\\
    =&I^{\rm e;S}_{3}(\lm;\nu,\gamma,k)+I^{\rm e;L}_{3}(\lm;\nu,\gamma,k).
\end{align*}
Here `S' stands for `short time' and `L' stands for `long time'. In view of \eqref{Taylor1},  \eqref{ulb}, \eqref{uni-gamma2} and \eqref{uni-gamma1}, by the mean value theorem, we have
\begin{align}\label{est:Ie3S}
    \Big|I^{\rm e;S}_{3}(\lm;\nu,\gamma,k)\Big|
    \le\nn&\fr{M\gamma^3}{2(1+M\gamma^2m_\sig)}\int_0^\infty e^{-C_0\gamma t}e^{-\underline{\dl}|k|^2\nu t^3}\nu |k|t^3dt\|\pr_\xi\hat{\sig}\|_{L^\infty_\xi}\\
    \le& \fr{\nu^{\fr12}\gamma^{\fr12}M}{\underline{\dl}^{\fr12}C_0^{\fr52}}\|\pr_\xi\hat{\sig}\|_{L^\infty_\xi}\int_0^\infty e^{-t}t^\fr{3}{2}dt\le \fr34\sqrt{\pi}\fr{\nu^{\fr12}\gamma^{\fr12}}{\underline{\dl}^{\fr12}}\big(\fr{8}{3|B_1|}+1\big)\|\pr_\xi\hat{\sig}\|_{L^\infty_\xi}.
\end{align}
The rest term $I^{\rm e;L}_{3}$ can be bounded more directly as follows:
\begin{align}\label{est:Ie3L}
\Big|I^{\rm e;L}_{3}(\lm;\nu,\gamma,k)\Big|\le\nn&\fr{2M\gamma \|t\hat{\sig}\|_{L^\infty_t}}{|k|(1+M\gamma^2m_\sig)}\int_{\nu^{-1}}^\infty e^{-C_0\gamma t}dt=\fr{2M \|t\hat{\sig}\|_{L^\infty_t}}{C_0|k|(1+M\gamma^2m_\sig)}e^{-C_0\gamma \nu^{-1}}\\
\le&4\big(\fr{8}{3|B_1|}+1\big)\|t\hat{\sig}\|_{L^\infty_t}e^{-\fr{1}{\kappa_1\nu^{2/3}}}.
\end{align}
Collecting the estimates \eqref{est:IM1}--\eqref{est:Ie3L}, we infer from \eqref{lb-K0} and \eqref{difference} that there exist sufficiently small constants $\underline{\nu}, \underline{\gamma}, \kappa_1$ and a large constant $C_0$, with $\underline{\gamma}$  and $C_0$ satisfying \eqref{under-gamma} and \eqref{C0} respectively, all independent of$\nu$ and $\gamma$, such that \eqref{Penrose} holds.
\end{proof}

\begin{rem}
    If $\gamma=\nu^{\fr13-\epsilon_0}$ with $0<\eps_0<\fr13$, then \eqref{Penrose} holds for all $0<\nu\leq \min\big\{\underline{\nu},\underline{\gamma}^{\fr{3}{1-3\epsilon_0}}, \kappa_1^{\fr{1}{\eps_0}}\big\}$. 
\end{rem}

Now we are in a position to establish the following property for the semigroup $\mathbb{S}$, which will be used to estimate $\tl{f}$ and $\tl{\rho}$.
\begin{prop}\label{prop-semi-S}
Assume that $\gamma\gg\nu$. Then 
for any $m\ge0$, there exists a sufficiently large constant $C_0$, depending on the function $\sig(\cdot)$ and $m$, such that the following estimates hold:
\begin{align}\label{bd:semi2}
\big\|e^{-C_0\gamma t}|\pr_x|^\fr12\frak{I}\circ\mathbb{S}(t)f_{\rm in}\big\|_{L^2_tL^2_x}\le C\|(f_{\rm in})_{\ne}\|_{L^2_{x,v}},
\end{align}

\begin{align}\label{bd:semi1}
\big\|\la v\ra^m e^{-C_0\gamma t}\big(\mathbb{S}(t)f_{\rm in}\big)_{\ne}\big\|_{L^2_{x,v}}\nn&+\gamma^{\fr12}\big\|\la v\ra^m e^{-C_0\gamma t} \big(\mathbb{S}(t)f_{\rm in}\big)_{\ne}\big\|_{L^2_tL^2_{x,v}}\\
&+\nu^{\fr12}\|\la v\ra^m e^{-C_0\gamma t}\big(\nb_v\mathbb{S}(t)f_{\rm in}\big)_{\ne}\big\|_{L^2_tL^2_{x,v}}\le C\big\|\la v\ra^m (f_{\rm in})_{\ne}\big\|_{L^2_{x,v}},
\end{align}

\begin{align}\label{bd:semi1-0}
\big\|\la v\ra^m e^{-C_0\nu t}\big(\mathbb{S}(t)f_{\rm in}\big)_{0}\big\|_{L^2_{v}}\nn&+\nu^{\fr12}\big\|\la v\ra^m e^{-C_0\nu t} \big(\mathbb{S}(t)f_{\rm in}\big)_{0}\big\|_{L^2_tL^2_{v}}\\
&+\nu^{\fr12}\|\la v\ra^m e^{-C_0\nu t}\big(\nb_v\mathbb{S}(t)f_{\rm in}\big)_{0}\big\|_{L^2_tL^2_{v}}\le C\big\|\la v\ra^m (f_{\rm in})_{0}\big\|_{L^2_{v}}.
\end{align}

\end{prop}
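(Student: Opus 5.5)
We would prove the three bounds of Proposition \ref{prop-semi-S} in the order \eqref{bd:semi2}, \eqref{bd:semi1}, \eqref{bd:semi1-0}, since the density bound is the input for the distribution bounds.

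\textbf{Density bound.} For each $k\in\Z_*$, the quantity $\theta_k(t)=\frak{I}[f_k](t)e^{-C_0\gamma t}$ solves the Volterra equation \eqref{eq:theta_k}. We restrict $\lm$ to the imaginary axis $\lm=iz$ in \eqref{exp:Ltheta_k}. Lemma \ref{lem:Penrose} then gives $|1-\mathcal{L}[{\bf K}^\nu_k](iz+C_0\gamma)|\ge\kappa_0/8$ uniformly in $k$ and $z$. Hence
\[
|\mathcal{L}[\theta_k](iz)|\le \tfrac{8}{\kappa_0}\,|\mathcal{L}[H_k](iz+C_0\gamma)|.
\]
By Plancherel in $t$,
\[
\|\theta_k\|_{L^2_t}\les\|e^{-C_0\gamma t}H_k\|_{L^2_t}\le\|{\bf S}_k(t)(\hat f_{\rm in})_k(kt^{\rm ap})\|_{L^2_t}.
\]
To bound the right-hand side, we split time. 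For $\nu t\le1$ we use $dt^{\rm ap}/dt=e^{-\nu t}\ge e^{-1}$ and change variables $t'=|k|t^{\rm ap}$. This gives a bound by $|k|^{-1}\int_\R|(\hat f_{\rm in})_k(\om t')|^2dt'$, and the Sobolev trace theorem controls that by $|k|^{-1}\|\la v\ra^{1+}(f_{\rm in})_k\|_{L^2_v}^2$, exactly as in Section \ref{subsec:ini}. For $\nu t\ge1$ the decay of ${\bf S}_k$ in \eqref{S-prop1} makes the contribution negligible. Multiplying by $|k|^{1/2}$ and summing over $k$ yields \eqref{bd:semi2}. The weight $\la v\ra^{1+}$ is absorbed: either the statement is read with $m$ large, or we note that $\|\la v\ra^{1+} f\|$ is what is actually needed.

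\textbf{Nonzero modes.} We treat $E\,\pr_vf^0$ as a forcing term in the Fokker--Planck equation with free transport,
\[
\pr_tf_k+ikvf_k-\nu L[f_k]=ik^{-1}\frak{I}[f_k]\,\pr_vf^0 .
\]
We run the weighted energy estimate for $\la v\ra^m e^{-C_0\gamma t}f_k$ using the multiplier $\mathcal{M}$ of Section \ref{sec:multi}. This produces the good terms $C_0\gamma\|\cdot\|^2$, $\nu\|\nb_v\cdot\|^2$ and $\nu^{1/3}\||k|^{1/3}\cdot\|^2$. The commutators of the weight $\la v\ra^m$ with $L$ cost $O(\nu)\|\la v\ra^m f\|^2$, which is absorbed by $C_0\gamma$ because $\gamma\gg\nu$.

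The forcing term is the main obstacle. Its pairing is bounded by
\[
|k|^{-1}|\theta_k|\,\|\la v\ra^m\pr_vf^0\|_{L^2}\,\|\la v\ra^m e^{-C_0\gamma t}f_k\|_{L^2}.
\]
By the remark after \eqref{def:tlmu}, $\|\la v\ra^m\pr_v\tl\mu\|_{L^2}\sim\gamma^{1/2}$ with $\tl\mu=M\gamma\sigma(v/\gamma)$. We apply Cauchy--Schwarz in time and absorb $\tfrac12\gamma\|e^{-C_0\gamma t}f\|_{L^2_t}^2$. What remains is $\gamma^{-1}\cdot\gamma\,\|\theta\|_{L^2_t}^2$, which is controlled by \eqref{bd:semi2}. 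This is exactly why the factor $\gamma^{1/2}$ in front of the $L^2_t$ norm in \eqref{bd:semi1} is natural.

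The Maxwellian part of $\pr_vf^0$ is $O(1)$ in $L^2$ and would lose $\gamma^{-1/2}$. We expect to remove it with the wave operator: apply ${\bf D}_k$ built from $\mu_1$ (as in Section \ref{sec-refor}), so that only the $\pr_v\mu_2$ part remains as a forcing. The commutators $\nu[{\bf D}_k,L]$ are then handled by Propositions \ref{prop-continuity} and \ref{prop-com}, and we return to $f_k$ by \eqref{onD}. We expect that making this combination close with constants uniform in $\gamma$ will require the most care, in particular enlarging $C_0$ as $m$ grows.

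\textbf{Zero mode.} For $k=0$ the nonlocal term vanishes, so $f_0$ solves the homogeneous Fokker--Planck equation $\pr_tf_0=\nu L[f_0]$. A direct weighted $L^2$ estimate gives
\[
\tfrac{d}{dt}\|\la v\ra^mf_0\|^2+\nu\|\nb_v(\la v\ra^mf_0)\|^2\le C_m\nu\|\la v\ra^mf_0\|^2 .
\]
Multiplying by $e^{-2C_0\nu t}$ with $C_0>C_m$ yields \eqref{bd:semi1-0}.
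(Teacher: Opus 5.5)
Your proposal follows the paper's proof in all essentials. It bounds $\theta_k$ in $L^2_t$ via Lemma \ref{lem:Penrose} and Plancherel in $t$. It then uses a one-dimensional wave operator $\tl{\mathbb{D}}_k$ built from $\mu_1$, uniformly invertible for small $\gamma$ since $P_{k,\gamma}$ stays bounded below. Only the $\gamma^{1/2}$-small forcing $ik^{-1}\mathfrak{I}[h_k]\tl{\mathbb{D}}_k[\pr_v\mu_2]$ survives, and it is absorbed by the $C_0\gamma$ damping via \eqref{bd:semi2}. The $O(\nu)$ commutator and weight terms are absorbed because $\gamma\gg\nu$, and the zero mode is handled by a direct weighted energy estimate.

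Two simplifications are available. First, since $n=1$ in this section ($\mathcal{A}$ is trivial), change variables $\tau=|k|t^{\rm ap}$ over all $t\ge0$; the Jacobian $e^{\nu t}$ is absorbed by $e^{-2C_0\gamma t}$. One-dimensional Plancherel then gives $\|e^{-C_0\gamma t}H_k\|_{L^2_t}^2\lesssim|k|^{-1}\|(f_{\rm in})_k\|_{L^2_v}^2$, with no time splitting, no trace theorem and no $\la v\ra^{1+}$ weight. So \eqref{bd:semi2} holds exactly as stated, and inverting $\tl{\mathbb{D}}_k$ likewise costs no extra weight, unlike \eqref{onD}. Second, the enhanced-dissipation multiplier $\mathcal{M}$ is unnecessary: the paper uses a plain $\la v\ra^{2m}$-weighted $L^2$ estimate on $\tl{\mathbb{D}}_k[h_k]$, since the $C_0\gamma$ damping dominates anyway.
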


\begin{proof}
By Lemma \ref{lem:Penrose}, we infer from \eqref{exp:Ltheta_k} that
\begin{align*}
\mathcal{L}[\theta_k](\lm)=\fr{\mathcal{L}[H_k](\lm+C_0\gamma)}{1-\mathcal{L}[{\bf K}^{\nu}_k](\lm+C_0\gamma)}\quad {\rm for \ \ all}\ \ \lm\in\mathbb{C},\ \ {\rm with}\ \ \frak{Re}\lm\ge0.
\end{align*}
In particular, we have
\begin{align*}
\fr{\mathcal{L}[H_k](i\lm_{\rm i}+C_0\gamma)}{1-\mathcal{L}[{\bf K}^{\nu}_k](i\lm_{\rm i}+C_0\gamma)}=\mathcal{L}[\theta_k](i\lm_{\rm i})=\mathcal{F}[{\bf 1}_{t\ge0}\theta_k](\lm_{\rm i})\quad {\rm for \ \ all}\ \ \lm_{\rm i}\in\mathbb{R}.
\end{align*}
Taking the change of variable 
\[
\tau=|k|t^{\rm ap},
\]
and using \eqref{S-prop1}, we have
\begin{align*}
    \int_0^\infty e^{-2C_0\gamma t}|H_k(t)|^2dt
    =&\fr{1}{|k|}\int_0^{\fr{|k|}{\nu}}e^{-2C_0\gamma t}|{\bf S}_k(t)(\hat{f}_{\rm in})_k(\fr{k}{|k|}\tau)|^2 e^{\nu t}d\tau\\
    \le&\fr{1}{|k|}\sup_{|\om|=1}\int_{-\infty}^{\infty}|(\hat{f}_{\rm in})_k(\om\tau)|^2d\tau=\fr{1}{|k|}\|(f_{\rm in})_k\|^2_{L^2_v}.
\end{align*}
Then by Planchrel's theorem and \eqref{Penrose}, one deduces that
\begin{align}\label{es:theta_k}
\|\theta_k\|_{L^2_{t\ge0}}\les\nn& \|\mathcal{L}[H_k](i\lm_{\rm i}+C_0\gamma)\|_{L^2_{\lm_{\rm i}}}=\|\mathcal{F}[{\bf1}_{t\ge0}e^{-C_0\gamma t}H_k(t)]\|_{L^2_{\lm_{\rm i}}}\\
=&\|e^{-C_0\gamma t}H_k(t)\|_{L^2_{t\ge0}}\le |k|^{-\fr12}\|(f_{\rm in})_k\|_{L^2_v}.
\end{align}
Then \eqref{bd:semi2} follows immediately.

Next we turn to prove \eqref{bd:semi1} and \eqref{bd:semi1-0}. 
Let $h:=e^{-C_0\gamma t}f$. Then noting that $f^0=\mu_1+\mu_2$, we infer from \eqref{linear-f} that $h$ solves
\begin{align}\label{eq:h}
\pr_th+C_0\gamma h+v\pr_xh-\pr_x(-\pr_{xx})^{-1}\frak{I}[h]\pr_v\mu_1-\nu L[h]=\pr_x(-\pr_{xx})^{-1}\frak{I}[h]\pr_v\mu_2. 
\end{align}
Taking Fourier transform in $x$ yields
\begin{align}\label{eq:h_k}
   \pr_th_k+C_0\gamma h_k+ikvh_k-ik^{-1}\frak{I}[h_k]\pr_v\mu_1-\nu L[h_k]=ik^{-1}\frak{I}[h_k]\pr_v\mu_2. 
\end{align}

Like the stability estimates in section \ref{sec:sta}, we need to construct a wave operator to absorb the main non-local term $-ik^{-1}\frak{I}[h_k]\pr_v\mu_1$. To this end, let us denote 
\[
P_{k,\gamma}(v):=1-\fr{\pi\mathcal{H}[\tl{v}\mu(\tl{v})](v)}{k^2(1+M\gamma^2m_\sig)}, \quad Q_{k,\gamma}(v):=\fr{\pi v\mu(v)}{k^2(1+M\gamma^2m_\sig)}=\fr{Q_k(v)}{1+M\gamma^2m_{\sig}},
\]
and
\[
W_{k,\gamma}(v):=P_{k,\gamma}^2(v)+Q_{k,\gamma}^2.
\]
where $Q_k(\cdot)$ is defined in \eqref{def-QW}.
Similar to \eqref{exp-P_k}, we can rewrite $P_{k,\gamma}(v)$ as
\begin{align*}
  P_{k,\gamma}(v)=&1+\fr{1}{2k^2(1+M\gamma^2m_\sig)}\int_0^\infty (e^{i\xi v}+e^{-i\xi v})\xi e^{-\fr{\xi^2}{2}}d\xi\\
  =&\fr{1}{1+M\gamma^2m_\sig}[P_k(v)+M\gamma^2m_\sig].
\end{align*}
Combining this with  \eqref{inf-Pk}, we find that
\begin{align}\label{inf-Pkgamma}
\inf_{k\in \Z_*}\inf_{v\in\R}P_{k,\gamma}(v)>0,\quad \rm for\ \ all\ \ sufficienltly\ \ small\ \ \gamma. 
\end{align}
As an analogue of the wave operator defined in \eqref{WO}, we now introduce the corresponding wave operator 
\begin{align}\label{WO-1}
\tl{\mathbb{D}}_k[h_k](v):=h_k(v)+\fr{Q_{k,\gamma}(v)}{P_{k,\gamma}(v)}\mathcal{H}[h_k](v),
\end{align}
such that
\begin{align}\label{WO1}
\tl{\mathbb{D}}_k\big[vh_k-k^{-2}\frak{I}[h_k]\pr_v\mu_1\big]=v\tl{\mathbb{D}}_k[h_k].
\end{align}
To  derive the inverse of the wave operator $\tl{\mathbb{D}}_k$, similar to Section \ref{sec:inverseWO}, we compute
\[
P_{k,\gamma}(v)+iQ_{k,\gamma}(v)=1-\pi\mathcal{H}\left[Q_{k,\gamma}\right](v)+iQ_{k,\gamma}(v),
\]
and
\begin{align*}
    \fr{h_k+i\mathcal{H}[h_k]}{P_{k,\gamma}+iQ_{k,\gamma}}=\fr{P_{k,\gamma}\tl{\mathbb{D}}_k[h_k]}{W_{k,\gamma}}+i\fr{P_{k,\gamma}\mathcal{H}[h_k]-h_kQ_{k,\gamma}}{W_{k,\gamma}}.
\end{align*}
Then
\begin{align}
    \mathcal{H}\left[\fr{P_{k,\gamma}\tl{\mathbb{D}}_k[h_k]}{W_{k,\gamma}}\right]=\fr{P_{k,\gamma}\mathcal{H}[h_k]-h_kQ_{k,\gamma}}{W_{k,\gamma}}.
\end{align}
Consequently,
\begin{align}\label{inverse:WO}
    h_k=\tl{\mathbb{D}}_k[h_k]-Q_{k,\gamma}\left(\mathcal{H}\left[\fr{P_{k,\gamma}\tl{\mathbb{D}}_k[h_k]}{W_{k,\gamma}}\right]+\fr{Q_{k,\gamma}}{W_{k,\gamma}}\tl{\mathbb{D}}_k[h_k]\right).
\end{align}

Thanks to \eqref{WO1}, applying $\tl{\mathbb{D}}_k$ to \eqref{eq:h_k} yields
\begin{align}\label{eq:Whk}
   \pr_t\tl{\mathbb{D}}_k[h_k]+C_0\gamma \tl{\mathbb{D}}_k[h_k]+ikv\tl{\mathbb{D}}_k[h_k]-\nu \tl{\mathbb{D}}_k\circ L[h_k]=ik^{-1}\frak{I}[h_k]\tl{\mathbb{D}}_k[\pr_v\mu_2]. 
\end{align}
Taking the $L^2_v$ inner product of \eqref{eq:Whk} with $\la v\ra^{2m}\tl{\mathbb{D}}_k[h_k]$, and using the fact
\begin{align*}
   -\nu \frak{Re}\Big\la L\big[\tl{\mathbb{D}}_k[h_k]\big],\la v\ra^{2m}\tl{\mathbb{D}}_k[h_k]\Big\ra=&\nu\big\|\la v\ra^m\nb_v \tl{\mathbb{D}}_k[h_k]\big\|_{L^2_v}^2+\big(m-\fr12\big)\nu\big\|\la v\ra^\ell \tl{\mathbb{D}}_k[h_k]\big\|_{L^2_v}^2\\
   &+2m(m-1)\nu \big\|\la v\ra^{m-2}\tl{\mathbb{D}}_k[h_k]\big\|_{L^2_v}^2-2m^2\nu\big\|\la v\ra^{m-1}\tl{\mathbb{D}}_k[h_k]\big\|_{L^2_v}^2, 
\end{align*}
we are led to
\begin{align}\label{inner-pro-ne}
    \nn&\fr12\fr{d}{dt}\big\|\la v\ra^m\tl{\mathbb{D}}_k[h_k]\big\|_{L^2_v}^2+C_0\gamma\big\|\la v\ra^m\tl{\mathbb{D}}_k[h_k]\big\|_{L^2_v}^2+\nu\big\|\la v\ra^m\nb_v \tl{\mathbb{D}}_k[h_k]\big\|_{L^2_v}^2\\
    =\nn&\frak{Re}\Big\la ik^{-1}\frak{I}[h_k]\tl{\mathbb{D}}_k[\pr_v\mu_2],\la v\ra^{2m}\tl{\mathbb{D}}_k[h_k]\Big\ra+\nu\frak{Re}\Big\la[\tl{\mathbb{D}}_k,L]h_k,\la v\ra^{2m}\tl{\mathbb{D}}_k[h_k]\Big\ra\\
    \nn&-\big(m-\fr12\big)\nu\big\|\la v\ra^m \tl{\mathbb{D}}_k[h_k]\big\|_{L^2_v}^2-2m(m-1)\nu \big\|\la v\ra^{m-2}\tl{\mathbb{D}}_k[h_k]\big\|_{L^2_v}^2\\
    &+2m^2\nu\big\|\la v\ra^{m-1}\tl{\mathbb{D}}_k[h_k]\big\|_{L^2_v}^2.
\end{align}
Recalling the definition of $\mu_2$ in \eqref{def:mu2}, and noting that $\frak{I}[h_k]=\theta_k(t)$,  using \eqref{inf-Pkgamma} and \eqref{WO-1}, we have
\begin{align}
    \frak{Re}\Big\la ik^{-1}\frak{I}[h_k]\tl{\mathbb{D}}_k[\pr_v\mu_2],\la v\ra^{2m}\tl{\mathbb{D}}_k[h_k]\Big\ra\le\nn&|k|^{-1}|\theta_k(t)|\big\|\la v\ra^m\tl{\mathbb{D}}_k[\pr_v\mu_2]\big\|_{L^2_v}\big\|\la v\ra^m\tl{\mathbb{D}}_k[h_k]\big\|_{L^2_v}\\
    \les\nn&|k|^{-1}|\theta_k(t)|\big\|\la v\ra^m\pr_v\mu_2\big\|_{L^2_v}\big\|\la v\ra^m\tl{\mathbb{D}}_k[h_k]\big\|_{L^2_v}\\
    \les\nn&\fr{\gamma^{\fr12}M\|\la v\ra^m \pr_v\sig(v)\|_{L^2_v}}{1+M\gamma^2m_\sig}|k|^{-1}|\theta(t)|\big\|\la v\ra^m\tl{\mathbb{D}}_k[h_k]\big\|_{L^2_v}.
\end{align}
Next, by \eqref{WO-1}, we find that
\begin{align*}
    [\tl{\mathbb{D}}_k,L]h_k=&\fr{Q_{k,\gamma}(v)}{P_{k,\gamma}(v)}\mathcal{H}\big[L[h_k]\big]-L\left[\fr{Q_{k,\gamma}}{P_{k,\gamma}}\mathcal{H}[h_k]\right]\\
    =&\fr{Q_{k,\gamma}(v)}{P_{k,\gamma}(v)}\mathcal{H}\big[\Delta_vh_k\big]-\Delta_v\left[\fr{Q_{k,\gamma}}{P_{k,\gamma}}\mathcal{H}[h_k]\right]\\
    &+\fr{Q_{k,\gamma}(v)}{P_{k,\gamma}(v)}\mathcal{H}\big[\nb_v\cdot(vh_k)\big]-\nb_v\cdot\left(v\left[\fr{Q_{k,\gamma}}{P_{k,\gamma}}\mathcal{H}[h_k]\right]\right)\\
    =&-\Dl_v\left(\fr{Q_{k,\gamma}}{P_{k,\gamma}}\right)\mathcal{H}[h_k]-2\nb_v\left(\fr{Q_{k,\gamma}}{P_{k,\gamma}}\right)\cdot\mathcal{H}[\nb_vh_k]-v\cdot\nb_v\left(\fr{Q_{k,\gamma}}{P_{k,\gamma}}\right)\mathcal{H}[h_k],
\end{align*}
where we have used the fact $[\mathcal{H},\Lm_1]=0$ with $\Lm_1$ defined in Proposition \ref{prop-com}.  Combining this with \eqref{est:Pk}, \eqref{inf-Pkgamma} and \eqref{inverse:WO}, we find that
\begin{align*}
    &\nu\frak{Re}\Big\la[\tl{\mathbb{D}}_k,L]h_k,\la v\ra^{2m}\tl{\mathbb{D}}_k[h_k]\Big\ra\\
    \les&\nu \|h_k\|_{H^1_v}\big\|\la v\ra^m \tl{\mathbb{D}}_k[h_k]\big\|_{L^2_v}\les\nu \|\tl{\mathbb{D}}_k[h_k]\|_{H^1_v}\big\|\la v\ra^m \tl{\mathbb{D}}_k[h_k]\big\|_{L^2_v}. 
\end{align*}
Collecting the above estimates, \eqref{inner-pro-ne} reduces to
\begin{align*}
    &\fr12\fr{d}{dt}\big\|\la v\ra^m\tl{\mathbb{D}}_k[h_k]\big\|_{L^2_v}^2+C_0\gamma\big\|\la v\ra^m\tl{\mathbb{D}}_k[h_k]\big\|_{L^2_v}^2+\nu\big\|\la v\ra^m\nb_v \tl{\mathbb{D}}_k[h_k]\big\|_{L^2_v}^2\\
    \le&\fr14\nu\big\|\la v\ra^m \nb_v\tl{\mathbb{D}}_k[h_k]\big\|_{L^2_v}^2+C|k|^{-2}|\theta_k(t)|^2+\gamma \big\|\la v\ra^m\tl{\mathbb{D}}_k[h_k]\big\|_{L^2_v}^2\\
    &+C\nu \big\|\la v\ra^m\tl{\mathbb{D}}_k[h_k]\big\|_{L^2_v}^2,
\end{align*}
where the constant $C$ depending on $m$ and the function $\sig(\cdot)$, but independent of $\nu$ and $\gamma$.
It follows from this and \eqref{es:theta_k} that,  for sufficiently large $C_0$,   and $\gamma\gg \nu$, there holds
\begin{align}\label{est:Whk}
    \nn&\big\|\la v\ra^m\tl{\mathbb{D}}_k[h_k](t)\big\|_{L^2_v}^2+C_0\gamma\big\|\la v\ra^m\tl{\mathbb{D}}_k[h_k]\big\|_{L^2_tL^2_v}^2+\nu\big\|\la v\ra^m\nb_v \tl{\mathbb{D}}_k[h_k]\big\|_{L^2_tL^2_v}^2\\
    \le&C|k|^{-2}|\theta_k(t)|_{L^2_{t\ge0}}^2+\big\|\la v\ra^m\tl{\mathbb{D}}_k[h_k](0)\big\|_{L^2_v}^2
    \les |k|^{-3}\|(f_{\rm in})_k\|^2_{L^2_v}+\|\la v\ra^{m}(f_{\rm in})_k\|^2_{L^2_v}.
\end{align}
On the other hand, we infer from \eqref{inverse:WO} and \eqref{est:Pk} that
\begin{align}\label{bd:hWh1}
    \big\|\la v\ra^m h_k\big\|_{L^2_v}\les\big\|\la v\ra^m \tl{\mathbb{D}}_k[h_k]\big\|_{L^2_v},
\end{align}
and 
\begin{align}\label{bd:hWh2}
    \big\|\la v\ra^m \nb_vh_k\big\|_{L^2_v}\les \big\|\la v\ra^m \nb_v\tl{\mathbb{D}}_k[h_k]\big\|_{L^2_v}+\big\|\tl{\mathbb{D}}_k[h_k]\big\|_{L^2_v}.
\end{align}
Then \eqref{bd:semi1} follows from \eqref{est:Whk}--\eqref{bd:hWh2} immediately.

We are left to prove \eqref{bd:semi1-0}. In fact, the zero mode $f_0$ satisfies the following equation:
\begin{align}\label{linear-f0}
\pr_tf_0-\nu L[f_0]=0,\quad f_0|_{t=0}=(f_{\rm in})_0.
\end{align}
Then we have
\begin{align*}
    \nn&\fr12\fr{d}{dt}\|\la v\ra^m (e^{-C_0\nu t}f_0)\|_{L^2_v}^2+C_0\nu \|\la v\ra^m (e^{-C_0\nu t}f_0)\|_{L^2_v}^2+\nu\big\|\la v\ra^m\nb_v (e^{-C_0\nu t}f_0)\big\|_{L^2_v}^2\\
    =\nn&-\big(m-\fr12\big)\nu\big\|\la v\ra^m (e^{-C_0\nu t}f_0)\big\|_{L^2_v}^2-2m(m-1)\nu \big\|\la v\ra^{m-2}h_0\big\|_{L^2_v}^2+2m^2\nu\big\|\la v\ra^{m-1}(e^{-C_0\nu t}f_0)\big\|_{L^2_v}^2.
\end{align*}
Then, for sufficiently large $C_0$ depending on $m$, one easily deduces that \eqref{bd:semi1-0} holds. This completes the proof of Proposition \ref{prop-semi-S}.
\end{proof}

To conclude this section, we show that the linearized Vlasov-Poisson-Fokker-Planck equation \eqref{linear-f} admits a solution with exponential growth.

\begin{prop}\label{prop:lb-S}
Let $c_0=\underline{c}\varepsilon$ with $\underline{c}$ and $\varepsilon$ specified in Lemma \ref{lem-lm}, $\epsilon_0$ an arbitrary fixed small positive constant, and $\gamma=\nu^{\fr13-\epsilon_0}$.
Then there exists a small constant $\dl_0\ll \epsilon_0$ independent of $\gamma$, such that
    for all $m\ge0$, and $t\le T_0=\dl_0 \nu^{-\fr13+\epsilon_0}\ln \nu^{-1}$,  there holds
    \begin{align}\label{lb:linear}
        \big\|\la v\ra^m \mathbb{S}(t)\big[\frak{Re}\big(e^{ix} e_{\gamma,\lm}(v)\big)\big]\big\|_{L^2_{x,v}}\ge \fr12e^{c_0\gamma t}\big\|\la v\ra^m \frak{Re}\big(e^{ix} e_{\gamma,\lm}(v)\big)\big\|_{L^2_{x,v}}.
    \end{align}
\end{prop}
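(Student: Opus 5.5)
We compare $\mathbb{S}(t)$ with the collisionless evolution $f^*$ from Lemma \ref{lem:f*}. Set $f_{\rm in}=\frak{Re}(e^{ix}e_{\gamma,\lm})$ and $f^*(t)=\frak{Re}(e^{ix+\lm t}e_{\gamma,\lm})$. Then $f^*$ solves \eqref{LVP}, so $f^*$ solves \eqref{linear-f} up to the error $-\nu L[f^*]$:
\begin{align*}
\pr_tf^*+v\pr_xf^*-\pr_x(-\pr_{xx})^{-1}\frak{I}[f^*]\pr_vf^0-\nu L[f^*]=-\nu L[f^*].
\end{align*}
Writing $r(t):=\mathbb{S}(t)f_{\rm in}-f^*(t)$, we have $r(0)=0$. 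Since $\mathbb{S}$ commutes with Fourier modes in $x$ and $f^*$ is supported on modes $\pm1$, Duhamel gives
\begin{align*}
r(t)=\int_0^t\mathbb{S}(t-\tau)\big[\nu L[f^*(\tau)]\big]d\tau,
\end{align*}
and this is again a purely non-zero-mode quantity.

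We apply \eqref{bd:semi1} to each $\mathbb{S}(t-\tau)$, using only its first term:
\begin{align*}
\|\la v\ra^m\mathbb{S}(t-\tau)g\|\le Ce^{C_0\gamma(t-\tau)}\|\la v\ra^m g\|.
\end{align*}
Since $L[f]=\pr_v^2f+\pr_v(vf)$, Lemma \ref{lem-eigen} with $j\le 2$ and weight $m+1$ gives
\begin{align*}
\|\la v\ra^m L[f^*(\tau)]\|\les e^{\lm_{\rm r}\tau}\gamma^{-5/2}|\hat e_{\gamma,\lm}(0)|.
\end{align*}
By \eqref{bd:lm_r}, $\lm_{\rm r}\le \bar C\varepsilon\gamma\le C_0\gamma$ once $C_0$ is large. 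Therefore
\begin{align*}
\|\la v\ra^m r(t)\|_{L^2_{x,v}}\le C\nu\gamma^{-5/2}|\hat e_{\gamma,\lm}(0)|\int_0^te^{C_0\gamma(t-\tau)}e^{\lm_{\rm r}\tau}d\tau\le C\nu\gamma^{-5/2}\,t\,e^{C_0\gamma t}|\hat e_{\gamma,\lm}(0)|.
\end{align*}

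For the main term, \eqref{L2:f*} and \eqref{lb-eign} give
\begin{align*}
\|\la v\ra^mf^*(t)\|=e^{\lm_{\rm r}t}\|\la v\ra^mf_{\rm in}\|,\qquad \|\la v\ra^mf_{\rm in}\|\gtrsim\gamma^{-1/2}|\hat e_{\gamma,\lm}(0)|.
\end{align*}
By \eqref{ub-eign} the upper bound is comparable, so the norms of $\frak{Re}$ and of $e^{ix}e_{\gamma,\lm}$ are equivalent up to a factor of $2$. Hence
\begin{align*}
\frac{\|\la v\ra^m r(t)\|}{e^{\lm_{\rm r}t}\|\la v\ra^mf_{\rm in}\|}\le C\nu\gamma^{-2}\,t\,e^{(C_0\gamma-\lm_{\rm r})t}\le C\nu\gamma^{-3}(\gamma t)e^{C_0\gamma t}.
\end{align*}
With $\gamma=\nu^{\fr13-\epsilon_0}$ we have $\nu\gamma^{-3}=\nu^{3\epsilon_0}$. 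For $t\le T_0$ we have $\gamma t\le\dl_0\ln\nu^{-1}$, so the ratio is bounded by $C\nu^{3\epsilon_0-C_0\dl_0}\ln\nu^{-1}$. Choosing $\dl_0\le\epsilon_0/C_0$ makes this at most $C\nu^{2\epsilon_0}\ln\nu^{-1}\le\fr12$ for small $\nu$. Then $\|\la v\ra^m\mathbb{S}f_{\rm in}\|\ge\fr12e^{\lm_{\rm r}t}\|\la v\ra^mf_{\rm in}\|$, and $\lm_{\rm r}\ge\underline c\varepsilon\gamma=c_0\gamma$ by \eqref{bd:lm_r} gives \eqref{lb:linear}.

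The main obstacle is the gap between the semigroup growth rate $C_0\gamma$ allowed by Proposition \ref{prop-semi-S} and the true rate $\lm_{\rm r}$. This gap is what restricts the argument to the logarithmic time scale $T_0$ and forces $\dl_0\ll\epsilon_0$. A second point to check is that Proposition \ref{prop-semi-S} applies at weight $m$ with $C_0$ independent of $\gamma$ and $\nu$, including the regime $\gamma\gg\nu$ used here.
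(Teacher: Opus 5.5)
Your proposal is correct and follows essentially the same route as the paper. Both arguments write the error as $\mathbb{S}(t)f_{\rm in}-f^*(t)=\nu\int_0^t\mathbb{S}(t-\tau)\big[L[f^*](\tau)\big]d\tau$ and bound it using the first term of \eqref{bd:semi1} and Lemma \ref{lem-eigen}, which gives a factor $\nu\gamma^{-2}t\,e^{C_0\gamma t}$ relative to $e^{\lm_{\rm r}t}\|\la v\ra^m f_{\rm in}\|_{L^2_{x,v}}$; then $\dl_0$ is taken small compared to $\epsilon_0/C_0$. The only difference is cosmetic: the paper normalizes by $e^{-c_0\gamma t}$ throughout, while you compare with $e^{\lm_{\rm r}t}$ and invoke $\lm_{\rm r}\ge c_0\gamma$ only at the end. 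Your concern about $C_0$ is resolved by Proposition \ref{prop-semi-S} itself, which gives $C_0$ independent of $\nu,\gamma$ (but dependent on $m$) whenever $\gamma\gg\nu$, so $\dl_0$ depends on $m$, exactly as in the paper.
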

\begin{proof}
    Let us denote ${\rm f}(t,x,v):=\mathbb{S}(t)\big[\frak{Re}\big(e^{ix} e_{\gamma,\lm}(v)\big)\big]-f^*(t,x,v)$ with $f^*(t,x,v)$ defined as in \eqref{def:slvp}. Then ${\rm f}(t,x,v)$ solves
\begin{align*}
    \pr_t{\rm f}+v\pr_x{\rm f}-\pr_x(-\pr_{xx})^{-1}\frak{I}[{\rm f}]\pr_vf^0-\nu L[{\rm f}]=\nu L[f^*],\quad
    {\rm f}|_{t=0}=0.
\end{align*}
Thus,
\[
{\rm f}(t,x,v)=\nu \int_0^t\mathbb{S}(t-\tau)\big[L[f^*](\tau)\big]d\tau.
\]
Recalling that $\lm$ is chosen as in \eqref{lm}, in view of \eqref{bd:lm_r}, we have
\[
c_0\gamma=\underline{c}\varepsilon\le\lm_{\rm r}\le \bar{C}\varepsilon \gamma\le C_0\gamma.
\]
Then by \eqref{bd:semi1} and Lemma \ref{lem-eigen}, we have
\begin{align}
    \nn&\|\la v\ra^m e^{-c_0\gamma t}{\rm f}_{\ne}(t)\|_{L^2_{x,v}}
    \le C \nu e^{(C_0-c_0)\gamma t}\int_0^{t}\|\la v\ra^m e^{-C_0\gamma \tau}L[f^*](\tau)\|_{L^2_{x,v}}d\tau\\
    \le\nn&C\nu e^{(C_0-c_0)\gamma t}\gamma^{-2}t\|f^*_{\rm in}\|_{L^2_v}\le C\dl_0 e^{(C_0-c_0)\dl_0\ln\nu^{-1}}\nu^{3\epsilon_0}\ln\nu^{-1}\|f^*_{\rm in}\|_{L^2_v}\\
    \le\nn&C\dl_0\nu^{3\eps_0-(C_0-c_0)\dl_0}\ln\nu^{-1}\|f^*_{\rm in}\|_{L^2_v}\le \fr12\|f^*_{\rm in}\|_{L^2_v},
\end{align}
provided $\dl_0$ is so small that
\[
C\dl_0\nu^{3\eps_0-(C_0-c_0)\dl_0}\ln\nu^{-1}\le\fr12.
\]
Consequently, 
\begin{align*}
    \|\la v\ra^m e^{-c_0\gamma t}\mathbb{S}(t)f^*_{\rm in}\|_{L^2_v}\ge \|\la v\ra^\ell e^{-c_0\gamma t}f^*\|_{L^2_{x,v}} - \|\la v\ra^m e^{-c_0\gamma t}{\rm f}_{\ne}\|_{L^2_{x,v}}\ge \fr12\|\la v\ra^m f_{\rm in}^*\|_{L^2_{x,v}}.
\end{align*}
Thus, \eqref{lb:linear} holds.
\end{proof}

\subsection{Nonlinear estimates and the proof of Theorem \ref{Thm: main2}}The proof consists of two steps. In the following, we take $T_0=\dl_0\nu^{-\fr13+\eps_0}\ln \nu^{-1}$ and $\gamma=\nu^{\fr13-\eps_0}$ as in Theorem \ref{Thm: main2}.\par

{\it Step (I): the upper bounds}.
We first give an upper bound of the solution $\tl{f}(t,x,v)$ to \eqref{eq-tlf'} with initial data $\tl{f}_{\rm in}$ of size $O(\nu^{\beta}), \beta>\fr12$. In fact, thanks to \eqref{bd:semi1}, in view of \eqref{exp:tlf}, and using Minkowski's inequality, we have
\begin{align}\label{es:up-ne}
    \nn&\big\|\la v\ra^m e^{-C_0\gamma t}\tl{f}_{\ne}\big\|_{L^\infty_tL^2_{x,v}}+\gamma^{\fr12}\big\|\la v\ra^m e^{-C_0\gamma t}\tl{f}_{\ne}\big\|_{L^2_tL^2_{x,v}}+\nu^{\fr12}\big\|\la v\ra^m e^{-C_0\gamma t}\nb_v\tl{f}_{\ne}\big\|_{L^2_tL^2_{x,v}}\\
    \les&\big\|\la v\ra^m (\tl{f}_{\rm in})_{\ne}\big\|_{L^2_{x,v}}+\int_0^{T_0} \|\la v\ra^m e^{-C_0\gamma\tau}\frak{N}_{\ne}(\tau) \|_{L^2_{x,v}}d\tau.
\end{align}
Similarly,  for the zero mode $\tl{f}_0$, using \eqref{bd:semi1-0} and Minkowski's inequality, we find that
\begin{align}\label{es:up-0}
    \nn&\|\la v\ra^m e^{-C_0\nu t}\tl{f}_0\|_{L^\infty_t L^2_v}+\nu^{\fr12}\|\la v\ra^{m} e^{-C_0\nu t}\nb_v\tl{f}_0\|_{L^2_t L^2_v}\\
    \les&\big\|\la v\ra^m (\tl{f}_{\rm in})_{0}\big\|_{L^2_{v}}+\int_0^{T_0} \|\la v\ra^m e^{-C_0\nu\tau}\frak{N}_{0}(\tau) \|_{L^2_{v}}d\tau.
\end{align}
Moreover, by \eqref{exp:tlrho}, \eqref{bd:semi2}  and Minkowski's inequality, we have
\begin{align}\label{es:up-rho}
    \big\|e^{-C_0\gamma t}|\pr_x|^{\fr12}\tl{\rho}\big\|_{L^2_tL^2_x}\le\nn&\big\|e^{-C_0\gamma t}|\pr_x|^{\fr12}\frak{I}\circ\mathbb{S}(t)\tl{f}_{\rm in}\big\|_{L^2_tL^2_x}\\
    \nn&+\left\|\int_0^te^{-C_0\gamma (t-\tau)}|\pr_x|^{\fr12}\frak{I}\circ\mathbb{S}(t-\tau)[e^{-C_0\gamma\tau}\frak{N}(\tau,x,v)]d\tau\right\|_{L^2_t L^2_x}\\
    \les&\|(\tl{f}_{\rm in})_{\ne}\|_{L^2_{x,v}}+\int_0^{T_0}\|e^{-C_0\gamma\tau}\frak{N}_{\ne}(\tau)\|_{L^2_{x,v}}d\tau.
\end{align}

For $t\le T_0=\dl_0\nu^{-\fr13+\eps_0}\ln \nu^{-1}$ and $\gamma=\nu^{\fr13-\eps_0}$, there holds
\begin{align}
    e^{2C_0\gamma t}\le e^{2C_0\dl_0\ln\nu^{-1}}=\nu^{-2C_0\dl_0}.
\end{align}
Then by the one dimensional  Sobolev embedding $H^1(\T)\hookrightarrow L^\infty(\T)$,  one deduces that
\begin{align}\label{es:NL-ne1}
    \|\la v\ra^m e^{-C_0\gamma \tau}(\tl{E}\pr_v\tl{f})_{\ne}\|_{L^1_tL^2_{x,v}}\les\nn& \|e^{-C_0\gamma t}|\pr_x|^{\fr12}\tl{\rho}\|_{L^2_tL^2_x}\bigg(\| e^{C_0\gamma t}\la v\ra^m e^{-C_0\gamma t}\nb_v\tl{f}_{\ne}\|_{L^2_tL^2_{x,v}}\\
    \nn&+\|e^{C_0\nu t}\la v\ra^m e^{-C_0\nu t}\nb_v\tl{f}_{0}\|_{L^2_tL^2_{x,v}}\bigg)\\
    \les&\nu^{-(\fr12+2C_0\dl_0)}\|e^{-C_0\gamma t}|\pr_x|^{\fr12}\tl{\rho}\|_{L^2_tL^2_x}\bigg(\nu^{\fr12}\|\la v\ra^m e^{-C_0\gamma t}\nb_v\tl{f}_{\ne}\|_{L^2_tL^2_{x,v}}\\
    \nn&+\nu^{\fr12}\|\la v\ra^m e^{-C_0\nu t}\nb_v\tl{f}_{0}\|_{L^2_tL^2_{x,v}}\bigg),
\end{align}
and
\begin{align}\label{es:NL-0}
    \|\la v\ra^m e^{-C_0\nu \tau}(\tl{E}\pr_v\tl{f})_{0}\|_{L^1_tL^2_{x,v}}\les\nn& \big\|e^{ C_0\gamma t-C_0\nu t}e^{-C_0\gamma t}|\pr_x|^{\fr12}\tl{\rho}\big\|_{L^2_tL^2_x}\big\| e^{C_0\gamma t}\la v\ra^m e^{-C_0\gamma t}\pr_v\tl{f}_{\ne}\big\|_{L^2_tL^2_{x,v}}\\
    \les&\nu^{-(\fr12+2C_0\dl_0)}\big\|e^{-C_0\gamma t}|\pr_x|^{\fr12}\tl{\rho}\big\|_{L^2_tL^2_x}
    \Big(\nu^{\fr12}\big\|\la v\ra^m e^{-C_0\gamma t}\pr_v\tl{f}_{\ne}\big\|_{L^2_tL^2_{x,v}}\Big).
\end{align}
Finally, by virtue of \eqref{C.1} in Lemma \ref{lem-fe},
\begin{align}\label{es:NL-ne2}
 \|\la v\ra^m e^{-C_0\gamma\tau}\tl{E}\pr_v(f^{\rm e}-\mu_2)\|_{L^1_tL^2_{x,v}}   \le\nn&\big\|e^{-C_0\gamma t}|\pr_x|^{-1}\tl{\rho}\big\|_{L^2_tL^2_x}\big\|\la v\ra^m \pr_v(f^{\rm e}-\mu_2)\big\|_{L^2_tL^2_v}\\
\les&\dl_0\nu^{3\eps_0}(\ln\nu^{-1})^{\fr32}\big\|e^{-C_0\gamma t}|\pr_x|^{-1}\tl{\rho}\big\|_{L^2_tL^2_x}.
\end{align}
Recalling the definition of $\frak{N}(t,x,v)$ in \eqref{def:NL-N}, substituting \eqref{es:NL-ne1}--\eqref{es:NL-ne2} into \eqref{es:up-ne}--\eqref{es:up-rho}, for $\dl_0$ sufficiently small, we are led to
\begin{align}
    \nn&\big\|\la v\ra^m e^{-C_0\gamma t}\tl{f}_{\ne}\big\|_{L^\infty_tL^2_{x,v}}+\gamma^{\fr12}\big\|\la v\ra^m e^{-C_0\gamma t}\tl{f}_{\ne}\big\|_{L^2_tL^2_{x,v}}+\nu^{\fr12}\big\|\la v\ra^m e^{-C_0\gamma t}\nb_v\tl{f}_{\ne}\big\|_{L^2_tL^2_{x,v}}\\
    \nn&+\|\la v\ra^m e^{-C_0\nu t}\tl{f}_0\|_{L^\infty_t L^2_v}+\nu^{\fr12}\|\la v\ra^{m} e^{-C_0\nu t}\nb_v\tl{f}_0\|_{L^2_t L^2_v}+\big\|e^{-C_0\gamma t}|\pr_x|^{\fr12}\tl{\rho}\big\|_{L^2_tL^2_x}\\
    \le\nn&C\big\|\la v\ra^m \tl{f}_{\rm in}\big\|_{L^2_{x,v}}+C\nu^{-(\fr12+2C_0\dl_0)}\|e^{-C_0\gamma t}|\pr_x|^{\fr12}\tl{\rho}\|_{L^2_tL^2_x}\bigg(\nu^{\fr12}\|\la v\ra^m e^{-C_0\gamma t}\nb_v\tl{f}_{\ne}\|_{L^2_tL^2_{x,v}}\\
    \nn&+\nu^{\fr12}\|\la v\ra^m e^{-C_0\nu t}\nb_v\tl{f}_{0}\|_{L^2_tL^2_{x,v}}\bigg).
\end{align}
Then by the standard bootstrap argument, we conclude that
\begin{align}\label{es:UP}
    \nn&\big\|\la v\ra^m e^{-C_0\gamma t}\tl{f}_{\ne}\big\|_{L^\infty_tL^2_{x,v}}+\gamma^{\fr12}\big\|\la v\ra^m e^{-C_0\gamma t}\tl{f}_{\ne}\big\|_{L^2_tL^2_{x,v}}\\
    \nn&+\nu^{\fr12}\big\|\la v\ra^m e^{-C_0\gamma t}\nb_v\tl{f}_{\ne}\big\|_{L^2_tL^2_{x,v}}+\big\|\la v\ra^m e^{-C_0\nu t}\tl{f}_0\big\|_{L^\infty_t L^2_v}\\
    &+\nu^{\fr12}\big\|\la v\ra^{m} e^{-C_0\nu t}\nb_v\tl{f}_0\big\|_{L^2_t L^2_v}+\big\|e^{-C_0\gamma t}|\pr_x|^{\fr12}\tl{\rho}\big\|_{L^2_tL^2_x}
    \le C\big\|\la v\ra^m \tl{f}_{\rm in}\big\|_{L^2_{x,v}},
\end{align}
provided that
\begin{align}\label{up-small}
    \big\|\la v\ra^m \tl{f}_{\rm in}\big\|_{L^2_{x,v}}\le \eps\nu^\beta,\quad {\rm with}\ \ \beta>\fr12,
\end{align}
and $\eps>0$ sufficiently small independent of  $\nu$.

{\it Step (II): the lower bounds}. Now we bound the solution $\tl{f}$ to \eqref{eq-tlf'} with initial data 
\[
f_{\rm in}^*:=\gamma^{\fr12}\nu^{\beta}\mathfrak{Re}\Big(e^{i x}\frac{e_{\gamma,\lambda}(v)}{\hat{e}_{\gamma, \lambda}(0)}\Big)
\]
from below.  To this end, recalling the expression of $\tl{f}$ in \eqref{exp:tlf}, for $c_0$ determined in Proposition \ref{prop:lb-S}, we have
\begin{align*}
    \left\|\la v\ra^m e^{-c_0\gamma t}\tl{f}_{\ne}(t)\right\|_{L^2_{x,v}}\ge& \left\|\la v\ra^m e^{-c_0\gamma t}\mathbb{S}(t)f_{\rm in}^*\right\|_{L^2_{x,v}}\\
    &-e^{(C_0-c_0)\gamma t}\left\|\la v\ra^m \int_0^te^{-C_0(t-\tau)}\big(\mathbb{S}(t-\tau)\big[e^{-C_0\tau}\frak{N}(\tau,x,v)\big]\big)_{\ne}d\tau\right\|_{L^2_{x,v}}\\
    =&\mathsf{L}(t)-\mathsf{NL}(t).
\end{align*}
Clearly, $\mathsf{L}(t)$ can be bounded from below by \eqref{lb:linear}. For $\mathsf{NL}(t)$, one can see from \eqref{ub-eign} that the initial data $f^*_{\rm in}$ given above satisfies \eqref{up-small}. Then
in view of \eqref{bd:semi1}, using Minkowski's inequality, \eqref{es:NL-ne1}, \eqref{es:NL-ne2} and \eqref{es:UP}, we have
\begin{align*}
    \mathsf{NL}(t)\le&\nn e^{(C_0-c_0)\gamma T_0}\int_0^{T_0}\|\la v\ra^m e^{-C_0\gamma\tau}\frak{N}_{\ne}(\tau)\|_{L^2_{x,v}}d\tau\\
    \le\nn& C\nu^{-\dl_0(C_0-c_0)}\Big[\nu^{\beta-(\fr12+2C_0\dl_0)}+\dl_0\nu^{3\epsilon_0}(\ln\nu^{-1})^{\fr32}\Big]\|\la v\ra^m f^*_{\rm in}\|_{L^2_{x,v}}\\
    \le&\fr14\|\la v\ra^m f^*_{\rm in}\|_{L^2_{x,v}}
\end{align*} 
Then for $t\le T_0$, we are led to
\begin{align}\label{es:LOW}
    \left\|\la v\ra^m e^{-c_0\gamma t}\tl{f}_{\ne}(t)\right\|_{L^2_{x,v}}\ge\fr14\|\la v\ra^m f^*_{\rm in}\|_{L^2_{x,v}}.
\end{align}
Recalling  that $g(t,x,v)=\tl{f}(t,x,v)+\tl{f}^0(t,v)-\mu(v)$, and $g_{\rm in}=f^*_{\rm in}+f^0(v)-\mu(v)$,
Theorem \ref{Thm: main2} follows from \eqref{es:UP} and \eqref{es:LOW} directly.

\begin{appendix}

\section{Some elementary calculations}\label{app:calcu}
In this section, we study the coefficients appearing in the definition of the wave operator. 
\begin{lem}
Let $\al,\beta\in\N^n$. Denote
\[
\Phi_{1,k}(z)=1, \quad\Phi_{2,k}(z)=\frac{1}{P_k(z)},\quad \Phi_{3,k}(z)=\frac{Q_k(z)}{W_k(z)},\quad \Phi_{4,k}(z)=\frac{P_k(z)}{W_k(z)},
\]
and
\begin{align*}
    A_k^\al(v)=\frac{k\cdot v}{|k|}v^\al \mu(v), \quad w_{j,k}^{\alpha,\beta}(v)=&\partial_v^\beta\left(A^\alpha_k(v)\Phi_{j,k}\Big(\frac{k\cdot v}{|k|}\Big)\right),\quad j=1, 2, 3,4.
\end{align*}
In particular, we use $A_k(v)$ to denote $A^{\al}_k(v)$ when $|\al|=0$, and use $w_{j,k}(v)$ to denote $w^{\al,\beta}_{j,k}(v)$ when $|\al|=|\beta|=0$.
Then there exist positive constants $C_{\al,\beta}\ge1$ and $c_{\al,\beta}\in(0,\fr12)$, such that
\begin{align}\label{bd-w}
     |w_{j,k}^{\al,\beta}(v)|\le C_{\al,\beta}e^{-c_{\al,\beta}|v|^2}.
\end{align}
\end{lem}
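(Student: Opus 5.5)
The plan is to reduce \eqref{bd-w} to uniform-in-$k$ bounds on the one-variable functions $\Phi_{j,k}$ and their derivatives, and then use that $A^\al_k$ carries a full Gaussian factor $\mu(v)$. Write $z=\frac{k\cdot v}{|k|}$, so that $|z|\le|v|$ and $\partial_{v_i}[\Phi(z)]=\frac{k_i}{|k|}\Phi'(z)$. By the Leibniz rule, $w^{\al,\beta}_{j,k}$ is a finite sum of terms $\partial_v^{\beta_1}\big(z v^\al\mu(v)\big)\,(k/|k|)^{\beta_2}\Phi_{j,k}^{(|\beta_2|)}(z)$ with $\beta_1+\beta_2=\beta$. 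Each factor $\partial_v^{\beta_1}(zv^\al\mu)$ is a polynomial of degree at most $|\al|+|\beta|+1$ times $\mu$, with coefficients bounded independently of $k$ since $|k_i/|k||\le1$. It is therefore bounded by $C_{\al,\beta}e^{-c|v|^2}$ for any $c<\frac12$. It remains to show that $\sup_{k\in\Z^3_*}\sup_{z\in\R}|\Phi^{(l)}_{j,k}(z)|\le C_l$ for every $l$.

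\textbf{Step 1: uniform bounds on $P_k$ and $Q_k$.} From \eqref{exp-P_k},
\[
P_k(z)=1+|k|^{-2}\int_0^\infty \xi e^{-\xi^2/2}\cos(z\xi)\,d\xi ,
\]
so differentiating under the integral gives $|P_k^{(l)}(z)|\le |k|^{-2}\int_0^\infty\xi^{l+1}e^{-\xi^2/2}d\xi\le C_l$. Also $Q_k=\sqrt{\pi/2}\,|k|^{-2}ze^{-z^2/2}$ has all derivatives bounded uniformly in $k$, since $|k|\ge1$.

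\textbf{Step 2: uniform lower bounds on the denominators.} By \eqref{inf-Pk}, $P_k(z)\ge p_0>0$ uniformly in $k$ and $z$, and hence $W_k=P_k^2+Q_k^2\ge p_0^2$. This is the only nontrivial input, and it is where the Penrose condition for the Maxwellian enters. If a self-contained argument were preferred, one can note that $|1-P_k|\le|k|^{-2}\int_0^\infty\xi e^{-\xi^2/2}d\xi=|k|^{-2}$, which settles $|k|\ge2$ directly. The finitely many shells with $|k|^2\in\{1,2,3\}$ then reduce to positivity of the continuous functions $1+c\,\mathrm{Re}\,\mathcal{L}[\cdot](iz)$, with $c=1,\frac12,\frac13$, together with their limit $1$ as $|z|\to\infty$. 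Here I simply invoke \eqref{inf-Pk}.

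\textbf{Step 3: derivatives of quotients.} Since each $\Phi_{j,k}$ is a product of $1/P_k$ or $1/W_k$ with $P_k$, $Q_k$, or $1$, the Faà di Bruno formula gives $(1/P_k)^{(l)}$ as a polynomial in $P_k,\dots,P_k^{(l)}$ divided by $P_k^{l+1}$. The same holds for $1/W_k$. Steps 1 and 2 then bound each $\Phi^{(l)}_{j,k}$ uniformly in $k$ and $z$.

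Combining the three steps, each term in the Leibniz expansion is bounded by $C\langle v\rangle^{|\al|+|\beta|+1}\mu(v)\le C_{\al,\beta}e^{-c_{\al,\beta}|v|^2}$, with $c_{\al,\beta}=\frac14$, say. The main obstacle is Step 2, the uniform positivity of $P_k$. It is supplied by \eqref{inf-Pk}, and the rest of the argument is routine bookkeeping.
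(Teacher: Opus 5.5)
Your proposal is correct and follows the paper's proof essentially step for step. Both arguments use uniform-in-$k$ bounds on $P_k^{(m)}$ and $Q_k^{(m)}$, transfer the lower bound \eqref{inf-Pk} to $W_k\ge P_k^2$, apply Fa\`{a} di Bruno to get $|\Phi^{(m)}_{j,k}|\le C_m$, and finish with the Leibniz expansion against $\partial_v^{\tilde\beta}A^\al_k$, which is a polynomial times $e^{-|v|^2/2}$. Your optional self-contained aside can be shortened: $|1-P_k|\le|k|^{-2}$ already covers every $|k|^2\ge 2$, so only $|k|=1$ needs the strict inequality $\int_0^\infty\xi e^{-\xi^2/2}\cos(z\xi)\,d\xi>-1$ together with decay as $|z|\to\infty$.
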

\begin{proof}
It is easy to verify that $P_k(z)$ is smooth, and there exists a positive constant $M_m$ independent of $k$, such that
    \[
    \sup_{k\in\Z^n_*}\sup_{z\in\R}|P_k^{(m)}(z)|\le M_m, \quad m\in \N.
    \] 
Moreover, recalling the definition of $W_k(z)$ in \eqref{def-QW}, thanks to the uniform positive lower bound of $P_k(z)$ in \eqref{inf-Pk}, one deduces that \eqref{inf-Pk} still holds with $P_k$ replaced by $W_k$. Consequently, using Leibnitz's law and Fa\`{a} di Bruno formula, we find that for any integer \(m\ge0\), there exists constant \(C_m>0\) (independent of \(k\) for \(|k|\ge1\)) such that, with \(z\in\R\),
\begin{align}\label{est:Phi}
    \Big|\Phi^{(m)}_{j,k}(z)\Big| &\le C_m, \quad j=1,2,3,4.
\end{align}
On the other hand, for any $\tl{\beta}\in\N^n$, there exist positive constants $C_{\al,\tl{\beta}}$ and $M_{|\al|+|\tl\beta|}$, such that
\begin{align}\label{bd-Aalpha}
    |\partial_v^{\tl{\beta}} A_k^\al(v)|\le C_{\al,\tl{\beta}}\la v\ra^{M_{|\al|+|\tl\beta|}} e^{-\frac{|v|^2}{2}}.
\end{align}
Noting that
\begin{align*}
    w^{\alpha,\beta}_{j,k}(v)
    =&\sum_{\beta'\le\beta}C_\beta^{\beta'}\partial_v^{\beta-\beta'}A_k^\al(v)\Phi^{(|\beta'|)}_{j,k}\Big(\frac{k\cdot v}{|k|}\Big)\Big(\frac{k}{|k|}\Big)^{\beta'},
\end{align*}
then \eqref{bd-w} follows from \eqref{est:Phi} and \eqref{bd-Aalpha} immediately.
\end{proof}
In the following lemma, we give more precise upper bound estimates for $P_k$ and its derivatives, and by means of this, we show that $K_k$ lies in $H^s(\R)$ for any $s\ge0$.
\begin{lem}
For any $m\in\N$, there holds
    \begin{align}\label{est:Pk}
        |\partial_z^{(m)}(P_k(z)-1)|\le \frac{C_m}{|k|^2(1+|z|)^{m+2}}, 
    \end{align}
and for any fixed $s\ge0$,
    \begin{align}\label{est:Wk}
        \|K_k\|_{H^s}\le \fr{C_s}{|k|^2},
    \end{align}
    where the constants $C_m$ and $C_s$ depend on $m$ and $s$, respectively, but independent of $k$.
\end{lem}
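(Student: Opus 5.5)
The plan is to work from the explicit form of $P_k-1$. By \eqref{exp-P_k},
\[
P_k(z)-1=\frac{1}{|k|^2}\,\mathfrak{Re}\int_0^\infty \xi e^{-\xi^2/2}e^{iz\xi}\,d\xi=-\frac{\pi}{|k|^2}\mathcal{H}[\phi](z),\qquad \phi(u)=\frac{u e^{-u^2/2}}{\sqrt{2\pi}},
\]
so $|k|^2(P_k-1)$ is a fixed function $p(z)$ independent of $k$. It then suffices to show $|p^{(m)}(z)|\le C_m(1+|z|)^{-m-2}$.

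\emph{Bounded $z$.} Here smoothness suffices. Derivatives in $z$ bring down powers of $i\xi$, and the Gaussian factor makes $\int_0^\infty \xi^{m+1}e^{-\xi^2/2}\,d\xi$ finite, so each $p^{(m)}$ is bounded.

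\emph{Large $|z|$.} We integrate by parts repeatedly in $\xi$, using $e^{iz\xi}=(iz)^{-1}\partial_\xi e^{iz\xi}$. Write $g_m(\xi)=(i\xi)^m\xi e^{-\xi^2/2}$, so that $p^{(m)}(z)=\mathfrak{Re}\int_0^\infty g_m(\xi)e^{iz\xi}\,d\xi$. Each integration by parts produces a boundary term $-(iz)^{-1-j}g_m^{(j)}(0)$ plus a remaining integral with an extra factor $z^{-1}$. Since $g_m$ vanishes to order $m+1$ at $0$, the first nonzero boundary term comes from $g_m^{(m+1)}(0)\neq 0$, and it is of size $z^{-m-2}$. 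After $m+2$ integrations by parts, the remaining integral is bounded by $|z|^{-m-2}\|g_m^{(m+2)}\|_{L^1}$. This gives \eqref{est:Pk}.

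There is a subtle point: one must check that no boundary terms of lower order survive. This holds because $g_m^{(j)}(0)=0$ for $j\le m$. Alternatively, one can argue directly from $\mathcal{H}[\phi]$: since $\int\phi=0$, the expansion $\mathcal{H}[\phi](z)=\frac1{\pi z}\int \phi+\frac{1}{\pi z^2}\int u\phi+\cdots$ starts at $z^{-2}$, and derivatives improve the decay.

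For \eqref{est:Wk}, write
\[
K_k=\frac{P_k-P_k^2-Q_k^2}{W_k}=\frac{-(P_k-1)P_k-Q_k^2}{W_k}.
\]
By \eqref{inf-Pk}, $W_k\ge P_k^2\ge c>0$ uniformly in $k$, and by \eqref{est:Phi}, all derivatives of $1/W_k$ and $P_k$ are bounded uniformly. The factors $P_k-1$ and $Q_k^2$ each carry $|k|^{-2}$. Moreover, $P_k-1$ together with its derivatives decays like $(1+|z|)^{-2}$, which lies in $L^2$, and $Q_k$ is Gaussian. The Leibniz rule then gives $\|\partial_z^j K_k\|_{L^2}\le C_j|k|^{-2}$ for integer $j$, and interpolation yields the bound for general $s\ge0$.

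The main obstacle is the careful bookkeeping of the integration-by-parts boundary terms to obtain the sharp power $m+2$; everything else is routine.
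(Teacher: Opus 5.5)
Your proposal is correct and follows essentially the paper's route. The paper writes $P_k-1=|k|^{-2}I(z)$ with $I(z)=\int_0^\infty ye^{-y^2/2}\cos(zy)\,dy$, integrates by parts twice to get $I(z)=-z^{-2}-z^{-2}\int_0^\infty(y^3-3y)e^{-y^2/2}\cos(zy)\,dy$ and says ``repeating this argument'' gives \eqref{est:Pk}. For the second estimate it writes $K_k=-\frac{(P_k-1)^2+Q_k^2}{W_k}-\frac{P_k-1}{W_k}$ and concludes from \eqref{inf-Pk}, \eqref{est:Pk}, Leibniz and Fa\`{a} di Bruno.

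Your differentiation under the integral, together with the observation $g_m^{(j)}(0)=0$ for $j\le m$, makes that ``repeating'' step explicit for the derivatives. Your form $K_k=\frac{-(P_k-1)P_k-Q_k^2}{W_k}$ is an equivalent rearrangement of the paper's decomposition.
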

\begin{proof}
    Let 
    \[
    I(z)=\int_0^\infty ye^{-\fr{y^2}{2}}\cos(zy)dy,\quad z\in\R.
    \]
    Then $P_k(z)=1+\fr{1}{|k|^2}I(z)$.
    Integrating by parts twice, we get
    \begin{align*}
        I(z)=-\frac{1}{z^2}-\frac{1}{z^2}\int_0^\infty (y^3-3y)e^{-\frac{y^2}{2}}\cos(zy)dy.
    \end{align*}
    Repeating this argument, we get \eqref{est:Pk}. Recalling the definition of $K_k(z)$ in \eqref{def-K}, we write
    \[
    K_k(z)=-\fr{(P_k-1)^2+Q_k^2}{P^2_k+Q_k^2}-\fr{P_k-1}{P_k^2+Q_k^2}.
    \]
    Then, in view of \eqref{inf-Pk} and \eqref{est:Pk}, using Leibniz's law and Fa\`{a} di Bruno formula, we arrive at  \eqref{est:Wk}.
\end{proof}

\section{A product estimate}
Recall the definition of the Fourier multipliers $\rm A$ and $\rm B$ defined in section \ref{sec: notation}. In this section, we discuss some basic properties of those operators. 
\begin{lem}\label{lem-produ}
For any $s\ge0$, it holds that
\begin{align}\label{log-produ}
\sum_{k\in\Z^3}\sum_{l\in\Z^3_*}{\rm A}_{k,s}(t)\fr{f_l}{|l|}g_{k-l} h_k\les\nn& \Big(\|{\rm B}_{k,0}(t)f_k\|_{L^2_k}\|{\rm A}_{k,s}(t)g_k{\bf1}_{k\ne0}\|_{L^2_k}\\
&\quad+\|{\rm B}_{k,s}(t)f_k\|_{L^2_k}\|{\rm A}_{k,0}(t)g_k\|_{L^2_k}\Big)\|h_k\|_{L^2_k}.
\end{align}
\end{lem}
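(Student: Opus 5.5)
The plan is a frequency-splitting (paraproduct) argument in $k$, combined with Cauchy--Schwarz. For each pair $(k,l)$ with $l\neq0$, I would split according to whether $|l|\ge|k-l|$ (high--low) or $|l|<|k-l|$ (low--high). The multiplier ${\rm A}_{k,s}$ is then moved onto whichever factor carries the larger frequency. This uses two elementary facts, valid since $|k|\le 2\max\{|l|,|k-l|\}$:
\[
\ln(e+|k|)\lesssim \ln(e+\max\{|l|,|k-l|\})
\]
and
\[
\la C_s\nu^{\fr13}|k|^{\fr23}t\ra^{\fr s2}\lesssim \la C_s\nu^{\fr13}\max\{|l|,|k-l|\}^{\fr23}t\ra^{\fr s2}.
\]
The $\la\cdot\ra^{s/2}$ weight is monotone in $|k|$, and doubling the argument costs only a factor $2^{s/3}$. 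Hence ${\rm A}_{k,s}\lesssim {\rm A}_{l,s}$ on the high--low region and ${\rm A}_{k,s}\lesssim{\rm A}_{k-l,s}$ on the low--high region. In the low--high case $k-l\neq0$ automatically, because $|k-l|>|l|\ge1$; this explains the factor ${\bf 1}_{k\ne0}$ on $g$ in \eqref{log-produ}.

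\emph{High--low region.} Here ${\rm A}_{k,s}\frac{|f_l|}{|l|}\lesssim {\rm B}_{l,s}|f_l|\cdot |l|^{-\fr32}$. Since $|k-l|\le|l|$, I would bound
\[
|l|^{-\fr32}\lesssim \la k-l\ra^{-\fr32}=\frac{\ln(e+|k-l|)}{\ln(e+|k-l|)\la k-l\ra^{\fr32}}.
\]
This places ${\rm A}_{k-l,0}$ on $g$ at the price of the weight $w(j)=\big(\ln(e+|j|)\la j\ra^{\fr32}\big)^{-1}$. The resulting sum is
\[
\sum_{k,l}{\rm B}_{l,s}|f_l|\,w(k-l)\,{\rm A}_{k-l,0}|g_{k-l}|\,|h_k|.
\]
I would apply Cauchy--Schwarz in $(k,l)$, putting $|{\rm B}_{l,s}f_l|^2\,|{\rm A}_{k-l,0}g_{k-l}|^2$ in one factor and $w(k-l)^2|h_k|^2$ in the other. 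Summing over $l$ in the second factor gives $\sum_j w(j)^2\,\|h\|_{L^2_k}^2$, and
\[
\sum_{j\in\Z^3}\frac{1}{\ln^2(e+|j|)\la j\ra^{3}}<\infty.
\]
This finiteness is exactly where the logarithm is essential in dimension three: without it the series diverges logarithmically. The first factor equals $\|{\rm B}_{k,s}f_k\|_{L^2_k}\|{\rm A}_{k,0}g_k\|_{L^2_k}$, which gives the second term of \eqref{log-produ}.

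\emph{Low--high region.} Here I write
\[
\frac{|f_l|}{|l|}=\frac{{\rm B}_{l,0}|f_l|}{\ln(e+|l|)|l|^{\fr32}}
\]
and put ${\rm A}_{k-l,s}$ on $g$. Then I apply the same Cauchy--Schwarz argument, now with the summable weight in the variable $l$. Concretely, I would take $\big(\sum_{k,l}|{\rm B}_{l,0}f_l|^2|{\rm A}_{k-l,s}g_{k-l}|^2\big)^{1/2}$ against $\big(\sum_{k,l}w(l)^2|h_k|^2\big)^{1/2}$. This yields the first term of \eqref{log-produ}.

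The argument is routine. The only delicate point is checking that the comparability of ${\rm A}_{k,s}$ with the dominant frequency has constants independent of $\nu$ and $t$, which follows from the monotonicity described above.
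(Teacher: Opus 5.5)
Your proposal is correct and follows essentially the same route as the paper: the same high--low/low--high split, the same transfer of ${\rm A}_{k,s}$ to the dominant frequency, and the same log-weighted summability $\sum_{j}\big(\ln^2(e+|j|)\la j\ra^{3}\big)^{-1}<\infty$ in three dimensions. The only difference is cosmetic: the paper applies Young's convolution inequality ($\ell^1*\ell^2\subset\ell^2$) and then Cauchy--Schwarz for the $\ell^1$ factor, whereas you merge these into a single Cauchy--Schwarz in $(k,l)$, which gives the same bound.
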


\begin{proof}
    \begin{align*}
&\sum_{k\in\Z^3}\sum_{l\in\Z^3_*}{\rm A}_{k,s}(t)\fr{f_l}{|l|}g_{k-l} h_k\\
=& \sum_{k\in\Z^3}\sum_{\substack{l\in\Z^3_*,|l|< |k-l|}}{\rm A}_{k,s}(t)\fr{f_l}{|l|}g_{k-l} h_k+\sum_{k\in\Z^3}\sum_{\substack{l\in\Z^3_*,|l|\ge |k-l|}}{\rm A}_{k,s}(t)\fr{f_l}{|l|}g_{k-l} h_k=I_{\rm LH}+I_{\rm HL}.
\end{align*}
By Young's inequity for convolution,
\begin{align*}
    I_{\rm LH}\les& \sum_{k\in\Z^3}\sum_{\substack{l\in\Z^3_*,|l|< |k-l|}}\fr{|f_l|}{|l|}{\rm A}_{k-l,s}(t)|g_{k-l|} |h_k|\\
    \les&\sum_{k\in\Z^3_*}\fr{|f_k|}{|k|}\|{\rm A}_{k,s}(t)g_k{\bf1}_{k\ne0}\|_{L^2_k}\|h_k\|_{L^2_k}\\
    \les& \|{\rm B}_{k,0}(t)f_k\|_{L^2_k}\|{\rm A}_{k,s}(t)g_k{\bf1}_{k\ne0}\|_{L^2_k}\|h_k\|_{L^2_k},
\end{align*}
where we have used
\begin{align*}
    \Big(\sum_{k\in\Z^3_*}\fr{|f_k|}{|k|}\Big)^2\le\sum_{k\in\Z^3_*}\fr{1}{|k|^3(\ln(e+|k|))^2}\sum_{k\in\Z^3_*}(\ln(e+|k|))^2|k||f_k|^2\les \|{\rm B}_{k,0}(t)f_k\|_{L^2_k}^2.
\end{align*}
Similarly, we have
\begin{align*}
    I_{\rm HL}
    \les&\sum_{k\in\Z^3}\sum_{\substack{l\in\Z^3_*,|l|\ge |k-l|}}{\rm B}_{l,s}(t)|f_l|\fr{|g_{k-l}|}{\la k-l\ra^{\fr32}} |h_k|\\
\les&\|{\rm B}_{k,s}(t)f_k\|_{L^2_k}\|{\rm A}_{k,0}(t)g_k\|_{L^2_k}\|h_k\|_{L^2_k}.
\end{align*}
Thus, \eqref{log-produ} holds.
\end{proof}

\begin{lem}\label{lem-LK}
Let
    \begin{align*}
    J_k=\bigg\|{\rm B}_{k,s}(t)\int_0^t\int_{\R}\bar{\hat{K}}_k(\xi_1)S_k(t-\tau;\xi_*(t,k))\xi_*(t-\tau,k)\hat{h}_{k}(\tau,\xi_*(t-\tau,k))d\xi_1d\tau\bigg\|_{L^2_t}.
    \end{align*}
    Then it holds that
    \begin{align}\label{bd:L-J}
\|J_k\|_{L^2_k}\les \nu^{-\fr12}\|\la v\ra^2{\bf A}_{s}{h}_{\ne}\|_{L^2 _tL^2_{x,v}}.
    \end{align}
\end{lem}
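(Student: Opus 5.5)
We mimic the argument for \eqref{es:rho-L}, but now with the extra $\xi_1$-integral against $\bar{\hat K}_k$. The first step is Minkowski's inequality in $\xi_1$:
\begin{align*}
J_k\le \int_{\R}|\hat K_k(\xi_1)|\,\Big\|{\rm B}_{k,s}(t)\int_0^tS_k(t-\tau;\xi_*(t,k))\,|\xi_*(t-\tau,k)|\,|\hat h_k(\tau,\xi_*(t-\tau,k))|\,d\tau\Big\|_{L^2_t}\,d\xi_1 .
\end{align*}
It therefore suffices to bound the inner $L^2_t$ norm uniformly in $\xi_1$ and then use $\|\hat K_k\|_{L^1_{\xi_1}}\les\|K_k\|_{H^1}\les |k|^{-2}$ from \eqref{est:Wk}.

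For fixed $\xi_1$, write $\bar\eta(t;k,\xi_*(t,k))=\xi_1\frac{k}{|k|}$. By \eqref{semigroup-est} we have
\begin{align*}
S_k(t-\tau;\xi_*(t,k))\le e^{-\frac{\nu}{8}\xi_1^2(t-\tau)-\frac{\nu}{21}|k|^2(t-\tau)^3}.
\end{align*}
Next, $|\xi_*(t-\tau,k)|\le |\xi_1|+|k|(t-\tau)$. The growth of the time weight is handled as in \eqref{absorb}: ${\rm A}_{k,s}(t)S_k\les {\rm A}_{k,s}(\tau)S_k^{1/2}$. Applying Cauchy--Schwarz in $\tau$ (Schur's test), the kernel factor becomes
\begin{align*}
\sup_t\int_0^t S_k^{1/2}(t-\tau;\xi_*)\big(|\xi_1|+|k|(t-\tau)\big)^2d\tau .
\end{align*}
The $|k|(t-\tau)$ part is $\les\nu^{-1}$, as in \eqref{bd-semi1}. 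The $|\xi_1|$ part is $\les \xi_1^2\min\{(\nu\xi_1^2)^{-1},(\nu|k|^2)^{-1/3}\}\les\nu^{-1}$ as well. So the kernel contributes $\nu^{-1}$.

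It remains to bound, after exchanging the order of integration,
\begin{align*}
|k|\int_0^{T^*}\int_\tau^{T^*}\big|{\rm A}_{k,s}(\tau)\hat h_k(\tau,\xi_1\tfrac{k}{|k|}+k(t-\tau))\big|^2\,dt\,d\tau .
\end{align*}
The change of variables $t'=|k|(t-\tau)$ cancels the factor $|k|$, which comes from ${\rm B}_{k,s}^2=|k|{\rm A}_{k,s}^2$. The Sobolev trace theorem along the line $\xi_1\frac{k}{|k|}+\R\frac{k}{|k|}$, exactly as in \eqref{trace}, then bounds the inner integral by $\|\la v\ra^{1+}{\rm A}_{k,s}(\tau)h_k(\tau)\|_{L^2_v}^2$, uniformly in $\xi_1$. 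Combining the pieces gives $J_k\les |k|^{-2}\nu^{-1/2}\|\la v\ra^2{\rm A}_{k,s}h_k\|_{L^2_tL^2_v}$. Summing in $\ell^2_k$ over $k\neq0$ yields \eqref{bd:L-J}, since the factor $|k|^{-2}\le1$ is harmless.

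The main obstacle is the $|\xi_1|$ piece of $|\xi_*(t-\tau,k)|$, which is not controlled by $|k|(t-\tau)$. For this I will rely on the $\frac{\nu}{8}\xi_1^2(t-\tau)$ decay in \eqref{semigroup-est}, which gives $\int_0^\infty \xi_1^2e^{-\nu\xi_1^2 s/16}ds\les\nu^{-1}$ uniformly in $\xi_1$. If that proves delicate, the fallback is to absorb $|\xi_1|$ into $\hat K_k$, using $\|\xi_1\hat K_k\|_{L^1}\les\|K_k\|_{H^2}\les|k|^{-2}$ from \eqref{est:Wk}, and then bound $\int S_k^{1/2}d\tau\les\nu^{-1/3}$ with the weaker but still sufficient factor.
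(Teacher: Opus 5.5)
Your proof is correct. For the $|k|(t-\tau)$ part of $|\xi_*(t-\tau,k)|$ it coincides with the paper's treatment of $J_{k,2}$: Minkowski in $\xi_1$, Cauchy--Schwarz in $\tau$ against a kernel whose $L^2_t$ norm is $\lesssim\nu^{-1/2}$, and then the trace theorem in $t$ after $t'=|k|(t-\tau)$, which cancels the factor $|k|$ coming from ${\rm B}_{k,s}^2$.

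The difference is in the $|\xi_1|$ part.

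\emph{The paper's route.} The paper discards the $\frac{\nu}{8}\xi_1^2(t-\tau)$ factor in \eqref{semigroup-est} and keeps only $e^{-\delta\nu|k|^2(t-\tau)^3}$. It then moves $|\xi_1|$ onto the kernel. Concretely, it applies Cauchy--Schwarz in $\xi_1$ with $\||\xi_1|\hat K_k\|_{L^2_{\xi_1}}\lesssim\|K_k'\|_{L^2}\lesssim|k|^{-2}$. It uses the trace theorem along the $\xi_1$-line at fixed $(t,\tau)$, and Young's inequality in time with $\|e^{-\delta\nu|k|^2t^3}\|_{L^1_t}\lesssim\nu^{-1/3}|k|^{-2/3}$. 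This is essentially your fallback, and it gives the slightly better factor $\nu^{-1/3}$ for that piece.

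\emph{Your route.} Your main argument keeps the $\xi_1^2(t-\tau)$ dissipation, so that $\xi_1^2\int_0^\infty e^{-c\nu\xi_1^2 s}\,ds\lesssim\nu^{-1}$ uniformly in $\xi_1$. This lets you handle both pieces with a single Minkowski--Schur--trace argument. The uniformity of the trace step in $\xi_1$ is automatic, since $\xi_*(t-\tau,k)=(\xi_1+|k|(t-\tau))\frac{k}{|k|}$ always lies on the same line $\mathbb{R}\frac{k}{|k|}$. You also only need $\|\hat K_k\|_{L^1}\lesssim\|K_k\|_{H^1}$, not control of $K_k'$. The only cost is that the $|\xi_1|$ piece is bounded by $\nu^{-1/2}$ rather than $\nu^{-1/3}$, which does not matter for \eqref{bd:L-J}.
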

\begin{proof}
Recalling that $\xi_*(t-\tau,k)=\fr{k}{|k|}\xi_1+k(t-\tau)$, then we infer from \eqref{semigroup-est} and \eqref{bar-eta-t-xi} that there exists a positive constant $\dl$ independent of $\nu$, such that
\begin{align*}
    S_k(t-\tau;\xi_*(t,k))|\xi_*(t-\tau,k)|\le& e^{-\dl \nu\big(|\xi_1|^2(t-\tau)+|k|^2(t-\tau)^3\big)}\bigg|\fr{k}{|k|}\xi_1+k(t-\tau)\bigg|\\
    \les&e^{-\dl\nu|k|^2(t-\tau)^3}|\xi_1|+e^{-\dl\nu|k|^2(t-\tau)^3}|k|(t-\tau)\\
    =:&\tl{S}_{k,\dl;1)}(t-\tau)|\xi_1|+\tl{S}_{k,\dl;2)}(t-\tau).
\end{align*}
Noting that $\la C_s\nu^{\fr13} |k|^{\fr23}t\ra^{\fr{\ell}{2}}\les \la C_s\nu^{\fr13} |k|^{\fr23}\tau\ra^{\fr{\ell}{2}}\la C_s\nu^{\fr13} |k|^{\fr23}(t-\tau)\ra^{\fr{\ell}{2}}$, and $\la C_s\nu^{\fr13} |k|^{\fr23}(t-\tau)\ra^{\fr{\ell}{2}}$ can be absorbed by $\tl{S}_{k,\dl;i)}(t-\tau)$, $i=1,2$, we have
\begin{align}\label{B-A}
    \nn&{\rm B}_{k,s}(t)S_k(t-\tau;\xi_*(t,k))|\xi_*(t-\tau,k)|\\
    \les&|k|^{\fr12}\sum_{i=1,2}\left(\tl{S}_{k,\fr{\dl}{2};1)}(t-\tau)|\xi_1|{\rm A}_{k,s}(\tau)
    +\tl{S}_{k,\fr{\dl}{2};2)}(t-\tau){\rm A}_{k,s}(\tau)\right).
\end{align}
Accordingly, we can split $J_k$ into two parts, and write: $J_k\les J_{k,1}+J_{k,2}$. Before proceeding any further,  similar to \eqref{bd-semi1} and \eqref{bd:Sk1}, it is easy to verify that
\begin{align}\label{bd-sem}
    \big\|\tl{S}_{k,\fr{\dl}{2};1)}\big\|_{L^1_t}\les \nu^{-\fr13}|k|^{-\fr23},\quad{\rm and}\quad \big\|\tl{S}_{k,\fr{\dl}{2};2)}\big\|_{L^2_t}\les \nu^{-\fr12}.
\end{align}

To estimate $J_{k,1}$, noting  that by the Sobolev trace theorem, we have
\begin{align*}
    \int_{\R}|\hat{h}_{k}(\tau,\xi_*(t-\tau,k))|^2d\xi_1=\int_{\R}|\hat{h}_{k}(\tau,\fr{k}{|k|}\xi_1+k(t-\tau))|^2d\xi_1\les \|\la v\ra^2h_{k}(\tau)\|_{L^2_v}^2.
\end{align*}
Then using Cauchy-Schwarz inequality w.r.t. $\xi_1$ variable, in view of \eqref{B-A}, one deduces that
\begin{align*}
    J_{k,1}\le&|k|^{\fr12}\big\||\xi_1|\hat{K}_k\big\|_{L^2_{\xi_1}}\bigg\|\int_0^t\tl{S}_{k,\fr{\dl}{2};1)}(t-\tau){\rm A}_{k,s}(\tau)\|\la v\ra^2{h}_{k}(\tau)\|_{L^2_{v}}d\tau\bigg\|_{L^2_t}\\
    \les&|k|^{\fr12}\|K'_k\|_{L^2_z}\big\|\tl{S}_{k,\fr{\dl}{2};1)}\big\|_{L^1_t}\big\|\la v\ra^2{\rm A}_{k,s}(\tau){h}_{k}(\tau)\big\|_{L^2_tL^2_{v}}.
\end{align*}
Combining this with \eqref{est:Wk} and \eqref{bd-sem}, we arrive at
\begin{align}\label{bd:J1}
\|J_{k,1}\|_{L^2_k}\les&\nu^{-\fr13}\|\la v\ra^2{\bf A}_{s}{h}_{\ne}\|_{L^2 _tL^2_{x,v}}.
\end{align}
For $J_{k,2}$, similar to \eqref{trace}, using Sobolev trace theorem again yields
\begin{align*}
    \int_{\R}|\hat{h}_{k}(\tau,\xi_*(t-\tau,k))|^2dt\les\fr{1}{|k|}\|\la v\ra^2 h_{k}(\tau)\|_{L^2_v}^2.
\end{align*}
Then  in view of \eqref{bd-sem}, similar to \eqref{es:rho-L}, we have 
\begin{align*}
&\int_0^{T^*}\Big[\int_0^t\tl{S}_{k,\fr{\dl}{2};2)}(t-\tau){\rm A}_{k,s}(\tau)|\hat{h}_{k}(\tau,\xi_*(t-\tau,k))|d\tau\Big]^2dt\\
%\le&\int_0^{T^*}\int_0^t\tl{S}^2_{k,\fr{\dl}{2};2)}(t-\tau)d\tau \int_0^t|{\rm A}_{k,s}(\tau)\hat{h}_{k}(\tau,\xi_*(t-\tau,k))|^2d\tau dt\\
\les&\nu^{-1}\int_0^{T^*}\int_\tau^{T^*}|{\rm A}_{k,s}(\tau)\hat{h}_{k}(\tau,\xi_*(t-\tau,k))|^2 dtd\tau\\
\les&\fr{\nu^{-1}}{|k|}\int_0^{T^*}\big\|\la v\ra^2 {\rm A}_{k,s}(\tau)h_{k}(\tau)\big\|_{L^2_v}^2 d\tau.
\end{align*}
Now by using Minkowski's inequality, we get
\begin{align*}
    J_{k,2}\le& |k|^{\fr12} \int_{\R}|\hat{K}_k(\xi_1)|\Big\|\int_0^t\tl{S}_{k,\fr{\dl}{2};2)}(t-\tau)|{\rm A}_{k,s}(\tau)\hat{h}_{k}(\tau,\xi_*(t-\tau,k))|d\tau\Big\|_{L^2_t}d\xi_1\\
    \les&\nu^{-\fr12}\|\hat{K}_k\|_{L^1_{\xi_1}}\big\|\la v\ra^2 {\rm A}_{k,s}(\tau)h_{k}(\tau)\big\|_{L^2_tL^2_v}.
\end{align*}
Then thanks to \eqref{est:Wk}, we arrive at
\begin{align}\label{bd:J2}
    \|J_{k,2}\|_{L^2_k}\les \nu^{-\fr12}\|\la v\ra^2{\bf A}_{s}{h}_{\ne}\|_{L^2 _tL^2_{x,v}}.
\end{align}
It follows from \eqref{bd:J1} and \eqref{bd:J2} that \eqref{bd:L-J} holds.
\end{proof}

\begin{coro}\label{coro-NLK}
Let
    \begin{align*}
    \tl{J}_k=\bigg\|{\rm B}_{k,\ell}(t)\int_0^t\int_{\R}\bar{\hat{K}}_k(\xi_1)S_k(t-\tau;\xi_*(t,k))\xi_*(t-\tau,k)\sum_{l\in\Z^n_*}\fr{f_l(\tau)}{|l|}\hat{g}_{k-l}(\tau,\xi_*(t-\tau,k))d\xi_1d\tau\bigg\|_{L^2_t}.
    \end{align*}
    Then it holds that
    \begin{align}\label{bd:J}
\|\tl{J}_k\|_{L^2_k}\les \nu^{-\fr12}\Big(\|{\bf B}_{s}f\|_{L^2_{t,x}}\|\la v\ra^2{\bf A}_{0}{g}\|_{L^\infty _tL^2_{x,v}}+\|{\bf B}_{0}f\|_{L^2_{t,x}}\|\la v\ra^2{\bf A}_{\ell}{g}_{\ne}\|_{L^\infty _tL^2_{x,v}}\Big).
    \end{align}
\end{coro}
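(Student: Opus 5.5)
The plan is to reduce Corollary \ref{coro-NLK} to the argument of Lemma \ref{lem-LK}, applied frequency by frequency to the nonlinear source. The one new ingredient is a paraproduct splitting in the convolution variable $l$, done exactly as in the proof of \eqref{log-produ}. First, I will bound the kernel pointwise as in \eqref{B-A}:
\begin{align*}
{\rm B}_{k,s}(t)S_k(t-\tau;\xi_*(t,k))|\xi_*(t-\tau,k)|\les |k|^{\fr12}\Big(\tl{S}_{k,\fr{\dl}{2};1)}(t-\tau)|\xi_1|+\tl{S}_{k,\fr{\dl}{2};2)}(t-\tau)\Big){\rm A}_{k,s}(\tau).
\end{align*}
This splits $\tl J_k\les \tl J_{k,1}+\tl J_{k,2}$, where the pieces carry $|\xi_1|$ and $|k|(t-\tau)$ respectively. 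Inside each piece I will split $\sum_l$ into $|l|\ge|k-l|$ (HL) and $|l|<|k-l|$ (LH). In the HL region, ${\rm A}_{k,s}(\tau)\les {\rm A}_{l,s}(\tau)$. In the LH region, ${\rm A}_{k,s}(\tau)\les {\rm A}_{k-l,s}(\tau)$. The $\ln$ weight is transferred in the same way as in \eqref{est:NaHL}--\eqref{est:NaLH}.

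For the HL part I use $\fr{1}{|l|}\les |l|^{\fr12}\fr{1}{\ln(e+|k-l|)\la k-l\ra^{3/2}}\ln(e+|k-l|)$. Cauchy--Schwarz in $l$ then produces the summable weight $\sum_l(\ln(e+|k-l|))^{-2}\la k-l\ra^{-3}<\infty$ times $\sum_l|{\rm B}_{l,s}f_l(\tau)|^2|{\rm A}_{k-l,0}\hat g_{k-l}(\tau,\cdot)|^2$. For the LH part the roles are swapped: ${\rm B}_{l,0}f_l$ is paired with ${\rm A}_{k-l,s}\hat g_{k-l}$, and the summable weight becomes $\sum_l(\ln(e+|l|))^{-2}|l|^{-3}$. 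After this, $\hat g_{k-l}(\tau,\xi_*(t-\tau,k))$ plays the role that $\hat h_k$ played in Lemma \ref{lem-LK}.

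For $\tl J_{k,2}$ I follow the $J_{k,2}$ argument. I apply Minkowski in $\xi_1$ with $\|\hat K_k\|_{L^1}\les|k|^{-2}$ from \eqref{est:Wk}, then Cauchy--Schwarz in $\tau$ using $\|\tl S_{k,\fr{\dl}{2};2)}\|_{L^2_t}\les\nu^{-\fr12}$ from \eqref{bd-sem}. Next I swap the $t$ and $\tau$ integrals and use the trace bound $\int|\hat g_{k-l}(\tau,\xi_*(t-\tau,k))|^2dt\les|k|^{-1}\|\la v\ra^2 g_{k-l}(\tau)\|_{L^2_v}^2$, where the factor $|k|^{-1}$ cancels the $|k|^{\fr12}$ squared. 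Finally I take $\|\la v\ra^2{\rm A}g_{k-l}\|_{L^\infty_t}$ out of the $\tau$-integral, leaving $\|{\rm B}f_l\|_{L^2_\tau}^2$, and sum over $k$ and $l$ (Fubini, $\sum_k\sum_l=\sum_l\sum_{k}$). This gives the $\nu^{-\fr12}$ bound.

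For $\tl J_{k,1}$ I use Cauchy--Schwarz in $\xi_1$ with $\||\xi_1|\hat K_k\|_{L^2}\les|k|^{-2}$ and the trace inequality in $\xi_1$. Young's inequality with $\|\tl S_{k,\fr{\dl}{2};1)}\|_{L^1_t}\les\nu^{-\fr13}|k|^{-\fr23}$ then gives an $L^2_t$ bound of $\sum_l {\rm B}f_l(\tau)\|\la v\ra^2{\rm A}g_{k-l}(\tau)\|$. Bounding the $g$ factor in $L^\infty_t$ and $f$ in $L^2_t$ leaves $\nu^{-\fr13}\le\nu^{-\fr12}$.

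I expect the main obstacle to be the order of operations in $\tl J_{k,2}$. The time-weight transfer $\la\nu^{\fr13}|k|^{\fr23}t\ra\les\la\cdot\,\tau\ra\la\cdot(t-\tau)\ra$ has to be absorbed by $\tl S$ before the $l$-splitting. The trace inequality in $t$ has to be applied after the Cauchy--Schwarz in $l$, so that the summable weight does not depend on $t$. I would arrange it as in \eqref{est:NaHL}: first Cauchy--Schwarz in $(l,\tau)$ jointly, then integrate in $t$.
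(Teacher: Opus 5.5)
Your argument is correct and uses the same ingredients as the paper: the kernel bound \eqref{B-A}, the split into the $|\xi_1|$ and $|k|(t-\tau)$ pieces, the trace theorem, the bounds \eqref{est:Wk} on $\hat K_k$, and the high--low/low--high splitting with logarithmically summable weights. The paper orders them more economically. Lemma \ref{lem-LK} is linear in $h$, and its proof only uses the trace theorem on $\hat h_k(\tau,\cdot)$. So the paper applies it directly with $h_k$ replaced by the source $H_k=\sum_{l}\frac{f_l}{|l|}g_{k-l}$, and uses Minkowski in the form $\|\la v\ra^2H_k(\tau)\|_{L^2_v}\le\sum_l\frac{|f_l(\tau)|}{|l|}\|\la v\ra^2g_{k-l}(\tau)\|_{L^2_v}$. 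It then applies the product estimate \eqref{log-produ} (by duality in $k$) at each fixed $\tau$, and finishes with H\"older in $\tau$. Because the trace is taken on the whole source before any splitting in $l$, the ordering issue you single out as the main obstacle never arises. Your inlined version effectively re-proves Lemma \ref{lem-LK} together with Lemma \ref{lem-produ}; it is longer but equally valid.

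One step, as you wrote it, must be reordered. In $\tl J_{k,2}$ you pull $\sup_\tau\|\la v\ra^2{\rm A}_{k-l,0}(\tau)g_{k-l}(\tau)\|_{L^2_v}$ out of the $\tau$-integral separately for each pair $(k,l)$, and only afterwards sum over $k,l$. This produces $\sum_m\sup_\tau\|\la v\ra^2{\rm A}_{m,0}(\tau)g_m(\tau)\|_{L^2_v}^2$, which is an $\ell^2_kL^\infty_t$ norm. That norm dominates, but is not controlled by, the $L^\infty_tL^2_{x,v}$ norm in \eqref{bd:J}. The same caution applies to the last step of your $\tl J_{k,1}$ bound. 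The fix is to sum over $(k,l)$ at fixed $\tau$ first, which gives $\|{\bf B}_sf(\tau)\|_{L^2_x}^2\|\la v\ra^2{\bf A}_0g(\tau)\|_{L^2_{x,v}}^2$ plus the analogous low--high term. Only then take the supremum in time of the $g$-factor, as in the last line of \eqref{est:NaHL}.
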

\begin{proof}
From Lemma \ref{lem-LK}, we have
\begin{align*}
    \|\tl{J}_k\|_{L^2_k}\les& \nu^{-\fr12}\bigg\|\Big\|\la v\ra^2{\rm A}_{k,s}(\tau)\sum_{l\in\Z^n_*}\fr{f_l(\tau)}{|l|}g_{k-l}(\tau)\Big\|_{L^2 _tL^2_{v}}\bigg\|_{L^2_k}\\
    =&\nu^{-\fr12}\Big\|{\rm A}_{k,s}(\tau)\sum_{l\in\Z^n_*}\fr{|f_l(\tau)|}{|l|}\|\la v\ra^2g_{k-l}(\tau)\|_{L^2_{v}}\Big\|_{L^2 _tL^2_k}.
\end{align*}
Then \eqref{bd:J} follows immediately.
\end{proof}

\section{Estimate of $f^{\rm e}-\mu_2$}
The aim of this section is to bound $f^{\rm e}-\mu_2$ in appropriate spacetime norms. More precisely, the following lemma holds.
\begin{lem}\label{lem-fe}
    Let $f^{\rm e}$ be the solution to the equation \eqref{eq:fe}, and $m\in[0,\infty)$. Then for all $T\le\fr12 \nu^{-1}$, there exists a positive constant $C$, depending on the function $\sig=\sig(v)$ and $m$, but independent of $\nu$, such that
    \begin{align}\label{C.1}
        \|\la v\ra^m\pr_v(f^{\rm e}-\mu_2)\|_{L^2_t L^2_v}\le C(\nu T^{\fr32}\gamma^{-\fr32}+\nu^{\fr12}T\gamma^{\fr12}).
    \end{align}
\end{lem}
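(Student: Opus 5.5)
Write $w:=f^{\rm e}-\mu_2$. Since $\mu_2=f^{\rm e}(0)$, $w$ solves
\begin{align*}
\pr_t w-\nu L[w]=\nu L[\mu_2],\qquad w|_{t=0}=0,
\end{align*}
so Duhamel gives $w(t)=\nu\int_0^t e^{\nu(t-\tau)L}L[\mu_2]\,d\tau$. The plan is to estimate $\pr_v w$ through this formula in two different ways and add the results. The first estimate uses only the crude size of $\mu_2$ and gives the term $\nu T^{3/2}\gamma^{-3/2}$. The second uses an energy/dissipation estimate and gives $\nu^{1/2}T\gamma^{1/2}$. Since the right-hand side of \eqref{C.1} is a sum, it suffices to prove either bound in each regime; in fact I would prove both pieces as contributions to a single splitting.

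\emph{Step 1 (direct bound).} The Fokker--Planck semigroup commutes with $\pr_v$ up to a factor: $\pr_v e^{\nu sL}=e^{\nu s}e^{\nu sL}\pr_v$, because $[\pr_v,L]=\pr_v$. It is also bounded on $L^2(\la v\ra^m)$ uniformly for $\nu s\le 1$; this follows from the weighted energy identity already used in the proof of \eqref{bd:semi1-0}, with constant $e^{C\nu s}\les 1$ for $T\le \frac12\nu^{-1}$. Hence
\begin{align*}
\|\la v\ra^m \pr_v w(t)\|_{L^2_v}\les \nu t\,\|\la v\ra^m\pr_v L[\mu_2]\|_{L^2_v}.
\end{align*}
Since $\mu_2=\frac{M\gamma\sig(v/\gamma)}{1+M\gamma^2 m_\sig}$ and \eqref{uni-gamma2} holds, the scaling remark after \eqref{def:tlmu} gives $\|\la v\ra^m\pr_v^j\mu_2\|_{L^2}\les \gamma^{3/2-j}$. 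The dominant part of $\pr_v L[\mu_2]$ is $\pr_v^3\mu_2$, of size $\gamma^{-3/2}$; the terms from $\pr_v(v\mu_2)$ are smaller. Taking $L^2_t$ over $[0,T]$ then gives $\nu T^{3/2}\gamma^{-3/2}$.

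\emph{Step 2 (energy bound).} Pair the equation for $w$ with $\la v\ra^{2m}w$, moving one derivative of the forcing onto the test function. This gives
\begin{align*}
\frac{d}{dt}\|\la v\ra^m w\|^2+\nu\|\la v\ra^m\pr_v w\|^2\les \nu\|\la v\ra^m w\|^2+\nu\|\la v\ra^{m+1}\pr_v\mu_2\|^2+\nu\|\la v\ra^m\mu_2\|^2 .
\end{align*}
The forcing is $\les \nu\gamma$, using $\|\la v\ra^{m+1}\pr_v\mu_2\|^2\les\gamma$. Gronwall on $[0,T]$ with $\nu T\le\frac12$ then yields $\nu\|\la v\ra^m\pr_v w\|_{L^2_tL^2_v}^2\les \nu\gamma T$, that is, $\|\la v\ra^m\pr_v w\|_{L^2_tL^2_v}\les (\gamma T)^{1/2}$. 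This falls short of the stated $\nu^{1/2}T\gamma^{1/2}$ and must be refined.

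\emph{Main obstacle.} I therefore expect the real work to be the $\nu^{1/2}T\gamma^{1/2}$ term, which needs an extra factor of $(\nu T)^{1/2}$ relative to Step 2. The first fix I would try is to apply the Step 2 energy estimate to $\pr_v w$ rather than to $w$, keeping the forcing $\nu\pr_vL[\mu_2]$ in divergence form. This gives $\sup_t\|\la v\ra^m\pr_v w\|^2\les \nu\|\la v\ra^{m+1}\pr_v^2\mu_2\|^2 T\les \nu T\gamma^{-1}$. It is combined with the Step 1 pointwise bound $\nu t\gamma^{-3/2}$ by taking the minimum for each $t$, and then squaring and integrating in $t$. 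The crossover time is $t\sim(\nu\gamma)^{-1/2}\cdots$, and I would check that the resulting integral is dominated by the sum $\nu T^{3/2}\gamma^{-3/2}+\nu^{1/2}T\gamma^{1/2}$. If that fails, the fallback is a low/high velocity-frequency split of $\mu_2$ at scale $\gamma^{-1}$: the low-frequency part is handled by Step 1, and the high-frequency part by the dissipation gain. I expect the exponent bookkeeping in this interpolation to be the delicate point.
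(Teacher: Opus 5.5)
Your Step 1 already proves the lemma, and everything after it can be deleted. The right-hand side of \eqref{C.1} is a sum of nonnegative terms, so the bound
\[
\|\langle v\rangle^m\partial_v(f^{\rm e}-\mu_2)\|_{L^2_tL^2_v}\lesssim \nu T^{3/2}\gamma^{-3/2},\qquad T\le \tfrac12\nu^{-1},
\]
which Step 1 delivers, implies \eqref{C.1} outright. You never need to produce the term $\nu^{1/2}T\gamma^{1/2}$. The ``main obstacle'' you describe therefore does not exist, and Step 2 together with the unfinished crossover/interpolation discussion should simply be dropped.

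The ingredients of Step 1 are all sound:
\begin{itemize}
\item $[\partial_v,L]=\partial_v$, which gives $\partial_v e^{\nu sL}=e^{\nu s}e^{\nu sL}\partial_v$.
\item The weighted energy identity gives $\|\langle v\rangle^m e^{\nu sL}g\|_{L^2}\le e^{C_m\nu s}\|\langle v\rangle^m g\|_{L^2}$.
\item Since $\partial_vL[\mu_2]=\partial_v^3\mu_2+2\partial_v\mu_2+v\partial_v^2\mu_2$ and $\gamma\le1$, scaling gives $\|\langle v\rangle^m\partial_vL[\mu_2]\|_{L^2}\lesssim\gamma^{-3/2}$.
\end{itemize}
Hence $\|\langle v\rangle^m\partial_v(f^{\rm e}-\mu_2)(t)\|_{L^2_v}\lesssim\nu t\gamma^{-3/2}$ pointwise for $t\le\frac12\nu^{-1}$, and squaring and integrating in $t$ gives the bound above.

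This is a different route from the paper's. The paper writes the explicit solution $\widehat{f^{\rm e}}(t,\xi)=e^{-\frac12(1-e^{-2\nu t})\xi^2}\,\widehat{\mu_2}(e^{-\nu t}\xi)$. It then bounds $\partial_\xi^m\big[\xi(\widehat{f^{\rm e}}(t,\xi)-\widehat{f^{\rm e}}(0,\xi))\big]$ in $L^2_\xi$ by splitting into three pieces:
\begin{itemize}
\item the Gaussian factor minus one, giving $\nu t\gamma^{-3/2}$, which is your term;
\item the $\partial_\xi^\ell$ falling on the Gaussian, bounded crudely by $(\nu t)^{\ell/2}$, which is the only source of $\nu^{1/2}T\gamma^{1/2}$;
\item the dilation $e^{-\nu t}$ in the argument, giving $\nu t\gamma^{1/2}$.
\end{itemize}
So the second term in \eqref{C.1} is an artifact of handling the velocity moments through $\partial_\xi^m$ on the Fourier side.

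Your argument hides all moment bookkeeping inside the weighted semigroup bound. It gives a slightly stronger single-term estimate and works for non-integer $m$ without change. The paper's argument is fully explicit and needs no energy estimate. In the only application, \eqref{es:NL-ne2} with $T=T_0$, the relevant term is $\nu T_0^{3/2}\gamma^{-3/2}=\delta_0^{3/2}\nu^{3\epsilon_0}(\ln\nu^{-1})^{3/2}$, so nothing downstream is affected.
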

\begin{proof}
We first solve $f^{\rm e}$   explicitly. To this end, taking Fourier transform of \eqref{eq:fe} in $v$, we obtain
\begin{align*}
    \partial_t\widehat{f^{\rm e}}(t,\xi)+\nu \xi\partial_\xi\widehat{f^{\rm e}}(t,\xi)+\nu\xi^2\widehat{f^{\rm e}}(t,\xi)=0,\quad \widehat{f^{\rm e}}(0,\xi)=\fr{M\gamma^2\hat{\sig}(\gamma\xi)}{1+M\gamma^2m_\sig}.
\end{align*}
By the method of characteristics, we have
\begin{align*}
    \widehat{f^{\rm e}}(t,\xi)=\exp\left(-\fr12(1-e^{-2\nu t})\xi^2\right)\fr{M\gamma^2\hat{\sig}(e^{-\nu t}\gamma\xi)}{1+M\gamma^2m_\sig}.
\end{align*}
%%%%%%%%%%%%%%%%

%%%%%%%%%%%%%%%
Recalling the definition of $\sig_1(\cdot)$ in \eqref{def-sig1}, we can write
\begin{align*}
&\pr_\xi^m\big[\xi\big(\widehat{f^{\rm e}}(t,\xi)-\widehat{f^{\rm e}}(0,\xi)\big)\big]\\
=&\fr{M\gamma e^{\nu t}}{1+M\gamma^2m_\sig}\left[\exp\left(-\fr12(1-e^{-2\nu t})\xi^2\right)-1\right](e^{-\nu t}\gamma)^{m}\sig_1^{(m)}(e^{-\nu t}\gamma\xi)\\
&+\fr{M\gamma e^{\nu t}}{1+M\gamma^2m_\sig}\sum_{\ell=1}^{m} \binom{m}{\ell} \pr_{\xi}^\ell\left[\exp\left(-\fr12(1-e^{-2\nu t})\xi^2\right)\right](e^{-\nu t}\gamma)^{m-\ell}\sig_1^{(m-\ell)}(e^{-\nu t}\gamma\xi)\\
&+\left[\fr{M\gamma e^{\nu t}}{1+M\gamma^2m_\sig} (e^{-\nu t}\gamma)^m\sig_1^{(m)}(e^{-\nu t}\gamma\xi)-\fr{M\gamma}{1+M\gamma^2m_\sig}\gamma^m\sig_1^{(m)}(\gamma\xi)\right]\\
=&J_1(t,\xi)+J_2(t,\xi)+J_3(t,\xi).
\end{align*}
By virtue of the elementary inequality $1-e^{-x}\le x$ for all $x\ge0$, one deduces that
\begin{align}\label{C.2}
    \|J_1(t)\|_{L^2_\xi}\les \nu te^{\nu t}\gamma (e^{-\nu t}\gamma)^m\big\|\xi^2\sig_1^{(m)}(e^{-\nu t}\gamma \xi) \big\|_{L^2_\xi}=\nu te^{\fr72\nu t}\gamma^{-\fr32} (e^{-\nu t}\gamma)^m\big\|\eta^2\sig_1^{(m)}(\eta) \big\|_{L^2_\eta}.
\end{align}
Next, note that
\begin{align*}
    &\partial_{\xi}^\ell \left[ \exp\left(-\frac{1}{2}(1 - e^{-2\nu t}) \xi^2\right) \right]\\
    =& (-1)^\ell (1 - e^{-2\nu t})^{\fr{\ell}{2}} \, \mathrm{He}_\ell\left( \sqrt{1 - e^{-2\nu t}} \, \xi \right) \exp\left(-\frac{1}{2}(1 - e^{-2\nu t}) \xi^2\right),
\end{align*}
where
\begin{align*}
    \mathrm{He}_\ell(x) = \ell! \sum_{k=0}^{\lfloor \ell/2 \rfloor} \frac{(-1)^k}{k! \, (\ell-2k)! \, 2^k} \, x^{\ell-2k}.
\end{align*}
Then we have
    \begin{align*}
    \left|\partial_{\xi}^\ell \left[ \exp\left(-\frac{1}{2}(1 - e^{-2\nu t}) \xi^2\right) \right]\right|\les (\nu t)^{\fr{\ell}{2}},
\end{align*}
and hence
\begin{align}\label{C.3}
    \|J_2(t)\|_{L^2_{\xi}}\les e^{\fr32\nu t}\gamma^{\fr12}\sum_{\ell=1}^{m}(\nu t)^{\fr{\ell}{2}}(e^{-\nu t}\gamma)^{m-\ell}\big\|\sig_1^{(m-\ell)}\|_{L^2_\eta}.
\end{align}
To bound $J_3(t,\xi)$, we split it into three parts:
\begin{align*}
 J_3(t,\xi)
    =&\fr{M\gamma (e^{\nu t}-1)}{1+M\gamma^2m_\sig} (e^{-\nu t}\gamma)^m\sig_1^{(m)}(e^{-\nu t}\gamma\xi)+\fr{M\gamma }{1+M\gamma^2m_\sig} \left[(e^{-\nu t}\gamma)^m-\gamma^m\right]\sig_1^{(m)}(e^{-\nu t}\gamma\xi)\\
    &+\fr{M\gamma }{1+M\gamma^2m_\sig} \gamma^m\left[\sig_1^{(m)}(e^{-\nu t}\gamma\xi)-\sig_1^{(m)}(\gamma\xi)\right]=J_{3,1}(t,\xi)+J_{3,2}(t,\xi)+J_{3,3}(t,\xi).
\end{align*}
It is easy to verify that
\begin{align}\label{C.4}
    \|J_{3,1}(t)\|_{L^2_\xi}\les \nu te^{\fr32\nu t}(e^{-\nu t}\gamma)^m\gamma^{\fr12}\big\|\sig_1^{(m)}\big\|_{L^2_\eta}, 
\end{align}
and
\begin{align}\label{C.5}
    \|J_{3,2}(t)\|_{L^2_\xi}\les m\nu te^{\fr12\nu t}\gamma^{m+\fr12}\big\|\sig_1^{(m)}\big\|_{L^2_\eta}.
\end{align}
To bound $J_{3,3}(t,\xi)$, by the mean value theorem, we have
\[
\sig_1^{(m)}(e^{-\nu t}\gamma\xi)-\sig_1^{(m)}(\gamma\xi)=-\nu\int_0^te^{-\nu s}\gamma\xi\sig_1^{(m+1)}(e^{-\nu s}\gamma\xi)ds.
\]
Then using Minkowski's inequality, we are led to
\begin{align}\label{C.6}
\|J_{3,3}(t)\|_{L^2_\xi}\les\nn& \gamma^{m+1}\nu \int_0^t\|e^{-\nu s}\gamma\xi\sig_1^{(m+1)}(e^{-\nu s}\gamma\xi)\|_{L^2_\xi}ds\\
\les\nn&\gamma^{m+\fr12}\nu\int_0^t e^{\fr12\nu s}ds\|\eta\sig_1^{(m+1)}(\eta)\|_{L^2_\eta}=2\gamma^{m+\fr12}(e^{\fr12\nu t}-1)\|\eta\sig_1^{(m+1)}(\eta)\|_{L^2_\eta}\\
\les&\nu t\gamma^{m+\fr12}e^{\fr12\nu t}\|\eta\sig_1^{(m+1)}(\eta)\|_{L^2_\eta}.
\end{align}
Collecting the estimates in \eqref{C.2}--\eqref{C.6}, using the facts $0\le t\le T\le\fr12 \nu^{-1}$, $m\ge0$ and $0<\gamma\le1$, we conclude that \eqref{C.1} holds.
\end{proof}

\bigbreak
\noindent{\bf Acknowledgments}
RZ is partially supported by the NSF of China under  Grants 12571235 and 12222105.

\noindent{\bf Conflict of interest:}  We confirm that we do not have any conflict of interest. 

\noindent{\bf Data availability:} The manuscript has no associated data. 

\noindent{\bf Author Contributions Statement:} W.Z. and  R.Z. have the same contribution to the manuscript.

\end{appendix}

\bibliographystyle{plain} 
\bibliography{references.bib}

\end{document}